\documentclass[reqno,10pt]{amsart}
\usepackage{mathrsfs}  
\usepackage{cancel} 
\usepackage[dvipsnames]{xcolor}
\usepackage{enumitem}
\usepackage{mathtools}
\usepackage{bm}
\usepackage{subcaption}
\usepackage{comment}
\usepackage{xargs}
\usepackage{soul}
\usepackage{subcaption}
\usepackage{amssymb}
\usepackage{tikz, pgf}
\usetikzlibrary{arrows.meta}
\usetikzlibrary{decorations.shapes}
\usetikzlibrary{decorations.pathreplacing}
\usetikzlibrary{decorations.pathmorphing}

\usepackage[textsize=tiny,textwidth=2.05cm]{todonotes}
\newcommandx{\task}[2][1=]{\todo[linecolor=blue,backgroundcolor=blue!30,#1]{#2}}
\newcommandx{\note}[2][1=]{\todo[linecolor=green,backgroundcolor=green!30,#1]{#2}}
\newcommandx{\error}[2][1=]{\todo[linecolor=red,backgroundcolor=red!30,#1]{#2}}
\usepackage{marginnote}
\usepackage[dvipsnames]{xcolor}
\usepackage{mathrsfs}
\usepackage{mathtools}
\usepackage{esint}
\usepackage{stackengine}
\usepackage{hyperref}
\usepackage{bbm}
\usepackage[titletoc]{appendix}

\newtheorem{theorem}{Theorem}[section]
\newtheorem{lemma}[theorem]{Lemma}
\newtheorem{proposition}[theorem]{Proposition}
\newtheorem{corollary}[theorem]{Corollary}

\theoremstyle{definition}
  \newtheorem{definition}[theorem]{Definition}
  \newtheorem{example}[theorem]{Example}

\theoremstyle{remark}
  \newtheorem{remark}[theorem]{Remark}
  
\newcommand{\dt}{\di t}

\newcommand{\dx}{\di x}

\newcommand{\eps}{\varepsilon}
\newcommand{\Hd}{\mathcal{H}^{d{-}1}}
\newcommand{\N}{\mathbb{N}}
\newcommand{\Z}{\mathbb{Z}}
\newcommand{\R}{\mathbb{R}}

\newcommand{\weaklystar}{\stackrel{*}{\rightharpoonup}}
\newcommand{\defas}{\coloneqq}

\usepackage{stackengine}

\newcommand*{\di}{\mathop{}\!\mathrm{d}}

\newcommand{\etamac}[1]{\eta^{#1}_{p,j}}
\newcommand{\sigmamac}[1]{\sigma^{#1}_{p,j}}

\DeclareMathOperator*{\argmin}{argmin}

\newcommand{\sym}{{\rm sym}}

\newcommand{\BBB}{\color{black}}

\newcommand{\EEE}{\color{black}}

\newcommand{\GGG}{\color{black}}

\newcommand{\RRR}{\color{red}}

\newcommand{\viola}[1]{\textcolor{blue!30!red}{#1}}

\newcommand{\verde}[1]{\textcolor{green!50!black}{#1}}

\newcommand{\ZZZZ}{\color{black}}

\renewcommand{\H}{\mathcal{H}}

\newcommand{\wto}{\rightharpoonup}

\newcommand{\mres}{\mathbin{\vrule height 1.6ex depth 0pt width 0.13ex\vrule height 0.13ex depth 0pt width 1.3ex}}

\newcommand{\M}{\mathfrak{M}}

\newcommand{\ol}{\overline}
\newcommand{\sm}{\setminus}

\newcommand{\tsubset}{\subset}

\newcommand{\Sn}{\mathbb{S}^{d{-}1}}
 \newcommand{\dhn}{\mathrm{d}\mathcal{H}^{d{-}1}}
 \newcommand{\hn}{\mathcal{H}^{d{-}1}}

 \newcommand{\SBV}{\mathrm{SBV}}
 \newcommand{\GSBV}{\mathrm{GSBV}}
 \newcommand{\BV}{\mathrm{BV}}
 \renewcommand{\M}{\mathcal{M}}
 \newcommand{\PC}{\mathrm{PC}}
 
 \newcommand{\Gnm}{\mathcal{G}_n^-}
 \newcommand{\A}{\mathcal{A}}
 \newcommand{\E}{\mathcal{E}}
 \newcommand{\F}{\mathcal{F}}
 \newcommand{\G}{\mathcal{G}}

\newcommand{\cosi}{\xrightharpoonup{\sigma_{\rm cf}}}
\newcommand{\Ldue}{2}
\newcommand{\muim}{\mu^i_-}

\newcommand{\Ti}{{t}}
\newcommand{\n}{{m }}
\newcommand{\kt}{{k_t}}

\DeclarePairedDelimiterX\setof[1]\{\}{#1}
\DeclarePairedDelimiterX\abs[1]\lvert\rvert{#1}
\DeclarePairedDelimiterX\norm[1]\lVert\rVert{#1}
\DeclarePairedDelimiterX\sprod[2]\langle\rangle{#1, #2}

\usepackage{enumitem}
\numberwithin{equation}{section}

\setlist[enumerate,1]{font=\normalfont}
\setlist[itemize,1]{font=\normalfont}
\newlist{thmlist}{enumerate}{1}
\setlist[thmlist]{label=(\roman{thmlisti}),
	ref=(\roman{thmlisti}),font=\normalfont,
	noitemsep}

\title[Quasistatic evolution of cohesive-type fracture]{Quasistatic evolution of cohesive-type fracture}

\subjclass[2020]{49J45, 49Q20,  74A45}
\keywords{Cohesive \EEE fracture,   quasistatic crack propagation,  irreversibility condition, maximal opening memory variable, free-discontinuity problems,  $\Gamma$-convergence.}

\author[V.~Crismale]{Vito Crismale}
\address[Vito Crismale]{
  Dipartimento di Matematica “G. Castelnuovo”, Sapienza Universita` di Roma, Piazzale A. Moro 2, I-00185, Rome, Italy}
\email{crismale@mat.uniroma1.it}

\author[M.~Friedrich]{Manuel Friedrich} 
\address[Manuel Friedrich]{Department of Mathematics, Johannes Kepler Universität Linz. Altenbergerstrasse 69,
4040 Linz, Austria}
\email{manuel.friedrich@jku.at}

\begin{document}
\begin{abstract} We prove the existence of globally stable quasistatic evolutions for a cohesive fracture model with unprescribed crack path and without any topological restriction, in arbitrary dimension.
The surface energy density is assumed to be concave and to exhibit an activation threshold, 
modeling depinning effects and fracture process zones in quasi-brittle materials. We devise a new notion of convergence for memory variables supported on evolving crack sets, inspired by $\sigma$-convergence in brittle fracture, guaranteeing compactness and lower semicontinuity properties.
In contrast to the brittle case, global stability is not preserved under passage to the limit because of oscillation and branching phenomena in the approximating cracks. To overcome this difficulty, we deviate from the classical   scheme for  proving \EEE energetic solutions by first proving the energy balance and convergence of the surface energies, and only afterwards recovering the   global stability condition.
    \end{abstract}
    \maketitle
    
    \section{Introduction}
  Many effects in materials are characterized by the fact that the force depends on the direction of the rate, but essentially not on its magnitude. Prominent examples include dry friction, damage, phase transformations in shape memory alloys, hysteretic behavior in magnetic, magnetostrictive, and ferroelectric materials, elastoplasticity, and fracture. Such phenomena can be formulated as so-called  {rate-independent systems}, meaning that a monotone temporal rescaling of the input leads to the same correspondingly rescaled output. Over the last decades, there has been tremendous progress in the formulation of variational mechanical problems in this setting, along with the derivation of existence results, see \cite{Mielke, Mielke2} and the references therein for a broad overview.

  \textbf{The broader context -- energetic solutions for rate-independent systems.}
Energetic   solutions \cite{MieThe99, Mielke2}  have emerged as a natural solution concept, with the significant advantage of relying only on energies and dissipation distances, rather than on their derivatives. Moreover, the formulation is sufficiently flexible to incorporate parameter-dependent problems and to derive effective theories in terms of (evolutionary) $\Gamma$-convergence \cite{MRS}. Energetic solutions are characterized by the \emph{static stability condition} (S) and the \emph{total energy balance} (E). The existence of solutions is obtained via a well-established program (see \cite[Table~2.1]{Mielke}), which is based on approximation by  time-incremental \EEE minimization problems. Central to the existence proof are two compatibility conditions between the energy and the dissipation, namely continuity of the power of the external forces and closedness of the stability set. Their combination allows one to prove  first \EEE (S) and then \EEE (E) and, a posteriori, additional convergence properties, in particular the convergence of the energies at all times in the passage from the time-discretized approximation to the time-continuous solution.

Establishing the closedness of the stability set is one of the key challenges in relevant applications. In particular, in brittle fracture, this can be reformulated as the so-called jump-transfer lemma or, equivalently, as a stability property of unilateral minimality. In more sophisticated situations such as cohesive fracture, a corresponding stability result beyond the one-dimensional setting is currently unavailable. In this work, for a specific model of cohesive-type fracture,   we verify the closedness of the stability set and 
 prove \EEE  the existence of energetic solutions.
From a methodological point of view, this is achieved by deviating from the standard  aforementioned proof \EEE steps.
Indeed, our strategy passes through a relaxed notion of (S), expressing that at each time the time-continuous limit configuration minimizes the $\Gamma$-limit of the energy functionals minimized in the incremental problems defining the discrete-time approximations. Then, exploiting a localization argument, we establish (E) and deduce 
energy convergence, \EEE which is usually obtained only a posteriori.
This additional information enables us to establish (S), which would be impossible without prior knowledge of energy convergence. We call this the   \emph{E--S approach}. We believe that this strategy may be useful in various contexts of rate-independent problems, and the present paper  illustrates \EEE
the effectiveness of this approach in the setting of cohesive fracture.

\textbf{The setting  -- variational \ZZZZ models \EEE for brittle and cohesive fracture.} Let us describe the setting in more detail. The propagation of  crack can be viewed as the result of a competition between   the minimization of bulk elastic energy  and dissipation associated with an infinitesimal increase of the cracked region. In a variational formulation, this leads to the study of energies of the form 
\begin{align}\label{mainfunctional}
\mathcal{E}( u \EEE,\Gamma) = \int_{\Omega \setminus \Gamma}    |\nabla u(x)|^2 \EEE \, {\rm d}x + \int_{ \Gamma}  g \EEE (x,[u](x),\nu_{u \EEE}(x)) \, {\rm d}\mathcal{H}^{d{-}1}(x),
\end{align}
where for simplicity we restrict ourselves to the generalized antiplane setting, i.e., $u \colon \Omega \to \R$ denotes a scalar-valued displacement field defined \EEE on a reference configuration  $\Omega \subset \mathbb{R}^d$ ($d\geq 2$). The energy features a  linearized \EEE elastic contribution depending on the displacement gradient on the  unfractured region $\Omega \setminus \Gamma$, as well as a surface term defined on a $(d{-}1)$-dimensional surface $\Gamma$, the crack set, where the density $ g \EEE$ \EEE 
depends on the crack opening  $[u]$ representing the difference of the traces of $u$ on the two sides of $\Gamma$. 

In the setting of \emph{brittle fracture}, the surface density  $ g \EEE (x, \cdot, \nu)$ \EEE is equal to  $  \kappa(x,\nu) \EEE >0$, referred to as the  (local) \EEE fracture toughness.  In many processes, the cohesive forces acting between the lips of the crack are not negligible and therefore \emph{cohesive zone models}, as proposed by Barenblatt \cite{barenblatt}, are more suitable. Neglecting for simplicity the dependence on $x$ and $\nu$,  this corresponds \EEE to a  nonnegative \EEE  increasing  density  $g$ \EEE explicitly depending on $[u]$. 
As  after the emergence of a macroscopic fracture the cohesive forces decrease for increasing opening, the function $g$ is usually assumed to be \ZZZZ (strictly) \EEE concave. 

{Existence and regularity results} for minimizers of \eqref{mainfunctional} under prescribed boundary conditions have been the subject of intensive research over the past decades, see \cite{BFM}  and \EEE \cite[Chapter~5]{Ambrosio-Fusco-Pallara:2000} \EEE for an overview  \BBB (see also \cite{FriLabSti, FriPerSol21} for the vectorial setting of linearized elasticity).   \EEE 
 In the case $g(0)=0$ and $g'(0^+) \in (0,\infty)$, a relaxation process in $\BV$ \EEE occurs, since for sufficiently large gradients it becomes energetically favorable to replace them by many infinitesimally small jumps, potentially leading to diffuse cracking of dimension higher than $d{-}1$, see \cite{BoBrBu, BCMG}. Although it has been shown that in one dimension minimizers belong to  the space of  Special functions of Bounded Variation $\SBV$ \EEE (see \cite{BCMG, BoCI}), corresponding results in higher dimensions currently seem out of reach. Else, \ZZZZ if \EEE  $g( 0^+ \EEE )>0$ or $g'(0^+) =\infty$, it is well known from \cite{Amb89} that minimizers belong to $\SBV$ (or suitable generalizations).

In the present paper,  we assume \EEE  $g( 0^+ \EEE)>0$, corresponding to an \emph{activation threshold}. 
From a mechanical point of view, this models  the additional energetic contribution that many materials display before standard cohesive propagation takes place, accounting for the finite energy required to nucleate or \emph{depin} the crack from  local heterogeneities in the underlying microstructure. Such a phenomenon naturally arises in the presence of fracture process zones (FPZs), where microcracking, bridging mechanisms, debonding, or other dissipative microscale processes precede the formation of a macroscopic crack.
 This is particularly relevant in composite media and \emph{quasi-brittle} materials (see e.g.\ \cite{Baz97, EGP02, ZZ22}), where a significant part of the dissipated energy is associated with the activation of the FPZ itself rather than with the subsequent crack opening,  and which model, for instance, \EEE
 fracture processes in fiber–matrix debonding in composites, delamination in laminated composites, or frictional cohesive fracture in geomaterials. 
 Indeed, in \cite{Bar18, BLZ, DMZe},   a cohesive zone model with activation threshold naturally emerges as the homogenization limit of a class of brittle composites exhibiting bridging mechanisms. A similar interpretation  appears in the Elastic--Damage--PlasticityDamage (E--D--PD) 
  regime discussed in \cite{alessi},    in the framework of phase-field approximation of cohesive fracture through elastoplastic-damage models (see e.g.\ \cite{AMV,     DMOT,    VHCD26} and references therein):    in this setting, the competition of damage -- initially predominant -- with plastic effects leads to a cohesive response including an additional depinning-type energy contribution.  We also  mention  some phase-field approximations of cohesive crack \ZZZZ energies \EEE  through elasticity-damage models with an activation threshold for the limit energy \cite{CC1,  FI,   Iu}. 
(See also  \cite{ACF1, ACF2, CFI1,  CFI24, CFI25} for the case  $g( 0^+ \EEE)=0$ in the limit energy density,  and \cite{Chambolle-Conti-Francfort-phase} for non-interpenetration.)

\textbf{State-of-the-art for quasistatic crack evolutions.} Within the framework of rate-independent systems,  the seminal paper by \EEE {\sc Francfort} and {\sc Marigo} \cite{frma98} introduced the notion of an \EEE \emph{irreversible quasistatic crack evolution} for the brittle case, i.e., $g \equiv \kappa$. This evolution is characterized by three key principles: monotone growth of the crack set, mechanical equilibrium at each instant of time, and an energy balance reflecting the absence of dissipation during the evolution process; the last two  correspond \EEE to (S) and (E) for energetic solutions.  In contrast to the theory previously developed starting from \cite{griffith},  the key feature of \cite{frma98}  lies in the fact that \EEE  the crack trajectory 
 is not \EEE
prescribed a priori.

The first rigorous existence result for the model proposed in \cite{frma98} was obtained in \cite{DM-Toa} for a two-dimensional antiplane setting under rather restrictive assumptions on the topology of the crack  (see \cite{Cha03} for a generalization to the vectorial setting). \EEE  These topological constraints were later removed
  by {\sc Francfort} and {\sc Larsen} in \EEE
\cite{Francfort-Larsen:2003}, where the problem was reformulated within a weak variational framework. This approach was subsequently extended to nonlinear elasticity \cite{dMasoFranToad, DalGiac}, including models incorporating non-interpenetration conditions \cite{Lazzaroni}, and, in two dimensions, to geometrically linear elasticity \cite{FriedrichSolombrino}.   Moreover, \cite{BabGia14} proved that in dimension two the weak evolution of \cite{Francfort-Larsen:2003} is strong, \ZZZZ i.e., \EEE the crack set is topologically closed for any time, extending the static regularity result \cite{DeGCarLea}.   Without aiming at exhaustiveness, we also refer to \cite{ALL20, DM-Toa2, Larsen} for related approaches based on local minimization principles, and mention approximation results for fracture evolutions in phase-field models \cite{Giacomini:2005}, eigenfracture \cite{ba duc}, and finite-element approximations \cite{seutter, GP1, GP2}. Moreover, crack evolutions have been identified as effective variational limits for atomistic models \cite{FriedrichSeutter}, homogenization \cite{GiacPonsi}, and linearization \cite{steinke}.

The derivation of corresponding existence results in the cohesive case is significantly more challenging 
since \EEE any meaningful irreversibility condition must take into account not only the history of the crack geometry, but also  that of \EEE the crack amplitudes through an internal variable. This leads to serious difficulties in establishing the global stability condition, and available results are currently restricted to dimension one or to models with prescribed crack  path. \EEE We briefly review the relevant literature focusing on the different choices of the internal \emph{memory variable} encoding irreversibility, referring to \cite{BFM} for an overview. 
In \cite{DMZ} and \cite{CaToa} the memory 
variable is the maximal crack opening: in \cite{DMZ} energy dissipation occurs only 
when the crack opening overcomes the maximal opening up to the current time, without any energy \ZZZZ recovery, \EEE  whereas \EEE 
in \cite{CaToa} some amount of energy is recovered while the crack opening decreases. In \cite{CriLaOr}, 
 the energy is dissipated also during the unloading phase, so the memory variable is the variation in time of the crack opening and \emph{fatigue} may produce cracks. \EEE In these  works, \EEE the existence of globally stable \EEE quasistatic evolutions
is proven.  Further existence results for quasistatic evolutions of cohesive-type fracture based on viscous approximations (in the abstract language of \cite{MRS16}) are obtained in \cite{Al, ACFS, Ca, NeSca} (see also related models \cite{TZ, BCFR}    in the framework of  delamination \cite{KMR}).  All the previous models assume prescribed crack path. 

{\sc Bonacini}, {\sc Conti}, and {\sc Iurlano} \cite{BoCI} \ZZZZ proved \EEE existence of globally stable quasistatic evolutions for a 1d model with unprescribed (0-dimensional) crack, with the maximal opening as memory variable and energy recovering in the unloading phase,  with an activation threshold for the dissipation. 
%
 Concerning variational evolutions beyond the one-dimensional setting and allowing for a free-discontinuity set, the most far-reaching result appears to be \EEE 
\cite{Giacomini:2005b}, where a cohesive-to-brittle transition for quasistatic fracture evolutions is established in the large-domain limit. Nevertheless, \ZZZZ also \EEE in this work no existence result for a genuinely cohesive continuous-in-time fracture evolution is obtained.

\textbf{The present paper.} The goal of our work is to establish an existence result for a quasistatic fracture evolution associated with a cohesive-type energy, without imposing any topological restrictions on the crack geometry.  This is performed in arbitrary dimensions and in the weak  setting of $\SBV$-functions. \EEE 
We view our paper as a proof of concept showing that the theory of energetic solutions is sufficiently flexible to also encompass free-discontinuity evolutions in cohesive fracture, provided that the proof strategy is suitably adapted, as outlined below. For this reason, we consider \BBB a scalar model with \EEE isotropic bulk and surface energy densities and uniformly bounded boundary data, even if more complicated settings could be treated at the price of  technical complications. In the spirit of the first existence result on cohesive fracture evolution in the setting of prescribed crack path \cite{DMZ},  we consider the maximal opening criterion.

\textbf{Proof strategy.} We employ the classical strategy based on approximating solutions by suitable time-discrete incremental minimization schemes, where the maximal opening enters as a memory variable, see \eqref{minimizing-scheme}. 
  An approximate energy   inequality \EEE
  is obtained by standard arguments,    providing uniform-in-time \EEE energy bounds  for the discrete-time approximations, \ZZZZ denoted by \EEE $(u_n(t), K_n(t), \psi_n(t))$. 
In turn, these bounds guarantee the precompactness of the displacements $u_n(t)$ at every fixed time.
For the discrete-time memory variables $\psi_n(t)$, supported on \ZZZZ the \EEE crack sets $K_n(t)$, one needs a notion of convergence preserving the three main properties of irreversibility, energy inequality  (i.e., \EEE lower semicontinuity of the dissipation is needed), and global stability  in the time-continuous limit $n \to \infty$. \EEE 

To this aim, we devise a new notion of convergence for functions defined on variable rectifiable sets of codimension one, denoted by $\sigma_{\rm cf}$-convergence  (`cf' standing for `cohesive fracture'). \EEE This draws inspiration from $\sigma$-convergence introduced by {\sc Giacomini} and {\sc Ponsiglione} \cite{GiacPonsi} (see also the related notions of $\sigma^2$-convergence \cite{dMasoFranToad} and $\sigma^2_{\rm \sym}$-convergence \cite{CriFri20}), developed 
for brittle fracture and applied to sequences of `discrete-time precrack' sets $K_n(t)$. In that setting, \ZZZZ one first proves \EEE \EEE the key property that the discrete-time functionals, where jump on $K_n(t)$ has no contribution,    $\Gamma$-converge \EEE to a functional still admitting an integral representation on $\SBV$  with a surface density $h^-$ depending on the material point $x$ and the normal $\nu$. \ZZZZ Then,  \EEE  the $\sigma$-limit $K(t)$, mechanically interpreted as the `limiting precrack', is defined as the maximal rectifiable set   where $h^-$  vanishes. \EEE

In our cohesive setting, we first prove the analogous integral representation result for the  $\Gamma$-limit \EEE  of the discrete-time functionals, where the surface density $h^-(x,\xi,\nu)$ now also depends on the crack opening $\xi$.   
(The arguments are essentially available \EEE in the literature, see  \cite{Sto lavoro GSBV, FM}. However, we give a self-contained proof  in a general setting, see  Appendix \ref{sec:App}.)  We then define the $\sigma_{\rm cf}$-limit $(K(t), \psi(t))$ by requiring that $K(t)$ is the union of the maximal rectifiable 0-sets  of $h^-$ \EEE over all nonzero jump openings, and $\psi(t)$ is the maximal Borel function supported on $K(t)$ such that $h^-(x,\xi, \nu_{K(t)}(x))=0$ for all $\xi$ with $g(|\xi|)\leq \psi(t,x)$, for $x \in K(t)$.
This notion of convergence guarantees irreversibility and lower semicontinuity. However, unlike in the brittle case, global stability is not preserved under passage to the limit,  as we explain now. 

A more direct approach to global stability in the brittle setting, basically equivalent to the $\Gamma$-convergence perspective, \EEE  is the celebrated \EEE  jump-transfer construction from \cite{Francfort-Larsen:2003}, which allows to transfer the jumps of a competitor onto the jump \ZZZZ sets \EEE of the approximating sequence. In the cohesive framework,  the strategy  of transferring jump \EEE may fail for two mechanisms,  illustrated in \EEE Figure~\ref{fig1} and Example~\ref{exAB}:
(A) if the length of the approximate jump is strictly larger than that of the limit jump;
(B) if the approximate jump consists of several flat connected components accumulating onto the limit \ZZZZ crack. \EEE   

\EEE
\begin{figure}[ht]
\begin{subfigure}{0.48\textwidth}
\begin{tikzpicture}[scale=0.7, every node/.style={font=\footnotesize},  baseline={(0,-0.7)}]
\def\amp{0.18}      
\def\xL{0}
\def\xR{4}
\draw[gray!40,dashed] (\xL,0)--(\xR,0);
\draw[blue!30!red, very thick]
  (\xL,0)
  -- (\xL+0.25,\amp)  -- (\xL+0.5,0)
  -- (\xL+0.75,-\amp) -- (\xL+1,0)
  -- (\xL+1.25,\amp)  -- (\xL+1.5,0)
  -- (\xL+1.75,-\amp) -- (\xL+2,0)
  -- (\xL+2.25,\amp)  -- (\xL+2.5,0)
  -- (\xL+2.75,-\amp) -- (\xL+3,0)
  -- (\xL+3.25,\amp)  -- (\xL+3.5,0)
  -- (\xL+3.75,-\amp) -- (\xR,0);
\draw[gray] (\xL,-0.05)--(\xL,0.05);
\draw[gray] (\xR,-0.05)--(\xR,0.05);
\node[gray] at (\xL,-0.3) {\scriptsize $0$};
\node[gray] at (\xR,-0.3) {\scriptsize $1$};
\node[blue!30!red, anchor=south west] at (\xL+1.3,\amp-0.25) {$\viola{K_n}$};
\node[green!50!black] at (\xL+2,-0.5) {$\verde{\psi_n}\equiv g(\theta)$};
\draw[decorate,decoration={brace,amplitude=4pt,raise=2pt}, gray!70]
  (\xL,\amp+0.15) -- (\xR,\amp+0.15);
\node[gray!70!black] at (\xL+2,\amp+0.6) {\scriptsize $\mathcal{H}^1(\viola{K_n})\to \kappa>1$};

\draw[-{Latex[length=2.5mm]}, thick] (\xR+0.4,0)--(\xR+1.5,0)
  node[midway, above] {\scriptsize $\sigma_{\rm cf}$};
\node[below] at (\xR+0.95,-0.05) {\scriptsize $n\to\infty$};

\def\xLR{\xR+1.8}
\def\xRR{\xR+5.8}
\draw[blue!30!red, very thick] (\xLR,0)--(\xRR,0);
\draw[gray] (\xLR,-0.05)--(\xLR,0.05);
\draw[gray] (\xRR,-0.05)--(\xRR,0.05);
\node[gray] at (\xLR,-0.3) {\scriptsize $0$};
\node[gray] at (\xRR,-0.3) {\scriptsize $1$};
\node[blue!30!red, anchor=south] at (\xLR+2,0.05) {$\viola{K}=(0,1)\times\{0\}$};
\node[green!50!black] at (\xLR+2,-0.35) {$\verde{\psi} \equiv g(\theta)  $};
\end{tikzpicture}
\caption{Oscillation}\label{fig1A}
\end{subfigure}
\begin{subfigure}{0.48\textwidth}
\begin{tikzpicture}[scale=0.7, every node/.style={font=\footnotesize}, baseline={(0,0)} ]
\def\sep{0.25}
\draw[gray!40,dashed] (0,0)--(4,0);
\draw[blue!30!red, very thick] (0,\sep)--(4,\sep);
\node[blue!30!red, anchor=south west] at (3.1,\sep) {$\viola{K_n^+}$};
\node[green!50!black, anchor=west] at (0.55,\sep+0.3) {$\verde{\psi_n}=\verde{g}(\theta_+)$};
\draw[blue!30!red, very thick] (0,-\sep)--(4,-\sep);
\node[blue!30!red, anchor=north west] at (3.1,-\sep) {$\viola{K_n^-}$};
\node[green!50!black, anchor=west] at (0.55,-\sep-0.3) {$\verde{\psi_n}=\verde{g}(\theta_-)$};
\node[gray] at (0,-0.5) {\scriptsize $0$};
\node[gray] at (4,-0.5) {\scriptsize $1$};
\draw[<->, gray!70] (-0.2,-\sep)--(-0.2,\sep);
\node[gray!70!black] at (-0.5,0) {\scriptsize $\tfrac{2}{n}$};

\draw[-{Latex[length=2.5mm]}, thick] (4.3,0)--(5.4,0);
\node[below] at (4.85,-0.05) {\scriptsize $n\to\infty$};
\node[above] at (4.85,0.05) {\scriptsize $\sigma_{\rm cf}$};

\def\xLR{5.8}
\def\xRR{9.8}
\draw[gray] (\xLR,-0.05)--(\xLR,0.05);
\draw[gray] (\xRR,-0.05)--(\xRR,0.05);
\draw[blue!30!red, very thick] (\xLR,0)--(\xRR,0);
\node[blue!30!red, anchor=south] at (\xLR+2,0.05) {$\viola{K}=(0,1)\times\{0\}$};
\node[green!50!black] at (\xLR+2,-0.35) {$\verde{\psi} \equiv  \verde{g}(\theta_+{+}\theta_-)$};
\node[gray] at (\xLR,-0.3) {\scriptsize $0$};
\node[gray] at (\xRR,-0.3) {\scriptsize $1$};
\end{tikzpicture}
\caption{Cumulation}\label{fig1B}
\end{subfigure}
\caption{The two mechanisms preventing jump transfer: in (A) the functions $\psi_n$ are constant, but the sets $K_n$ are highly oscillating; in (B) two branches  $K^-_n$, $K^+_n$ \EEE corresponding to cumulated jump  $0<\theta_-<\theta_+$ converge to a single branch  with accumulated jump $\ZZZZ \theta := \EEE \theta_+ + \theta_-$. A combination of (A) and (B) is also possible. \newline
($\alpha$) In both cases, the   energy cost in the limit for a competitor with crack $K$ and jump height $\theta+ \bar{\theta}$ is given by  $g(\theta+ \bar{\theta}) - g(\theta)$. In (A), transferring the new jump  on $K_n$ yields asymptotically the  energy cost  $\kappa (g(\theta+ \bar{\theta}) - g(\theta))$. In (B), transferring the new jump on $K_n^+$ (corresponding to the `mechanical precrack') yields the energy cost  $g(\theta^++ \bar{\theta}) - g(\theta^+)$. Using the strict concavity of $g$, in both cases this is bigger than $g(\theta+ \bar{\theta}) - g(\theta)$, whence the jump-transfer argument fails. \newline
($\beta$) Note that in both cases the surface energies do not converge: in (A), we have  $\mathcal{H}^1(K_n) \to \kappa > 1 =\mathcal{H}^1(K)$ while, in (B),  $g(\theta_+) + g(\theta_-) > g(\theta_+ + \theta_-)$.  
%
%
%
}\label{fig1}
\end{figure}

 In both cases, \EEE the transfer procedure necessarily produces an excess of energy,  see ($\alpha$) in the caption of Figure \ref{fig1}. \EEE
As a consequence,  when interpreted within the perspective of $\Gamma$-convergence, the stability induced by the $\Gamma$-limit of the  discrete-time functionals   is strictly weaker than the one required by the global stability property entering the definition of energetic solutions. We therefore distinguish between the `weak form' and the `strong form' of (S), cf.\ Definition \ref{def: unilat min}.
\EEE Our central observation is that both  mechanisms \EEE (A)--(B), which produce the gap between the weak and strong form of (S), correspond precisely to situations in which the surface energies of the approximating solutions fail to converge to the surface energy of the limit configuration,  see again Figure \ref{fig1}, in particular ($\beta$) in the caption. \EEE  In fact,   as a central property of $\sigma_{\rm cf}$-convergence, we  prove that the \EEE   strong form of (S) is preserved under $\sigma_{\rm cf}$-convergence provided one additionally assumes surface energy convergence (or at least convergence of the measure of the crack sets), \ZZZZ cf.\ Corollary \ref{cor. unilat}. \EEE Such convergence is naturally expected and has indeed been established in all relevant existence results for brittle fracture. However, it is usually obtained \emph{a posteriori}, after establishing (S) and (E), rather than \emph{a priori}, as required in the present setting. Consequently, our strategy departs from the standard existence scheme for energetic \ZZZZ solutions \EEE by following the E--S approach: we first establish (E)  together with \EEE energy convergence, and only afterwards recover the strong form of (S).

 We now describe the main idea of the argument. \EEE
Following the classical strategy for proving the reverse energy inequality, given $t\in [0,T]$  and \EEE a fixed error $\eps$, we partition the interval $[0,t]$ into nodes $(s_i)_{i=0}^k$ so that the power of the external loading is approximated by the corresponding Riemann sum on $(s_i)_{i=0}^k$ up to $\eps$.  We combine this \EEE with a suitable localization procedure on small cubes, which allows us to assume that the limit crack is locally almost flat within each cube. Given a cube $Q$, we identify the \emph{starting time} $s_p(Q)$ at which the crack `appears for the first time'. At this stage, the problem is not yet constrained by a preexisting crack, and it is therefore energetically favorable for a flat limit crack to be approximated by a single flat crack line  (up to an error of order $\eps$), excluding mechanisms (A) and (B). Since minimality is a \emph{global} property, whereas the absence of a preexisting crack at a given time $s_i$ can only be expected in certain cubes (namely, those associated with the corresponding starting time), a rigorous implementation of this strategy requires the weak form of (S) derived via $\Gamma$-convergence, which is precisely the notion of global minimality available to us at this stage of the proof. \EEE

More precisely, we test the weak form of (S) at each time $s_{i{-}1}$ against the configuration at time $s_i$, corrected by the backward increment of the boundary datum. This yields a weak version of the energy balance involving the dissipation associated with the weak form of (S).  Although this does not, in principle, yield the correct energy balance at time $s_i$, it allows us to deduce that, in all cubes $Q$ with starting time $s_p(Q)=s_i$, surface energy convergence holds inside $Q$, as desired. While this already captures a key aspect of the proof, substantial difficulties arise from the need to guarantee energy convergence also at later times $s_j$ with $s_j>s_p(Q)$. To achieve this, we proceed inductively in time to control the additional oscillation and branching that may arise at each subsequent time $s_j$, crucially exploiting the threshold condition $g( 0^+ \EEE)>0$. By an induction procedure, \EEE this  allows us to establish (E). Then, by the lower semicontinuity properties of $\sigma_{\rm cf}$-convergence, we deduce the convergence of the surface energies from the time-discrete approximations to the time-continuous evolution. \BBB This eventually allows us to deduce the strong form of \EEE  (S).  

The methods presented here are naturally tailored to this specific \ZZZZ setting of \EEE cohesive fracture. Nevertheless, we believe that the underlying strategy may prove effective not only for a broad class of cohesive fracture models, but also for the analysis of other dissipative phenomena, such as delamination and elastoplasticity.

 \textbf{Outline of the paper.}  In Section \ref{sec: evo} we present the setting and the main results. Section \ref{sec:prel} is devoted to some preliminaries, in particular the formulation of an integral representation result for energies with precrack. In Section \ref{subsec:sigmaconv} we introduce the novel notion of set-function convergence, called $\sigma_{\rm cf}$-convergence. Here, we also discuss the concepts of weak and strong unilateral minimality. Section \ref{sec: main proof} then contains the proof of our main result on the existence of quasistatic crack evolutions. Finally, in Appendix \ref{sec:App} we provide a self-contained proof of the integral representation result presented in Section \ref{sec:prel}.  \EEE

  \textbf{Notation.} We close the introduction with some general notation. \EEE For the general notions on \ZZZZ $\SBV$-functions  \EEE and their fine properties we refer to \cite{Ambrosio-Fusco-Pallara:2000}. For $u\in \ZZZZ \SBV \EEE $,  $\mathrm{D}u$ \ZZZZ  denotes \EEE the distributional derivative of $u$ which decomposes as $\mathrm{D}u= \nabla u \mathcal{L}^d + \ZZZZ [u] \otimes \nu_u \mathcal{H}^{d{-}1} \mres_{J_u}   \EEE $ into absolutely continuous and singular part with respect to the
Lebesgue measure, $\nabla u$ \ZZZZ being \EEE the approximate gradient of $u$.
Moreover, $J_u$ denotes the jump set of $u$, which \ZZZZ coincides \EEE up to \ZZZZ an \EEE $\hn$-negligible set with the set of approximate discontinuity points of $u$, and $\nu_u$ \ZZZZ is \EEE the measure theoretic normal to $J_u$.
The symbols $u^\pm$ denote the one-sided approximate limits of $u$ at a point of $J_u$, from the side of $\pm \nu_u$, \ZZZZ and we set $[u] := u^+-u^-$. \EEE We recall that, for $p>1$, $\SBV^p$ is the space of $\SBV$ functions with $\nabla u$ in $L^p$ and $\hn(J_u)$ finite.   We \BBB say that $u_n$ converges weakly to $u$ in $\SBV^p$, and \EEE  write $u_n \wto u$ in $\SBV^p$,  if $(u_n)_n \subset \SBV^p$ is such that $\|u_n\|_1 + \|\nabla u_n\|_p + \hn(J_{u_n})$ is bounded and $u_n \to u$ in $L^1$. (In particular,  this implies $u\in \SBV^p$ and $\nabla u_n \wto \nabla u$ in $L^p$.) \ZZZZ We denote by $\M_b^+(\Omega')$ the class of positive bounded Radon measures on $\Omega'$. \EEE

\BBB 
Given two countably $(\Hd, d{-}1)$ rectifiable (in the sequel we will say just \emph{rectifiable}) \EEE sets $\Psi_1, \Psi_2$ with $\mathcal{H}^{d{-}1}(\Psi_i) <+\infty $ for $i=1,2$, we write $\Psi_1 \subset \Psi_2$ if $\mathcal{H}^{d{-}1}(\Psi_1 \setminus \Psi_2) = 0$. Moreover, given $\Psi$ and  two Borel functions $f_i\colon \Psi \to \R$, we write $f_1 \le f_2$ if $\mathcal{H}^{d{-}1}(\lbrace f_1 > f_2\rbrace) = 0$. \ZZZZ We also write $\Psi_1 = \Psi_2$ if $\Psi_1 \subset \Psi_2$ and $\Psi_1 \supset \Psi_2$, and analogously $f_1 =f_2$. \EEE   A measure theoretic unit normal vector of  $\Psi$ is denoted by $\nu_\Psi$.  \EEE   Pointwise evaluations of Borel functions are  always \EEE  meant in the sense of precise representatives.    
We denote the scalar product on $L^2(  \Omega' \EEE )$ by $\langle\cdot , \cdot  \rangle$. We set $a  \vee \EEE b:=\max\{a,b\}$ and $a^+:=a\vee 0$.
For any $x \in \R^d$, $\varrho>0$,  $\nu \in \Sn$,   we denote by  $Q_\varrho^\nu(x)$  the cube centered in $x$, with sidelength $\varrho$ and two faces normal to $\nu$.  Moreover, we let  $Q_\varrho^{\nu,+}(x) := \lbrace y\in Q^\nu_\varrho(x) \colon (y- x)\cdot \nu > 0\rbrace$ and,  for any $\xi \geq 0$, 
$\ol u_{x,\xi,\nu}:= \xi \chi_{Q_\varrho^{\nu,+}(x)}$. \EEE

\section{Setting and main result}\label{sec: evo}

This section is devoted to the formulation of our main result. Let $\Omega \subset \R^d$ \BBB be \EEE a bounded Lipschitz domain. In order to \BBB incorporate \EEE boundary conditions on a part $\partial_D \Omega \subset \partial \Omega$ of the boundary, we \BBB follow the standard procedure of imposing \EEE boundary conditions in a \emph{neighborhood} of the boundary. \BBB To this end, \EEE we suppose that there exists another Lipschitz set ${\Omega'} \supset \Omega$ with  $\partial_D \Omega = \partial \Omega \cap {\Omega'}$ such that also  $\Omega' \setminus \overline{\Omega}$ is Lipschitz. \BBB We \EEE  consider the  cohesive-type energy  
\begin{align}\label{energy-static-new}
E(u)\defas  \int_{\Omega'}|\nabla u |^2 \,    {\rm d}x  + \int_{J_u} g(|[u]|) \, {\rm d}\Hd
\end{align}
\BBB for \EEE $u\in SBV^2(\Omega')$, \BBB where \EEE the surface density $g \colon [0, \infty) \to [0,\infty)$ is continuous \ZZZZ and \EEE  increasing \ZZZZ on $(0,\infty)$, \EEE and  satisfies
\begin{align}\label{gproperty}
g(0) = 0, \quad \BBB   g(0^+)  := \EEE  \lim_{t \searrow 0}\EEE g(t) =  c_g, \quad\quad  g(a) + g(b) \ge g(a+ b) + c_g \ \text{ for all } a,b >0 
\end{align}
as well as
\begin{align}\label{gproperty2}
g(a + t) \le  (g(a) + C_g t) \wedge  C_g \EEE \ \  \text{ for all } a,t >0 
\end{align}
for some constants $C_g,\,c_g  >0$.   In particular, as $g$ is continuous, increasing, and subadditive \ZZZZ on $(0,\infty)$, \EEE   $E$ is lower semicontinuous with respect to the $L^1(\Omega')$-topology,  see \cite[Section~5.4, Example~5.23]{Ambrosio-Fusco-Pallara:2000}.  

\BBB We will assume boundary conditions $u =w$ on $\Omega' \setminus \overline{\Omega}$ for some  boundary datum $w\in H^1( \Omega') \cap L^\infty(\Omega')$. In this case, we \EEE  note that it is actually irrelevant whether the elastic energy is taken over $\Omega$ or $\Omega'$ as the difference is merely a constant, namely $\int_{\Omega' \setminus \overline{\Omega}}|\nabla w |^2 \,    {\rm d}x$.  The evolution on a time horizon $[0,T]$ will be driven by a time-dependent boundary condition $w\in W^{1,1}(0,T;H^1( \Omega'))$ which satisfies, \BBB for some $M>0$, \EEE
\begin{align}\label{uniformnound}
\Vert w(t) \Vert_\infty \le  M \EEE \quad \text{for all }t \in [0,T]. 
\end{align}
\BBB Next,  we introduce \emph{pairs of set-function} 
\begin{align}\label{mathcalcdef} 
\mathcal{C} := \big\{ \text{$(K,\psi) \colon $    $ K\subset  \Omega \cup  \partial_D \Omega \EEE $ rectifiable with $\mathcal{H}^{d{-}1}(K) < + \infty$,  \ \   $\psi \colon K \to [0,\infty)$ Borel} \big\} 
\end{align}
and, given a pair  $(K,\psi)  \in \mathcal{C}$, \EEE  we  define  the  history-dependent  \emph{cohesive-type fracture energy}\EEE
\begin{align}\label{eq: lim-en}
    \mathcal{E}(u,K,\psi):=
       \int_{\Omega'} |\nabla u|^2 \, {\rm d}x +  \int_{K \cap J_u}   \big(g(|[u]|)   \vee \EEE \psi\big)\EEE \, {\rm d}\Hd +   \int_{ J_u \sm K}     g(|[u]|) \, {\rm d}\Hd
  \end{align}
    for each $u \in \SBV^2 (\Omega')$. \BBB Here, \EEE $K$ represents the cracked surface \BBB and $\int_K \psi \, \ZZZZ {\rm d}\mathcal{H}^{d{-}1}\EEE$ the dissipated surface energy up to some time in the evolution, i.e., the \emph{history}, see also \eqref{17050743} below. In particular, \emph{admissible functions} \ZZZZ $u$ \EEE for \ZZZZ a \EEE given boundary datum $w$ and $(K,\psi)  \in \mathcal{C}$ are required to  satisfy   \EEE
    \begin{equation}\label{def:ADgH}
  \ZZZZ u \EEE =w \text{ on } \Omega' \setminus \overline{\Omega}.
  \end{equation} 
and 
\begin{align}\label{constraints}  
 J_u  \subset  K, \quad \quad  g(|[u]|) \le \psi \text{ on }  J_u. 
  \end{align}
  We denote the class of functions $\ZZZZ u \EEE \in \SBV^2(\Omega')$ satisfying \eqref{def:ADgH}--\eqref{constraints}  by  $AD(w,K,\psi)$. \EEE           Note that for $K= \emptyset$ the energy \eqref{eq: lim-en} coincides with the one in \eqref{energy-static-new}.  Moreover, if  $u \in AD(w,K,\psi)$, then \eqref{eq: lim-en} simplifies to 
 \begin{align}\label{eq: lim-en-an} 
  \mathcal{E}(u,K,\psi)=     \int_{\Omega'} |\nabla u|^2 \, {\rm d}x +   \int_{K} \psi \, {\rm d}\Hd .
 \end{align}
 We finally   highlight that  the crack sets $K$ may intersect the Dirichlet boundary $\partial_D \Omega$,  but $K \cap (\Omega' \setminus \overline{\Omega}) = \emptyset$.  \EEE

 We now formulate the main result of this paper.

 \begin{theorem}[Quasistatic crack \BBB  evolution of cohesive-type fracture\EEE]\label{main def}
 Let  $w  \in W^{1,1}(0,T;H^1( \Omega')) $  \BBB satisfy \EEE \eqref{uniformnound}. \BBB Then, \EEE there exists an \emph{irreversible quasistatic crack evolution} with respect to the boundary condition  $w$, i.e., there exists a mapping  $t\to (u(t),K(t),\psi(t))$ with \BBB $(K(t),\psi(t)) \in \mathcal{C}$ and \EEE  $u(t) \in AD(w(t),K(t),\psi(t))$ for all $t \in [0,T]$ such that the following  four   conditions hold:
  
  \begin{itemize}
   \item[(a)] \emph{Initial condition}: $u(0)$ minimizes the energy $E$ given in \eqref{energy-static-new}   among all $v \in \SBV^2(\Omega')$ with
$v = w(0)$ on $\Omega' \setminus \overline{\Omega}$.  
  \item[(b)] \emph{Irreversibility}:   $K(t_1) \subset  K(t_2)$ and $\psi(t_1) \le \psi(t_2)$ on  $K(t_1)$   for all $0\leq t_1\leq t_2\leq T$.
  \item[(c)] \emph{Global stability}:    For every $t \in (0,T]$,   for  every  $(H,\zeta) \BBB \in \mathcal{C} \EEE $ with $K(t) \subset H $ and  $\psi(t) \le \zeta$ on $K(t)$,  and  for   every $v\in AD(w(t),H,\zeta)$    it holds that 
      \begin{equation}\label{finalstability}
       \mathcal{E}(u(t),K(t),\psi(t))  \leq \mathcal{E}(v,H,\zeta)\,.
      \end{equation}
      \item[(d)]   \emph{Energy balance}:    The function $t\mapsto \mathcal{E}(u(t),K(t),\psi(t))$ is absolutely continuous and it holds that
      \begin{equation*}\label{energybalance}
          \frac{\rm d}{{\rm d}t} \mathcal{E}(u(t),K(t),\psi(t)) =  \ZZZZ 2 \EEE \int_{\Omega'} \EEE \nabla u(t) \cdot   \partial_{t} \nabla w(t)\, {\rm d}x \quad \text{for a.e.\ $t \in [0,T]$}\,,  
      \end{equation*} 
      where by $\partial_t$ we denote the time derivative of $w$.
  \end{itemize}
\end{theorem}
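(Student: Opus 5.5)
The plan follows the classical time-discretization scheme, reorganized along the E--S approach from the introduction. Fix partitions $0=t^n_0<\dots<t^n_{k_n}=T$ with vanishing mesh. Set $K_n(0)=J_{u_n(0)}$ and $\psi_n(0)=g(|[u_n(0)]|)$, where $u_n(0)$ minimizes $E$ with datum $w(0)$; existence follows from the lower semicontinuity of $E$ and $\SBV^2$-compactness, using \eqref{uniformnound} and truncation by $M$ for an $L^\infty$ bound. At each step we minimize $\mathcal{E}(\cdot,K_n(t^n_{i-1}),\psi_n(t^n_{i-1}))$ among $v$ with $v=w(t^n_i)$ outside $\overline\Omega$. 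We then update $K_n(t^n_i)=K_n(t^n_{i-1})\cup J_{u_n(t^n_i)}$ and $\psi_n(t^n_i)=\psi_n(t^n_{i-1})\vee g(|[u_n(t^n_i)]|)$, and extend piecewise constantly in time. Testing minimality with $u_n(t^n_{i-1})+w(t^n_i)-w(t^n_{i-1})$ yields the discrete energy inequality
\[
\mathcal{E}_n(t)\le \mathcal{E}_n(0)+2\int_0^{t}\langle \nabla u_n, \partial_t\nabla w\rangle\,{\rm d}s+o(1),
\]
hence uniform bounds on $\|\nabla u_n(t)\|_2$ and on $\int_{K_n(t)}\psi_n(t)$. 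Since $\psi_n\ge c_g$ on $K_n$ by \eqref{gproperty}, we also get $\mathcal{H}^{d-1}(K_n(t))\le C$.

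Next we take limits. A Helly-type diagonal argument over a countable dense set of times, together with the compactness of $\sigma_{\rm cf}$-convergence from Section \ref{subsec:sigmaconv}, gives $(K_n(t),\psi_n(t))\cosi(K(t),\psi(t))$. At every other time we pass to a further subsequence and use monotonicity to define the limit. $\SBV$-compactness gives $u_{n}(t)\to u(t)$, possibly along a $t$-dependent subsequence. Irreversibility (b) and the admissibility $u(t)\in AD(w(t),K(t),\psi(t))$ follow from the properties of $\sigma_{\rm cf}$-limits, namely $J_u\subset K$ and $g(|[u]|)\le\psi$ for limits of admissible functions. The lower semicontinuity of the dissipation under $\sigma_{\rm cf}$ yields the energy inequality ``$\le$'' in the limit. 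The $\Gamma$-convergence representation via $h^-$ yields the weak form of (S) in Definition \ref{def: unilat min}; from it we obtain (a) and uniqueness of $\nabla u(t)$ by strict convexity, so that the power term converges for all $t$ by dominated convergence.

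The core step is the reverse energy inequality, carried out through the localization scheme of the introduction. Given $\eps$, we choose Riemann nodes $(s_i)$ and cover $K(t)$ by small cubes in which the crack is almost flat. For each cube $Q$ we identify its starting time $s_p(Q)$. We test the weak stability at $s_{i-1}$ with $u(s_i)-w(s_i)+w(s_{i-1})$. This shows that on cubes with $s_p(Q)=s_i$ the surface energy of $K_n$ converges, which excludes the oscillation (A) and the accumulation (B) of Figure \ref{fig1}. We then proceed by induction over $j>p(Q)$, using $g(a)+g(b)\ge g(a+b)+c_g$ to bound the excess from new branches by a multiple of the increment. This propagates energy convergence and gives
\[
\mathcal{E}(t)\ge\mathcal{E}(0)+2\int_0^t\langle\nabla u,\partial_t\nabla w\rangle\,{\rm d}s-C\eps,
\]
and hence (d) together with convergence of surface energies $\int_{K_n(t)}\psi_n(t)\to\int_{K(t)}\psi(t)$ and of $\mathcal{H}^{d-1}(K_n(t))$. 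Finally, Corollary \ref{cor. unilat} upgrades weak to strong unilateral minimality under this convergence, which gives (c).

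I expect the inductive control of oscillation and branching at later times $s_j>s_p(Q)$ to be the main obstacle. Weak stability only sees the relaxed density $h^-$, so the quantitative use of the threshold $c_g$ must be arranged carefully across the cube decomposition.
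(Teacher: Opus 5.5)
Your architecture is the paper's E--S route essentially step for step: incremental scheme with maximal-opening memory, $\sigma_{\rm cf}$-compactness, weak unilateral stability and uniqueness of $\nabla u(t)$, cubes with a starting time, testing at $s_{i-1}$ with $u(s_i)-w(s_i)+w(s_{i-1})$, energy convergence, then Corollary~\ref{cor. unilat}. The gap is that the step you flag as the main obstacle is asserted rather than supplied, and it carries essentially all the new work.

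A single test at $s_{i-1}$ gives one inequality for elastic energies, with the crack increment on old crack measured by $h^-_{s_{i-1}}(\cdot,[u(s_i)],\nu)$. This density can strictly exceed $g(\eta_i)-g(\eta_{i-1})$ (Example~\ref{exAB}), so by itself the test yields nothing cube by cube. Two ingredients are needed.
(i) Sum the tests over $i$ and sandwich them against the limsup bound from the discrete scheme. This produces nonnegative defect measures $\delta^i$ with $\sum_i\delta^i(\Omega')\le C\varepsilon$, quantifying the gap in each localized lower semicontinuity inequality. Near-convergence of $\mathcal{H}^{d-1}(K_n(s_p)\cap Q)$ on a starting cube then follows from Corollary~\ref{cor:lsc-neu22}, with error $C\delta^p(Q)$.
(ii) An \emph{upper} bound $\mu^{i+1}_-(Q)\le(\mu^{i+1}-\mu^{i})(Q)+\text{errors}$ at later times, i.e.\ a jump transfer. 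The paper builds the competitor $v_n(s_i)+\sigma_{i+1}\chi_{D_n}$. Here $v_n(s_i)$ is a boundary-adapted recovery sequence for $\eta_i\chi_D$, and $Q\cap\partial D_n$ is taken from a recovery sequence at the starting time. This competitor is sharp only on lines parallel to $\nu$ that meet $K_n(s_i)\cap Q$ once. The error is therefore $C(\delta^p(Q)+\mathcal{H}^{d-1}(\Pi_i))\sigma_{i+1}$, where $\Pi_i$ collects the lines that asymptotically meet the crack at least twice.

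The threshold enters exactly here. Each line in $\Pi_{i+1}\setminus\Pi_i$ forces an extra $c_g$ in $\mu^{i+1}_{\rm conv}(Q)-\mu^{i}_{\rm conv}(Q)$ beyond $\mu^{i+1}_-(Q)$: either an additional crack point with $\psi_n\ge c_g$, or two jumps with $g(a)+g(b)\ge g(a+b)+c_g$. Hence $\mathcal{H}^{d-1}(\Pi_{i+1}\setminus\Pi_i)\le C\big(\delta^{i+1}(Q)+\tfrac{\varepsilon}{k}r^{d-1}+(\delta^p(Q)+\mathcal{H}^{d-1}(\Pi_i))\sigma_{i+1}\big)$. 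A discrete Gronwall argument using $\sum_l\sigma_l\le C_w$ then closes the induction; this is where the $L^\infty$ bound is essential. Your phrase about bounding ``the excess from new branches by a multiple of the increment'' points this way. Without the slice multiplicity $\Pi_i$, the defect measures and the transfer competitor, it is not yet an argument.

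Separately, passing to a further subsequence ``at every other time'' makes the $\sigma_{\rm cf}$-limit subsequence-dependent on a set of full measure. Uniqueness of $\nabla u(\tau)$ then holds only relative to each subsequence. You cannot conclude $\nabla u_n(\tau)\to\nabla u(\tau)$ along one sequence for a.e.\ $\tau$, which your dominated-convergence step for the power term requires. The paper instead applies Helly's theorem to $f_n(t)=\int_{K_n(t)}\psi_n(t)\,{\rm d}\mathcal{H}^{d-1}$. It then uses $\mathcal{H}^-_n(s)-\mathcal{H}^-_n(t)\le f_n(t)-f_n(s)$ for $s\le t$ to squeeze the $\overline{\Gamma}$-limits between rational times. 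So one subsequence works at every $t$ outside a countable set, and a diagonal extraction handles the rest.

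Finally, (a) does not follow from weak stability, which only controls the elastic part. It follows from lower semicontinuity at $t=0$ together with $E(u(0))\le\mathcal{E}(u(0),K(0),\psi(0))$.
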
   
\begin{remark}
The  evolution $t\to (u(t),K(t),\psi(t))$ constructed in \BBB the \EEE proof  satisfies
  \begin{equation}\label{17050743}
  K(t)=\bigcup_{ s \in [0,t]} J_{u(s)}, \quad \psi(t)=\mathrm{ess\,sup}_{s \in [0,t]} \,  g(|[u(s)]|).
  \end{equation} 
\end{remark}

%
%
%
%
%
%
%
%

 \EEE

\section{Preliminaries}\label{sec:prel}
 
 In this section, we present an integral representation  result \EEE for energies with precrack and collect some basic slicing properties.

\subsection{Integral representation for energies with precrack}

As before, let  \EEE  $\Omega \subset \R^d$ be a bounded Lipschitz domain and let  $\Omega' \supset \Omega$ be another bounded Lipschitz domain such that also $\Omega' \sm \ol \Omega$ has Lipschitz boundary.  Define the Dirichlet boundary \EEE $\partial_D \Omega:= \Omega' \cap \partial \Omega$.  Let $1 < p  <  \infty$. \EEE 
 By $\mathcal{A}(\Omega')$ 
 we denote the family of open 
 subsets of $\Omega'$, 
 and we let 
\begin{equation*}
{\rm PC}(A):=\big\{v \in \SBV(A) \colon \nabla v=0 \text{ a.e.\ in }A,\, \hn(J_v)<+\infty\big\} \quad\text{for }A \in \A(\Omega'),
\end{equation*}  
\begin{equation*}
\SBV^p_{{w}}(\Omega'):=\big\{v \in \SBV^p(\Omega') \colon v={{w}} \text{ on } \Omega' \sm \ol \Omega\big\} \quad \text{for }\BBB {{w}} \EEE  \in W^{1,p}(\Omega').
\end{equation*}
We consider \BBB a sequence $(K_n,\psi_n)_n \subset \mathcal{C}$ (see \eqref{mathcalcdef}) \EEE  such that  $\Hd(K_n) \le C_0$ and $\int_{K_n} \psi_n \, {\rm d}\Hd \le C_0$ for some $C_0 \ge 0$. 
\BBB We recall that this pair represents the \emph{precrack} which is already present in the material and the corresponding density of  dissipated surface energy. \EEE Let $g \colon [ 0 ,+\infty) \to [0,+\infty)$  be \EEE continuous \ZZZZ and \EEE  increasing \ZZZZ on $(0,\infty)$, \EEE and satisfying \eqref{gproperty}--\eqref{gproperty2}. In particular, note that $g$ and $g-c_g$ are subadditive.

\BBB 
Consider  $w \in W^{1,p}(\Omega')$ and $(w_n)_n \subset W^{1,p}(\Omega')$ with $\|w\|_{\infty}\leq M$ and $ \|w_n\|_{\infty}\EEE\leq M$ for all $n \in \N$ \EEE such that $w_n \to w$ in $W^{1,p}(\Omega')$.  \BBB Our goal is to study the asymptotic behavior of the functionals 
$$\int_{\Omega'} |\nabla u|^p \dx + \int_{J_u}   \big(  g(|[u]|) - \psi_n \big)^+ \EEE \, \dhn$$
for $u \in \SBV^p_{w_n}(\Omega')$, where \EEE by convention,  for $A \in \mathcal{A}(\Omega')$, we set 
 \begin{align}\label{convention}
\int_{A \cap J_u}  \big(  g(|[u]|) - \psi_n \big)^+ \EEE \, \dhn \equiv \int_{A \cap (J_u \cap K_n)}   \big(   g(|[u]|) - \psi_n \big)^+    \EEE \, \dhn + \BBB  \int_{A \cap (J_u \setminus K_n)} \EEE g(|[u]|) \, \dhn .
\end{align}
\BBB (This corresponds to extending $\psi_n \equiv 0$ outside $K_n$.) As we employ the localization technique of $\Gamma$-convergence, following the approach in \cite{GiacPonsi}, it is convenient to consider a version of this energy where jumps on $\Omega' \setminus \overline{\Omega}$ can occur but are highly penalized. We introduce   
$ \E_n^- \colon L^1(\Omega') \times \A(\Omega') \to [0,+\infty]$ by
\begin{equation}\label{samenotation}
\E_n^-(u,A):= \int_A |\nabla u|^p \dx + \int_{A \cap J_u\cap \overline{\Omega}}   \big(  g(|[u]|) - \psi_n \big)^+ \EEE \, \dhn \BBB +  ( \EEE C_g  +1 )  \hn\big(\EEE(A \cap J_u ) \sm \ol \Omega\big) \EEE  \quad \text{\BBB if \EEE $\BBB u|_A \EEE \in \SBV^p(A)$}  
\end{equation}
and $+\infty$ otherwise. \BBB Note that jumps   in $\Omega' \setminus \overline{\Omega}$ are possible, but they are more expensive compared to the ones  in $\ZZZZ \ol\Omega \EEE$ as $\sup g \le C_g  < C_g + 1$, see \eqref{gproperty2}. \EEE Similarly, we introduce $\H_n^-  \colon L^1(\Omega') \times \A(\Omega') \to [0,+\infty]$ as the surface part of this energy on piecewise constant  functions, \EEE namely 
\begin{equation}\label{hnm}
\H_n^-(u, A):= \int_{  A \cap J_u \EEE \cap \ol \Omega}  \big( g(|[u]|) - \psi_n\big)^+   \, \dhn +  ( \EEE C_g  +1 )  \hn\big(\EEE(A \cap J_u ) \sm \ol \Omega\big) \EEE \quad \BBB \text{if } u|_A \EEE \in \PC(A) 
\end{equation}
and $+\infty$ otherwise.  We further let  
\begin{equation*}\label{0812242009}
\mathbf{m}^{X}_{\F}(\ol v, A):=\inf_{v \in X(A)}\big\{\F(v, A)\colon v=\ol v \text{ in a neighborhood of } \partial A\big\} \ \text{ for }\ol v \in X(A), \, A \in \mathcal{A}(\Omega'),
\end{equation*}
where $X$ stands for a suitable functional space (typical choices are $X=W^{1,p}, \, \PC$) and  $\F$ for a suitable functional.
Given $F_n,\, F \colon L^1(\Omega') \times \A(\Omega') \to [0,+\infty]$,  following \EEE \cite[Definition~16.2 and Proposition~16.4]{DMLibro}, we say that $F_n \colon L^1(\Omega') \times \A(\Omega') \to [0,+\infty]$ $\ol \Gamma$-converge 
to $F$ if $F_n(\cdot, A)$ $\Gamma$-converge to $F(\cdot, A)$ in the strong \BBB $L^1(A)$-topology \EEE for any $A \in \A(\Omega')$. \BBB We also recall the notation $\ol u_{x,\xi,\nu}:= \xi \chi_{Q_\varrho^{\nu,+}(x)}$ for any $x \in \Omega'  $, $\xi \in   \R $, $\nu \in \Sn$, \ZZZZ and $\varrho >0$. \EEE  We come to the main statement of this section.

\begin{theorem}\label{thm:convpreliminaries}
Under the above assumptions, there exist two functionals $\H^-$, $\E^- \colon L^1(\Omega') \times \A(\Omega') \to [0,+\infty]$ such that, up to a (not relabeled) subsequence, $\H_n^- $ and $\E_n^- $ $\ol \Gamma$-converge  to $\H^- $ and $\E^- $, with
\begin{equation}\label{Hnu}
\H^-(u,A)= \int_{A\cap J_u} h^-(\cdot, [u],\nu_u) \,\dhn \quad\text{for }u \in \PC(\Omega'),
\end{equation}
\begin{equation}\label{Enu}
\E^-(u,A)= \int_A |\nabla u|^p\dx+\int_{A\cap J_u} h^-(\cdot, [u],\nu_u) \,\dhn \quad\text{for } \BBB u \in \SBV^p(\Omega'), \EEE
\end{equation}
  where for any $x \in \BBB \Omega' \EEE $, $\xi \in  \R \EEE$, \ZZZZ and \EEE $\nu \in \Sn$ \BBB we have \EEE
\begin{equation}\label{1004261901}
\begin{split}
h^-(x, \xi, \nu) \BBB = \EEE \limsup_{\varrho \to 0^+} \frac{\mathbf{m}_{\H^-}^{\PC}(\ol u_{x,\xi,\nu}, Q^\nu_\varrho(x))}{\varrho^{d{-}1}}.
\end{split}
\end{equation}
Moreover,
\begin{equation}\label{12040854}
h^-(x, \xi, \nu) =\limsup_{\varrho \to 0^+}  \liminf_{n\to \infty} \frac{\mathbf{m}_{\H^-_n}^{\PC}(\ol u_{x,\xi,\nu}, Q^\nu_\varrho(x))}{\varrho^{d{-}1}}=\limsup_{\varrho \to 0^+}  \limsup_{n\to \infty} \frac{\mathbf{m}_{\H^-_n}^{\PC}(\ol u_{x,\xi,\nu}, Q^\nu_\varrho(x))}{\varrho^{d{-}1}}.
\end{equation}
\end{theorem}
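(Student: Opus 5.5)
We follow the localization method of $\Gamma$-convergence (see \cite[Chapters~14--20]{DMLibro}), as in \cite{GiacPonsi}, combined with the global method for integral representation of Bouchitté--Fonseca--Leoni--Mascarenhas and its $\SBV^p$ adaptations \cite{Sto lavoro GSBV, FM}.

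\textbf{Step 1: compactness.} By the compactness theorem for $\ol\Gamma$-convergence \cite[Theorem~16.9]{DMLibro}, we extract a subsequence along which $\H_n^-$ and $\E_n^-$ $\ol\Gamma$-converge to some $\H^-$, $\E^-$, first on a countable dense family of open sets and then, via inner regularity, on all of $\A(\Omega')$. For this we need the \emph{fundamental estimate}, uniformly in $n$, for both functionals. For $\H_n^-$ and $\E_n^-$ we glue with cut-offs along level sets of a distance function. The truncated form $(g(|[u]|)-\psi_n)^+\le g(|[u]|)$, subadditivity of $g$, and the bound $g\le C_g$ from \eqref{gproperty2} control the surface created on the gluing layer. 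That layer costs at most $(C_g+1)\hn$ times the area of the chosen level set, which is small after averaging over level sets. For $\PC$ we glue in the same way, using a coarea choice of level set, without gradient terms. Together with the bounds $0\le \H_n^-(u,A)\le (C_g+1)\hn(J_u\cap A)$ and $\E_n^-(u,A)\le \int_A|\nabla u|^p+(C_g+1)\hn(J_u\cap A)$, this shows that for each $u$ the limits $\H^-(u,\cdot)$ and $\E^-(u,\cdot)$ are restrictions of Borel measures. We use De Giorgi--Letta's criterion, with inner regularity, subadditivity, superadditivity and locality.

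\textbf{Step 2: representation.} On $\PC$, the limit $\H^-(u,\cdot)$ is absolutely continuous with respect to $\hn\mres J_u$. Lower semicontinuity holds, since it is a $\Gamma$-limit. By the global method, its density at $\hn$-a.e.\ $x\in J_u$ is the blow-up of the cell minimum
$\mathbf{m}^{\PC}_{\H^-}(\ol u_{x,\xi,\nu},Q^\nu_\varrho(x))/\varrho^{d-1}$ with $\xi=[u](x)$ and $\nu=\nu_u(x)$. The key ingredient is that $\mathbf{m}_{\H^-}$ and $\H^-$ have the same Radon--Nikodym derivative with respect to $\mu=\hn\mres J_u$. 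This follows from the measure property and the fundamental estimate, comparing $\mathbf{m}$ on small cubes with the Vitali-covering sum. It yields \eqref{Hnu} with $h^-$ given by \eqref{1004261901}; the $\limsup$ equals the limit $\mu$-a.e.

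For $\E^-$ we split the limit into a bulk part and a surface part. Approximating by Lipschitz functions and using Sobolev lower semicontinuity together with the upper bound from the constant sequence, the bulk density is exactly $|\nabla u|^p$: the integrand $|\cdot|^p$ does not depend on $n$ or $x$. On $J_u$, blow-up shows that gradients do not contribute in the surface limit. The usual trick is that in the cell problem on $Q_\varrho$ the $W^{1,p}$ contribution scales like $\varrho^d$, and cells can be reduced to $\PC$ competitors by a Coarea/piecewise-constant approximation at cost $o(\varrho^{d-1})$; the argument follows \cite[Chapter~5]{Ambrosio-Fusco-Pallara:2000} and \cite{FM}. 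Hence the surface density of $\E^-$ coincides with $h^-$, giving \eqref{Enu}.

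\textbf{Step 3: formula \eqref{12040854}.} We show that cell minima pass to the limit:
\[
\limsup_n \mathbf{m}_{\H_n^-}(\ol u,Q_{\varrho'})\le \mathbf{m}_{\H^-}(\ol u,Q_\varrho)\le\liminf_n\mathbf{m}_{\H_n^-}(\ol u,Q_{\varrho''})+\text{error}
\]
for $\varrho'<\varrho<\varrho''$ close. Recovery sequences are modified near $\partial Q$ by the fundamental estimate so that they satisfy the boundary condition. The error is controlled by $(C_g+1)$ times the $\hn$-measure of the jump of $\ol u$ in thin annuli, i.e.\ $O(\varrho''^{d-1}-\varrho'^{d-1})$. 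Dividing by $\varrho^{d-1}$ and letting $\varrho'\uparrow\varrho$, $\varrho''\downarrow\varrho$, the three $\limsup_{\varrho}$ quantities coincide.

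\textbf{Main obstacle.} The main difficulty is the uniform fundamental estimate and boundary matching in the presence of the variable, $n$-dependent precracks $(K_n,\psi_n)$. Since no uniform lower bound holds, as $(g-\psi_n)^+$ may vanish, coercivity in $\PC$ cannot be used. We must instead rely on the truncation $\|u\|_\infty\le M$, which we may assume by the $L^\infty$ bounds on $w_n$ and because truncation does not increase the energy, $g$ being increasing. With that truncation, gluing costs only surface area, and the obstacle is overcome.
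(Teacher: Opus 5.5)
Your overall architecture (compactness via \cite[Theorem~16.9]{DMLibro}, measure property, cell formula, passage of cell minima to the limit) is the right one. The gap is in the fundamental estimate for piecewise constant competitors, exactly the point you declare ``overcome''.

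\textbf{Why sharp gluing fails.} For $\PC$ competitors you glue $u$ and $v$ sharply along a level set of a distance function. The new interface $\Sigma$ then carries jumps of amplitude $|u-v|$. Because of the activation threshold $g(0^+)=c_g>0$, each such jump costs at least $c_g$ per unit area off $K_n$, however small it is. Averaging over level sets therefore bounds the gluing cost by $\mathcal{L}^d(\{u\neq v\}\cap S)$ divided by the layer width, not by $\|u-v\|_{L^1(S)}$. For $u\equiv 0$ and $v\equiv 1/n$ one has $\|u-v\|_{L^1}\to 0$, while every gluing interface costs at least $c_g\hn(\Sigma\sm K_n)$. Truncation to $[-M,M]$ does not help. (A smooth cut-off $\phi u+(1-\phi)v$ is fine in $\SBV^p$, but it leaves $\PC$.)

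\textbf{What this leaves unsupported.} Three steps depend on this estimate: subadditivity of the $\Gamma$-$\limsup$ of $\H_n^-$, the replacement of the boundary datum $u$ by $\ol u_{x,\xi,\nu}$ in your global-method step, and the boundary matching in your Step~3.

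\textbf{Why the correct gluing still needs a lower bound.} The correct $\PC$ gluing (Lemma~\ref{le:fundestGn}) rounds $\phi u+(1-\phi)v$ to levels $t_l$ chosen by coarea, so the new jumps have measure $\lesssim\sigma^{-1}\|u-v\|_{L^1}$. Rounding, however, shifts existing jump amplitudes by up to $2\sigma$. Absorbing this multiplicatively requires a positive lower bound on the surface density, which $(g(|[u]|)-\psi_n)^+$ lacks on $K_n$. The same lower bound (coercivity) is a hypothesis of the global-method representation theorems you invoke. So ``by the global method'' is not available for $\H^-$ or $\E^-$ as stated.

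\textbf{The missing idea.} The paper perturbs the functionals: $\H_n^{\eps,-}=\H_n^-+\eps\hn(\cdot\cap J_u\cap K_n)$, and similarly $\E_n^{\eps,-}$.
Their surface densities are bounded below by $\min\{\eps,c_g\}$. Hence Lemma~\ref{le:fundestHnm} and the representation theorems apply, giving $h^{\eps,-}$ by the cell formula together with \eqref{12040901-eps}.
One then lets $\eps\to0$. Since $\H^-\le\H^{\eps,-}\le\H^-+\eps\mu(\ol A)$, with $\mu$ the weak$^*$ limit of $\hn\mres K_n$, the densities $h^{\eps,-}$ decrease to the cell formula \eqref{1004261901}. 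This holds at every $x$ with $\limsup_{\varrho\to0}\varrho^{1-d}\mu(\ol{B_\varrho(x)})<\infty$, i.e.\ $\hn$-a.e.\ (Lemma~\ref{extra-lemma}).
Likewise $\E^{\eps,-}\to\E^-$, and \eqref{12040854} follows from \eqref{12040901-eps} as $\eps\to0$.

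An $\eps=0$ variant with an additive-error estimate might be feasible. It would rest on \eqref{gproperty2} and $\hn(J_u\cap A)\le\hn(K_n)+c_g^{-1}\H_n^-(u,A)$. You would then still have to redo the global method without coercivity, which you do not address.

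\textbf{A separate error in Step~3.} Your radii run the wrong way. A competitor on $Q_\varrho$ yields none on a smaller cube $Q_{\varrho'}$. A limit of minimizers on a larger cube $Q_{\varrho''}$ need not equal $\ol u$ near $\partial Q_\varrho$.
The usable inequalities are
$\limsup_n\mathbf{m}_{\H_n^-}(\ol u,Q_\varrho)\le\mathbf{m}_{\H^-}(\ol u,Q_\varrho)$, via recovery sequences corrected near $\partial Q_\varrho$, and
$\mathbf{m}_{\H^-}(\ol u,Q_\varrho)\le\liminf_n\mathbf{m}_{\H_n^-}(\ol u,Q_{\varrho'})+C(\varrho^{d-1}-\varrho'^{d-1})$ for $\varrho'<\varrho$, via extending minimizers by $\ol u$.
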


\BBB
\begin{corollary}\label{cor: bc}
In the setting of Theorem \ref{thm:convpreliminaries}, given $w \in W^{1,p}(\Omega')$ and $(w_n)_n \subset W^{1,p}(\Omega')$ with  $w_n \to w$ in $W^{1,p}(\Omega')$, we define  $\E_n^{\partial, -}, \E^{\partial, -}  \colon L^1(\Omega') \times \A(\Omega') \to [0,+\infty]$ by
$$\E_n^{\partial, -}(u) := \begin{cases} \E_n^{  -}(u,\Omega')  & \text{if } u \in \SBV^p_{w_n}(\Omega'), \\ + \infty& else, \end{cases} \quad \quad 
\E^{\partial, -}(u) := \begin{cases} \E^{  -}(u,\Omega')  & \text{if } u = w \text{ on } \Omega' \setminus \overline{\Omega}, \\ + \infty& else. \end{cases}
$$
Then  $\E_n^{\partial, -}$  $\Gamma$-converge  to $\E^{\partial, -}$ in the   strong  \ZZZZ  $L^1(\Omega')$-topology. \EEE 
\end{corollary}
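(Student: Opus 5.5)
We deduce Corollary \ref{cor: bc} from the $\ol\Gamma$-convergence $\E_n^-\to\E^-$ of Theorem \ref{thm:convpreliminaries}, applied with $A=\Omega'$. Since $\E_n^-(\cdot,\Omega')$ $\Gamma$-converges to $\E^-(\cdot,\Omega')$ in $L^1(\Omega')$, only the boundary constraint needs work. We prove the liminf and limsup inequalities separately.

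\emph{Liminf inequality.} Let $u_n\to u$ in $L^1(\Omega')$ with $\sup_n\E_n^{\partial,-}(u_n)<\infty$. Then $u_n\in\SBV^p_{w_n}(\Omega')$, so $u_n=w_n$ on $\Omega'\sm\ol\Omega$. Since $w_n\to w$ in $L^1$, the limit satisfies $u=w$ on $\Omega'\sm\ol\Omega$. Moreover $\E_n^{\partial,-}(u_n)=\E_n^-(u_n,\Omega')$, and the $\Gamma$-liminf for $\E_n^-(\cdot,\Omega')$ gives $\E^{\partial,-}(u)=\E^-(u,\Omega')\le\liminf_n\E_n^{\partial,-}(u_n)$.

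\emph{Limsup inequality: setup.} Fix $u$ with $u=w$ on $\Omega'\sm\ol\Omega$ and $\E^-(u,\Omega')<\infty$. Take a recovery sequence $v_n\to u$ in $L^1(\Omega')$ with $\E_n^-(v_n,\Omega')\to\E^-(u,\Omega')$. In general $v_n$ does not satisfy the constraint, so we modify it near $\Omega'\sm\ol\Omega$ in three steps.

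\emph{Step 1 (truncation).} Replace $v_n$ by its truncation at level $\pm M'$, where $M'\ge\|u\|_\infty$ (one may first reduce to bounded $u$). This does not increase the energy: $|[\cdot]|$ decreases under truncation, $g$ is increasing, and $(g(\cdot)-\psi_n)^+$ is monotone in the jump. The functions $w_n$ are already bounded by $M$.

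\emph{Step 2 (cutoff).} Take a thin Lipschitz layer $U_\delta=\{x\in\Omega'\sm\ol\Omega:\ \mathrm{dist}(x,\partial\Omega)<\delta\}$. Outside $\ol\Omega$ we would like $u_n=w_n$, and jumps there cost $(C_g+1)\hn$, which is more than any jump inside $\ol\Omega$. Choose a cutoff $\varphi$ equal to $1$ on $\Omega'\sm(\ol\Omega\cup U_\delta)$ and $0$ on $\ol\Omega$. Using a De Giorgi-type averaging over $N$ layers, as in the fundamental estimate, define
\[
u_n=\varphi w_n+(1-\varphi)v_n .
\]
The mixed term $|\nabla\varphi|^p|v_n-w_n|^p$ is controlled because $v_n-w_n\to u-w=0$ in $L^1(\Omega'\sm\ol\Omega)$ and both functions are uniformly bounded, so the convergence also holds in $L^p$.

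\emph{Step 3 (jumps on the interface).} A jump may be created on $\partial\Omega\cap\Omega'$, where $\varphi$ switches. To avoid this, glue in $U_\delta$ itself rather than across $\partial\Omega$. Then the energy of $u_n$ on $U_\delta$ is at most $C\,\E_n^-(v_n,U_\delta)+C\,\E_n^-(w_n,U_\delta)+o(1)$, together with the contribution of jumps of $v_n$ in $U_\delta$, which are counted with weight $(C_g+1)$ in both functionals. Now $u_n\in\SBV^p_{w_n}(\Omega')$ converges to $u$, and
\[
\limsup_n\E_n^{\partial,-}(u_n)\le\E^-(u,\Omega')+C\big(\E^-(u,U_\delta)+\|\nabla w\|^p_{L^p(U_\delta)}\big)+C/N .
\]
The upper bound on $\limsup_n\E_n^-(v_n,\ol{U_\delta})$ comes from the measure property of the $\Gamma$-limit (inner regularity and localization, as proven in Appendix \ref{sec:App}). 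Letting $\delta\to0$ and $N\to\infty$, and then taking a diagonal sequence, gives the limsup inequality.

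The main obstacle is Step 3: making sure the gluing near $\partial_D\Omega$ does not produce jumps on $\partial\Omega$ of order one. This is exactly why the jumps outside $\ol\Omega$ carry the penalization $(C_g+1)$ and why the localization is carried out on the open layer $U_\delta$. Since the functions $\varphi$ are smooth and the gluing takes place where both $v_n$ and $w_n$ are close in $L^p$, no new jumps appear beyond those of $v_n$ and $w_n$.
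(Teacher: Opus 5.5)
Your liminf inequality is correct; it is the part the paper calls immediate. The limsup construction, however, has a genuine gap: it does not produce admissible functions.

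\textbf{Why the construction is not admissible.} Membership in $\SBV^p_{w_n}(\Omega')$ requires $u_n=w_n$ a.e.\ on \emph{all} of $\Omega'\sm\ol\Omega$. Your $u_n=\varphi w_n+(1-\varphi)v_n$ has $\varphi=0$ on $\ol\Omega$ and $\varphi=1$ only off $\ol\Omega\cup U_\delta$. It therefore differs from $w_n$ on the layer $U_\delta\subset\Omega'\sm\ol\Omega$. So the claim ``$u_n\in\SBV^p_{w_n}(\Omega')$'' is false, and neither $\delta\to0$ nor a diagonal argument can repair an exact constraint.

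Step~3 does not change this. With $\varphi$ smooth and vanishing on $\ol\Omega$, nothing switches on $\partial\Omega$, and ``gluing in $U_\delta$'' is just Step~2 again. What you have actually proved is the easy relaxed statement, where the datum is imposed only outside a $\delta$-neighborhood of $\partial_D\Omega$.

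To get the exact constraint, the correction must leave the exterior untouched and hence act in $\ol\Omega$, where the naive options fail:
\begin{itemize}
\item a cut-off layer inside $\Omega$ sees $v_n-w_n\to u-w\neq0$;
\item a sharp gluing along $\partial\Omega$ creates jumps of possibly tiny amplitude on a set of non-vanishing $\hn$-measure, each costing at least $c_g>0$ because of the activation threshold.
\end{itemize}

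\textbf{The missing idea.} The paper, which defers to \cite[Lemma~7.1]{GiacPonsi}, points to the following. Jumps in $\Omega'\sm\ol\Omega$ cost $C_g+1$, while $h^-\le g\le C_g$ on $\partial_D\Omega$ (Remark~\ref{rem: sigma1}(i)). Hence recovery sequences cannot asymptotically realize the jumps of $u$ on $\partial_D\Omega$ from outside, i.e.\ $\hn(J_{v_n}\sm\ol\Omega)\to0$. Together with the convergence of the gradients, this makes $v_n-w_n$ small in a strong sense on the Lipschitz set $\Omega'\sm\ol\Omega$. Only then can a cut-off correction be arranged that keeps exactly $w_n$ outside $\ol\Omega$ at an energy cost $o(1)$.

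This ingredient cannot be avoided. Take $K_n=\emptyset$ and an exterior penalization $\kappa<c_g$. Then the unconstrained limit $\E^-$ has density at most $\kappa$ on $\partial_D\Omega$, since jumps there can be approximated by nearby exterior jumps. By contrast, the $\Gamma$-limit of the constrained functionals charges $g(|[u]|)\ge c_g$ on $\partial_D\Omega$, so the corollary fails. Your argument mentions $C_g+1>\sup g$ but never uses it in any estimate, which is another sign it cannot be complete.

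A minor point: the upper bound from weak$^*$ convergence holds on the closed set $\ol{U_\delta}\supset\partial_D\Omega$, not on $U_\delta$. So your error term $C\,\E^-(u,U_\delta)$ is mis-stated. With the factor $C$ in front of the surface part, it would tend to $C\int_{J_u\cap\partial_D\Omega}h^-(\cdot,[u],\nu_u)\,\dhn$, not to $0$.
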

We omit the proof of Corollary \ref{cor: bc} as it follows from arguments in \cite{GiacPonsi}. \EEE  Indeed,  with the  presence of boundary conditions, the $\Gamma$-liminf for $\E_n^{\partial, -}$ is immediate and for the $\Gamma$-limsup  we can  argue as in \cite[Lemma~7.1]{GiacPonsi} to show that, due to the fact that $g \le C_g$ (see \eqref{gproperty2}) and the prefactor $(C_g +1)$ in \eqref{samenotation},  recovery sequences have asymptotically vanishing jump in $\Omega' \setminus \overline{\Omega}$ and can  thus be altered by a cut-off construction to attain the boundary condition $w_n$ on $\Omega' \setminus \overline{\Omega}$. \EEE 

\BBB Let us now come to the proof of Theorem \ref{thm:convpreliminaries}. \EEE The proof makes use of two general $\Gamma$-convergence  and integral representation results for free-discontinuity problems, namely the representation of energies defined on $\mathrm{(G)SBV}^p$-functions \cite{Sto lavoro GSBV} and piecewise rigid functions \cite{FM}, along with a suitable handling of the precrack (see \cite{GiacPonsi}). In our setting, only simplified versions of the results in \cite{Sto lavoro GSBV, FM} are needed, for which  we include  self-contained statements and proofs in Appendix~\ref{sec:App}.

 As a preparation, we need the following two lemmas which will also be proved in Appendix \ref{sec:App}. The first is an integral representation result  and the second one \BBB is \EEE a variant of a `classical' fundamental estimate.   \BBB Both statements take \EEE the presence of $(K_n, \psi_n)$   \BBB   into account whence a perturbation argument is necessary to obtain uniform positive lower bounds for the surface energy densities. \EEE 
 
 \begin{lemma}[Integral representation]\label{extra-lemma}
Suppose that $\H_n^-$  $\ol \Gamma$-converge   to $\H^- \colon L^1(\Omega') \times \A(\Omega') \to [0,+\infty]$. Moreover, \BBB consider a countable set $I \subset (0,1)$ with $0 \in \overline{I}$ such that  there exists a subsequence (not relabeled) for which  for every $\eps \in I$  the functionals defined by $\H_n^{\varepsilon,-} \colon L^1(\Omega') \times \A(\Omega') \to [0,+\infty]$ as
\begin{align}\label{HHNN}
\H_n^{\varepsilon,-}(u, A):=\H_n^-(u, A)+ \varepsilon \,\hn(A \cap J_u \cap K_n) \quad \text{if } u|_A \in \PC(A)
\end{align}
and $+\infty$ otherwise, \EEE $\ol \Gamma$-converge to $\H^{\varepsilon,-} \colon L^1(\Omega') \times \A(\Omega') \to [0,+\infty]$, where  
 $${ \H^{\varepsilon,-}(u,A)= \int_{A\cap J_u} h^{\varepsilon,-}(\cdot, [u],\nu_u) \,\dhn \quad \text{ for }  u \in \PC(\Omega') \EEE  }$$
 with $h^{\varepsilon,-}(x, \xi,\nu)=\limsup_{\varrho \to 0^+}  \varrho^{-(d{-}1)} \mathbf{m}_{\H^{\varepsilon,-}}^{\PC}(\ol u_{x,\xi,\nu}, Q^\nu_\varrho(x))$ for all $x \in \Omega'$, $\xi \in \R$, and $\nu \in \mathbb{S}^{d{-}1}$.
 Then, $\H^-$ is of the form  \eqref{Hnu} with   $h^-$  \ZZZZ given as in \eqref{1004261901}. Moreover, it holds that 
 $h^-(x,\xi,\nu) =   \lim_{\varepsilon\to 0} h^{\varepsilon,-}(x,\xi,\nu)$
 for all $x \in \Omega'$, $\xi \in \R$, and $\nu \in \mathbb{S}^{d{-}1}$.
 \end{lemma}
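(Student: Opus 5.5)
We reduce to an integral representation theorem for functionals on piecewise constant functions with \emph{uniformly positive} surface densities (the simplified version of \cite{FM} proved in Appendix~\ref{sec:App}), applied to the perturbed limits $\H^{\varepsilon,-}$, and then pass to the limit $\varepsilon\to 0$ along $I$ by monotone convergence.

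\textbf{Step 1: properties of $\H^{\varepsilon,-}(\cdot,A)$ for fixed $\varepsilon\in I$.} We check the hypotheses of the representation theorem for $\H^{\varepsilon,-}(u,\cdot)$. We must show it is the restriction of a Borel measure, local, $L^1$-lower semicontinuous (automatic for $\ol\Gamma$-limits), and satisfies
\[
c_\varepsilon\,\hn(A\cap J_u)\le \H^{\varepsilon,-}(u,A)\le C\,\hn(A\cap J_u).
\]
\emph{Bounds.} The upper bound follows from $\H^{\varepsilon,-}_n\le (C_g+2)\hn(J_u\cap A)$, using $u_n=u$ as recovery sequence. For the lower bound at the level of $n$, note three cases. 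Off $K_n$ in $\ol\Omega$ the density is $g(|[u]|)\ge c_g$. In $\Omega'\sm\ol\Omega$ it is $C_g+1$. On $K_n$ it is at least $\varepsilon$. Hence $\H_n^{\varepsilon,-}(u,A)\ge (\varepsilon\wedge c_g)\hn(A\cap J_u)$. This lower bound passes to the $\Gamma$-limit by lower semicontinuity of $\hn(J_u\cap A)$ on $\PC$ with bounded jump measure, which follows from Ambrosio's compactness for piecewise constant functions combined with a truncation of $u$ to finitely many values, recalling that $\|w_n\|_\infty\le M$.
\emph{Measure property.} The main point is subadditivity, $\H^{\varepsilon,-}(u,A\cup B)\le \H^{\varepsilon,-}(u,A')+\H^{\varepsilon,-}(u,B)$ for $A'\Supset A$. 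We obtain it from the fundamental estimate for $\PC$ functions (the second lemma announced, proved in Appendix~\ref{sec:App}). This estimate joins recovery sequences on $A'$ and $B$ across a layer by choosing a level set of a coarea-type slicing, so that the joining cost is controlled by $\hn$ of a layer set times $\sup$ density, and this layer term vanishes. The uniform lower bound $c_\varepsilon$ is exactly what makes the energies control the jump measure inside the layers; this is the reason for the perturbation. The De Giorgi--Letta criterion then gives the measure property.

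\textbf{Step 2: representation of $\H^{\varepsilon,-}$.} With Step~1 in hand, the blow-up argument of \cite{FM} gives $\H^{\varepsilon,-}(u,A)=\int_{A\cap J_u}h^{\varepsilon,-}(\cdot,[u],\nu_u)\,\dhn$ with the cell formula stated in the lemma. Here we use $\mathbf m_{\H^{\varepsilon,-}}^{\PC}$ on cubes and compare $\H^{\varepsilon,-}(u,\cdot)$ with $\hn\mres J_u$ via Radon--Nikodym at points of $J_u$. This is presumably already incorporated in the hypotheses of the lemma, so we simply use it.

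\textbf{Step 3: $\varepsilon\to0$.} $\H_n^{\varepsilon,-}$ is nondecreasing in $\varepsilon$ and $\H_n^-\le\H_n^{\varepsilon,-}\le \H_n^-+\varepsilon\hn(A\cap J_u)$. The main obstacle is the reverse inequality, since $\hn(J_{u_n})$ along a recovery sequence for $\H^-$ need not be bounded; the density of $\H_n^-$ may vanish on $K_n$. To handle it, take a recovery sequence $u_n$ for $\H^-(u,A)$. We modify it so that jumps on $K_n$ with vanishing energy are kept only where needed, or we simply truncate $u_n$ to the finitely many values of $u$, which does not increase $\H_n^-$ since $g$ is increasing. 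We then bound $\hn(J_{u_n}\cap K_n)\le C_0$ using $\hn(K_n)\le C_0$. This yields $\H^{\varepsilon,-}\le \H^-+\varepsilon C_0$, which together with $\H^-\le \H^{\varepsilon,-}$ gives $\H^{\varepsilon,-}\to\H^-$ uniformly as $\varepsilon\to0$. Consequently $\H^-$ is a measure in $A$, and by monotone convergence $\H^-(u,A)=\int_{A\cap J_u}\lim_\varepsilon h^{\varepsilon,-}\,\dhn$.

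\textbf{Identification of the density.} Finally, $\mathbf m^{\PC}_{\H^-}\le\mathbf m^{\PC}_{\H^{\varepsilon,-}}\le \mathbf m^{\PC}_{\H^-}+\varepsilon C_0\wedge(\varepsilon\cdot\text{Step 1 bound})$. For the density the $C_0$ bound is useless; we need $\hn(J_v\cap Q_\varrho)\le C\varrho^{d-1}$ for almost minimizers $v$. This bound does follow, using the lower bound for $\H^{-}$ off $K$ and comparison with $\ol u_{x,\xi,\nu}$ on $K$. Dividing by $\varrho^{d-1}$ and taking $\limsup_\varrho$ gives $h^-=\lim_\varepsilon h^{\varepsilon,-}$ with $h^-$ as in \eqref{1004261901}.
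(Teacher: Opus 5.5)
The part of your argument that gives $\H^-(u,A)=\int_{A\cap J_u}\lim_{\varepsilon\to0}h^{\varepsilon,-}(\cdot,[u],\nu_u)\,\dhn$ is fine and matches the paper. Your Steps 1--2 only re-derive what the lemma already assumes, and the ``obstacle'' in Step 3 is void, since $J_{u_n}\cap K_n\subset K_n$.

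\textbf{The gap.} It lies in the last paragraph: identifying $\lim_\varepsilon h^{\varepsilon,-}$ with the cell formula \eqref{1004261901}. There you need
\[
\mathbf{m}^{\PC}_{\H^{\varepsilon,-}}(\ol u_{x,\xi,\nu},Q^\nu_\varrho(x))\le \mathbf{m}^{\PC}_{\H^{-}}(\ol u_{x,\xi,\nu},Q^\nu_\varrho(x))+\theta_\varepsilon(x)\,\varrho^{d-1}
\quad\text{with } \theta_\varepsilon(x)\to0 .
\]
A bound $\hn(J_v\cap Q^\nu_\varrho(x))\le C\varrho^{d-1}$ for an almost minimizer $v$ does not give this. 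It would have to be combined with $\H^{\varepsilon,-}(v,A)\le\H^-(v,A)+C\varepsilon\,\hn(J_v\cap A)$, which you never prove. With a universal $C$ this inequality is false, for exactly the reason you noticed in Step 3. A recovery sequence $v_n$ for $\H^-(v,A)$ may carry much more jump on $K_n$ than $v$ has, at no cost for $\H^-_n$. The $\varepsilon$-term sees $\hn(J_{v_n}\cap K_n)$, not $\hn(J_v)$. In Example~\ref{exAB}(A) one has $\hn(J_{v_n})\approx\kappa\,\hn(J_v)$ and $h^{\varepsilon,-}-h^-\approx\kappa\varepsilon$ for small $\varepsilon$. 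So the error is governed by the local density of the oscillating cracks, which no universal constant bounds. The bound on $\hn(J_v\cap Q_\varrho)$ is itself also unsupported: no limit crack $K$ is available in this lemma, and $h^-$ may vanish.

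\textbf{The missing idea.} Keep the $\varepsilon$-error local by means of a measure. Let $\mu$ be a weak$^*$ limit of $\hn\mres K_n$, which exists since $\hn(K_n)\le C_0$. Run your Step~3 argument with $\hn(A\cap J_{u_n}\cap K_n)\le\hn(A\cap K_n)$ in place of the bound $C_0$. This gives
\[
\H^-(u,A)\le\H^{\varepsilon,-}(u,A)\le\H^-(u,A)+\varepsilon\,\mu(\ol A),
\]
and hence
\[
\mathbf{m}^{\PC}_{\H^{\varepsilon,-}}(\ol u_{x,\xi,\nu},Q^\nu_\varrho(x))\le \mathbf{m}^{\PC}_{\H^{-}}(\ol u_{x,\xi,\nu},Q^\nu_\varrho(x))+\varepsilon\,\mu(\overline{B_{\sqrt d\varrho}(x)}).
\]
Consider any $x$ with $\limsup_{\varrho\to0^+}\varrho^{-(d-1)}\mu(\overline{B_\varrho(x)})<+\infty$; this holds for $\hn$-a.e.\ $x$ by \cite[Theorem~2.56]{Ambrosio-Fusco-Pallara:2000}. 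Divide by $\varrho^{d-1}$, take $\limsup_{\varrho}$, and then let $\varepsilon\to0$. This yields $\lim_\varepsilon h^{\varepsilon,-}\le h^-$, and the reverse inequality follows from $\H^-\le\H^{\varepsilon,-}$. This $\hn$-a.e.\ identification is what \eqref{Hnu} requires.
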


 \BBB In the proof of  Theorem \ref{thm:convpreliminaries} below, the $\ol \Gamma$-convergence of the sequences $(\H_n^{\varepsilon,-})_n$ defined in \eqref{HHNN} will be guaranteed by an application of \cite{FM}. In this context, we also need a fundamental estimate for the functionals   $(\H_n^{\varepsilon,-})_n$. \EEE
 
\begin{lemma}[Fundamental estimate]\label{le:fundestHnm}
\BBB Let $\eps \in I$, \EEE let $\eta, \xi >0$, let  $A'$, $A$, $B \in \mathcal{A}(\Omega')$ with $A' \subset \subset A$, and let  $S:= (A\sm A') \cap B$. \BBB Then, there exists \EEE  $\Lambda>0$ \BBB and a continuous function $\omega_g^\eps \colon [0,\infty) \to [0,\infty)$ with $\omega_g^\eps (0) = 0$ \EEE such that for every $n \in \N$,  for every $\sigma \in (0,    1  )$ \BBB with $\omega_g^\eps (\sigma) \le \frac{1}{2}$, \EEE and $u \in \PC(A;[0,\xi])$, $v \in \PC(B;\lbrace 0, \xi\rbrace)$    there exist $\varphi \in C_c^\infty(A; [0,1])$ with $\varphi=1$ in a neighborhood of $\ol{ A'}$  and $w \in \PC(\BBB A' \EEE \cup B; [0,\xi])$  such that \EEE
\begin{itemize}
\item[(i)] $w=v$ in $B \sm A$,
\item[(ii)] $\hn(J_{w} \sm (J_u \cup J_v \cup S))=0$,
\item[(iii)]  $\|\varphi u+(1-\varphi)v-w\|_{L^\infty(\BBB A' \EEE \cup B)}  \le \EEE \sigma$, and
\item[(iv)] $\BBB \H^{\eps,-}_n \EEE(w, A'\cup B) \leq (1 + \BBB \omega_g^\eps(\sigma) \EEE )(1+\eta) \big( \BBB \H^{\eps,-}_n \EEE(u,A) + \BBB \H^{\eps,-}_n \EEE(v,B)\big) + \frac{\Lambda}{\sigma}\|u-v\|_{L^1(S)}$.
\end{itemize}
\end{lemma}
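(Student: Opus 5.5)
The plan is to follow the classical fundamental-estimate construction for piecewise constant functions (as in \cite{FM} and \cite{Sto lavoro GSBV}): interpolate $u$ and $v$ on a transition layer $S$, and then re-quantize the interpolation so that it becomes piecewise constant again, with values in $[0,\xi]$.

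\textbf{Layers and cut-off.} Fix $N\in\N$ (depending on $\eta$) and layers $A_0:=A'\subset\subset A_1\subset\subset\dots\subset\subset A_N\subset\subset A$. Choose cut-offs $\varphi_i\in C_c^\infty(A_i;[0,1])$ with $\varphi_i=1$ near $\ol{A_{i-1}}$ and $|\nabla\varphi_i|\le 2N/\mathrm{dist}(A',\partial A)$. For each $i$ the interpolation $\varphi_i u+(1-\varphi_i)v$ lies in $\SBV$, takes values in $[0,\xi]$, and has gradient $\nabla\varphi_i(u-v)$, supported in $S_i:=(A_i\sm \ol{A_{i-1}})\cap B$.

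\textbf{Re-quantization by coarea.} To return to $\PC$, pick a level shift $\tau\in[0,\sigma)$ and set $w:=\sigma\lfloor(\varphi_i u+(1-\varphi_i)v-\tau)/\sigma\rfloor+\tau$, truncated to $[0,\xi]$; near the boundary one adjusts so that $w=u$ where $\varphi_i=1$ and $w=v$ where $\varphi_i=0$, which gives (i) and (iii). By the coarea formula, averaging over $\tau$ bounds the new jump set inside $S_i$ by
\[
\frac1\sigma\int_{S_i}|\nabla\varphi_i||u-v|\,{\rm d}x \le \frac{CN}{\sigma}\|u-v\|_{L^1(S_i)},
\]
and $J_w$ is otherwise contained in $J_u\cup J_v$, which gives (ii). The new jumps outside $K_n$ cost at most $g\le C_g$, or $C_g+1$ outside $\ol\Omega$. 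On $K_n$ they cost at most $C_g+\eps$, because $(g-\psi_n)^+\le g$. This yields the term $\Lambda\sigma^{-1}\|u-v\|_{L^1(S)}$ with $\Lambda$ depending on $N$, $C_g$ and $A,A'$.

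\textbf{Old jumps and the main obstacle.} On old jumps in $J_u\cup J_v$ the amplitude changes: in the transition layer $|[w]|$ may differ from $|[u]|$ or $|[v]|$ by up to $\sigma$, or may combine both. I expect this to be the main obstacle, since the density $(g(|[\cdot]|)-\psi_n)^+ +\eps\chi_{K_n}$ is not monotone in a controlled multiplicative way and can be very small on $K_n$. To handle it I will use \eqref{gproperty2}, namely $g(a+t)\le g(a)+C_g t$. I also use that every jump of a $\PC$ function costs at least $\min\{c_g,\eps\}>0$: off $K_n$ the cost is $g\ge c_g$, and on $K_n$ the $\eps$-term contributes at least $\eps$. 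Hence the additive error $C_g\sigma$ can be absorbed multiplicatively as $\omega_g^\eps(\sigma)=C_g\sigma/\min\{c_g,\eps\}$, up to modulus-of-continuity corrections of $g$ near $0$. This is exactly why the $\eps$-perturbation \eqref{HHNN} is needed. Jumps where $|[w]|$ exceeds $\max(|[u]|,|[v]|)$ in a combined way are controlled by subadditivity of $g$, so that the cost is at most the sum of the costs for $u$ and $v$.

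\textbf{Choice of layer.} Summing over $i=1,\dots,N$, the contributions of $J_u\cup J_v$ in the layers add up to at most $\H_n^{\eps,-}(u,A)+\H_n^{\eps,-}(v,B)$. Hence some index $i$ carries at most a $1/N$ fraction of this total. Choosing $N\ge1/\eta$ and $\varphi:=\varphi_i$ gives (iv), with the factor $(1+\omega_g^\eps(\sigma))(1+\eta)$. The assumption $\omega_g^\eps(\sigma)\le\frac12$ ensures that the quantization error does not destroy the lower bound used in the absorption step.
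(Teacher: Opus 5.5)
There is a genuine gap in the step where you ``adjust'' the quantized interpolation so that $w=u$ where $\varphi_i=1$ and $w=v$ where $\varphi_i=0$.

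No single quantizer can fix all values of $u$, since $u$ may take arbitrary values in $[0,\xi]$. Your shifted floor quantizer, truncated to $[0,\xi]$, does not even fix $\xi$: it sends $\xi$ to $\sigma\lfloor(\xi-\tau)/\sigma\rfloor+\tau<\xi$ unless $\tau\equiv\xi$ modulo $\sigma$, which is a single shift. So the adjustment can only mean patching $u$, resp.\ $v$, back in on $\{\varphi_i=1\}$, resp.\ $\{\varphi_i=0\}$.

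The patching interfaces then carry jumps of small but nonzero height. Because of the activation threshold, each such jump costs at least $\min\{c_g,\eps\}$ per unit area in $\H^{\eps,-}_n$; this is the very lower bound you use for the absorption. These jumps sit on surfaces of order-one area, controlled neither by $\frac{\Lambda}{\sigma}\|u-v\|_{L^1(S)}$ nor by $\H^{\eps,-}_n(u,A)+\H^{\eps,-}_n(v,B)$.

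In fact, requiring $w=u$ on $A'$ is incompatible with (iv). Take concentric balls with $B_{3\rho}\subset\Omega'$, and set $A'=B_\rho$, $A=B_{2\rho}$, $B=B_{3\rho}\sm\ol{B_{\rho/2}}$. Let $u\equiv c\in(0,\sigma)$ on $A$ and $v\equiv 0$ on $B$.
\begin{itemize}
\item The right-hand side of (iv) equals $\Lambda c\,|S|/\sigma$, which tends to $0$ as $c\to0$.
\item Any $w\in\PC(A'\cup B)$ with $w=c$ on $A'$ and $w=0$ on $B\sm A$ must jump on almost every radial segment crossing $S$. Hence $\H^{\eps,-}_n(w,A'\cup B)\ge\min\{c_g,\eps\}\,\hn(\partial B_\rho)$, independently of $c$.
\end{itemize}
The statement deliberately asks on $A'$ only for the $L^\infty$-closeness (iii); in this example $w\equiv 0$ is the right choice.

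The remedy is what the paper does. It obtains the lemma from Lemma~\ref{le:fundestGn}, after noting that the densities of $\H^{\eps,-}_n$ lie between two positive constants depending on $\eps$ and satisfy (g4) with $\omega(t)=C_g\eps^{-1}t$.

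Concretely, quantize $\widehat w:=\varphi u+(1-\varphi)v$ globally on $A'\cup B$ and never patch, using a quantizer that fixes exactly $0$ and $\xi$. Let $C_\sigma=\lfloor 2\xi/\sigma\rfloor$ and choose levels $t_l\in(\frac{\sigma}{2}l,\frac{\sigma}{2}(l+1))$, $0\le l\le C_\sigma-1$, by the coarea/mean-value argument. Then set $w:=\sum_{l=1}^{C_\sigma-1}t_l\chi_{\{t_{l-1}<\widehat w\le t_l\}}+\xi\chi_{\{\widehat w>t_{C_\sigma-1}\}}$.

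This is where the hypothesis $v\in\PC(B;\lbrace 0,\xi\rbrace)$ enters:
\begin{itemize}
\item On $\{\varphi=0\}\supset B\sm A$ one has $w=\widehat w=v$, which is (i).
\item On $\{\varphi=1\}$, $w$ is merely a quantization of $u$. It satisfies (iii), creates no new jumps there, and changes jump heights by at most $2\sigma$. This is absorbed as you propose, giving $\omega_g^\eps(\sigma)$ of order $C_g\sigma/\min\{c_g,\eps\}$.
\item The only new jumps are $\partial^*\{\widehat w>t_l\}\sm J_{\widehat w}$. They lie where $\nabla\widehat w\neq 0$, i.e.\ in $\{0<\varphi<1\}\cap B\subset S$, and are bounded by your coarea estimate.
\end{itemize}
With this change the rest of your plan goes through and coincides with the paper's argument.

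The appeal to subadditivity is unnecessary. On $J_u\cup J_v$ one has $|[\widehat w]|\le\max\{|[u]|,|[v]|\}$, and the density is nondecreasing in the jump height.
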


  In (iii), $u$ and $v$ are considered to be extended arbitrarily outside of $A$ and $B$, respectively. \EEE   We now come to the proof of Theorem \ref{thm:convpreliminaries}. 
\EEE

\begin{proof}[Proof of Theorem \ref{thm:convpreliminaries}]

By \cite[Theorem 16.9]{DMLibro}, there exist $\H^- \colon L^1(\Omega') \times \A(\Omega') \to [0,+\infty]$ and $\BBB \E^- \EEE \colon L^1(\Omega') \times \A(\Omega') \to [0,+\infty]$ such that, up to a (not relabeled) subsequence, $\H_n^-$ and $\BBB \E_n^- \EEE $ $\ol \Gamma$-converge   to $\H^-$ and $\BBB \E^- \EEE $, \BBB respectively. \EEE We are left to prove the 
integral representations of $\H^-$ and $\mathcal{E}^-$ and the characterization of $h^-$ in the statement.
Given $I \subset (0,1)$ countable with $0 \in \overline{I}$, for \EEE
any $\varepsilon \BBB \in I \EEE $ we define $\H_n^{\varepsilon,-}$ 
 as in \eqref{HHNN} and  \EEE    $\E_n^{\varepsilon,-} \colon L^1(\Omega') \times \A(\Omega') \to [0,+\infty]$ by 
\begin{equation*}
 \E_n^{\varepsilon,-}(u, A):=\BBB \E_n^- \EEE (u, A)+ \varepsilon\, \hn(A \cap \BBB  J_u \cap \EEE K_n) \quad \BBB \text{if } u|_A \in \SBV^p(A)
\end{equation*}
and $+ \infty$ otherwise.  Observe that the surface densities of the functionals $\H_n^{\varepsilon,-}$ and $ \E_n^{\varepsilon,-}$ are bounded uniformly from below by $\eps$. This allows us to apply  \cite[Theorem~3.9]{Sto lavoro GSBV}    and \cite[Theorem~2.3]{FM} (by taking $L=\{0\}$ therein) 
 to infer that \EEE there   exist $\H^{\varepsilon,-}$,  $\E^{\varepsilon,-}\EEE \colon L^1(\Omega') \times \A(\Omega') \to [0,+\infty]$ 
such that $\H_n^{\varepsilon,-} $, $\E_n^{\varepsilon,-} $ $\ol \Gamma$-converge  (up to a subsequence) \EEE to $\H^{\varepsilon,-} $, $\E^{\varepsilon,-} $, with
\begin{equation}\label{1004261943}
\H^{\varepsilon,-}(u,A)= \int_{A\cap J_u} h^{\varepsilon,-}(\cdot, [u],\nu_u) \,\dhn \quad\text{for }u \in \PC(\Omega'), 
\end{equation}
\begin{equation}\label{16040759XXX}
\E^{\varepsilon,-}(u,A)= \int_A |\nabla u|^p\dx+\int_{A\cap J_u} h^{\varepsilon,-}(\cdot, [u],\nu_u) \,\dhn\quad\text{for } u \in \SBV^p(\Omega'),
\end{equation}
and $h^{\varepsilon,-}(x, \xi,\nu)=\limsup_{\varrho \to 0^+} \varrho^{-(d{-}1)}\mathbf{m}_{\H^{\varepsilon,-}}^{\PC}(\ol u_{x,\xi,\nu}, Q^\nu_\varrho(x))$  \BBB for any $x \in  \Omega'  $, $\xi \in  \R $, and  $\nu \in \Sn$. Note that by a diagonal argument the subsequence can be chosen independently of $\eps \in I$. More precisely, to guarantee that the $\Gamma$-limit $\H^{\varepsilon,-}$ satisfies the integral representation  \eqref{1004261943}, one needs to check the property 
  \begin{equation}\label{12040901-eps}
\limsup_{n\to \infty} \mathbf{m}_{\H^{\eps,-}_n}^{\PC}(\ol u_{x,\xi,\nu}, Q^\nu_\varrho(x)) \leq \mathbf{m}_{\H^{\eps,-}}^{\PC}(\ol u_{x,\xi,\nu}, Q^\nu_\varrho(x)) \leq \sup_{0<\varrho'< \varrho} \liminf_{n\to \infty}\mathbf{m}_{\H^{\eps,-}_n}^{\PC}(\ol u_{x,\xi,\nu}, Q^\nu_{\varrho'}(x)),
\end{equation}
  corresponding to  \cite[condition (2.8)]{FM} (with cubes in place of balls). \ZZZZ Moreover, the fact that the surface densities in \eqref{1004261943} and \eqref{16040759XXX} coincide follows from \eqref{12040901-eps} and \cite[(3.10)--(3.11)]{Sto lavoro GSBV}.   We defer the verification of  \eqref{12040901-eps} \EEE  to the end of the proof.  \EEE We note that alternatively, instead of employing the general results in  \cite[Theorem~3.9]{Sto lavoro GSBV}    and \cite[Theorem~2.3]{FM}, we could  resort \EEE to Proposition \ref{prop:Gammaconvgnminus} and  Theorem \ref{generalgammacon} below. \ZZZZ (Choose $f_n = \ZZZZ |\cdot|^p \EEE $, $g_n = g$ on $\overline{\Omega}$ and $g_n = C_g+1$ on $\Omega' \setminus \overline{\Omega}$, as well as $\varphi_n \colon K_n \to [0,\infty)$ such that  \EEE  $\psi_n = g(\cdot,\varphi_n,\nu_{K_n})$.)

  Now,  the representation of $\H^-$ in \eqref{Hnu} with the density $h^-$ given in \eqref{1004261901} holds by Lemma \ref{extra-lemma}.   Moreover, using 
 $\hn(K_n)\leq C_0$, we get $\E^-(u,A) \leq \E^{\varepsilon,-}(u,A) \leq \E^-(u,A) + C_0\varepsilon$ by general properties of $\Gamma$-convergence, i.e.,    $\E^{\varepsilon,-}$ converges   pointwise to   $\E^-$ as given \ZZZZ  in \EEE \eqref{Enu}, where we \ZZZZ  use 
" \EEE  that   
$h^-(x,\xi,\nu) =   \lim_{\varepsilon\to 0} h^{\varepsilon,-}(x,\xi,\nu)$
for all $x \in \Omega'$, $\xi \in \R$, and $\nu \in \mathbb{S}^{d{-}1}$. \BBB In a similar fashion,  \eqref{12040854} follows by passing to the limit $\eps \to 0$ ($\eps \in I$) in \eqref{12040901-eps}, \ZZZZ and letting $\varrho \to 0$. \EEE This concludes the proof.

It remains to prove \eqref{12040901-eps}. For notational convenience, we drop the index $\eps$ and write $\H^-$ and $\H^-_n$. As a preparation, we fix \ZZZZ $\varrho>0$ \EEE and define the neighborhood $N_{\varrho'}:=  Q^\nu_\varrho(x) \sm Q^\nu_{\varrho'}(x)$ for $\varrho' \in (0,\varrho)$. We observe that by \eqref{gproperty2} and \eqref{HHNN} we have
\begin{align}\label{ttouselater}
\H^-_n(\ol u_{x,\xi,\nu}, N_{\varrho'} ) \leq ( C_g + \eps)(\varrho^{d{-}1}-(\varrho')^{d{-}1}).
\end{align}   
\emph{First inequality in \eqref{12040901-eps}:}  \BBB For any $\delta>0$, choose \EEE $v \in \PC(Q^\nu_\varrho(x))$ with $\H^-(v, Q^\nu_\varrho(x))\leq \mathbf{m}_{\H^-}^{\PC}(\ol u_{x,\xi,\nu}, Q^\nu_\varrho(x)) + \delta$ and  $v=\ol u_{x,\xi,\nu}$ on \BBB $N_{\varrho'}$ for some \EEE $\varrho' \in (0,\varrho)$. \BBB By \EEE $\Gamma$-convergence of $\H_n^-(\cdot, Q^\nu_\varrho(x))$ to $\H^-(\cdot, Q^\nu_\varrho(x))$, \BBB  we find \EEE $(v_n)_n \subset \PC(Q^\nu_\varrho(x))$  such that  $v_n \to v $ in $L^1(Q^\nu_\varrho(x))$ and $\lim_{n\to \infty} \H_n^-(v_n, Q^\nu_\varrho(x))= \H^-(v, Q^\nu_\varrho(x))$. \ZZZZ By the monotonicity of $g$ the energies $\H_n^-$ and $\H^-$ are decreasing by truncation. Therefore, up to a truncation   it is not restrictive to assume that $v$ and  $v_n$  have  values in  $[0,\xi]$.   Fixing  
\BBB $\varrho''$ with $\varrho'<\varrho'' < \varrho$, \EEE
for any $n$ we apply Lemma~\ref{le:fundestHnm}  to $v_n$   (in place of $u$) \EEE and $\ol u_{x,\xi,\nu}$  (in place of $v$), with $A'=Q^\nu_{\varrho'}(x)$, $A=Q^\nu_{\varrho''}(x)$, $B=\BBB N_{\varrho'}\EEE$, $\sigma= \sqrt{\tau_n}$ for $\tau_n:= \|v_n- \ol u_{x,\xi,\nu}\|_{L^1(\ZZZZ N_{\varrho'} \EEE )}$, and   $\eta = \eta_n$ \EEE with $\eta_n\to 0$ slowly enough such that  the corresponding $\Lambda_n$ satisfies \EEE   $\Lambda_n\,\sqrt{\tau_n}\to 0$ as $n\to \infty$:  this provides a modification $w_n \in \PC(Q^\nu_\varrho(x))$ such that $w_n=\ol u_{x,\xi,\nu}$ on $Q^\nu_\varrho(x) \sm A$ 
(neighborhood of $\partial Q^\nu_\varrho(x)$), $w_n \to v$ in $L^1(Q^\nu_\varrho(x))$, and $\BBB \limsup_{n\to \infty} \EEE \H_n^-(w_n, Q^\nu_\varrho(x)) \BBB \le  \H^-(v, Q^\nu_\varrho(x))+ \BBB \limsup_{n \to \infty} \H^-_n(\ol u_{x,\xi,\nu}, N_{\varrho'} )  $. \BBB Using \eqref{ttouselater}, this \EEE implies the inequality by the arbitrariness of $\delta>0$ \BBB and $\varrho' <\varrho$. \EEE

\emph{Second inequality in \eqref{12040901-eps}:} For any 
$\varrho' \in (0,\varrho)$, \ZZZZ we choose a subsequence (not relabeled) that realizes the $\liminf$ of $\mathbf{m}_{\H^-_n}^{\PC}(\ol u_{x,\xi,\nu}, Q^\nu_{\varrho'}(x))$. We \EEE  consider $u_n \in \PC(Q^\nu_{\varrho'}(x))$ such that $u_n= \ol u_{x,\xi,\nu}$ near $\partial Q^\nu_{\varrho'}(x)$ and $\H^-_n(u_n, Q^\nu_{\varrho'}(x)) \leq \mathbf{m}_{\H^-_n}^{\PC}(\ol u_{x,\xi,\nu}, Q^\nu_{\varrho'}(x))+1/n$. By truncation we may assume that any $u_n$ maps into  $[0,\xi]$. Defining   the extension 
$v_n :=u_n \chi_{Q^\nu_{\varrho'}(x)} + \ol u_{x,\xi,\nu}\chi_{N_{\varrho'}}$, 
it holds that $\H^-_n(v_n, Q^\nu_{\varrho}(x))= \H^-_n(u_n, Q^\nu_{\varrho'}(x))+ \H^-_n(\ol u_{x,\xi,\nu}, \ZZZZ {N_{\varrho'} \EEE })$. \BBB  Since \EEE 
$v_n$ maps into $[0,\xi]$ and  $\hn(J_{v_n})$ is  \BBB uniformly \EEE bounded (by $g\geq c_g$, see \eqref{gproperty}, and $\hn(K_n)\leq C_0$), we \BBB can apply a compactness result for piecewise constant functions (see \cite[Theorem~4.25]{Ambrosio-Fusco-Pallara:2000})  \EEE to find $v \in \PC(Q^\nu_{\varrho}(x))$ with $v= \ol u_{x,\xi,\nu}$ \BBB in $N_{\varrho'}$ \EEE   such that   $v_n\to v$ in $L^1(Q^\nu_{\varrho}(x))$, up to a (not relabeled) subsequence.  \BBB In view of  \eqref{ttouselater}, we find  
\[
 \H^-(v,Q^\nu_{\varrho}(x))\leq \liminf_{n\to \infty}  \H^-_n(v_n, Q^\nu_{\varrho}(x))\leq  \liminf_{n\to \infty}  \H^-_n(u_n, Q^\nu_{\varrho'}(x)) + \BBB (C_g + \eps) \EEE (\varrho^{d{-}1}-(\varrho')^{d{-}1}).
\]
As $\mathbf{m}_{\H^-}^{\PC}(\ol u_{x,\xi,\nu}, Q^\nu_\varrho(x))\leq \H^-(v,Q^\nu_{\varrho}(x))$ and $\H^-_n(u_n, Q^\nu_{\varrho'}(x)) \leq \mathbf{m}_{\H^-_n}^{\PC}(\ol u_{x,\xi,\nu}, Q^\nu_{\varrho'}(x))+1/n$, this concludes  \BBB the proof of the second inequality. \EEE
\end{proof}
\begin{remark}\label{rem:72GiaPon}
Arguing as in \cite[Remark~8.2]{GiacPonsi}, it can be shown that the values of $h^-(\cdot, \xi, \nu)$ on $\partial_D \Omega$ depend only on $g$ and $(K_n, \psi_n)$, and  do not depend neither on \EEE the choice of $\Omega'$ with the above properties, nor on  the  exact value $\widetilde{C_g}>C_g$  \EEE such that $\H_n^-(u,A)=\widetilde{C_g} \hn( A \cap J_u \EEE)$ for $A \in \mathcal{A}(\Omega')$ with $A \subset \Omega'\sm \ol \Omega$. 
\end{remark}

\begin{remark}\label{rem:tothm31}
\BBB Arguing \EEE as in \cite[Proposition~5.3]{GiacPonsi}, by Theorem~\ref{thm:convpreliminaries} we deduce the separate lower semicontinuity
$$ \int_A |\nabla u|^p \, {\rm d}x  \le \liminf_{n \to \infty}   \int_A |\nabla u_n|^p \, {\rm d}x,    \quad \int_{ A \cap J_u \EEE} h^-(\cdot,[u],\nu_u) \, {\rm d}\Hd \le \liminf_{n\to \infty} \int_{ A \cap J_{u_n} \EEE} \big( g(|\BBB [u_n] \EEE |)-\psi_n\big)^+ \, {\rm d}\Hd $$
for any $A\in \A(\Omega')$ and \BBB any sequence $(u_n)_n$ with \EEE  $u_n  \in \EEE \SBV^p_{w_n}(\Omega')$ and  \EEE  $u_n \rightharpoonup u$ 
in $\SBV^p(\Omega').$  
\end{remark}

\BBB We close this section with a consequence of the fundamental estimate. 

\begin{lemma}[Boundary conditions in recovery sequences]\label{recovboundra}
Suppose that $\H_n^-$  $\ol \Gamma$-converge   to $\H^-$. Then, there exists a subsequence (not relabeled) such that, given $v \in \PC(Q^\nu_\varrho(x))$ with   $v=\ol u_{x,\xi,\nu}$ in a neighborhood of $\partial Q^\nu_\varrho(x)$, we can choose  
$(v_n)_n \subset \PC(Q^\nu_\varrho(x))$  satisfying  $v_n \to v $ in $L^1(Q^\nu_\varrho(x))$, $v_n=\ol u_{x,\xi,\nu}$ in a neighborhood of $\partial Q^\nu_\varrho(x)$, and $\lim_{n\to \infty} \H_n^-(v_n, Q^\nu_\varrho(x))= \H^-(v, Q^\nu_\varrho(x))$. 
\end{lemma}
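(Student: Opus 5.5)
The plan is to repeat the argument for the first inequality in \eqref{12040901-eps}, but without fixing a particular $v$ in advance. The key point is that the subsequence must not depend on $v$. The fundamental estimate, Lemma~\ref{le:fundestHnm}, is stated for $\H_n^{\eps,-}$ with $\eps\in I$. So I will first pass to the subsequence given by Theorem~\ref{thm:convpreliminaries} and Lemma~\ref{extra-lemma}, along which $\H_n^{\eps,-}\to\H^{\eps,-}$ for every $\eps\in I$ simultaneously; this is the subsequence in the statement. If one prefers to work directly with $\H_n^-$, the proof of Lemma~\ref{le:fundestHnm} presumably also gives a version for $\H_n^-$, but I will use the $\eps$-version and send $\eps\to0$ at the end.

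Next fix $v\in\PC(Q^\nu_\varrho(x))$ with $v=\ol u_{x,\xi,\nu}$ on $N_{\varrho'}=Q^\nu_\varrho(x)\sm Q^\nu_{\varrho'}(x)$ for some $\varrho'<\varrho$. By truncation, which decreases both $\H_n^-$ and $\H^-$, I may assume $v$ takes values in $[0,\xi]$. Take any recovery sequence $\tilde v_n\to v$ in $L^1$ with $\H_n^-(\tilde v_n,Q^\nu_\varrho(x))\to\H^-(v,Q^\nu_\varrho(x))$, truncated to $[0,\xi]$. Now choose $\varrho'<\varrho''<\varrho'''<\varrho$ and apply Lemma~\ref{le:fundestHnm} with $u=\tilde v_n$, with $\ol u_{x,\xi,\nu}$ in place of $v$, $A'=Q^\nu_{\varrho''}(x)$, $A=Q^\nu_{\varrho'''}(x)$, $B=N_{\varrho'}$, $\sigma=\sqrt{\tau_n}$ where $\tau_n=\|\tilde v_n-\ol u_{x,\xi,\nu}\|_{L^1(N_{\varrho'})}\to0$, and $\eta_n\to0$ slowly enough that $\Lambda_n\sqrt{\tau_n}\to0$. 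The result is $v_n$, which equals $\ol u_{x,\xi,\nu}$ on $Q^\nu_\varrho(x)\sm A$ (a neighbourhood of the boundary) and converges to $v$ in $L^1$ by property (iii). Property (iii) only gives closeness up to $\sigma$ in $L^\infty$, but a final rounding of values to $\{0,\xi\}$ near $B\sm A$ is harmless because $\sigma\to0$. Finally,
\[
\limsup_n \H_n^{\eps,-}(v_n,Q^\nu_\varrho(x))\le \H^{\eps,-}(v,Q^\nu_\varrho(x))+\limsup_n\H_n^{\eps,-}\big(\ol u_{x,\xi,\nu},N_{\varrho'}\cap Q^\nu_{\varrho'''}(x)\big).
\]

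The main obstacle is the excess term from the annulus. The crude bound \eqref{ttouselater} gives $(C_g+\eps)$ times the annulus area, but this does not vanish: $\varrho'$ is fixed by $v$, and the annulus overlap $B\cap A$ is not small. To handle it, I will note that the full-sequence recovery $\tilde v_n$ already pays for this energy. On $N_{\varrho'}$ we have $v=\ol u_{x,\xi,\nu}$, so by $\Gamma$-liminf localized on open subsets, $\H^-$ of $v$ on the open annulus $N_{\varrho'}$ is at most $\liminf_n \H_n^-(\tilde v_n,N_{\varrho'})$. To make this usable, I will instead glue on a thin annulus $S$ of width $\delta$ close to $\partial Q^\nu_{\varrho}(x)$ rather than on all of $N_{\varrho'}$. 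Then the cost of $\ol u_{x,\xi,\nu}$ on $S$ is at most $(C_g+\eps)\,\hn(\text{flat face}\cap S)\lesssim \delta\varrho^{d-2}$, which vanishes as $\delta\to0$; by this choice, the factor $(1+\omega(\sigma))(1+\eta_n)$ tends to $1$. A diagonal argument in $\delta\to0$ and $\eps\to0$ ($\eps\in I$), using $\H^{\eps,-}\le\H^-+C_0\eps$ and $\H_n^-\le\H_n^{\eps,-}$, then yields $\limsup_n\H_n^-(v_n)\le\H^-(v)$. Together with the $\Gamma$-liminf this gives the equality.
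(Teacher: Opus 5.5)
Your proposal is correct and is essentially the paper's argument. You fix the subsequence along which $\H_n^{\eps,-}$ $\ol\Gamma$-converge to $\H^{\eps,-}$ for all $\eps\in I$, redo the gluing of the first inequality in \eqref{12040901-eps} via Lemma~\ref{le:fundestHnm}, and finish with diagonal arguments in the annulus width and in $\eps\to0$, using $\H_n^-\le\H_n^{\eps,-}$ and $\H^{\eps,-}\le\H^-+C_0\eps$. Your thin annulus is precisely the paper's use of the arbitrariness of $\varrho'<\varrho$ in \eqref{ttouselater}: since $v=\ol u_{x,\xi,\nu}$ on $N_{\varrho'}$ forces the same on every thinner annulus, that bound does tend to zero.

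One point to tidy is the opening reduction ``by truncation $v$ is $[0,\xi]$-valued''. This is not legitimate for a recovery sequence of a prescribed $v$, although it is harmless in every application in the paper, where $v$ is $[0,\xi]$-valued. Your own rounding remark, applied after the general part of Lemma~\ref{le:fundestGn}, removes the need for it.
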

\begin{proof}
As above, by a diagonal argument, we can choose a subsequence such that $\H_n^{\eps,-}$  $\ol \Gamma$-converge   to $\H^{\eps, -}$ for all $\eps \in I$. The statement for $\H_n^{\eps,-}$ and $\H^{\eps,-}$  in place of  $\H_n^{-}$ and $\H^{-}$  follows as in the proof of the first inequality in \eqref{12040901-eps}, by using a diagonal argument. Then the statement for $\eps = 0$ follows by a further diagonal argument as $\eps \to 0$, taking \eqref{HHNN} into account. \EEE
\end{proof}

\subsection{Slicing}\label{sec:slicing}
Let \EEE us introduce some slicing notation. For $y \in \R^d$ and $\nu \in \mathbb{S}^{d{-}1}$, given a set $G \subset \R^d$ and $v \colon G \to \R$, we define $G^\nu_y := \lbrace t \in \R \colon  y + t\nu \in G\rbrace$, and for $t \in G_y^\nu$ we set $v^\nu_y(t) :=  v(y + t\nu)$. We also define  the \ZZZZ orthogonal \EEE projection  $ P^{\nu}(G) := \lbrace y \in \R^d  \colon y \cdot \nu = 0, \,  G^\nu_y \neq \emptyset \rbrace$.   We formulate the following   technical \EEE  lemma \BBB that we will use several times. \EEE

\begin{lemma}\label{le:projection}
Let $Q \subset \R^d$ be a cube with two sides orthogonal to $\nu \in \mathbb{S}^{d{-}1}$. Let $\BBB \Psi \EEE \subset Q$ be a Borel set such that  $\hn( \BBB \Psi  \EEE )\leq \ZZZZ \mathcal{H}^{d-1}( P^{\nu}(\Psi)) \EEE +  \BBB \delta \EEE $ \BBB for some $\delta>0$ \EEE and let $A\subset Q$ be a Borel  set satisfying $A=(P^\nu(A) + \nu \R)\cap Q$. Then,
\[
\hn(\Psi \cap A) \leq \hn(P^\nu(\Psi \cap A)) + \delta.
\]
\end{lemma}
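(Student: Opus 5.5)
The proof is a splitting argument: we compare $\Psi$ with its projection separately inside the cylinder $A$ and outside it. The key observation is that, since $A=(P^\nu(A)+\nu\R)\cap Q$ is a union of full vertical fibers of $Q$, projections of pieces inside and outside $A$ land in disjoint sets. Precisely, for $\Psi_1:=\Psi\cap A$ and $\Psi_2:=\Psi\setminus A$ we claim
\[
P^\nu(\Psi_1)\subset P^\nu(A), \qquad P^\nu(\Psi_2)\cap P^\nu(A)=\emptyset .
\]
The first inclusion is immediate. For the second, suppose $y\in P^\nu(\Psi_2)\cap P^\nu(A)$, so $y+t\nu\in \Psi_2\subset Q$ for some $t$. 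Since $y\in P^\nu(A)$, the point $y+t\nu$ lies in $(P^\nu(A)+\nu\R)\cap Q=A$, contradicting $\Psi_2\cap A=\emptyset$. Hence $P^\nu(\Psi)$ is the disjoint union of $P^\nu(\Psi_1)$ and $P^\nu(\Psi_2)$, and
\[
\hn(P^\nu(\Psi))=\hn(P^\nu(\Psi_1))+\hn(P^\nu(\Psi_2)).
\]
Measurability is not an issue here: projections of Borel sets are Suslin, hence $\hn$-measurable. Alternatively, additivity on these two sets follows directly because they are separated by the measurable set $P^\nu(A)$.

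Next we use that orthogonal projection is $1$-Lipschitz, so $\hn(P^\nu(\Psi_2))\le \hn(\Psi_2)$. Combining this with the hypothesis $\hn(\Psi)\le \hn(P^\nu(\Psi))+\delta$ and additivity of $\hn$ on the disjoint Borel sets $\Psi_1,\Psi_2$ gives
\[
\hn(\Psi_1)+\hn(\Psi_2)\le \hn(P^\nu(\Psi_1))+\hn(P^\nu(\Psi_2))+\delta\le \hn(P^\nu(\Psi_1))+\hn(\Psi_2)+\delta .
\]
Subtracting $\hn(\Psi_2)$, which is finite because the hypothesis forces $\hn(\Psi)<\infty$ given that $P^\nu(\Psi)$ is a bounded subset of a hyperplane, yields $\hn(\Psi\cap A)\le \hn(P^\nu(\Psi\cap A))+\delta$.

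The main point to check carefully is the disjointness of the two projections. This is exactly where the cylindrical structure of $A$ inside $Q$ and the inclusion $\Psi\subset Q$ are used. The remaining steps are elementary measure theory.
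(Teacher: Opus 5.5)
Your proof is correct and is the same argument as the paper's: split $\Psi$ into $\Psi\cap A$ and $\Psi\setminus A$, use $\hn(P^\nu(\Psi))=\hn(P^\nu(\Psi\cap A))+\hn(P^\nu(\Psi\setminus A))$ together with $\hn(P^\nu(\Psi\setminus A))\le\hn(\Psi\setminus A)$, and subtract. You also spell out the disjointness of the two projections (the paper uses it tacitly) and justify the projection inequality by the $1$-Lipschitz property rather than the Area Formula, which is the more fitting tool since $\Psi$ is only assumed Borel.
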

\begin{proof}
Since $\hn(\Psi   \sm A) - \hn(P^\nu(\Psi  \sm A  ))\geq 0$ \BBB by the Area Formula (see e.g.\ \cite[(2.72)]{Ambrosio-Fusco-Pallara:2000}), \EEE  $\hn(\Psi)= \hn(\Psi \cap A)+\hn(\Psi  \sm A \EEE )$, and $\ZZZZ \mathcal{H}^{d-1}(P^{\nu}(\Psi)) \EEE = \hn(P^\nu(\Psi \cap A))+\hn(P^\nu( \Psi \sm A))$, it holds that
\[
\hn(\Psi\cap A) - \hn(P^\nu(\Psi \cap A))\leq \hn(\Psi )-  \ZZZZ \mathcal{H}^{d-1} (P^{\nu}(\Psi )) \EEE \leq \delta.
\]
This concludes the proof.
\end{proof}  
 
\section{A novel convergence of sets and functions}\label{subsec:sigmaconv}

This section is devoted to  a \EEE  new notion of convergence, inspired by $\sigma$-convergence  \cite{GiacPonsi},  which  involves \EEE   general couples $( {K}_n, {\psi}_n)$ with rectifiable sets  $ {K}_n$  and Borel functions  ${\psi}_n \colon K_n \to [0,\infty)$.  \BBB We always extend $\psi_n \equiv 0$ outside $K_n$. \EEE For convenience, in the definition we directly use the (larger)   Lipschitz domain $\Omega'$ as this allows for a convenient adaptation of the notion to  a setting with boundary conditions. \ZZZZ Recall the shorthand notation in \eqref{convention}.  \EEE

\begin{definition}\label{def:sigmaconv}
 Let  $\widehat g \colon \Omega' \times [0,\infty) \to [0,\infty)$ be a  Borel function. \EEE
A sequence of  sets and functions $(K_n,\psi_n)_n$, where $K_n$ are rectifiable sets in  $\Omega'$  and   $\psi_n \colon K_n \to [0,\infty)$ are Borel functions, \BBB is said to \EEE \emph{$\sigma_{\rm cf}$-converge in $\Omega'$ to $(K,\psi)$}  if \EEE the functionals $\mathcal{H}_n^-\colon \BBB L^1(\Omega') \EEE {\times} \mathcal{A}(\Omega') \to [0,+\infty)$, defined by
 \begin{equation}\label{1708240952}
\mathcal{H}_n^-(u, A):= \int_{  A \cap J_u \EEE}  \big(    \widehat{g} \EEE (\cdot, |[u]|) - \psi_n \big)^+  \, \dhn \quad \BBB \text{if } u|_A \in \PC(A) \EEE
\end{equation}  
\BBB and $+\infty$ otherwise, 
 $\ol \Gamma$-converge  to  $\mathcal{H}^-\colon \BBB L^1(\Omega') \EEE {\times} \mathcal{A}(\Omega') \to [0,+\infty)$    with \EEE
\begin{equation}\label{1708240957}
\mathcal{H}^-(u, A) = \EEE \int_{ A \cap J_u \EEE} h^-(\cdot, [u], \nu_u) \EEE \,  \mathrm{d}\mathcal{H}^{d{-}1} \quad  \BBB \text{for }  u \in \PC(\Omega'), \EEE
\end{equation}
and the following two properties hold:
\begin{itemize}
\item $K$ is the maximal (with respect to  $\subset$) rectifiable set in $\Omega'$ such that
\begin{equation}\label{maxK}
\lim_{\xi \to 0}  h^-(x, \xi, \nu_K(x))=0 \quad\text{for } \mathcal{H}^{d{-}1}\text{-a.e.\ }x \in K,
\end{equation}
i.e., for every rectifiable $H \subset \Omega'$ we have
\begin{align}\label{maxK2}
 \lim_{\xi \to 0}  h^-(x, \xi, \nu_H(x))=0 \quad\text{for } \mathcal{H}^{d{-}1}\text{-a.e.\ }x \in H \ \Rightarrow \ H \subset K.   
 \end{align}
\item $\psi \colon K \to [0,\infty)$ is the maximal (with respect to  $\le$) Borel function  such that  
\begin{equation}\label{maxpsi}
h^-(x,\xi, \nu_K(x))=0 \quad\text{for all $\xi \in \R$ with   $g(|\xi|) \EEE \le  \psi(x) $ for } \mathcal{H}^{d{-}1}\text{-a.e.\ }x \in K,
\end{equation}
i.e., for every Borel function $\zeta \colon K \to [0,\infty)$ we have    \ZZZZ 
\begin{align}\label{maxpsi2}
h^-(x,\xi, \nu_K(x))=0 \quad\text{for all $\xi \in \R$ with   $g(|\xi|)   \le  \zeta(x) $ for } \mathcal{H}^{d{-}1}\text{-a.e.\ }x \in K \ \Rightarrow \   \zeta  \le \psi. 
\end{align}
\EEE
\end{itemize} 
\end{definition}
 We  adapt the notion to a setting with boundary conditions. As in Section \ref{sec: evo}, we consider the sets $\Omega \subset \Omega'$ with $ \partial_D \Omega = \partial \Omega \cap \Omega'$, and let $g$ be a function as  in Section \ref{sec: evo}, in particular satisfying  \eqref{gproperty}--\eqref{gproperty2}.  \BBB Recall the definition of $\mathcal{C}$ in \eqref{mathcalcdef}. \EEE

\begin{definition}\label{def:sigmaconv2} 
A sequence of  sets and functions $(K_n,\psi_n)_n$,  \BBB with $(K_n,\psi_n) \in  \mathcal{C} \EEE $,  \EEE  \emph{$\sigma^\partial_{\rm cf}$-converges in $\Omega'$ to $(K,\psi)$}  if $(K_n,\psi_n)_n$  $\sigma_{\rm cf}$-converges in the sense of Definition \ref{def:sigmaconv} for the function $\widehat g$ given by
\begin{align}\label{widehtatg}
\widehat g(x,t) = \begin{cases}   g(t) & \text{for } x \in \Omega' \cap \overline{\Omega}, \,  t \ge 0, \\   C_g+1 & \text{for } x \in \Omega' \sm \overline{\Omega}, \,  t \ge 0. \end{cases} 
\end{align}
\end{definition}
In view of Theorem \ref{thm:convpreliminaries} \BBB and Corollary \ref{cor: bc}, \EEE and the definition of $\widehat g$, we see that this notion indeed relates to boundary conditions imposed on $\Omega' \setminus \overline{\Omega}$, even if the notion does not directly involve any boundary conditions. 

 From now on, we always adopt the setting of Definition \ref{def:sigmaconv2}, and \EEE study the main properties for  $g$ from Section~\ref{sec: evo} and for  sequences $(K_n,\psi_n)_n \BBB \subset \mathcal{C} \EEE $ satisfying  $\hn(K_n),\, \int_{K_n} \psi_n \,\dhn \leq C_0$. (We will not repeat these general assumptions all the time.)  \EEE  
\begin{remark}\label{rem: sigma1} We  proceed with \EEE some direct consequences and remarks. 
\begin{itemize}
\item[(i)] By \BBB definition of  $\sigma^\partial_{\rm cf}$-convergence, the sequence  \EEE $(\H^-_n)_n$ corresponding to $(K_n,\psi_n)_n$ $\ol \Gamma$-converges, and \BBB by Theorem~\ref{thm:convpreliminaries} \EEE  the density $h^-$ of the limit in  \eqref{1708240957} is characterized by \eqref{1004261901}--\eqref{12040854}.   In particular,   we get  $h^-(x, \xi, \nu) \le   g(|\xi|) $ \BBB for all $x \in  \Omega  $, \ZZZZ $\xi \in \R$, \EEE   and $\nu \in \mathbb{S}^{d{-}1}$  by taking  $\ol u_{x,\xi,\nu}$ as competitor. For all differentiability points of $x \in \partial_D \Omega$ (and thus for $\mathcal{H}^{d{-}1}$-a.e.\ $x \in \partial_D \Omega$) we also have  $h^-(x, \xi, \nu) \le   g(|\xi|) $  by using \EEE   $\xi\chi_{\Omega}$ as competitor. \EEE Moreover, if $(K_n,\psi_n)$  $\sigma^\partial_{\rm cf}$-converges in $\Omega'$ to $(K,\psi)$, then $K \subset \Omega \cup \partial_D \Omega$  and $(K,\psi) \in \mathcal{C}$. \EEE

\item[ (ii)] By Remark~\ref{rem:propglimit} and continuity of $g$ \ZZZZ on $(0,\infty)$, \EEE it follows that the function $\xi \mapsto h^-(x,\xi,\nu)$ is continuous for all $x \in    \Omega  \cup \partial_D \Omega \EEE$  and $\nu \in \mathbb{S}^{d{-}1}$ (with same modulus of continuity of $g$), that $h^-(x,\xi,\nu) = h^-(x,-\xi,\nu) $, and that $\xi \mapsto h^-(x,\xi,\nu)$ is nondecreasing on $[0,\infty)$. 

\item[(iii)]  If  \EEE $(K_n,\psi_n)_n$, $(\tilde{K}_n,\tilde{\psi}_n)_n$ are such that $\tilde{K}_n  \subset   K_n$, $\tilde{\psi}_n \le  \psi_n$ and $\sigma^\partial_{\rm cf}$-converge in $\Omega'$ to $(K,\psi)$ and $(\tilde{K},\tilde{\psi})$, respectively, then $\tilde{K}  \subset   K$ and $\tilde{\psi}  \le   \psi$.   Indeed, denoting by \EEE $h^-$, $\tilde{h}^-$ the corresponding limit densities in Definition \ref{def:sigmaconv}, this follows since $h^-\leq \tilde{h}^-$ by monotonicity of the densities in the $\Gamma$-convergence result Theorem~\ref{thm:convpreliminaries}.

\item[(iv)]
By the results in   Appendix \ref{sec: ppi3},  we could \EEE 
 consider more general \BBB situations \EEE of the form 
\begin{equation*}\label{1708240952'}
\mathcal{H}_n^-(u, A):= \int_{  A \cap J_u \EEE} \big(g_n( \cdot, \EEE [u], \nu_u) - \psi_n \big)^+   \, {\rm d}  \mathcal{H}^{d{-}1}, 
\end{equation*} 
with $(g_n)_n$ satisfying (g1)--(g4),   and possibly take $g_n(x, \BBB \varphi_n, \EEE \nu_u)$ in place of $\psi_n$,  for some sequence of Borel functions   $(\varphi_n)_n$.

\RRR 
\end{itemize}
\end{remark}

 For convenience and with a slight abuse of notation, we will from now \BBB on \EEE always speak of $\sigma_{\rm cf}$-convergence although $\sigma^\partial_{\rm cf}$-convergence is intended. \EEE    We proceed with fundamental properties of this type of convergence. 

\begin{theorem}[Compactness and lower semicontinuity]\label{sigma-comp} 
  \ZZZZ Let    $(K_n,\psi_n)_n \subset \mathcal{C} $ with   $\hn(K_n) \le C_0$ and $\int_{K_n} \psi_n \,\dhn \leq C_0$. \EEE Then, 
there exists a subsequence (not relabeled), a rectifiable set $K$ and a Borel function $\psi\colon K \to   [c_g, \EEE \infty)$ such that $(K_n,\psi_n)_n$ $\sigma_{\rm cf}$-converges in $\Omega'$ to $(K,\psi)$.  Moreover, for all $A \in \mathcal{A}(\Omega')$ we have 
\begin{equation}\label{lsc-neu}
\liminf_{n\to \infty} \hn(K_n \cap A) \geq \hn(K\cap A),
\end{equation}  
\begin{align}\label{lsc}
 \liminf_{n\to \infty} \int_{K_n \cap A} \psi_n \, {\rm d}\mathcal{H}^{d{-}1} \ge \int_{K\cap A} \psi \, {\rm d}\mathcal{H}^{d{-}1} ,
\end{align}
\begin{align}\label{lscminus}
 \liminf_{n\to \infty} \int_{K_n \cap A}  (\psi_n-c_g) \EEE \, {\rm d}\mathcal{H}^{d{-}1} \ge \int_{K\cap A} (\psi -c_g) \, {\rm d}\mathcal{H}^{d{-}1}. 
\end{align}
\end{theorem}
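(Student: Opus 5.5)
The plan is to build everything on Theorem~\ref{thm:convpreliminaries}, applied with $\widehat g$ from \eqref{widehtatg}. It gives a subsequence along which $\H^-_n$ $\ol\Gamma$-converges to $\H^-$ with density $h^-$ given by \eqref{1004261901}--\eqref{12040854}. Then $K$ and $\psi$ are constructed from $h^-$, and the maximality requirements \eqref{maxK2}, \eqref{maxpsi2} are checked. The lower semicontinuity estimates \eqref{lsc-neu}--\eqscref{lscminus} all come from a single blow-up and slicing argument.

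\textbf{Step 1: a density estimate.} Let $H\subset\Omega'$ be rectifiable with $\lim_{\xi\to0}h^-(x,\xi,\nu_H(x))=0$ for $\hn$-a.e.\ $x\in H$. Let $\mu$ be a weak$^*$ limit of $\hn\mres K_n$ in $\M_b^+(\Omega')$, which exists since $\hn(K_n)\le C_0$. I claim $\frac{d\mu}{d\hn\mres H}\ge1$ $\hn$-a.e.\ on $H$. Fix a point $x$ of approximate flatness of $H$ with normal $\nu$, fix a small $\xi$, and fix a cube $Q=Q^\nu_\varrho(x)$. By \eqref{12040854} there are $u_n\in\PC(Q;[0,\xi])$ (after truncation) with $u_n=\ol u_{x,\xi,\nu}$ near $\partial Q$ and $\H^-_n(u_n,Q)\le\eta\varrho^{d-1}$, where $\eta=\eta(\xi,\varrho)\to0$. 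On $\hn$-a.e.\ line $y+\R\nu$ with $y\in P^\nu(Q)$, the slice of $u_n$ goes from $0$ to $\xi$, so it must jump at least once. Jumps off $K_n$ cost at least $c_g$ in $\H^-_n$ by \eqref{gproperty}. Hence the set of $y$ whose line meets $J_{u_n}\sm K_n$ has measure at most $\eta\varrho^{d-1}/c_g$. Every other line meets $K_n\cap Q$, which gives $\hn(K_n\cap \ol Q)\ge(1-\eta/c_g)\varrho^{d-1}$. Letting $n\to\infty$, then $\xi\to0$ and $\varrho\to0$, yields the claim.

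\textbf{Step 2: existence of the maximal $K$, and \eqref{lsc-neu}.} Consider the family of admissible $H$ from Step 1. By Step 1 each admissible $H$ satisfies $\hn(H\cap A)\le\mu(A)\le \liminf_n\hn(K_n\cap A)$ for open $A$ with $\mu(\partial A)=0$, and hence for all open $A$ by inner approximation. A countable union of admissible sets is again admissible, because normals of rectifiable sets agree $\hn$-a.e.\ on intersections and the condition is pointwise. Taking a maximizing sequence for $\hn(\cdot)$, which is bounded by $C_0$, and forming its union produces a maximal $K$ in the sense of \eqref{maxK2}. Applying the measure bound to $K$ itself gives \eqref{lsc-neu}. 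By Remark~\ref{rem: sigma1}(i), $K\subset\Omega\cup\partial_D\Omega$.

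\textbf{Step 3: definition of $\psi$.} For $x\in K$ set $\psi(x):=\sup\{g(|\xi|)\colon h^-(x,\xi,\nu_K(x))=0\}$. By Remark~\ref{rem: sigma1}(ii), $\xi\mapsto h^-(x,\xi,\nu)$ is continuous, even and nondecreasing. So the zero set is an interval, and $\psi$ is Borel: the supremum can be taken over rational $\xi$, and $h^-$ is Borel in $x$. Since $g$ is increasing, this $\psi$ satisfies \eqref{maxpsi}, and \eqref{maxpsi2} is immediate from the definition. Moreover $\psi\ge c_g$: by \eqref{maxK}, $h^-(x,\xi,\nu_K(x))\to0$ as $\xi\to0$, so the zero set contains $0$ and $\psi(x)\ge g(0^+)=c_g$. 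Here I would double-check whether $\psi$ should instead be defined as the supremum over the closure, to handle the case where the zero set is only $\{0\}$. Finiteness of $\psi$ follows from \eqref{lsc} below, or from $g\le C_g$.

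\textbf{Step 4: \eqref{lsc} and \eqref{lscminus}.} This is the main obstacle. Let $\lambda$ be a weak$^*$ limit of $\psi_n\hn\mres K_n$. I want $\frac{d\lambda}{d\hn\mres K}\ge\psi$ $\hn$-a.e. At a blow-up point $x$, pick $\xi$ with $h^-(x,\xi,\nu)=0$ and $g(\xi)\ge\psi(x)-\delta$, and take $u_n$ as in Step 1 with $\H^-_n(u_n,Q)\le\eta\varrho^{d-1}$. On a good slice, the jumps $a_1,\dots,a_k$ of $u_n$ satisfy $\sum_i a_i\ge\xi$ and lie on $K_n$. Using $\psi_n\ge g(a_i)-(g(a_i)-\psi_n)^+$ together with subadditivity of $g$ and of $g-c_g$, I obtain
$$\textstyle\sum_i\psi_n\ge g(\xi)-\text{excess},\qquad \sum_i(\psi_n-c_g)\ge g(\xi)-c_g-\text{excess}.$$
The subtle part is that the integral of $\psi_n$ over $K_n\cap Q$ must be compared with the slice sums. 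On the slices this follows from the area formula, since $\hn\mres K_n$ dominates the integral of counting measure over projections. The slice excesses integrate to at most $\H^-_n(u_n,Q)$. Bad slices have small measure and are simply dropped, since $\psi_n\ge0$. This yields $\lambda(\ol Q)\ge(g(\xi)-\eta')\varrho^{d-1}$, and hence \eqref{lsc}. The same argument with $\psi_n-c_g$ gives \eqref{lscminus}; there one needs $\psi_n-c_g$ to be effectively nonnegative on the relevant jumps, which holds wherever $(g-\psi_n)^+$ is small, up to an error controlled by the excess.
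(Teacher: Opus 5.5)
Your architecture matches the paper's: Theorem~\ref{thm:convpreliminaries}, then a density bound giving $\hn\mres H\le\mu$, then a maximal $K$ as the union of a maximizing sequence, then a blow-up for \eqref{lsc}. What differs is the blow-up device. You slice near-optimal cell competitors at level $n$ via \eqref{12040854}. The paper instead takes recovery sequences of a fixed limit competitor ($\delta\chi_{B^{+,S}}$ for a $C^1$ graph $S$ in Step~1, a near-minimizer of the limit cell problem in Step~2). It controls $\int_{J_{v_n}}g(|[v_n]|)$ by the measures on $K_n$ plus $\H^-_n(v_n,\cdot)$, applies lower semicontinuity of $v\mapsto\int_{J_v}g(|[v]|)$, and slices only the limit. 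Both routes work, and yours avoids the graph approximation and the semicontinuity theorem.

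There is, however, a gap in Step~3. From $0\in Z_x:=\{\xi\colon h^-(x,\xi,\nu_K(x))=0\}$ you only get $\psi(x)\ge g(0)=0$, not $g(0^+)$. Moreover, $\lim_{\xi\to0}h^-(x,\xi,\nu_K(x))=0$ does not produce any $\xi\neq0$ in $Z_x$. This case really occurs. Take $K_n\equiv K_0$ a fixed flat segment in $\Omega$, $\psi_n\equiv c_g$ on $K_0$, and $g(t)=1+\frac{t}{1+t}$. Slicing and subadditivity of $g-c_g$ give $h^-(x,\xi,\nu_{K_0})=g(|\xi|)-c_g$ on $K_0$, which is positive for $\xi\neq0$ but tends to $0$. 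Hence $K_0\subset K$, $Z_x=\{0\}$, and your $\psi$ vanishes on $K_0$. This violates $\psi\ge c_g$ and also \eqref{maxpsi2}: $\zeta\equiv c_g$ satisfies its premise vacuously, since $\{g(|\xi|)\le c_g\}=\{0\}$. Taking a closure does not help, because $Z_x$ is already closed.

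The repair has two parts:
\begin{itemize}
\item Put a floor into the definition, $\psi(x):=c_g\vee\sup\{g(|\xi|)\colon\xi\neq0,\ h^-(x,\xi,\nu_K(x))=0\}$.
\item Add to Step~4 the bound $c_g\,\hn\mres K\le\lambda$. Run your slicing with small $\xi\neq0$ for which $h^-(x,\xi,\nu)$ is small but nonzero. The lower density is then at least $g(\xi)(1-\eta/c_g)-\eta$ with $\limsup_{\varrho\to0}\eta\le h^-(x,\xi,\nu)$, and letting $\xi\to0$ gives $c_g$.
\end{itemize}
The paper's $\Psi$-construction also yields $\psi=0$ on $K_0$ in this example, and its one-line claim ``$\psi\ge c_g$ by \eqref{gproperty}'' needs the same fix.

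For \eqref{lscminus}, your closing remark is not enough. You need $\psi_n-c_g\ge0$ on all of $K_n\cap Q$, not just at the jumps of $u_n$. It is used in the area-formula domination, and again when you discard bad slices and the points of $K_n$ not crossed by $J_{u_n}$. Without $\psi_n\ge c_g$ the inequality is actually false. If $\psi_n\equiv0$ on a fixed segment $K_0$, then $h^-(x,\xi,\nu)=g(|\xi|)$ there, so $K=\emptyset$, while the left-hand side equals $-c_g\hn(K_0\cap A)<0$. The paper assumes $\psi_n\ge c_g$ tacitly when it treats $(\psi_n-c_g)\hn\mres K_n$ as positive measures. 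The assumption holds in the application, since $\psi^k_n$ is a maximum of $g$ over nonzero jumps. Under it, your slicing argument with subadditivity of $g-c_g$ goes through.
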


  \begin{proof}
By Theorem~\ref{thm:convpreliminaries}, up to a subsequence (not relabeled) we get \BBB that \EEE  $\mathcal{H}_n^-$ $\ol \Gamma$-converge to $\mathcal{H}^-$, represented as in \eqref{1708240957},  where $h^-$ is given as in \EEE \eqref{1004261901}--\eqref{12040854}.

\noindent \emph{Step 1: The limit set $K$.} 
We define the class
$${\mathcal{K} := \Big\{ H \subset   \Omega \cup \partial_D \Omega \EEE  \colon \text{$H$ is rectifiable and $\lim_{\xi \to 0}  h^-(x, \xi, \nu_H(x))=0$ for $\mathcal{H}^{d{-}1}\text{-a.e.\ }x \in H$}} \Big\}, $$
and the measures $\widetilde{\mu}_n:= \hn\mres_{K_n}$. Since $\hn(K_n)\leq C_0$, there exists $\widetilde{\mu} \in \M_b^+(\Omega')$ such that, up to a subsequence, $\BBB \widetilde{\mu}_n \EEE \weaklystar \widetilde{\mu}$ in $\M_b^+(\Omega')$  as $\BBB n \EEE  \to \infty$. \EEE
Given $H \in \mathcal{K}$,  we claim that for $\hn$-a.e.\ $x \in H$ \BBB the following holds: \EEE 
for any $\varepsilon>0$ there exists $\ol \varrho>0$ such that for any $\varrho \in (0 ,\ol \varrho)$ with \ZZZZ $B_\varrho(x) \subset \Omega'$ and \EEE $\widetilde{\mu}(\partial B_\varrho(x))=0$   one has \EEE
\begin{equation}\label{14041056}
\Hd(H \cap B_\varrho(x)) - \varepsilon \varrho^{d{-}1} \leq \widetilde{\mu}(B_\varrho(x)) .
\end{equation}
  Once  this \EEE is shown,  by the rectifiability of $H$ and the Besicovitch Derivation Theorem (see \cite[Theorem~2.22]{Ambrosio-Fusco-Pallara:2000}) applied to $\widetilde{\mu}$ and $\hn\mres_H$ it follows that 
\begin{equation}\label{14041106}
\hn\mres_H \leq \widetilde{\mu} \quad \text{ in } \ZZZZ\M^+_b \EEE (\Omega').
\end{equation}
In particular, by \eqref{14041106}, we get $\mathcal{H}^{d{-}1}(H) \le \liminf_{n \to \infty} \mathcal{H}^{d{-}1}(K_n) \le C_0$ for each $H \in \mathcal{K} $. 
Therefore, 
$$M_{\mathcal{K}}:=\sup \lbrace \mathcal{H}^{d{-}1}(H) \colon \, H \in \mathcal{K}    \rbrace \leq C_0,  $$
and we can consider a   maximizing sequence $(H_k)_k$ of this maximization problem. We define $K:= \bigcup_{k \in \N} H_k \in \mathcal{K}$ and observe that $M_{\mathcal{K}} = \mathcal{H}^{d{-}1}(K)$. By definition of $\mathcal{K}$   we observe that $K$ satisfies \eqref{maxK}. Since  $M_{\mathcal{K}} = \mathcal{H}^{d{-}1}(K)$, we also see that $K$ is maximal (with respect to $\subset$) with this property, i.e., \eqref{maxK2} holds. 
Moreover, as $K \in \mathcal{K}$, by \eqref{14041106}  also \eqref{lsc-neu} follows.

 We close this step of the proof by proving \EEE  \eqref{14041056}.
By rectifiability of $H$, for $\hn$-a.e.\ $x \in H$ there exists $S \subset  \Omega \cup \partial_D \Omega$  being \EEE a $(d{-}1)$-dimensional graph of class $C^1$ such that $x \in S$, with normal $\BBB \nu_H(x) \EEE$ at $x$, $\lim_{\varrho\to 0^+}   \varrho^{-{d{-}1}} \hn({ B_\varrho(x)} \cap (H\triangle S))=0$  
and for $\varrho$ small $S$ separates $B_\varrho(x)$ in two connected components $B^{\pm,S}$.
Let us fix $\varepsilon>0$ and let $\ol \varrho>0$  be \EEE such that, for any $\varrho<\ol \varrho$,  
\begin{equation}\label{14040946}
\hn\big(  {B_{\varrho}(x)} \EEE \cap (H\triangle S)\big)  \le \frac{\varepsilon}{2}  \EEE \,\varrho^{d{-}1}.
\end{equation}
For $\delta>0$, we set $v:=\delta \chi_{B^{+,S}}$.
We notice that, since
$h^-(x,\xi,\nu) \le g(|\xi|)$   for all \BBB $\mathcal{H}^{d{-}1}$-a.e.\ \EEE  $x \in \Omega \cup \partial_D \Omega $ \EEE \ZZZZ and all $\xi \in \R$, $\nu \in \mathbb{S}^{d{-}1}$ \EEE (see Remark~\ref{rem: sigma1}(i)), 
\begin{equation*}\label{14040942}
\begin{split}
\H^-(v,  {B_{\varrho}(x)} \EEE)&= \int_{J_v} h^-(\cdot, [v],\nu_v) \,\dhn  \le \EEE g(\delta) \, \hn\big((S \sm H) \cap  {B_{\varrho}(x)} \EEE\big) + \int_{H \cap S \cap  B_\varrho(x)} h^-(\cdot, [v], \ZZZZ \nu_H) \EEE \,\dhn
\\&
 \le \EEE  g(\delta)\,  \frac{\varepsilon}{2} \EEE \,\varrho^{d{-}1} + \int_{H \cap B_\varrho(x)} h^-(\cdot, \delta,\nu_H) \,\dhn=: E^\delta_{\varrho,\varepsilon},
\end{split}
\end{equation*}
where we also used that $|[v]| = \delta$ on $J_v$.
Let $(v_n)_n \subset \PC(B_\varrho(x))$ be a recovery sequence for $v$ with respect to the $\Gamma$-convergence (in the strong $L^1$-topology) of $\H^-_n(\cdot, B_\varrho(x))$ to $\H^-(\cdot, B_\varrho(x))$. By a truncation argument, it is not restrictive to assume that $v_n$ \BBB has \EEE values in  $[0,\delta]$. In view of \eqref{1708240952}, we particularly get  \BBB (recalling the convention that \EEE $\psi_n \equiv 0$ outside of $K_n$) \EEE
\begin{equation*}
 E^\delta_{\varrho,\varepsilon}   \ge \EEE \H^-(v,  {B_{\varrho}(x)} \EEE) = \lim_{n\to \infty}\H^-_n(v_n,   {B_{\varrho}(x)} \EEE) \geq \limsup_{n\to \infty} \int_{J_{v_n}\sm K_n} g ( |[v_n]|) \,\dhn,
\end{equation*}
 where we used that  $\widehat{g}$ as given in \eqref{widehtatg} is bigger or equal than $g$. \EEE 
Then, for $\varrho \BBB < \bar{\varrho}\EEE$ such that  \ZZZZ  $B_\varrho(x) \subset \Omega'$ and  \EEE $\widetilde{\mu}(\partial B_\varrho(x))=0$,   \BBB  by the fact that $|[v_n]| \le  \delta$ on $J_{v_n}$, the monotonicity of $g$, and \EEE $K_n \subset \Omega \cup \partial_D \Omega$, we find  \EEE
\begin{equation*}
 E^\delta_{\varrho,\varepsilon}+g(\delta) \widetilde{\mu}(B_\varrho(x)) =   E^\delta_{\varrho,\varepsilon}+g(\delta) \liminf_{n\to \infty} \mathcal{H}^{d{-}1}(K_n \cap B_\varrho(x))   \ge \liminf_{n \to \infty }\int_{ J_{v_n}}   {g}(  |[v_n]|)  \,  \dhn.
\end{equation*}
\BBB Then, \EEE  by lower semicontinuity of $v\in \PC(B_\varrho(x))\mapsto\int_{J_v}  {g}( |[v]|) \,\dhn$ in the strong $L^1$-topology (by   the  fact that \EEE $g$ is  nonnegative, continuous, increasing, and subadditive \ZZZZ on $(0,\infty)$, \EEE see \cite[Section 5.4, Example~5.23]{Ambrosio-Fusco-Pallara:2000})  and  by the fact that \EEE $|[v]| = \delta$ on $J_v$, we get 
\begin{equation*}
\begin{split}
 E^\delta_{\varrho,\varepsilon}+g(\delta) \widetilde{\mu}(B_\varrho(x))  \ge \int_{ J_{v}}   {g}( |[v]|)  \, \dhn =  g(\delta) \Hd(  S \EEE  \cap B_\varrho(x)).
\end{split}
\end{equation*}
We conclude \eqref{14041056} by the arbitrariness of $\delta >0$,   using that $\lim_{\delta\to 0}g(\delta)=c_g>0$ (see \eqref{gproperty}),  \eqref{14040946}, \EEE and the fact that, as $H \in \mathcal{K}$, $\lim_{\delta\to 0}\int_{H \cap B_\varrho(x)} h^-(x, \delta,\nu_H) \,\dhn=0$  by the  Monotone Convergence Theorem. (Recall from Remark~\ref{rem: sigma1}(ii) that $h^-(x,\cdot,\nu)$ is nondecreasing.)

\vspace{1em}

\noindent \emph{Step 2: The limit function $\psi$.}
 Since $\int_{K_n} \psi_n \,\dhn \leq C_0$, there exists $\mu\in  \M_b^+(\Omega')$ \EEE such that, up to a subsequence,
\begin{equation}\label{1910251228}
\mu_n:=\psi_n \hn\mres_{K_n} \weaklystar \mu \quad\text{in } \ZZZZ \M^+_b \EEE (\Omega'). \EEE
\end{equation}
We consider the class
$$ {\Psi} := \big\{ \varphi \colon K \to [0,\infty) \text{ Borel} \colon \, h^-(x, \varphi(x), \nu_K(x))=0 \quad\text{for } \mathcal{H}^{d{-}1}\text{-a.e.\ }x \in K \big\}. $$
We claim that for every $\varphi\in \Psi$ and  $x \in K$  \ZZZZ the following holds:  for any $\varepsilon>0$ there exists $\ol \varrho>0$ such that for any $\varrho \in (0 ,\ol \varrho)$ with \EEE $ Q^{\nu_K(x)}_\varrho(x)\EEE\subset \Omega'$ and $\mu (\partial Q^{\nu_K(x)}_\varrho(x))\EEE=0$, it holds that
\begin{equation}\label{1910251204}
g(\varphi(x))\leq \frac{\mu(Q^{\nu_K(x)}_{\varrho}(x))}{\varrho^{d{-}1}} \ZZZZ + \eps. \EEE
\end{equation}
 We defer the proof to the end of this step. \EEE By   \eqref{1910251204} and the Besicovitch Derivation Theorem (see \cite[Theorem~2.22]{Ambrosio-Fusco-Pallara:2000}) applied to $\mu$ and $g(\varphi) \hn\mres_K$ it follows that for every $\varphi\in \Psi$ 
\begin{equation}\label{0105260938}
g(\varphi) \hn \mres_K \leq \mu \quad \text{in } \ZZZZ \M^+_b \EEE (\Omega'),
\end{equation} 
which gives in particular \EEE
$\int_K g(\varphi)\, \dhn \leq \mu(\Omega')\leq\liminf_{n\to \infty} \mu_n(\Omega')  \leq C_0$ \BBB by \eqref{1910251228}. \EEE
Therefore,  $M_{\Psi}:=\sup_{\varphi \in \Psi} \int_K g(\varphi)\,\dhn \leq C_0 $ and \EEE we set $\psi:=\sup_{k\in \N}g(\varphi_k)$, for $(\varphi_k)_k$ an increasing maximizing sequence for the problem
 $\sup_{\varphi \in \Psi} \int_K g(\varphi) \,\dhn$. \EEE
By the definition of $\Psi$ and the continuity and monotonicity of $\xi \mapsto h^-(x,\xi,\nu)$ (see Remark \ref{rem: sigma1}(ii)) we get that $\psi$ satisfies \eqref{maxpsi}, and that $\psi\geq c_g$ by \eqref{gproperty}.    Moreover, as $M_{\Psi} = \int_K  \psi \, {\rm d} \mathcal{H}^{d{-}1}$  by the Monotone Convergence Theorem, the function $\psi$ is maximal (with respect to $  \le  $) with this property, i.e., \eqref{maxpsi2} holds.  By  \eqref{0105260938} \EEE we get \eqref{lsc}.

 Let us now come to the proof of  \eqref{1910251204}. \EEE 
\ZZZZ Fix \EEE $\varphi\in \Psi$, $x \in K$, \ZZZZ and  $\varepsilon>0$. 
 For brevity, let  $\nu:= \nu_K(x)$ and $\xi:= \varphi(x)$. By \eqref{1004261901} \ZZZZ we can choose  $\ol \varrho>0$   such that, for any $\varrho<\ol \varrho$, we have $ \ZZZZ Q^{\nu}_\varrho \EEE (x)\EEE\subset \Omega'$  and   there exists $v\in \PC(Q^\nu_\varrho(x))$ with   
$v=\ol u_{x,\xi,\nu} \text{ in a neighborhood of }\partial Q^\nu_\varrho(x)\text{ and }\mathcal{H}^-(v, Q^\nu_\varrho(x)) \le \varepsilon \ZZZZ \varrho^{d-1} \EEE$. \ZZZZ Let us fix $0 < \varrho <\ol \varrho$ such that also \EEE  $\mu (\partial  \ZZZZ Q^{\nu}_\varrho \EEE (x)\EEE)=0$ \ZZZZ holds. \EEE 

Let $(v_n)_n \subset \PC(Q^\nu_\varrho(x))$ be a recovery sequence for $v$ with respect to  $\Gamma$-convergence  of $\mathcal{H}_n^-(\cdot, Q^\nu_\varrho(x))$ to $\mathcal{H}^-(\cdot, Q^\nu_\varrho(x))$. 
Then $\limsup_{n \to \infty} \mathcal{H}_n^-(v_n, Q^\nu_\varrho(x))  \le \EEE \varepsilon \ZZZZ \varrho^{d{-}1} \EEE$.  \BBB By \EEE $\mu(\partial Q^\nu_\varrho(x))=0$ and  the lower semicontinuity of $v\in \PC( Q^\nu_\varrho(x)\EEE)\mapsto\int_{J_v} g(|[v]|) \,  \dhn$ in the strong $L^1$-topology, recalling \eqref{hnm} and \eqref{1910251228} it follows that  
\begin{equation}\label{1910251311}
\int_{J_v} g(|[v]|)\,\dhn \leq    \liminf_{n \to \infty}    \int_{J_{v_n}}  {g} \EEE (  |[v_n]|)\,\dhn \leq \liminf_{n\to \infty}\mu_n(Q^\nu_\varrho(x)) + \varepsilon  \ZZZZ \varrho^{d{-}1} \EEE   =   \mu(Q^\nu_\varrho(x))+\varepsilon \ZZZZ \varrho^{d{-}1}, \EEE
\end{equation}
 where we again used that  $g \le \widehat{g}$.
\BBB 
Here we used \EEE the  following elementary formula that we  will \EEE employ several times in the sequel:  
\begin{equation}\label{proprliminfsup}
 \liminf_{n\to \infty} a_n    - \liminf_{n\to \infty} b_n  \leq \limsup_{n\to \infty} \,  (a_n-b_n)^+ \text{ for sequences } (a_n)_n,\,(b_n)_n \subset [0,+\infty).
\end{equation}  
Since 
  $v=\ol u_{x,\xi,\nu}$ in a neighborhood of $\partial Q^\nu_\varrho(x)$, \BBB by a slicing argument  along the direction $\nu$ we get \EEE
\begin{equation}\label{1910251316}
g(\xi) \,\varrho^{d{-}1} \leq \int_{J_v} g(|[v]|)\,\dhn.
\end{equation}
In fact, \BBB recalling the notation in Subsection \ref{sec:slicing}, \EEE
 for $\hn$-a.e.\ $y \in  P^\nu \EEE (Q^\nu_\varrho(x))$ it holds 
\[
(g(\ZZZZ |v| \EEE ))^\nu_y \in \PC\big((Q^\nu_\varrho(x))^\nu_y\big) \text{ with } J_{(g(|v|))^\nu_y}=J_{v^\nu_y}=(J_v)^\nu_y=(J_{g(|v|)})^\nu_y
\]
and
\[
D^j (v^\nu_y) \big((Q^\nu_\varrho(x))^\nu_y\big)= \sum_{s \in (J_v)^\nu_y} [v](s)= \xi.
\] 
Therefore,  by the subadditivity of $g$, we get 
\[
g(\xi)= g\Big(\sum\nolimits_{s \in (J_v)^\nu_y} [v](s)\Big)\leq \sum_{s \in J_{(g( |v| \EEE ))^\nu_y}}g(|[v](s)|) = |D^j (g(|v|)^\nu_y)|\big((Q^\nu_\varrho(x))^\nu_y\big).
\]
Thus, we obtain \eqref{1910251316} by integrating over $y\in  P^\nu \EEE (Q^\nu_\varrho(x))$ and using the Area Formula. Finally, by \eqref{1910251311}, \eqref{1910251316},  \ZZZZ and \EEE  the notation $\xi = \varphi(x)$,  \EEE
 we conclude \eqref{1910251204}.

\hspace{1em}

\noindent \emph{Step 3: Proof of \eqref{lscminus}.}
Similarly to \eqref{1910251228},  there exists $\lambda \in \M_b^+(\Omega')$ such that, \EEE up to a subsequence,
 \begin{equation}\label{0304261441}
\lambda_n:= \BBB (\psi_n-c_g ) \EEE  \hn\mres_{K_n} \weaklystar \lambda \quad\text{in }  \ZZZZ \M^+_b \EEE (\Omega'). \EEE
\end{equation}
With the above notation  (in particular $\nu:= \nu_K(x)$), \EEE we claim that 
for every $\varphi\in \Psi$, $x \in K$, and  \ZZZZ  $\varepsilon>0$ there exists $\ol \varrho>0$ such that for any $\varrho \in (0 ,\ol \varrho)$ with \EEE $ \ZZZZ Q^{\nu}_\varrho \EEE (x)\EEE\subset \Omega'$ and $\ZZZZ \lambda \EEE  (\partial \ZZZZ  Q^{\nu}_\varrho \EEE (x))\EEE=0$, it holds that \EEE
\begin{equation}\label{0304261443}
g(\varphi(x))-c_g\leq \frac{\lambda(Q^\nu_\varrho(x))}{\varrho^{d{-}1}} \ZZZZ + \eps. \EEE
\end{equation}
As above, we then get  $(g(\varphi)-c_g) \hn \mres_K \leq \lambda$ in $\ZZZZ \M^+_b \EEE (\Omega')$, and then we conclude \EEE
by the Monotone Convergence Theorem, recalling the definition of $\psi$  and \eqref{0304261441}. \EEE
To prove \eqref{0304261443}, we argue as done   for \eqref{1910251204}, \EEE  replacing \eqref{1910251311} with  
\begin{equation*}\label{03041449}
\int_{J_v}( g(|[v]|)-c_g) \,  \dhn\leq \BBB \liminf_{n \to \infty} \EEE \int_{J_{v_n}} ( {g} \EEE(|[v_n]|)-c_g)\,\dhn \leq \liminf_{n\to \infty}\lambda_n(Q^\nu_\varrho(x)) + \varepsilon \ZZZZ \varrho^{d{-}1} \EEE =\lambda(B_\varrho(x))+\varepsilon \ZZZZ \varrho^{d{-}1}, \EEE
\end{equation*}
for the same $(v_n)_n$ 
(since 
$(\widehat{g}  (\cdot) \EEE -c_g) - (\psi_n-c_g)= \BBB  \widehat{g}   (\cdot) \EEE -\psi_n$).
  Here,   we used the  
 lower semicontinuity of $v\in \PC(Q^\nu_\varrho(x)\EEE)\mapsto\int_{J_v}( g(|[v]|)-c_g) \,  \dhn$ 
 in the strong $L^1$-topology,  given that \EEE 
   $g-c_g$ is  nonnegative, \EEE continuous, increasing, and subadditive \ZZZZ on $(0,\infty)$ \EEE  by \eqref{gproperty}.
\end{proof}
 
%

\EEE

%

By \eqref{lscminus} we  immediately \EEE deduce the following.
\begin{corollary}\label{cor:lsc-neu22}
If 
$\limsup_{n\to \infty} \int_{K_n \cap A} \psi_n \, {\rm d}\mathcal{H}^{d{-}1} \le \int_{K \cap A} \psi \, {\rm d}\mathcal{H}^{d{-}1} + \lambda$ for some $\lambda \ge 0$ \BBB and $A \in \mathcal{A}(\Omega')$, \EEE
then
\begin{align}\label{lsc-neu22}
 \limsup_{n\to \infty}  \mathcal{H}^{d{-}1}(K_n \cap A) \le  \mathcal{H}^{d{-}1}(K \cap A) + C\lambda
\end{align}
 for a constant $C > 0$ only depending on $c_g$. \EEE
\end{corollary}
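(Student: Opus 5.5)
We combine \eqref{lscminus} with the assumed upper bound on the integrals of $\psi_n$ and split $\int_{K_n\cap A}\psi_n\,\dhn$ into $c_g\,\hn(K_n\cap A)$ and $\int_{K_n\cap A}(\psi_n-c_g)\,\dhn$. The constant is $C=1/c_g$.

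First we take a subsequence realizing $\limsup_{n\to\infty}\hn(K_n\cap A)$; we do not relabel it. Passing to a subsequence does not affect $\sigma_{\rm cf}$-convergence, since $\ol\Gamma$-convergence is inherited by subsequences and the limit pair $(K,\psi)$ is determined by $h^-$. Hence \eqref{lscminus} still holds along this subsequence, and the hypothesis, being a $\limsup$ bound, also holds along it.

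For every $n$ we write
\[
c_g\,\hn(K_n\cap A)=\int_{K_n\cap A}\psi_n\,\dhn-\int_{K_n\cap A}(\psi_n-c_g)\,\dhn .
\]
Taking the $\limsup$ and using $\limsup(a_n-b_n)\le\limsup a_n-\liminf b_n$, we combine the hypothesis with \eqref{lscminus}:
\[
c_g\limsup_{n\to\infty}\hn(K_n\cap A)\le \int_{K\cap A}\psi\,\dhn+\lambda-\int_{K\cap A}(\psi-c_g)\,\dhn=c_g\,\hn(K\cap A)+\lambda .
\]
All quantities are finite, since $\hn(K)<\infty$ and $\int_K\psi\,\dhn\le C_0$ by \eqref{lsc}. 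The right-hand side is therefore legitimate, and dividing by $c_g>0$ gives \eqref{lsc-neu22} with $C=1/c_g$.

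The only subtle point is the meaning of $\psi_n-c_g$ in \eqref{lscminus}. We need $\psi_n\ge c_g$ on $K_n$, so that the measures in \eqref{0304261441} are positive. This holds in the setting where the statement is applied, namely $\psi_n=\mathrm{ess\,sup}\,g(|[u]|)\ge c_g$. Without it, \eqref{lscminus} read literally still provides the needed inequality. Beyond this, the argument is routine.
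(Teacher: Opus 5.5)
Your proof is correct and is the paper's argument: the corollary is stated there as an immediate consequence of \eqref{lscminus}, via the splitting $\int_{K_n\cap A}\psi_n\,{\rm d}\mathcal{H}^{d-1}=c_g\,\mathcal{H}^{d-1}(K_n\cap A)+\int_{K_n\cap A}(\psi_n-c_g)\,{\rm d}\mathcal{H}^{d-1}$, which gives $C=1/c_g$ (your preliminary subsequence extraction is harmless but unnecessary). Your caveat is well taken, but your last sentence undersells it: the tacit assumption $\psi_n\ge c_g$ on $K_n$, which holds for the discrete evolution, is genuinely needed, since for instance $\psi_n\equiv 0$ on a fixed $K_n=K_0\subset\Omega$ with $\mathcal{H}^{d-1}(K_0\cap A)>0$ yields $K=\emptyset$, and then both \eqref{lscminus} and \eqref{lsc-neu22} fail.
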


  \begin{proposition}[\BBB Closedness of the irreversibility condition\EEE]\label{prop:stability}
Suppose that  
$(K_n,\psi_n)_n \ZZZZ \subset \EEE \mathcal{C}$ $\sigma_{\rm cf}$-converges in $\Omega'$ to $(K,\psi)$. Let $(u_n)_n$ be a sequence in $\SBV^p(\Omega')$ with $u_n \rightharpoonup u$ 
in $\SBV^p(\Omega')$ 
such that 
\begin{align}\label{follow}
\lim_{n \to \infty} \mathcal{H}_n^-(u_n, \Omega') = 0,
\end{align}
where $\mathcal{H}_n^-$ is defined in \eqref{1708240952}.  Then $J_u \subset  K$ and  $g(|[u](x)|)   \le \psi(x)$ for $\mathcal{H}^{d{-}1}$-a.e.\  $x \in J_u$. 
\end{proposition}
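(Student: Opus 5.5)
The plan is to pass to the limit in the $\Gamma$-liminf inequality, obtaining that the jump density $h^-$ vanishes on $J_u$, and then to read off the two conclusions from the maximality properties \eqref{maxK2} and \eqref{maxpsi2} in Definition~\ref{def:sigmaconv}.

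\emph{Step 1 (vanishing of $h^-$ on $J_u$).} By Remark~\ref{rem:tothm31}, i.e., the separate lower semicontinuity deduced from Theorem~\ref{thm:convpreliminaries}, applied with $A=\Omega'$, we get
\[
\int_{J_u} h^-(\cdot,[u],\nu_u)\,\dhn \le \liminf_{n\to\infty} \mathcal{H}_n^-(u_n,\Omega') = 0 .
\]
Here the right-hand side is the surface part of $\E_n^-$, which coincides with \eqref{1708240952} for $\widehat g$ as in \eqref{widehtatg}. Remark~\ref{rem:tothm31} is stated for $u_n\in\SBV^p_{w_n}$, but the surface lower semicontinuity only uses the $\ol\Gamma$-liminf for $\E_n^-(\cdot,A)$ and the $\SBV^p$-bounds, so boundary data play no role. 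If necessary, I would argue directly: following \cite[Proposition~5.3]{GiacPonsi}, blow up at $\hn$-a.e.\ $x\in J_u$ using the weak $\SBV^p$ convergence, so that the gradient energy of the rescaled functions vanishes, and compare with $\mathbf m_{\H_n^-}^{\PC}$ via \eqref{12040854}. It follows that $h^-(x,[u](x),\nu_u(x))=0$ for $\hn$-a.e.\ $x\in J_u$.

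\emph{Step 2 ($J_u\subset K$).} By Remark~\ref{rem: sigma1}(ii), $\xi\mapsto h^-(x,\xi,\nu)$ is even, nonnegative, and nondecreasing on $[0,\infty)$. Hence $h^-(x,\xi,\nu_u(x))=0$ for all $|\xi|\le|[u](x)|$, and in particular $\lim_{\xi\to0}h^-(x,\xi,\nu_u(x))=0$ for $\hn$-a.e.\ $x\in J_u$. Since $J_u$ is rectifiable and $\nu_{J_u}=\pm\nu_u$, with $h^-$ even in $\xi$ (so that by the symmetry $(\xi,\nu)\mapsto(-\xi,-\nu)$ the orientation does not matter), \eqref{maxK2} applied with $H=J_u$ yields $J_u\subset K$. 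One point to check is that $J_u\cap(\Omega'\setminus\ol\Omega)$ is negligible. This follows from the same argument, because $K\subset\Omega\cup\partial_D\Omega$ by Remark~\ref{rem: sigma1}(i).

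\emph{Step 3 ($g(|[u]|)\le\psi$).} Define $\zeta:=\psi\vee g(|[u]|)$ on $J_u$ and $\zeta:=\psi$ on $K\setminus J_u$. I will verify the premise of \eqref{maxpsi2}. For $x\in K\setminus J_u$ and $g(|\xi|)\le\zeta(x)=\psi(x)$, the premise holds by \eqref{maxpsi}. For $x\in J_u$ (where $\nu_K=\pm\nu_u$ a.e., since $J_u\subset K$) and $g(|\xi|)\le\zeta(x)$, either $g(|\xi|)\le\psi(x)$, and \eqref{maxpsi} applies, or $g(|\xi|)\le g(|[u](x)|)$. In the latter case we need $h^-(x,\xi,\nu)=0$, and here lies the main subtlety: $g$ is only increasing, not necessarily strictly, so $g(|\xi|)\le g(|[u]|)$ does not give $|\xi|\le|[u]|$.

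To handle this I would first try to show that $h^-(x,\cdot,\nu)$ depends on $\xi$ only through $g(|\xi|)$, or at least is monotone in $g(|\xi|)$. This is plausible from \eqref{12040854}: replacing the values of competitors by a rescaled profile keeps $g$ of the jumps unchanged on plateaus of $g$. If that turns out to be delicate, I would instead note that on a plateau of $g$ the condition $g(|\xi|)\le\psi$ in \eqref{maxpsi} and the constraint are insensitive, so I would use the characterization from Step~2 of Theorem~\ref{sigma-comp}. There, $\psi=\sup_k g(\varphi_k)$ with $\varphi_k$ in the class $\Psi$, and $\varphi:=|[u]|$ (extended by $0$) belongs to $\Psi$ by Step~1. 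Then $\psi\vee g(\varphi)$ is admissible in the maximization, and maximality of $\int_K\psi$ forces $g(|[u]|)\le\psi$ $\hn$-a.e.\ on $J_u$. Concretely, $\max(\varphi_k,\varphi)\in\Psi$ by monotonicity of $h^-$ in $\xi$, and it yields a value at least as large as $\int_K\psi$, with strict increase if $g(|[u]|)>\psi$ on a set of positive measure.

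The main obstacle is Step~1, namely making the lower semicontinuity rigorous without boundary data, together with the orientation and plateau bookkeeping in Step~3.
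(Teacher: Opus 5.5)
Your proposal is correct and is essentially the paper's proof. The paper also uses the separate lower semicontinuity of Remark~\ref{rem:tothm31} to get $h^-(\cdot,[u],\nu_u)=0$ $\hn$-a.e.\ on $J_u$, then monotonicity of $h^-$ in $\xi$ plus \eqref{maxK2} to get $J_u\subset K$, and finally \eqref{maxpsi2} to get $g(|[u]|)\le\psi$; there it takes $\zeta=g(|[u]|)$ directly, with $[u]=0$ on $K\setminus J_u$.

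The plateau issue in your Step~3 does not arise. The paper's $g$ is strictly increasing (it works with $g^{-1}$, e.g.\ in Lemma~\ref{lemma: wum}), so $g(|\xi|)\le g(|[u]|)$ just means $|\xi|\le|[u]|$. Your fallback through the class $\Psi$ of Theorem~\ref{sigma-comp} is valid but unnecessary. Your first workaround, that $h^-$ depends on $\xi$ only through $g(|\xi|)$, is automatic for injective $g$. For $g$ with plateaus it can actually fail in branching configurations like Example~\ref{exAB}(B).
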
 
 
\begin{proof}
By the  definition of $\sigma_{\rm cf}$-convergence and  by the $\Gamma$-convergence of $\E_n^-(\cdot, \Omega')$ to $\E^-(\cdot, \Omega')$ along with the separate lower semicontinuity of bulk and surface parts (see Remark~\ref{rem:tothm31}) \BBB we get  \EEE
 that 
\begin{align*}
\int_{J_u}  {h}^-(\cdot, [u],    \nu_u)  \,  \dhn & \le \liminf_{n \to \infty} \int_{ J_{u_n}} \big(  g(|[u_n]|)   -  {\psi}_n\big)^+   \,  \dhn  = 0,
\end{align*}
where the last step follows from \eqref{follow}.  This first shows ${h}^-(x, [u](x),    \nu_u(x)) =0$ for  $\mathcal{H}^{d{-}1}\text{-a.e.\ }x \in J_u$, which together with the monotonicity of $\xi \mapsto h^-(x,\xi,\nu)$ (see Remark \ref{rem: sigma1}(ii)) yields  $\lim_{\xi \to 0}  h^-(x, \xi, \nu_u(x))=0$ for $\mathcal{H}^{d{-}1}\text{-a.e.\ }x \in J_u$. 
Thus, $J_u \subset  K$  by \eqref{maxK2}. Moreover, the fact that $h^-(x, [u](x), \nu_K(x))=0$ for $\mathcal{H}^{d{-}1}\text{-a.e.\ }x \in K$ (setting $[u](x) = 0$ for $x \in K \setminus J_u$) also implies   $g(|[u]|) \EEE \le \psi $ a.e.\ on $J_u$ by \eqref{maxpsi2}.  
\end{proof}

We now study \BBB unilateral \EEE minimality under boundary conditions.  Recall the definitions of $\mathcal{C}$ \ZZZZ in \EEE \eqref{mathcalcdef} and of $AD(w,K,\psi)$ \ZZZZ in \EEE   \eqref{def:ADgH}-\eqref{constraints}, \ZZZZ as well as \EEE the convention \eqref{convention}. \EEE
%
\begin{definition}[Unilateral minimizer]\label{def: unilat min}
 Let $w \in W^{1,p}(\Omega')$. Let \EEE $h^-$ \ZZZZ  and $(K,\psi) \in \mathcal{C}$ be \EEE as given in Definition~\ref{def:sigmaconv}.  We  say that $(u,K,\psi)$ is:
 
(i) 
 a \EEE \emph{weak unilateral minimizer with respect to $h^-$}   if  $u\in AD(w, K,\psi)$ and  
\begin{align}\label{unilat-weak}
\int_{\Omega'} |\nabla u|^p \, {\rm d}x \le \int_{\Omega'} |\nabla v|^p \, {\rm d}x   +  \int_{J_v} h^-(\cdot,[v],\nu_v)   \, {\rm d}\mathcal{H}^{d{-}1} \quad \text{for any }  v \in \SBV^p_w(\Omega').
\end{align}

(ii) 
a \emph{strong unilateral minimizer} if   $u\in AD(w, K,\psi)$ and
\begin{align}\label{unilat-strong}
\int_{\Omega'} |\nabla u|^p \, {\rm d}x \le \int_{\Omega'} |\nabla v|^p \, {\rm d}x +   \int_{J_v}  \big( g(|[v]|)  - \psi  \big)^+ \, \dhn \quad \text{for any }  v \in \SBV^p_w(\Omega').
\end{align}
\end{definition}

\BBB We emphasize that (ii) corresponds to a notion of minimality which is equivalent to  \eqref{finalstability},  for $p=2$. \EEE

\begin{example}\label{exAB}
Let us explain why in general  (i) does not imply (ii) \EEE
by providing two examples representing different mechanisms. We also refer to Figure~\ref{fig1} \BBB for an illustration. \EEE

(A) Consider a sequence of strongly oscillating curves $(K_n)_n \ZZZZ \subset \R^2 \EEE $ which $\sigma$-converges \cite{GiacPonsi} to the straight segment $K = (0,1) \times \lbrace 0 \rbrace \subset \R^2$ and satisfies  $ \kappa := \EEE \liminf_{n\to \infty}  (2\rho)^{-1}\mathcal{H}^1(K_n \cap B_\rho(x)) \ge 1$ for all $\rho >0$ and $x \in K$. (For instance, let any $K_n$ be the graph of a function $f_n(x):= n^{-1} \varphi(n x)$ defined in $(0,1)$ for $\varphi$ Lipschitz and 1-periodic with $\int_0^1 \sqrt{1+|\varphi'|^2}=\kappa$, e.g., $\varphi= (\l_\kappa \cdot) \chi_{[0,1/2]} +  (   l_k \EEE     -l_k   \cdot ) \chi_{[1/2,1]}$, $l_k:= \sqrt{\kappa^2-1}$, on $[0,1]$ and extended by periodicity.)
 \EEE
 Choose $\psi_n \equiv  \tau \EEE >0$ on $K_n$. Then, \BBB by using competitors of the  form $\xi\chi_{Q^{e_2}_\rho(x) \cap D_n}$ in \eqref{12040854}, where $D_n \subset Q^{e_2}_\rho(x) $ is the region above $K_n$, \EEE it is elementary to check that 
 the $\sigma_{\rm cf}$-limit is given by $K$ and $\psi =   \tau \EEE   $ on $K$, and the function $h^-$ satisfies $h^-(x,\xi,e_2) =  \kappa \EEE (  g(|\xi|)   - \tau)^+  $ for $x \in K$ \BBB and $\xi >0$. \EEE  For $ \kappa \EEE>1$, we thus see that \eqref{unilat-weak} is weaker compared to \eqref{unilat-strong}. 

(B)  Suppose that  $g$ is strictly concave and   satisfies \EEE \eqref{gproperty}, \eqref{gproperty2}, for instance \EEE $g(x) = 1 + \frac{x}{1+x}$.
Consider the sequence of two parallel segments $(K_n)_n$ given by $K_n = K_n^- \cup K_n^+$ with $K_n^\pm =  (0,1) \times \lbrace \pm \frac{1}{n} \rbrace$, which $\sigma$-converges \cite{GiacPonsi} to the straight segment $K = (0,1) \times \lbrace 0 \rbrace \subset \R^2$. Choose $\psi_n \equiv g(\theta_\pm) >0$ on $K^\pm_n$ with $0 < \theta_- < \theta_+$.  \BBB Let $x \in K$ and $\xi >0 $, and define $ \bar{\xi} := ( \xi - (\theta_+ + \theta_-))^+$. Then,  by using competitors taking values $\theta_- + \theta_+ + \bar{\xi} $ (above $K_n^+$),  $\theta_-$ (between $K_n^-$ and $K_n^+$) and $0$ (below $K_n^-$), \EEE   it is elementary to check that   the $\sigma_{\rm cf}$-limit is given by $K$ and $\psi =  g(\theta_+ + \theta_-)   $ on $K$, and the function $h^-$ satisfies 
\[ h^-(x,\xi,e_2) =    g(\theta_+ +  \bar{\xi} \EEE )    - g(\theta_+).  \] 
 As this is bigger than $g(\theta_+ + \theta_- +  \bar{\xi} \EEE )     - g(\theta_+ + \theta_-)$  in the case  $\bar{\xi}>0$, \EEE we thus see that \eqref{unilat-weak} is weaker compared to \eqref{unilat-strong}. 

\end{example}

\begin{remark}\label{weakii}
With the notation of Definition~\ref{def: unilat min}, \eqref{unilat-weak} particularly implies (recall \eqref{def:ADgH}, \eqref{constraints})
$$\int_{\Omega'} |\nabla u|^p \, {\rm d}x \le \int_{\Omega'} |\nabla v|^p \, {\rm d}x  \quad\text{for all $v \in AD(\BBB w,\EEE K,\psi)$}. $$ 
\end{remark} 
 We now study the asymptotic properties of sequences  $(u_n, K_n, \psi_n)_n$ \EEE with 
\begin{equation}\label{seqstrongmin}
(u_n, K_n, \psi_n) \text{ strong unilateral minimizer}, \ (K_n,\psi_n) \cosi (K,\psi),  \ u_n \in \SBV^p_{w_n}(\Omega'), \ u_n \wto u \text{ in } \ZZZZ \SBV^p \EEE (\Omega'),
\end{equation} 
 where \EEE  $(K_n,\psi_n) \cosi (K,\psi)$ means that the sequence  $(K_n,\psi_n)_n$ \EEE $\sigma_{\rm cf}$-converges to $(K, \psi)$ in $\Omega'$, and  where we assume that \EEE  $w_n \to w$ in $W^{1,p}(\Omega')$.
We notice that $(u_n, K_n, \psi_n)$ is a strong unilateral minimizer with $u_n \in \SBV^p_{w_n}(\Omega')$ if and only if \ZZZZ $u_n \in AD(w_n,K_n,\psi_n)$ and 
 $u_n$ is a minimizer of  $\ZZZZ \E_n^{\partial,-}\EEE$ with   $\E_n^{\partial,-}$ defined in   Corollary \ref{cor: bc}. 
%
%
%

\begin{proposition}[Weak unilateral minimality]\label{prop. unilat}
 Assuming \eqref{seqstrongmin}, it holds that \EEE $(u,K,\psi)$ is a  weak   unilateral minimizer with respect to $h^-$   as given in \eqref{1708240957} and that $\nabla u_n \to \nabla u$ in $L^p(\Omega';\R^d)$.
\end{proposition}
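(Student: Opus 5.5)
The argument is the $\Gamma$-convergence stability of minimizers, applied to $\E_n^{\partial,-}$ and $\E^{\partial,-}$ from Corollary \ref{cor: bc}, combined with Proposition \ref{prop:stability} for admissibility of the limit.

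\emph{Admissibility of $u$.} Since $u_n \in AD(w_n,K_n,\psi_n)$, we have $J_{u_n}\subset K_n$ and $g(|[u_n]|)\le\psi_n$ on $J_{u_n}$. Hence $\mathcal{H}^-_n(u_n,\Omega')=0$ for every $n$: there are no jumps outside $\overline\Omega$ because $K_n\subset\Omega\cup\partial_D\Omega$ and $u_n=w_n$ on $\Omega'\setminus\overline\Omega$. Proposition \ref{prop:stability} then gives $J_u\subset K$ and $g(|[u]|)\le\psi$ on $J_u$. Moreover $u_n=w_n\to w$ in $L^1(\Omega'\setminus\overline\Omega)$, so $u=w$ there, and therefore $u\in AD(w,K,\psi)$.

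\emph{Minimality inequality.} By the remark preceding the proposition, $u_n$ minimizes $\E_n^{\partial,-}$ and $\E_n^{\partial,-}(u_n)=\int_{\Omega'}|\nabla u_n|^p$. Fix $v\in\SBV^p_w(\Omega')$. Corollary \ref{cor: bc} yields a recovery sequence $v_n\in\SBV^p_{w_n}(\Omega')$ with $v_n\to v$ in $L^1$ and $\E_n^{\partial,-}(v_n)\to\E^{\partial,-}(v)=\int|\nabla v|^p+\int_{J_v}h^-(\cdot,[v],\nu_v)\,\dhn$. Here $h^-$ agrees with the $\sigma_{\rm cf}$ density because both come from the same $\ol\Gamma$-limit of $\H^-_n$ (Theorem \ref{thm:convpreliminaries}); for the jump part in $\Omega'\setminus\overline\Omega$ one uses $\widehat g=C_g+1$, and $v$ has no jump there. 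Combining minimality with separate lower semicontinuity of the bulk term (Remark \ref{rem:tothm31}), and passing to a subsequence along which the $\limsup$ below is attained,
\begin{equation*}
\int_{\Omega'}|\nabla u|^p\le\liminf_n\int_{\Omega'}|\nabla u_n|^p\le\limsup_n\int_{\Omega'}|\nabla u_n|^p\le\lim_n\E_n^{\partial,-}(v_n)=\E^{\partial,-}(v).
\end{equation*}
This is exactly \eqref{unilat-weak}.

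\emph{Strong convergence of gradients.} Take $v=u$ in the chain above. Since $u\in AD(w,K,\psi)$, we have $h^-(x,[u](x),\nu_u(x))=0$ for $\hn$-a.e.\ $x\in J_u$, by \eqref{maxpsi} together with $\nu_u=\pm\nu_K$ a.e.\ on $J_u\subset K$ and the symmetry of $h^-$. Thus $\E^{\partial,-}(u)=\int|\nabla u|^p$, and the chain gives $\|\nabla u_n\|_p\to\|\nabla u\|_p$. Together with $\nabla u_n\rightharpoonup\nabla u$ weakly in $L^p$ and uniform convexity of $L^p$ for $1<p<\infty$, this yields strong convergence. Both conclusions may be established along an arbitrary subsequence, so the statement holds for the full sequence.

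\emph{Main obstacle.} The only delicate point is the identification $\E^{\partial,-}(u)=\int|\nabla u|^p$, that is, verifying $h^-(\cdot,[u],\nu_u)=0$ on $J_u$ from the maximality characterization \eqref{maxpsi}. This requires matching the normals $\nu_u$ and $\nu_K$ on $J_u\subset K$ and using the evenness of $h^-$ in $\xi$ (Remark \ref{rem: sigma1}(ii)).
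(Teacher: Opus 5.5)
Your proof is correct and follows essentially the same route as the paper: admissibility of $u$ via Proposition \ref{prop:stability}, minimality via the $\Gamma$-convergence of Corollary \ref{cor: bc}, and strong convergence of gradients from energy convergence plus separate lower semicontinuity. The differences are minor: you test against recovery sequences directly rather than quoting convergence of minimizers, and the step you flag as delicate is actually immediate, because $\int_{J_u} h^-(\cdot,[u],\nu_u)\,\dhn=0$ follows from the surface lower semicontinuity in Remark \ref{rem:tothm31} (the corresponding surface energies of $u_n$ vanish, exactly as in the proof of Proposition \ref{prop:stability}), so no matching of $\nu_u$ with $\nu_K$ is needed.
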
 

\begin{proof}
As  $(u_n,K_n,\psi_n)$ are  strong \EEE unilateral minimizers, by definition we have $J_{u_n} \subset  K_n$ and $g(|[u_n]|)  \le \psi_n$ on $J_{u_n}$. Then, by Proposition \ref{prop:stability}  we get $J_u  \subset K$ and $g(|[u](x)|) \le \psi(x)$ for $\mathcal{H}^{d{-}1}$-a.e.\  $x \in J_u$.  \ZZZZ This shows that $u \in AD(w,K,\psi)$. \EEE

 Moreover, as  observed before,  $u_n$  \EEE is a minimizer of  \BBB $\E_n^{\partial, -}$. \EEE Thus,  by  \BBB Corollary \ref{cor: bc} \EEE  and general properties of $\Gamma$-convergence, $u$ is a minimizer of  \BBB $\E^{\partial, -}$, \EEE and it holds $\mathcal{E}^-_n(u_n, \Omega') \to \mathcal{E}^-(u, \Omega')$ \EEE as $n\to \infty$.  \BBB (Notice that this holds first for a subsequence, but then for the whole sequence by Urysohn's lemma.) 
 Consequently, for each  $v \in \SBV^p_w(\Omega')$ we get
 \begin{align*}
 \int_{\Omega'} |\nabla u|^p \, {\rm d}x & = \mathcal{E}^-(u,\Omega') \BBB = \E^{\partial, -}(u)  \le  \E^{\partial, -}(v)  = \EEE \mathcal{E}^-(v,\Omega') = \int_{\Omega'} |\nabla v|^p \, {\rm d}x  + \int_{ J_v} h^-(\cdot,[v],\nu_v)  \,  {\rm d}  \mathcal{H}^{d{-}1}.
 \end{align*}
 This shows \eqref{unilat-weak}. 
 By the  separate lower semicontinuity for bulk and surface part, see Remark~\ref{rem:tothm31},  together with $\mathcal{E}^-_n(u_n, \Omega') \to \mathcal{E}^-(u, \Omega')$ we get convergence of both parts of the energy, in particular  $\nabla u_n \to \nabla u$ in $L^p(\Omega';\R^d)$.
\end{proof}

\BBB As discussed in the introduction, \EEE in order to guarantee \emph{strong} unilateral \BBB minimality, we need an additional property, namely convergence of  the measure of the crack sets. This is  \EEE   subject of the next lemma: we define the measures $\lambda_n = \mathcal{H}^{d{-}1}\mres_{K_n}$ and, up to a  not relabeled subsequence, we suppose that the weak$^*$ limit $\lambda \in \BBB \M_b^+(\Omega') \EEE$ exists. Using the  Radon–Nikod\'ym theorem we let 
\begin{align}\label{kappa}
\kappa = \frac{{\rm d}\lambda}{{\rm d}\mathcal{H}^{d{-}1}\mres_K},
\end{align}
and observe that $\kappa \in L^1(K; \mathcal{H}^{d{-}1}\mres_K)$.

\begin{lemma}[Control on $h^-$]\label{lemma: wum}
 Assume that $(K_n,\psi_n) \cosi (K,\psi)$ and \BBB let $\kappa$ as in  \eqref{kappa}. Then,  \EEE for \ZZZZ $\mathcal{H}^{d{-}1}$-a.e.\  \EEE $x \in K$ with $ \kappa(x)  =1$   and all $\xi \in \R$ with $g(|\xi|) \ge \psi(x)$ we have   
$$h^-(x,\xi,\nu_K) \le    g(|\xi|) - \psi(x),$$
\ZZZZ  where $h^-$  is  given as  \EEE  in \eqref{1708240957}.
\end{lemma}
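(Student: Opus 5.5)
Fix a point $x \in K$ that is a Lebesgue point of $\psi$ (with respect to $\mathcal{H}^{d{-}1}\mres_K$) and where $\kappa(x) = 1$. By the lower semicontinuity \eqref{lsc}, \eqref{0105260938} and a blow-up, we may also assume the following about the weak$^*$ limit $\mu$ of $\psi_n\mathcal{H}^{d{-}1}\mres_{K_n}$:
\begin{itemize}
\item[(1)] its density with respect to $\mathcal{H}^{d{-}1}\mres_K$ at $x$ is finite and at least $\psi(x)$;
\item[(2)] $K$ is approximately flat at $x$ with normal $\nu:=\nu_K(x)$;
\item[(3)] $\lambda(Q^\nu_\varrho(x))/\varrho^{d-1}\to 1$.
\end{itemize}
Set $\xi$ with $g(|\xi|)\ge\psi(x)$, and w.l.o.g.\ $\xi>0$. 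By \eqref{12040854}, it suffices to construct, for small $\varrho$ and large $n$, competitors $v_n\in\PC(Q^\nu_\varrho(x))$ equal to $\ol u_{x,\xi,\nu}$ near the boundary with
\[
\limsup_n \H_n^-(v_n,Q^\nu_\varrho(x))\le (g(\xi)-\psi(x))\varrho^{d-1}+o(\varrho^{d-1}).
\]

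The natural idea is to use the maximality \eqref{maxpsi}. Choose $\theta\ge0$ with $g(\theta)\le\psi(x)$ close to $\psi(x)$ (or equal, if $g(\xi)\geq \psi(x)$ allows $\theta\le\xi$). Since $h^-(x,\theta,\nu)=0$, there are $w_n$ with boundary datum $\ol u_{x,\theta,\nu}$ and $\H_n^-(w_n,Q_\varrho)=o(\varrho^{d-1})$. Then set $v_n:=w_n+(\xi-\theta)\chi_{Q^{\nu,+}_\varrho(x)}$ (after a fundamental-estimate matching, Lemma~\ref{le:fundestHnm}). The extra cost comes from the added flat jump on the hyperplane. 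By subadditivity \eqref{gproperty} and monotonicity, on the flat interface one pays at most $g(|[w_n]|+\xi-\theta)-\psi_n$ on $K_n$ and $g(\cdot)$ off $K_n$. This transfer is however exactly where mechanisms (A),(B) cause excess.

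Hence the key step is to exploit $\kappa(x)=1$. From Corollary~\ref{cor:lsc-neu22}-type reasoning and Lemma~\ref{le:projection}, $\kappa(x)=1$ together with $\mathcal{H}^{d-1}(K_n\cap Q_\varrho)\approx\varrho^{d-1}$ and $\sigma$-limit $K$ flat implies the following. Since \eqref{lsc-neu} forces $K_n$ to cover the projection, $\mathcal{H}^{d-1}(K_n\cap Q_\varrho)\le \mathcal{H}^{d-1}(P^\nu(K_n\cap Q_\varrho))+o(\varrho^{d-1})$, i.e.\ $K_n$ is, up to small measure, a single-sheeted graph over the face (no oscillation, no multiple branches). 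Moreover, a.e.\ slice in direction $\nu$ meets $K_n$ essentially once. On such slices the minimizing $w_n$ (truncated to $[0,\theta]$) has a single jump of height $\theta$ located on $K_n$ where $g(\theta)\le\psi_n$ up to small error. Here I would use the Lebesgue-point property and that the $\mu$-mass equals $\approx\psi(x)\varrho^{d-1}$ while each slice carries $\psi_n\ge$ its sheet value, giving $\psi_n\approx\psi(x)$ in $L^1$ on the sheet.

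Then I would define $v_n:=(\xi/\theta) w_n$, or $w_n$ with its jump on the sheet increased to $\xi$, instead of adding a new flat jump. The jump stays on the single sheet, so the cost is $\int_{K_n\cap Q_\varrho}(g(\xi)-\psi_n)^+\approx(g(\xi)-\psi(x))\varrho^{d-1}$, and the remaining parts contribute $o(\varrho^{d-1})$ thanks to the slicing estimate from \eqref{1910251316}. Letting $\theta\to$ the threshold, $\varrho\to0$ concludes. The main obstacle is the single-sheet argument: rigorously deducing from $\kappa(x)=1$ and near-optimality that the jump of $w_n$ sits essentially on one graph-like portion of $K_n$ with $\psi_n\approx\psi(x)$. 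I would attempt it by slicing along $\nu$, comparing $\mathcal{H}^{d-1}(K_n\cap Q_\varrho)$ with the measure of the projected set where slices of $J_{w_n}$ are nonempty, via Lemma~\ref{le:projection}.
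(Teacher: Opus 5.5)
Your construction is the paper's. You take a near-optimal cell competitor $w_n$ at the threshold amplitude $\theta$ (the paper's $z_n$, with $\theta=\zeta=g^{-1}(\psi(x))$) and set $v_n=(\xi/\theta)w_n$. You then use $\kappa(x)=1$, the boundary datum, the area formula and Lemma~\ref{le:projection} to show that, up to $O(\eps)\varrho^{d-1}$, $J_{w_n}$ consists of single jumps of height $\theta$ lying on $K_n$ (the paper's set $G_n$). Drawing $w_n$ directly from \eqref{12040854}, instead of from a recovery sequence plus Lemma~\ref{recovboundra}, is a harmless simplification.

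However, one step would fail as stated: the part you list as the main obstacle, namely $\psi_n\approx\psi(x)$ in $L^1$ on the sheet, deduced from ``$\mu$-mass $\approx\psi(x)\varrho^{d-1}$''. You only know $\mu\ge\psi\,\mathcal{H}^{d-1}\mres K$, and the inequality can be strict even where $\kappa=1$. For example, let $K_n=K$ be a fixed flat piece and let $\psi_n$ alternate between values $c_g<a<b$ on fine parallel stripes of equal width, with $g$ strictly increasing. Then $\kappa\equiv 1$ and $\psi=a$, since for $g(\xi)>a$ every slice through an $a$-stripe costs at least $\min\{g(\xi)-a,c_g\}$. Yet $\mu=\frac{a+b}{2}\mathcal{H}^{d-1}\mres K$, so the cost of $v_n$ is in general not $\approx(g(\xi)-\psi(x))\varrho^{d-1}$.

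Only an upper bound is needed, and it follows from the pointwise inequality
\[
\big(g(\xi)-\psi_n\big)^+\le g(\xi)-g(\theta)+\big(g(\theta)-\psi_n\big)^+
\]
on the single-jump part. The last term integrates to at most $\mathcal{H}^-_n(w_n,Q_\varrho)=o(\varrho^{d-1})$. The single-jump part lies in $K_n\cap Q_\varrho$, whose measure is asymptotically at most $(1+\eps)\varrho^{d-1}$ by $\kappa(x)=1$. This is exactly how the paper concludes; no Lebesgue-point property of $\psi$ and no information on $\mu$ is used.

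Similarly, the fact that $K_n$ essentially covers $P^\nu(Q_\varrho)$ does not follow from \eqref{lsc-neu}. It comes from every slice of $J_{w_n}$ being nonempty (boundary datum), together with $\mathcal{H}^{d-1}(J_{w_n}\setminus K_n)\le c_g^{-1}\mathcal{H}^-_n(w_n,Q_\varrho)$. With these two corrections your argument goes through.
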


\begin{proof}
Let $\eps >0 $. We write $\zeta = g^{-1}(\psi(x))$    and $\nu = \nu_K(x)$ for shorthand. Up to changing sign, it is not restrictive to treat only the case $\xi \BBB \ge \zeta \EEE $. In view of \eqref{1004261901} \BBB and Remark~\ref{rem: sigma1}(i), \EEE we can choose $r>0$ sufficiently small such that on the cube $Q:=Q^\nu_r(x)$ it holds that
\begin{align}\label{wum1}
h^-(x, \xi, \nu) \le \frac{\mathbf{m}^{\mathrm{PC}}_{\mathcal{H}^-}(\ol u_{x,\xi,\nu}, Q)}{r^{d{-}1}} + \eps , \quad \quad h^-(x, \zeta, \nu) \ge \frac{\mathbf{m}^{\mathrm{PC}}_{\mathcal{H}^-}(\ol u_{x,\zeta,\nu}, Q)}{r^{d{-}1}} - \eps  
\end{align}
and  
\begin{align}\label{wum2}
\text{$\lambda( \ZZZZ \overline{Q} \EEE ) \le (\kappa(x) + \eps)r^{d{-}1} = (1 + \eps)r^{d{-}1}$}
\end{align}
with $\lambda$ as introduced before \eqref{kappa}. Indeed, this can be done for \ZZZZ $\mathcal{H}^{d{-}1}$-a.e.\ \EEE $x \in K$. We choose $z \in  \PC(Q)$ \EEE such that \BBB $z = \ol u_{x,\zeta,\nu} $ in a neighborhood of $\partial Q$ and \EEE
\begin{align}\label{wum3}
  \mathcal{H}^-(z, Q)   \le  \mathbf{m}^{\mathrm{PC}}_{\mathcal{H}^-}(\ol u_{x,\zeta,\nu}, Q) + \eps r^{d{-}1}. 
\end{align}
\BBB Choose a subsequence of $(\mathcal{H}^-_n)_n$ (not relabeled) for which the statement in  Lemma \ref{recovboundra} holds. \EEE Let $(z_n)_n$ be a recovery sequence for  $z$ with respect to the $\Gamma$-convergence (in the strong $L^1$ topology) of  $\mathcal{H}_n^-(\cdot,Q)$ to $\mathcal{H}^-(\cdot,Q)$. \BBB  By Lemma~\ref{recovboundra} \EEE we may assume that $z_n = u_{x,\zeta,\nu}$ in a neighborhood $\partial Q$.
We get 
\begin{align}\label{wum4}
\limsup_{n \to \infty} \mathcal{H}^-_n(z_n, Q) \le   \mathcal{H}^-(z, Q)  \le 2\eps r^{d{-}1},
\end{align}
where we used \eqref{wum1}, \eqref{wum3}, and the fact that $h^-(x, \zeta, \nu) = 0$. Without restriction, by truncation, we can assume that $z_n \colon Q \to [0,\zeta]$.  As a competitor for the minimization problem  $\mathbf{m}^{\PC}_{\mathcal{H}^-}(\ol u_{x,\xi,\nu}, Q)$, we consider the admissible function $w = \frac{\xi}{\zeta} z$. We also define the sequence $w_n = \frac{\xi}{\zeta} z_n $. First, by $\Gamma$-convergence of $\mathcal{H}^-_n(\cdot, Q)$ to $\mathcal{H}^-(\cdot, Q)$ and \eqref{wum1} we get 
\begin{align}\label{frola}
h^-(x, \xi, \nu) \le   \frac{\mathcal{H}^-(w, Q)}{r^{d{-}1}} + \eps \le  \liminf_{n \to \infty}\frac{ \mathcal{H}^-_n(w_n, Q)}{r^{d{-}1}} + \eps.   
\end{align} 
\BBB Recalling \eqref{1708240952}, \EEE  \eqref{wum4} implies $\limsup_{n \to \infty}\int_{J_{z_n} \setminus K_n} \BBB \widehat{g} \EEE (\cdot, \EEE|[z_n]|) \, {\rm d}\Hd \le 2\eps r^{d{-}1}$ as well as 
\begin{align}\label{eq: as well as}
\limsup_{n \to \infty}\mathcal{H}^{d{-}1}(J_{z_n} \setminus K_n) \le 2\eps c_g^{-1} r^{d{-}1}
\end{align}
 by \eqref{gproperty}. Thus, using \eqref{gproperty2}, \BBB \eqref{widehtatg}, \ZZZZ  \eqref{frola}--\eqref{eq: as well as}, \EEE and the fact that $z_n$ takes values in $[0,\zeta]$ \EEE we get  
\begin{align}\label{something on G0}
h^-(x,\xi,\nu)  \le  \liminf_{n \to \infty} \frac{1  }{ r^{d{-}1}} \int_{J_{w_n} \cap K_n} \big( g(|[w_n]|) - \psi_n\big)^+ \, {\rm d} \mathcal{H}^{d{-}1}    + 2 \eps +  2\eps C_gc_g^{-1}\Big(\frac{\xi}{\zeta} -1 \Big) \BBB \zeta \EEE + \eps.   
\end{align}
From \eqref{wum2} and \eqref{eq: as well as} we get 
\begin{equation}\label{14041520}
\limsup_{n \to \infty}\mathcal{H}^{d{-}1}(J_{z_n} ) \le   r^{d{-}1} + C\eps r^{d{-}1}
\end{equation}
\BBB for some $C>0$. \EEE This along with that fact that $(z_n)_n \subset \PC(Q)$ with  $z_n = u_{x,\zeta,\nu}$ in a neighborhood of $\partial Q$ and a slicing argument 
gives that 
$G_n := \lbrace x \in J_{z_n} \colon   [z_n] = \zeta \rbrace$ satisfies
\begin{align}\label{something on G}
\mathcal{H}^{d{-}1}(J_{z_n} \setminus G_n) \le C\eps r^{d{-}1}.
\end{align} 
In fact, as $\# (J_{z_n})^\nu_y\geq 1$ for all $y \in P^\nu(Q)$ (again by $z_n = u_{x,\zeta,\nu}$ near $\partial Q$), by \eqref{14041520} \BBB and  the \EEE Area Formula \EEE we get $\hn(P^\nu(Q)\sm E_n) \le  C \varepsilon r^{d{-}1}$, \BBB where \EEE $E_n:=\{y\in  P^\nu(Q) \colon \# (J_{z_n})^\nu_y=1\}$. \BBB Let $F_n = (E_n + \nu \R)\cap Q$. \EEE  Then,  by Lemma~\ref{le:projection} applied to $\BBB \Psi \EEE =J_{z_n}$ and $A= \BBB  Q \setminus F_n$ \BBB we get $\mathcal{H}^{d{-}1}(J_{z_n} \setminus F_n) \le C\eps r^{d{-}1}$. Eventually, \eqref{something on G} follows from the fact that $G_n \ZZZZ\supset \EEE J_{z_n} \cap F_n$ since, \EEE  for $y\in  J_{z_n} \cap F_n$, $(J_{z_n})^\nu_y=\{t_y\}$ and $[(z_n)^\nu_y](t_y)=[z_n](y+ t_y \nu)=\zeta$ by $z_n = u_{x,\zeta,\nu}$ near $\partial Q$.

Thus, by \BBB \eqref{gproperty2}, \eqref{proprliminfsup}, \EEE  \eqref{wum4},  \eqref{something on G0}, \BBB and \EEE \eqref{something on G} we obtain  
\begin{align*}
h^-(x,\xi,\nu) & \le  \liminf_{n \to \infty} \frac{1}{r^{d{-}1}} \int_{J_{w_n} \cap K_n\cap \lbrace  g(|[w_n]|) > \psi_n\rbrace}  \big( g(|[w_n]|) - g(|[z_n]|) \big)\, {\rm d} \mathcal{H}^{d{-}1}  \\
&  \ \ \  + \BBB \limsup_{n \to \infty} \EEE \frac{1  }{r^{d{-}1}}   \int_{J_{w_n} \cap K_n\cap \lbrace  g(|[w_n]|) >  \psi_n\rbrace}   \big(  g(|[z_n]|) -\psi_n\big)^+ \, {\rm d} \mathcal{H}^{d{-}1} + C\eps\\
& \le  \liminf_{n \to \infty} \frac{1  }{r^{d{-}1}} \int_{J_{w_n} \cap K_n \cap G_n\cap \lbrace  g(|[w_n]|) > \psi_n\rbrace}  \big( g(|[w_n]|) - g(|[z_n]|) \big)\, {\rm d} \mathcal{H}^{d{-}1}  + C\eps,
\end{align*}
where \BBB the constant \ZZZZ $C$ \EEE  also depends on $\xi$, $\zeta$, and $c_g,C_g$. \EEE
Then, by the definition of  
$w_n$  and \eqref{wum2}   we derive 
$$h^-(x,\xi,\nu) \le  \liminf_{n \to \infty} \frac{1}{r^{d{-}1}}\mathcal{H}^{d{-}1}(K_n \ZZZZ \cap Q \EEE )  \big(g(\xi) - g(\zeta))    + C\eps \le   g(\xi) - g(\zeta)    + C\eps.$$
This concludes the proof by the arbitrariness of $\eps > 0$.  
\end{proof}

\begin{corollary}[Strong unilateral minimality]\label{cor. unilat}
Assuming \eqref{seqstrongmin} and \EEE that the function $\kappa$ defined in \eqref{kappa} satisfies
\begin{align}\label{curciiil}
\kappa =1  \ \ \  \text{ \ZZZZ $\mathcal{H}^{d{-}1}$-a.e.\ \EEE  on } K,
\end{align}
it holds that $(u,K,\psi)$ is a strong  unilateral minimizer. \BBB Moreover, \EEE $\nabla u_n \to \nabla u$  in $L^p(\Omega';\R^d)$.
\end{corollary}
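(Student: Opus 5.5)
The plan is to reduce the statement to Proposition~\ref{prop. unilat} and then upgrade the weak minimality inequality \eqref{unilat-weak} to \eqref{unilat-strong} by a pointwise comparison of the densities, using Lemma~\ref{lemma: wum}. By Proposition~\ref{prop. unilat}, assumption \eqref{seqstrongmin} already gives three facts: $u\in AD(w,K,\psi)$, the weak unilateral minimality \eqref{unilat-weak} of $(u,K,\psi)$ with respect to $h^-$, and $\nabla u_n\to\nabla u$ in $L^p(\Omega';\R^d)$. The last claim of the corollary is therefore immediate. It remains to show that for every $v\in \SBV^p_w(\Omega')$
\[
\int_{J_v} h^-(\cdot,[v],\nu_v)\,\dhn \le \int_{J_v}\big(g(|[v]|)-\psi\big)^+\,\dhn ,
\]
with the convention $\psi\equiv 0$ outside $K$. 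Combined with \eqref{unilat-weak}, this yields \eqref{unilat-strong}.

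I will prove this inequality pointwise $\hn$-a.e.\ on $J_v$, splitting $J_v$ into three sets.

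On $J_v\setminus K$, where $\psi=0$, I use $h^-(x,\xi,\nu)\le g(|\xi|)$ from Remark~\ref{rem: sigma1}(i). This holds for $x\in\Omega$ and for $\hn$-a.e.\ $x\in\partial_D\Omega$. Since $v=w$ on $\Omega'\setminus\ol\Omega$, we have $J_v\subset \Omega\cup\partial_D\Omega$ up to a null set.

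On $J_v\cap K$, the rectifiability of both sets gives $\nu_v=\pm\nu_K$ $\hn$-a.e. The evenness of $h^-$ in $\xi$ (Remark~\ref{rem: sigma1}(ii)) together with the symmetry $h^-(x,\xi,-\nu)=h^-(x,-\xi,\nu)$ (the functional only sees $v$, so the pair $(-\xi,-\nu)$ describes the same jump as $(\xi,\nu)$) lets me evaluate $h^-$ at $\nu_K$. I then split further according to the size of the jump.
\begin{itemize}
\item Where $g(|[v]|)\le\psi$, property \eqref{maxpsi} gives $h^-=0$, which equals the right-hand side.
\item Where $g(|[v]|)\ge\psi$, Lemma~\ref{lemma: wum} gives $h^-\le g(|[v]|)-\psi$. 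This uses hypothesis \eqref{curciiil}, i.e.\ $\kappa=1$ $\hn$-a.e.\ on $K$.
\end{itemize}
Integrating the three pointwise bounds over $J_v$ yields the desired inequality.

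The main point needing care is that Lemma~\ref{lemma: wum} holds only for $\hn$-a.e.\ $x\in K$, with an exceptional set independent of $\xi$. Since $J_v\cap K$ is a subset of $K$, this null set is harmless. The normal-orientation issue is routine. I therefore expect no genuine obstacle: all the substantive work has been done in Lemma~\ref{lemma: wum}, and the corollary is essentially bookkeeping.
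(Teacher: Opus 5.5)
Your proposal is correct and follows the paper's proof. Proposition~\ref{prop. unilat} supplies weak unilateral minimality and $\nabla u_n\to\nabla u$. The weak inequality is then upgraded to the strong one through the pointwise bounds $h^-\le (g(|[v]|)-\psi)^+$ on $J_v\cap K$ (Lemma~\ref{lemma: wum} under \eqref{curciiil}) and $h^-\le g(|[v]|)$ on $J_v\setminus K$ (Remark~\ref{rem: sigma1}(i)). Your explicit appeal to \eqref{maxpsi} for the case $g(|[v]|)\le\psi$, and your check on the orientation of the normals, only spell out details the paper leaves implicit.
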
 
  
\begin{proof}
The statement follows immediately by Proposition \ref{prop. unilat} and the fact that for each $v \in \SBV^p_w(\Omega')$ we have 
$$h^-(x,[v],\nu_v) \le  \big( g(|[v]|) - \psi\big)^+   \text{ for  $\BBB  x \in J_v \cap K$}, \quad h^-(x,[v],\nu_v) \le  g(|[v]|)   \text{ for $\mathcal{H}^{d{-}1}$-a.e.\ $\BBB x \in \EEE J_v \setminus  K$}$$
 by  Lemma \ref{lemma: wum} and Remark \ref{rem: sigma1}(i), respectively.
\end{proof}

%
%

\section{Proof of the existence result}\label{sec: main proof}

This section is devoted to the proof of Theorem \ref{main def}. The proof will be performed by time discretization.   Given   a sequence $(\delta_n)_n \subset (0,\infty)$ with $\delta_n \to 0$, for each $\delta_n$, we consider the subdivision $0=t^0_n <\dots<t_n^{ T /\delta_n}=T$  of the interval $[0,T]$ with step size $\delta_n$. (Without restriction, we assume that $T /\delta_n \in \N$.) Given $w\in W^{1,1}(0,T;H^1( \Omega'))$ satisfying \eqref{uniformnound}, we correspondingly let $w^k_n := w(t^k_n)$ be the sequence of boundary data at different time steps $k\in \{0,\dots, T/\delta_n \}$. 
 We \EEE suppose that the \emph{initial value} $u_n^0 =u^0 \in  \SBV^2_{w(0)}(\Omega')$ is a minimum configuration in the sense that 
 \begin{equation}\label{minimizing-scheme0}
  u^{0}\in  {\rm  argmin} \big\{ E(v) \colon v \in \SBV^2_{w(0)}(\Omega') \big\},
  \end{equation}
  with $E$ given as in \eqref{energy-static-new}. We inductively define an evolution as follows: given $(u_n^j)_{0 \le j \le k}$, we define 
the  \BBB \emph{precrack} \EEE  and  the corresponding \BBB \emph{density of dissipated surface energy} \EEE at time $t_n^k$ by  
   \begin{equation}\label{Tcrack}
 K_n^k  := \bigcup_{j = 0}^{k} J_{u^j_n}, \quad \quad   \psi_n^k(x) := \max_{0\le j \le k}   g(|[u^j_n](x)|) \ \ \text{ for } x \in K_n^k,
  \end{equation}
where for definiteness we set $[u^j_n](x) = 0$ for $x \notin  \BBB J_{u^j_n} \EEE $. Then, we \BBB choose \EEE
     \begin{equation}\label{minimizing-scheme}
    u_{n}^{k{+}1}\in \argmin \big\{    \mathcal{E}(v, K_n^k,\psi_n^k) \colon \,  v\in \SBV^2_{w_n^{k{+}1}}(\Omega') \big\},
  \end{equation}  
  i.e., the minimization of the energy defined in \eqref{eq: lim-en} involves the previous time steps in terms of $(K_n^k,\psi_n^k)$. The existence of minimizers in \BBB \eqref{minimizing-scheme0} and \EEE \eqref{minimizing-scheme}   follows from the direct method and Ambrosio's theorem. Indeed, due to \eqref{uniformnound}, by a truncation argument, we can   assume that all competitors for the minimization \BBB problems \EEE  in \eqref{minimizing-scheme0} and \eqref{minimizing-scheme} satisfy this bound.  Moreover, we observe that  
$\int_{J_u}   \BBB g(|[u]|)  \, \EEE {\rm d}\Hd $ \BBB in \eqref{energy-static-new} \EEE   is    lower semicontinuous, see \cite[Example 5.23]{Ambrosio-Fusco-Pallara:2000}, \BBB as \EEE   $g$ is  nonnegative, \EEE continuous, increasing, and subadditive \ZZZZ on  $(0,\infty)$. \EEE As for \eqref{eq: lim-en}, we can apply \cite[Lemma 8.3]{Giacomini:2005b}, neglecting the Cantor part. \EEE

 We define the evolution  $u_n\colon [0,T] \to \SBV^2(\Omega')$ \EEE by
  \begin{equation}\label{definitionofun}
  u_n(t):= u^k_n  \quad \text{for}\; t\in[t^{k}_n,t^{k{+}1}_n)\,.
  \end{equation}
The \ZZZZ precrack \EEE $K_n$ and the density of dissipated surface energy \EEE $\psi_n$ are defined by
 \begin{equation}\label{definition-crackset}
    K_n(t)\defas K_n^k, \quad  \psi_n(t)  := \EEE \psi_n^k   \quad  \text{for $t\in [t^{k}_n,t^{k{+}1}_n)$}\,.
    \end{equation}  
\ZZZZ In a similar way, we define the boundary conditions $w_n(t):= w^k_n$  for $t\in[t^{k}_n,t^{k{+}1}_n).$

We observe that $(u_n(t), K_n(t), \psi_n(t))$ are strong unilateral  minimizers for every $n\in \N$ and $t \in [0,T]$ in the sense of Definition \ref{def: unilat min} as  $u_n(t)\in AD(w_n(t),K_n(t), \psi_n(t))$ by \eqref{definitionofun}--\eqref{definition-crackset}, and further 
 as
\begin{align}\label{desired equation}
\int_{\Omega'} |\nabla u_n(t)|^2 \, {\rm d}x \le \int_{\Omega'} |\nabla v|^2 \, {\rm d}x +   \int_{J_v}  \big( g(|[v]|)  - \psi_n(t)  \big)^+ \, \dhn\quad \text{for any }  v \in \SBV^2_{w_n(t)}(\Omega').
\end{align}
In fact, for $t$ fixed, we choose $k$ such that $t \in [t_n^{k+1}, t_n^{k+2})$. Then, \eqref{minimizing-scheme} implies   $\mathcal{E}(u^{k+1}_n, K_n^k,\psi_n^k) \le \mathcal{E}(v, K_n^k,\psi_n^k)$ for all $v\in \SBV^2_{w_n^{k{+}1}}(\Omega')$. We use \eqref{Tcrack} to find  $\mathcal{E}(u^{k+1}_n, K_n^k,\psi_n^k) = \mathcal{E}(u^{k+1}_n, K_n^{k+1},\psi_n^{k+1}) $ and $ \mathcal{E}(v, K_n^k,\psi_n^k) \le  \mathcal{E}(v, K_n^{k+1},\psi_n^{k+1})$. From this, we deduce $\mathcal{E}(u^{k+1}_n, K_n^{k+1},\psi_n^{k+1}) \le \mathcal{E}(v, K_n^{k+1},\psi_n^{k+1})$. Eventually, subtracting $\int_{K_n^{k+1}} \psi_n^{k+1} \, {\rm d}\mathcal{H}^{d-1}$ from both sides yields \eqref{desired equation}.  
 
 We now proceed with the proof of the main theorem.  We start by some standard estimates, in particular  a \EEE discrete energy estimate and \EEE  uniform energy bounds on $u_n(t)$  which  enable us to pass to the limit by  compactness and semicontinuity results, see Subsection \ref{subsec1}. The proof of the main result is then given in Subsection \ref{subsec2}. Here, the \BBB main \EEE  novelty lies in  the derivation of \eqref{finalstability} where we  crucially exploit the notion of convergence introduced in Section \ref{subsec:sigmaconv}.  \BBB To this end, we follow the E--S approach, i.e., we first establish the energy balance and prove the stability condition afterwards. \EEE  We defer to Subsection~\ref{subsec3}   a  fundamental part of the proof which deals with the local structure of crack sets \BBB along the evolution. \EEE Roughly speaking, in this step we confirm   assumption \eqref{curciiil} which is needed for the property of strong unilateral minimizers.

\subsection{Energy   estimate,   compactness, and lower semicontinuity}\label{subsec1}

The first  goal of this subsection is to show that the energy of the evolution is bounded uniformly in time. For this, we need to prove an energy estimate on the time-discrete level, which will be also crucial to establish the energy balance. \EEE 

We recall the notation for the time discretization $\{0=t^{0}_n< t^{1}_n<\dots< t^{T /\delta_n}_n =T\}$ of the interval $[0,T]$ with step size $\delta_n$, and introduce the following shorthand notation.   Given an arbitrary $v\in \SBV^2_{w_n^k}(\Omega')$, let
\begin{equation}\label{1908241552}
\mathcal{E}^{0}(v)=E(v)\, \quad \text{and} \quad \mathcal{E}^{k}_n(v) \defas  \mathcal{E}(v, K_n^{k{-}1},\psi_n^{k{-}1})\quad \text{for} \, { k\ge 1, \EEE }
\end{equation}
where $E$ and $\mathcal{E}$ are defined in  \eqref{energy-static-new} and \eqref{eq: lim-en}, respectively, \ZZZZ and $(K_n^{k{-}1}, \psi_n^{k{-}1})$ are given in \eqref{Tcrack}. \EEE  
We start by proving a bound on the elastic part of the energy.  
\begin{lemma}[Elastic energy, uniform bound]\label{prop-control-on-ela}
  Let $t \mapsto u_n(t)$ be the discrete evolution defined in \eqref{definitionofun}.   There exists \EEE a constant   $C_w>0$ \BBB only \EEE   depending on $w$ such that   $E(u^0) \le C_w$ and 
  \begin{align*}
   \int_{ \Omega'} |\nabla {u}_n(t)|^2  \, {\rm d} x + \Vert u_n(t) \Vert_\infty \le C_w \quad \text{ for all $t \in [0,T]$ and for all $n \in \N$.}
   \end{align*}
      
\end{lemma}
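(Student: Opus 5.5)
The plan has three parts: bound $E(u^0)$, bound $\|u_n(t)\|_\infty$ by truncation, and then bound the elastic energy through a discrete energy inequality.

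\emph{Bound on $E(u^0)$.} Testing \eqref{minimizing-scheme0} with $v=w(0)$ gives
$$E(u^0)\le \int_{\Omega'}|\nabla w(0)|^2\,{\rm d}x\le \sup_{t}\|w(t)\|_{H^1}^2,$$
which is finite since $w\in W^{1,1}(0,T;H^1(\Omega'))\subset C([0,T];H^1(\Omega'))$.

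\emph{$L^\infty$ bound.} Truncating at level $M$ from \eqref{uniformnound} does not increase the bulk energy. It also does not increase the surface terms: $g$ is increasing and $|[\,\cdot\,]|$ decreases under truncation, so both the jump set and $g(|[\cdot]|)$, and hence $g(|[\cdot]|)\vee\psi$, only decrease. Truncation also preserves the boundary condition, because $|w_n^k|\le M$. We therefore select the minimizers in \eqref{minimizing-scheme0} and \eqref{minimizing-scheme} with $\|u_n^k\|_\infty\le M$. If the statement is meant for arbitrary minimizers, we note that truncation keeps the energy and strictly decreases it whenever the truncation changes the function on a set of positive measure where $\nabla u\neq 0$ or a jump is cut. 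The cleanest route is simply to fix this choice in the construction, as the text preceding the lemma already suggests.

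\emph{Discrete energy inequality.} Fix $k\ge 0$ and test \eqref{minimizing-scheme} at step $k+1$ with $v=u_n^k-w_n^k+w_n^{k+1}\in \SBV^2_{w_n^{k+1}}(\Omega')$. This $v$ has $J_v=J_{u_n^k}\subset K_n^k$ and $[v]=[u_n^k]$, so $g(|[v]|)\le\psi_n^k$ on $J_v$ and
$$\mathcal{E}(v,K_n^k,\psi_n^k)=\|\nabla u_n^k+\nabla(w_n^{k+1}-w_n^k)\|_2^2+\int_{K_n^k}\psi_n^k\,{\rm d}\Hd.$$
Set $F_n^k:=\mathcal{E}(u_n^k,K_n^k,\psi_n^k)$, which equals $\|\nabla u_n^k\|_2^2+\int_{K_n^k}\psi_n^k\,{\rm d}\Hd$ by \eqref{Tcrack} and \eqref{eq: lim-en-an}. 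Since $\mathcal{E}(u_n^{k+1},K_n^k,\psi_n^k)=F_n^{k+1}$, the minimality of $u_n^{k+1}$ gives
$$F_n^{k+1}\le F_n^k+2\int_{\Omega'}\nabla u_n^k\cdot\nabla(w_n^{k+1}-w_n^k)\,{\rm d}x+\|\nabla(w_n^{k+1}-w_n^k)\|_2^2 .$$
For $k=0$ we use $F_n^0=E(u^0)$, since $K_n^0=J_{u^0}$ and $\psi_n^0=g(|[u^0]|)$.

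\emph{Gronwall step.} Write $\nabla(w_n^{k+1}-w_n^k)=\int_{t_n^k}^{t_n^{k+1}}\partial_t\nabla w\,{\rm d}s$. Using $\|\nabla u_n^k\|_2\le (F_n^k)^{1/2}\le 1+F_n^k$ and bounding the quadratic term by $\sup_s\|\nabla w(s)\|_2\cdot\int_{t_n^k}^{t_n^{k+1}}\|\partial_t\nabla w\|_2\,{\rm d}s$, we obtain
$$F_n^{k+1}\le F_n^k+(C+2F_n^k)\,a_k,\qquad a_k:=\int_{t_n^k}^{t_n^{k+1}}\|\partial_t\nabla w\|_2\,{\rm d}s .$$
A discrete Gronwall argument with $\sum_k a_k\le\|\partial_t\nabla w\|_{L^1(0,T;L^2)}$ then gives
$$\sup_k F_n^k\le \big(E(u^0)+C\big)\exp\big(2\|\partial_t\nabla w\|_{L^1(0,T;L^2)}\big)=:C_w,$$
which is independent of $n$. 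Since $\|\nabla u_n(t)\|_2^2\le F_n^k$, this yields the claim after enlarging $C_w$ by $M$.

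The only delicate point is the admissibility of the shifted competitor, namely that the jump set and the openings are unchanged and that the constraint \eqref{constraints} holds. This follows because $w_n^{k+1}-w_n^k\in H^1$ has no jump, so everything else is routine.
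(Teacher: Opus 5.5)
Your argument is correct, but for the elastic bound it takes a longer route than the paper, which tests \eqref{minimizing-scheme} directly with the jump-free function $w_n^{k+1}$.

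\textbf{The paper's route.} The competitor $w_n^{k+1}$ pays only the fixed history term $\int_{K_n^k}\psi_n^k\,{\rm d}\Hd$. The minimizer $u_n^{k+1}$ pays this term as well, so it cancels and one gets $\|\nabla u_n^{k+1}\|_2^2\le\|\nabla w_n^{k+1}\|_2^2\le\sup_s\|\nabla w(s)\|_2^2$. Equivalently, take $v=w_n(t)$ in \eqref{desired equation}. No energy inequality or Gronwall argument is needed, and the constant has no exponential factor. The paper then feeds this bound into Corollary~\ref{cor: full-ela-control} to control the power term. Only afterwards does the shifted competitor $u_n^{k-1}+w_n^k-w_n^{k-1}$ of Lemma~\ref{e-balance-lemma} give the total-energy bound of Corollary~\ref{cor: energy bound}.

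\textbf{Your route.} You run that second step first and close it with a discrete Gronwall inequality instead of the a priori elastic bound. At the price of a worse constant, this proves Lemma~\ref{prop-control-on-ela} and Corollary~\ref{cor: energy bound} in one stroke. It is also the argument one would need if no cost-free comparison state were available.

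\textbf{A correction to your aside on arbitrary minimizers.} The claim that minimizers are automatically bounded by $M$ is false. Suppose $u>M$ is constant on a region enclosed by $K_n^k$. Truncating there only lowers jumps on $K_n^k$ with $g(|[u]|)\le\psi_n^k$, which leaves the energy unchanged. So minimizers need not satisfy $\|u\|_\infty\le M$, and the lemma genuinely relies on selecting truncated minimizers in the construction. That is what you, and the paper, do.
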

\EEE
\begin{proof}
The control on the elastic energy is standard and follows by using $w(0)$ and $w_n^k$ as competitors in  \eqref{minimizing-scheme0} and \eqref{minimizing-scheme}, respectively. As discussed  below \eqref{minimizing-scheme}, by a truncation argument  we can assume that $\Vert u_n(t) \Vert_\infty \le M$  (cf.\ \eqref{uniformnound}) \EEE for all $t \in [0,T]$. 
\end{proof}
\noindent
As an immediate consequence, we obtain the following corollary. 
\begin{corollary}\label{cor: full-ela-control}
    Let $t \mapsto u_n(t)$ be the discrete evolution defined in \eqref{definitionofun}.
    Then, there exists a constant   $C_w>0$  depending on $w$, but independent of $k$ and $n$, such that 
\begin{align}\label{vio}
\int_{0}^{t_{n}^{k}}  \langle \EEE \nabla {u}_n(\tau),   \partial_{t} \nabla w(\tau)  \rangle \EEE \,{\rm d}\tau \le   C_w \quad  \text{for all $k \in \EEE \{0,\ldots,  T/\delta_n\}$}. \EEE
\end{align} 
\end{corollary}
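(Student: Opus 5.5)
The bound \eqref{vio} follows from the Cauchy--Schwarz inequality in space and time, combined with the uniform elastic bound of Lemma~\ref{prop-control-on-ela}. The time integrand is only ever paired with a quantity that is bounded uniformly in $\tau$, $n$ and $k$, so no fine structure of the discrete evolution is needed.

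First, for every $\tau \in [0,T]$ I apply the Cauchy--Schwarz inequality in $L^2(\Omega')$:
\[
\langle \nabla u_n(\tau), \partial_t \nabla w(\tau)\rangle \le \|\nabla u_n(\tau)\|_{L^2(\Omega')}\, \|\partial_t \nabla w(\tau)\|_{L^2(\Omega')}.
\]
By Lemma~\ref{prop-control-on-ela}, $\|\nabla u_n(\tau)\|_{L^2(\Omega')} \le C_w^{1/2}$ for all $\tau$ and $n$. Here $u_n(\tau)$ is piecewise constant in time by \eqref{definitionofun}, so the integrand is measurable in $\tau$.

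Next, since $w \in W^{1,1}(0,T;H^1(\Omega'))$, the map $\tau \mapsto \|\partial_t \nabla w(\tau)\|_{L^2(\Omega')}$ belongs to $L^1(0,T)$. Integrating over $[0,t_n^k] \subset [0,T]$ gives
\[
\int_0^{t_n^k} \langle \nabla u_n(\tau), \partial_t \nabla w(\tau)\rangle \, {\rm d}\tau \le C_w^{1/2}\, \|\partial_t w\|_{L^1(0,T;H^1(\Omega'))}.
\]
After enlarging $C_w$ if necessary, the right-hand side depends only on $w$, and in particular is independent of $k$ and $n$. This proves \eqref{vio}.

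There is no real obstacle in this argument. The only point to keep in mind is that the constant $C_w$ in Lemma~\ref{prop-control-on-ela} is uniform in $t$ and $n$, and that redefining $C_w$ as the maximum of the two constants is consistent with the paper's convention that $C_w$ denotes a generic constant depending only on $w$.
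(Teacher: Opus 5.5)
Your proposal is correct and matches the paper's proof: both apply H\"older's (Cauchy--Schwarz) inequality to bound the integral by $\|\partial_t \nabla w\|_{L^1(0,T;L^2(\Omega'))}\,\|\nabla u_n\|_{L^\infty(0,T;L^2(\Omega'))}$. Both then conclude with the uniform elastic bound of Lemma~\ref{prop-control-on-ela} and the regularity $w\in W^{1,1}(0,T;H^1(\Omega'))$.
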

\begin{proof}
 By H\"older's inequality we find 
$$ \int_{0}^{t_{n}^{k}}  \langle \EEE\nabla {u}_n(\tau) ,  \partial_{t} \nabla w(\tau) \rangle \EEE\,{\rm d}\tau  \leq    \, \Vert \partial_t \nabla  w \Vert_{L^1(0,T; L^2(\Omega'))}\; \|\nabla {u}_n\|_{L^{\infty}(\ZZZZ 0, T; \EEE L^2(\Omega'))} \,.$$
 Using \EEE Proposition~\ref{prop-control-on-ela}
and   $w\in W^{1,1}(0,T; H^1(\Omega'))$ we deduce \eqref{vio}. \EEE 
\end{proof}
We continue with the following discrete energy estimate that will be fundamental to establish a uniform bound on the energy and to prove the energy-balance law.   
\begin{lemma}[Discrete energy estimate]\label{e-balance-lemma}
  Let $t \mapsto u_n(t)$ be the discrete evolution defined in \eqref{definitionofun}. Let $k \in \EEE \{0,\ldots,  T/\delta_n\}$. Then, there exists  $(\beta_n)_n$   independent \EEE of $k$ with $\beta_n \to 0$   as $n \to \infty$ \EEE such that 
    \begin{equation*}\label{eq: for en bal}
{        \mathcal{E}_n^k( u_n^k ) - \mathcal{E}^{0}(u^0) \leq 2  \int_{0}^{t_{n}^{k}}  \langle \EEE \nabla {u}_n(\tau), \partial_{t} \nabla  {w}(\tau)   \rangle \EEE \,{\rm d}\tau  +  \beta_n  .   }
    \end{equation*}
\end{lemma}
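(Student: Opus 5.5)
The plan is the standard discrete energy inequality for incremental minimization schemes. For each $j\in\{0,\dots,k-1\}$ we test the minimality of $u_n^{j+1}$ in \eqref{minimizing-scheme} (or of $u^0$ in \eqref{minimizing-scheme0} when $j=0$ is handled via $\mathcal{E}^0$) against the competitor $v:=u_n^j + w_n^{j+1}-w_n^j$. This competitor lies in $\SBV^2_{w_n^{j+1}}(\Omega')$ and has $J_v=J_{u_n^j}$ with $[v]=[u_n^j]$, since $w_n^{j+1}-w_n^j\in H^1(\Omega')$. Hence, by \eqref{Tcrack}, $J_v\subset K_n^j$ and $g(|[v]|)\le\psi_n^j$, so
\[
\mathcal{E}^{j+1}_n(v)=\mathcal{E}(v,K_n^j,\psi_n^j)=\int_{\Omega'}|\nabla u_n^j+\nabla(w_n^{j+1}-w_n^j)|^2\,\dx+\int_{K_n^j}\psi_n^j\,\dhn .
\]
For $j\ge1$ the last term equals the surface part of $\mathcal{E}^j_n(u_n^j)=\mathcal{E}(u_n^j,K_n^{j-1},\psi_n^{j-1})$, because $u_n^j$ only adds $J_{u_n^j}$ with opening $g(|[u_n^j]|)$ to the history, i.e.\ $\mathcal{E}(u_n^j,K_n^{j-1},\psi_n^{j-1})=\int|\nabla u_n^j|^2+\int_{K_n^j}\psi_n^j$. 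For $j=0$, $K_n^0=J_{u^0}$, $\psi_n^0=g(|[u^0]|)$, so this equals $E(u^0)=\mathcal{E}^0(u^0)$.

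Expanding the square gives
\[
\mathcal{E}^{j+1}_n(u_n^{j+1})\le \mathcal{E}^j_n(u_n^j)+2\langle\nabla u_n^j,\nabla w_n^{j+1}-\nabla w_n^j\rangle+\|\nabla w_n^{j+1}-\nabla w_n^j\|_2^2 .
\]
We write $\nabla w_n^{j+1}-\nabla w_n^j=\int_{t_n^j}^{t_n^{j+1}}\partial_t\nabla w(\tau)\,{\rm d}\tau$ (Bochner integral, valid since $w\in W^{1,1}(0,T;H^1)$). Since $u_n(\tau)=u_n^j$ on $[t_n^j,t_n^{j+1})$, the middle term is exactly $2\int_{t_n^j}^{t_n^{j+1}}\langle\nabla u_n(\tau),\partial_t\nabla w(\tau)\rangle\,{\rm d}\tau$. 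Summing over $j=0,\dots,k-1$ telescopes to the claim with remainder
\[
\sum_{j}\Big\|\int_{t_n^j}^{t_n^{j+1}}\partial_t\nabla w\Big\|_2^2\le \max_j\Big(\int_{t_n^j}^{t_n^{j+1}}\|\partial_t\nabla w\|_2\Big)\,\|\partial_t\nabla w\|_{L^1(0,T;L^2)}=:\beta_n .
\]
This bound is independent of $k$, and $\beta_n\to0$ by absolute continuity of the integral of $\|\partial_t \nabla w\|_2\in L^1(0,T)$, since the intervals have length $\delta_n\to0$.

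The only delicate point is the bookkeeping identity that the history energy carried by $\mathcal{E}^{j+1}_n(v)$ coincides with that of $\mathcal{E}^j_n(u_n^j)$, which uses \eqref{eq: lim-en-an}. This requires $u_n^j\in AD(w_n^j,K_n^j,\psi_n^j)$, true by construction of \eqref{Tcrack}. One also checks that $\int_{K_n^j}\psi_n^j=\int_{K_n^{j-1}\cap J_{u_n^j}}(g\vee\psi_n^{j-1})+\int_{K_n^{j-1}\setminus J_{u_n^j}}\psi_n^{j-1}+\int_{J_{u_n^j}\setminus K_n^{j-1}}g$, which follows directly from \eqref{eq: lim-en} together with the convention $[u](x)=0$ off the jump set.
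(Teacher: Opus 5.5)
Your proposal is correct and is the same argument the paper uses. The paper omits the details and only states that one tests minimality at time $t_n^k$ with $u_n^{k-1}+w_n^k-w_n^{k-1}$; that is exactly your competitor, followed by expanding the square, telescoping, and the absolute-continuity bound on $\sum_j\|\nabla w_n^{j+1}-\nabla w_n^j\|_2^2$. Your bookkeeping of the history term is right, reading \eqref{eq: lim-en} with $\psi$ integrated over all of $K$ so that \eqref{eq: lim-en-an} holds. Note only that for $j=0$ you use the minimality of $u_n^1$ together with $\mathcal{E}^0(u^0)=\int|\nabla u^0|^2+\int_{K_n^0}\psi_n^0$, and never the minimality of $u^0$ itself.
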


\begin{proof}
The argumentation follows a well-known strategy, see e.g.\   \cite[Section~3.2]{Francfort-Larsen:2003}, \BBB \cite[Lemma 3.2]{Giacomini:2005b}, or  \EEE \cite[Lemma 6.1]{dMasoFranToad}, and   relies on testing the problem at time $t_n^k$ with $u_n^{k{-}1} + w_n^{k} - w_n^{k{-}1} $. We omit the details.
\end{proof}
  
As a direct consequence, we obtain the following bound on the energy. 

\begin{corollary}[Energy bound]\label{cor: energy bound}
 Let $t \mapsto u_n(t)$ be the discrete evolution defined in \eqref{definitionofun}. Then, there exists a constant $C_w>0$ \EEE only depending on $w$ such that 
\[\mathcal{E}^k_n(u^k_n) \le C_w \quad \text{for all $k  \in \EEE \{0, \ldots, T/\delta_n\} $ and $n \in \N$.} \]
\end{corollary}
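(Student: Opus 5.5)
This is a direct consequence of Lemma~\ref{e-balance-lemma} together with Corollary~\ref{cor: full-ela-control} and the bound $E(u^0)\le C_w$ from Lemma~\ref{prop-control-on-ela}. For $k=0$ the claim is simply $\mathcal{E}^0(u^0)=E(u^0)\le C_w$; recall $\mathcal{E}^0=E$ by \eqref{1908241552}. For $k\ge1$, Lemma~\ref{e-balance-lemma} gives
\[
\mathcal{E}_n^k(u_n^k)\le \mathcal{E}^0(u^0)+2\int_0^{t_n^k}\langle\nabla u_n(\tau),\partial_t\nabla w(\tau)\rangle\,{\rm d}\tau+\beta_n .
\]
I would then bound the three terms on the right separately:
\begin{itemize}
\item The first term, $\mathcal{E}^0(u^0)=E(u^0)$, is at most $C_w$. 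This is obtained by testing \eqref{minimizing-scheme0} with $w(0)$, giving $E(u^0)\le\int_{\Omega'}|\nabla w(0)|^2\,{\rm d}x$, which is controlled since $w\in W^{1,1}(0,T;H^1(\Omega'))\subset C([0,T];H^1(\Omega'))$.
\item The integral term is at most $2C_w$ by Corollary~\ref{cor: full-ela-control}. That corollary uses Hölder's inequality together with the uniform bound on $\|\nabla u_n(t)\|_{L^2}$ from Lemma~\ref{prop-control-on-ela}.
\item The error $\beta_n$ is independent of $k$ and satisfies $\beta_n\to0$, so $\sup_n\beta_n<\infty$.
\end{itemize}
Combining these bounds and enlarging the constant gives $\mathcal{E}_n^k(u_n^k)\le 3C_w+\sup_n\beta_n$. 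This constant is uniform in $k$ and $n$, and after renaming it depends only on $w$.

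The only point requiring care is that $\sup_n\beta_n$ depends only on $w$ (and on the fixed choice of the sequence $\delta_n$). In the proof of Lemma~\ref{e-balance-lemma}, $\beta_n$ arises from comparing Riemann sums of $\langle\nabla u_n,\partial_t\nabla w\rangle$, and from the quadratic remainder $\|\nabla(w_n^k-w_n^{k-1})\|_2^2$, with the time integral. It is bounded by quantities such as $\max_k\|\nabla(w(t_n^k)-w(t_n^{k-1}))\|_{L^2}\,\|\partial_t\nabla w\|_{L^1(0,T;L^2)}$ and the modulus of absolute continuity of $t\mapsto\|\partial_t\nabla w(t)\|_{L^2}$. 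Both depend only on $w$.

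Even if one wished to avoid this inspection, the finitely many $n$ with $\beta_n>1$ can be handled directly. For fixed $n$ there are finitely many $k$, and each $\mathcal{E}_n^k(u_n^k)\le \mathcal{E}_n^k(u_n^{k-1}+w_n^k-w_n^{k-1})$ is finite. So I expect no genuine obstacle here.
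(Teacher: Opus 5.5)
Your proposal is correct and matches the paper's argument. The bound follows by combining $\mathcal{E}^0(u^0)=E(u^0)\le C_w$ from Lemma~\ref{prop-control-on-ela}, the estimate \eqref{vio} of Corollary~\ref{cor: full-ela-control}, and the discrete energy estimate of Lemma~\ref{e-balance-lemma}, with the errors $\beta_n$ absorbed into the constant. Your discussion of why $\sup_n\beta_n$ is controlled by $w$ (and the fixed time steps) spells out a detail the paper leaves implicit.
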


\begin{proof}
   The proof follows by combining $\ZZZZ \mathcal{E}^{0} \EEE \ZZZZ (u^0) \EEE \le C_w$ (see Lemma \ref{prop-control-on-ela}), \eqref{vio}, and Lemma \ref{e-balance-lemma}.
\end{proof}

Based on the energy bound in Corollary \ref{cor: energy bound}, we can pass to the limit in the crack sets and displacements by compactness arguments. We start with the crack sets.

\begin{proposition}[Convergence of crack sets\EEE]\label{limit-crack-evo}
  \GGG Let $t\mapsto (u_n(t), K_n(t),\psi_n(t))$ be the evolution defined in \eqref{definitionofun}  and \eqref{definition-crackset}. Then, there  exist  \BBB pairs $(K(t), \psi(t)) \in \mathcal{C}$ \EEE   for $t \in [0,T]$ satisfying
\begin{align}\label{irr-prop}
\text{ $K(t_1) \subset  K(t_2)$ and $\psi(t_1) \le \psi(t_2)$ on  $K(t_1)$   for all $0\leq t_1\leq t_2\leq T$,} 
\end{align}  
 and a subsequence (not relabeled) such that, for every $t\in [0,T]$,  $(K_n(t),\psi_n(t))\cosi (K(t),\psi(t))$. \EEE
 Moreover, for all $U \in \mathcal{A}(\Omega')$ it holds that 
 \begin{align}\label{lsc-app}
\int_{K(t)\cap U} \psi(t) \, {\rm d}\mathcal{H}^{d{-}1} \le \liminf_{n\to \infty} \int_{K_n(t)\cap U} \psi_n(t) \, {\rm d}\mathcal{H}^{d{-}1},
\end{align} 
 \begin{align}\label{lsc2-app}
\mathcal{H}^{d{-}1}(K(t) \cap U )   \le \liminf_{n\to \infty}  \mathcal{H}^{d{-}1}(K_n(t) \cap U).
\end{align}

\end{proposition}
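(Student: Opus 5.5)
The plan is a Helly-type selection argument based on the compactness in Theorem~\ref{sigma-comp} and the monotonicity in Remark~\ref{rem: sigma1}(iii). First I check the uniform bounds needed for compactness. By Corollary~\ref{cor: energy bound} and \eqref{eq: lim-en-an}, $\int_{K_n(t)}\psi_n(t)\,\dhn \le C_w$ for all $t$ and $n$. Since $\psi_n(t)\ge c_g$ on $K_n(t)$ (every point of $K_n^k$ lies in some $J_{u^j_n}$, where $g(|[u^j_n]|)\ge c_g$ by \eqref{gproperty}), this gives $\hn(K_n(t))\le C_w/c_g$. Moreover $(K_n(t),\psi_n(t))\in\mathcal{C}$, because $J_{u^j_n}\subset\Omega\cup\partial_D\Omega$ by the boundary condition. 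Finally, by \eqref{Tcrack} the discrete pairs are monotone in time: $K_n(t_1)\subset K_n(t_2)$ and $\psi_n(t_1)\le\psi_n(t_2)$ on $K_n(t_1)$ for $t_1\le t_2$.

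Next I fix a countable dense set $D\subset[0,T]$ containing $0$ and $T$. Applying Theorem~\ref{sigma-comp} at each $t\in D$ and taking a diagonal subsequence, I obtain $(K_n(t),\psi_n(t))\cosi(K(t),\psi(t))$ for all $t\in D$. Because the discrete pairs are monotone, Remark~\ref{rem: sigma1}(iii) (with $\tilde K_n=K_n(t_1)$, $\tilde\psi_n=\psi_n(t_1)$, extended by zero) gives \eqref{irr-prop} on $D$. To pass from $D$ to all of $[0,T]$, I consider the increasing function $t\mapsto \Phi(t):=\int_{K(t)}\psi(t)\,\dhn+\hn(K(t))$ on $D$, extended to $[0,T]$ by $\Phi(t):=\sup_{s\in D,\,s\le t}\Phi(s)$. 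This function has at most countably many discontinuity points, and I add them to $D$. With a further diagonal extraction, $\sigma_{\rm cf}$-convergence then holds on the enlarged countable set $D'$.

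For $t\notin D'$, I again apply Theorem~\ref{sigma-comp} along the current subsequence. The resulting limit could depend on a further subsequence, so I must prove uniqueness. Let $(\hat K,\hat\psi)$ be such a limit. Remark~\ref{rem: sigma1}(iii) gives $K(s)\subset\hat K\subset K(s')$ and $\psi(s)\le\hat\psi\le\psi(s')$ for $s<t<s'$ with $s,s'\in D'$. I then define $K^-(t):=\bigcup_{s<t,s\in D'}K(s)$ and $\psi^-(t):=\sup_{s<t}\psi(s)$, and similarly $K^+(t)$, $\psi^+(t)$ as the measure-theoretic intersection and infimum over $s>t$. Continuity of $\Phi$ at $t$ forces $\hn(K^+(t)\setminus K^-(t))=0$ and $\int(\psi^+(t)-\psi^-(t))=0$; to justify this I will use monotone convergence with $\psi\ge c_g$. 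Hence $\hat K=K^-(t)$ and $\hat\psi=\psi^-(t)$ independently of the subsequence. By the Urysohn property of $\Gamma$-convergence, which applies here because the maximal objects in Definition~\ref{def:sigmaconv} are determined by the $\Gamma$-limit densities and every sub-subsequence has the same $(K,\psi)$, the whole subsequence $\sigma_{\rm cf}$-converges at $t$. There is one subtlety: distinct subsequence limits $h^-$ might yield the same $(K,\psi)$, but only $(K,\psi)$ is claimed. So for each such $t$ I will instead fix the pair $(K(t),\psi(t))=(K^-(t),\psi^-(t))$ and note that every further subsequence has a $\sigma_{\rm cf}$-limit equal to it, which is exactly what is needed.

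Finally, \eqref{lsc-app} and \eqref{lsc2-app} follow directly from \eqref{lsc} and \eqref{lsc-neu} for the subsequence at each $t$. Irreversibility on all of $[0,T]$ follows from the sandwich relations above. I expect the main obstacle to be the uniqueness step at times $t\notin D'$: I need the squeeze between $K(s)$ and $K(s')$ together with continuity of $\Phi$ to pin down $(\hat K,\hat\psi)$, including the inclusion of the normals. The inclusion of normals is automatic, since the sets are rectifiable and nested up to null sets.
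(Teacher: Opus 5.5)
Your handling of the times in the countable set $D'$ is fine, but there is a genuine gap at the times $t\notin D'$. By Definition~\ref{def:sigmaconv}, $(K_n(t),\psi_n(t))\cosi(K(t),\psi(t))$ requires the functionals $\mathcal{H}^-_n(t)$ to $\ol\Gamma$-converge along the \emph{whole} subsequence. The pair $(K(t),\psi(t))$ is only read off from the limit density $h^-$.

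Your squeeze with $\Phi$ correctly shows that all sub-subsequential $\sigma_{\rm cf}$-limits share the pair $(K^-(t),\psi^-(t))$. It says nothing about the $\Gamma$-limits themselves, so the Urysohn property cannot be invoked: it needs a common $\Gamma$-limit, not a common derived pair. Your fallback, ``every further subsequence has a $\sigma_{\rm cf}$-limit equal to it'', is strictly weaker than the statement.

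This is not a formality. Example~\ref{exAB}(A) shows that $h^-=\kappa(g(|\xi|)-\tau)^+$ depends on $\kappa$, while the pair $((0,1)\times\{0\},\tau)$ does not. A sequence interleaving oscillating cracks with two values of $\kappa$ therefore has sub-subsequences with the same pair but different $\Gamma$-limits, and it does not $\sigma_{\rm cf}$-converge. Since $\Phi$ is built from the limit objects, its continuity cannot detect such changes, nor jumps of $\lim_n\int_{K_n(t)}\psi_n(t)\,\dhn$ that are lost through the lower semicontinuity \eqref{lsc}. The later proof of Theorem~\ref{main def} really needs a single density $h^-_s$ at the times $s=s_i\in\bar s+\frac{1}{N_\eps}\mathbb{Z}$. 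Since $\bar s$ depends on $u$, these times cannot be placed in $D'$ beforehand.

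The missing idea is to run the Helly argument on the \emph{discrete} surface energies $f_n(t):=\int_{K_n(t)}\psi_n(t)\,\dhn$. These are nondecreasing and uniformly bounded by Corollary~\ref{cor: energy bound}. Combine this with the uniform estimate
\[
\mathcal{H}^-_n(s)(u,A)-\mathcal{H}^-_n(t)(u,A)\le f_n(t)-f_n(s)\qquad\text{for all } s\le t,
\]
which follows from $(a-b)^+-(a-c)^+\le c-b$ for $b\le c$ and $\psi_n(s)\le\psi_n(t)$ (with $\psi_n(s)=0$ off $K_n(s)$).

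First extract so that $f_n\to f$ pointwise and $\mathcal{H}^-_n(q)$ $\ol\Gamma$-converges for every rational $q$. At each continuity point $t$ of $f$ and each $\eps>0$, pick rationals $t^-<t<t^+$ with $\limsup_n(f_n(t^+)-f_n(t^-))\le\eps$. The estimate then gives $\Gamma\text{-}\liminf_n\mathcal{H}^-_n(t)\ge\mathcal{H}^-(t^-)-\eps$ and $\Gamma\text{-}\limsup_n\mathcal{H}^-_n(t)\le\mathcal{H}^-(t^+)+\eps\le\mathcal{H}^-(t^-)+\eps$. Hence the full subsequence $\ol\Gamma$-converges at $t$.

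The countably many discontinuity points of $f$ are handled by one more diagonal extraction. After that, irreversibility and \eqref{lsc-app}--\eqref{lsc2-app} follow from Remark~\ref{rem: sigma1}(iii) and Theorem~\ref{sigma-comp}, as in your proposal. Your preliminary steps (uniform bounds, $(K_n(t),\psi_n(t))\in\mathcal{C}$, monotonicity of the discrete pairs) are correct.
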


\begin{proof}
The compactness at each time $t$ follows from \BBB Theorem \EEE \ref{sigma-comp}. The fact that the subsequence can be chosen independently of $t$ \ZZZZ and \EEE property \eqref{irr-prop} follow by  Remark~\ref{rem: sigma1}(iii)  and a variant of Helly's theorem.   We sketch  the proof for the reader's convenience: applying \EEE Lemma~\ref{le:Helly} \BBB below \EEE   to the sequence 
\[
f_n(t):= \int_{K_n(t)} \psi_n(t) \,\dhn
\]
and recalling the bound in Corollary \ref{cor: energy bound}, up to extracting a (not relabeled) subsequence, it holds that there exists $E\subset [0,T]$ countable such that for every $\varepsilon>0$ 
\begin{equation}\label{12052136}
\limsup_{n\to \infty} \big(f_n(t^+) - f_n(t^-)\big)   \BBB \le \EEE \varepsilon \quad \text{for all }t \in [0,T]\sm E, \, \text{ with }t\in (t^-,t^+) \text{  for suitable \EEE }t^\pm \in \mathbb{Q}.
\end{equation}
Moreover, \ZZZZ by a diagonal argument, \EEE up to extracting a (not relabeled) subsequence, \ZZZZ from \EEE Theorem~\ref{thm:convpreliminaries}  \ZZZZ we get \EEE
\begin{equation}\label{12052231}
\H^-_n(t) \ \ol \Gamma\text{-converge to $\H^-(t)$ for any $t \in [0,T] \cap \mathbb{Q}$,}
\end{equation}
for $\H^-_n(t)$ defined as in \eqref{hnm} with $\psi_n(t)$ in place of $\psi_n$ therein \BBB (recall \ZZZZ \eqref{convention}, \EEE where $K_n(t)$ now appears). In particular, we notice that for any $A \in \A(\Omega')$ and   $u$ with $u|_A \in \PC (A)$ (otherwise the value is always $+\infty$) \EEE 
   it is elementary to check that \EEE
\begin{equation}\label{12052201}
\H^-_n(s)(u, A)-\H^-_n(t)(u, A)\leq  f_n(t) - f_n(s) \quad\text{ for all }s\leq t.
\end{equation}
Let $t \in [0,T]\sm E$.  By Theorem~\ref{thm:convpreliminaries}, \BBB \eqref{Tcrack},  \eqref{definition-crackset}, \ZZZZ and Corollary \ref{cor: energy bound} \EEE  there exists a subsequence $n(t)$ such that $\H^-_{n(t)}(t)$ $\ol \Gamma$-converge to $\H^-(t)$, with $\H^-(t^+)\leq \H^-(t)\leq \H^-(t^-)$  for  $t^\pm \in \mathbb{Q}$   as in \eqref{12052136}. \EEE For any $u \in  L^1 \EEE (\Omega')$ and $A \in \A(\Omega')$,
by \eqref{12052136}--\eqref{12052201} we have that, for any $u_n\to u$ in $L^1(\Omega')$,
\begin{equation*}
\H^-(t)(u, A)\leq \H^-(t^-)(u, A) \leq \liminf_{n\to \infty} \H^-_n(t^-)(u_n, A)\leq \liminf_{n\to \infty} \H^-_n(t)(u_n, A) + \varepsilon,  
\end{equation*}
and, similarly, that there exists $v_n\to u$ \ZZZZ in $L^1(\Omega')$ \EEE such that
\begin{equation*}
\quad \H^-(t)(u, A)\geq \H^-(t^+)(u, A) \geq \limsup_{n\to \infty} \H^-_n(t^+)(v_n, A) \geq \limsup_{n\to \infty} \H^-_n(t)(v_n, A) -\varepsilon.
\end{equation*} 
(We notice that possibly some values in the above inequalities could be infinite.)  
By the arbitrariness of $\varepsilon>0$ we get that $\H^-_n(t)$ $\ol \Gamma$-converge to $\H^-(t)$ for any $t \in [0,T]\sm E$. As $E$ is countable, up to extracting a further subsequence such that $\H^-_n(t)$ $\ol \Gamma$-converge to $\H^-(t)$ for any $t \in E$ along such  a \EEE  subsequence, we conclude that $\H^-_n(t)$ $\ol \Gamma$-converge to $\H^-(t)$ for any $t \in [0,T]$. Therefore, \BBB by Definitions~\ref{def:sigmaconv} and \ref{def:sigmaconv2}  \EEE we have that $(K_n(t), \psi_n(t))$  $\sigma_{\rm cf}$-converge \EEE to \BBB some \EEE $(K(t), \psi(t)) \BBB \in \mathcal{C}\EEE$.
%
\BBB The estimates \EEE \eqref{lsc-app} and \eqref{lsc2-app}   follow, respectively, from \eqref{lsc} and \eqref{lsc-neu} in Theorem~\ref{sigma-comp}. \BBB Eventually, \eqref{irr-prop} is a consequence of  Remark \ref{rem: sigma1}(iii). \EEE
\end{proof}

\begin{lemma}\label{le:Helly}
Let $f_n\colon [0,T]\to [0,\ZZZZ +\infty)$ be a sequence such that \ZZZZ $\sup_{n \in \N} \Vert f_n \Vert_\infty <+\infty$ and \EEE $f_n(s)\leq f_n(t)$ for all $n \in \N$ and $s\leq t$. Then there exists a countable set $E\subset [0,T]$ such that, up to passing to a (not relabeled) subsequence, for all $t \in [0,T]\sm E$ and for any $\varepsilon>0$ there \ZZZZ exist \EEE $t^\pm\in [0,T] \BBB \cap \mathbb{Q} \EEE$ with $t^-<t<t^+$ such that
\begin{equation*}
\limsup_{n\to \infty} \big( f_n(t^+)-f_n(t^-) \big)   \le    \varepsilon.
\end{equation*}
\end{lemma}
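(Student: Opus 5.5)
We prove Lemma~\ref{le:Helly} by a diagonal argument on a countable dense set combined with Helly's selection principle, and identify $E$ with the jump set of a monotone limit function.

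\emph{Step 1: Convergence on the rationals.} Set $D:=[0,T]\cap\mathbb{Q}$ and $C:=\sup_n\|f_n\|_\infty<+\infty$. For each $q\in D$ the sequence $(f_n(q))_n$ lies in $[0,C]$. Since $D$ is countable, a diagonal argument gives a (not relabeled) subsequence and a function $f\colon D\to[0,C]$ with $f_n(q)\to f(q)$ for every $q\in D$. Monotonicity of each $f_n$ passes to the limit, so $f$ is nondecreasing on $D$.

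\emph{Step 2: The exceptional set.} For $t\in[0,T]$ define the one-sided limits
$$f^-(t):=\sup\{f(q)\colon q\in D,\ q<t\},\qquad f^+(t):=\inf\{f(q)\colon q\in D,\ q>t\},$$
with the natural conventions at the endpoints $0$ and $T$. Monotonicity of $f$ gives $f^-(t)\le f^+(t)$. The intervals $(f^-(t),f^+(t))$ are pairwise disjoint for distinct $t$ and all lie in $[0,C]$. Hence the set
$$E:=\{t\colon f^-(t)<f^+(t)\}$$
is countable, since each such interval contains a distinct rational. We also put the endpoints $0$ and $T$ into $E$. This keeps $E$ countable and guarantees that rationals $t^-<t<t^+$ exist for every $t\notin E$.

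\emph{Step 3: Conclusion.} Fix $t\in(0,T)\setminus E$ and $\varepsilon>0$. Since $f^-(t)=f^+(t)$, the definitions of $f^\pm$ provide $t^-,t^+\in D$ with $t^-<t<t^+$ and $f(t^+)-f(t^-)\le\varepsilon$. Pointwise convergence on $D$ then gives
$$\limsup_{n\to\infty}\big(f_n(t^+)-f_n(t^-)\big)=f(t^+)-f(t^-)\le\varepsilon,$$
which is the claim.

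The argument contains no real obstacle. The only points needing care are the endpoint conventions and the observation that the jump set of a bounded monotone function is countable. Neither step requires convergence of $f_n(t)$ at the irrational points $t$ themselves.
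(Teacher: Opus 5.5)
Your proof is correct and follows essentially the paper's route: a subsequence converging pointwise via a Helly-type diagonal argument, with $E$ the jump set of the monotone limit and rational $t^\pm$ chosen close to $t$. The one difference is that you extract convergence only on $[0,T]\cap\mathbb{Q}$ rather than invoking the full Helly theorem, which suffices because only the values $f_n(t^\pm)$ at rational points enter the conclusion; adding $0$ and $T$ to $E$ is also a careful touch, since at the endpoints the requirement $t^-<t<t^+$ within $[0,T]$ cannot be met, a point the paper's proof passes over silently.
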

\begin{proof}
By Helly's  theorem, \EEE up to passing to a (not relabeled) subsequence, $f_n(t)\to f(t)$ for any $t\in [0,T]$ for a suitable nondecreasing $f$. Let $E\subset [0,T]$ be the \ZZZZ (at most countable) \EEE set of discontinuity points of $f$. 
Then for every $t \in [0,T]\sm E$ there exist $t^\pm \in [0,T] \ZZZZ \cap \mathbb{Q} \EEE $ with $t^-<t<t^+$ such that $f(t^+)-f(t^-) \BBB \le \EEE \varepsilon/3$ and $|f_n(t^\pm)-f(t^\pm)|\leq \varepsilon/3$ for $n$ large enough, so we conclude by the triangle inequality.
\end{proof}

\EEE

\begin{proposition}[Convergence of displacements]\label{liminf-ineq'}
 \GGG Let $t\mapsto (u_n(t), K_n(t),\psi_n(t))$ be the evolution defined in \eqref{definitionofun}  and \eqref{definition-crackset}. \EEE Let  $(K(t),\psi(t))$ be the $\sigma_{\rm cf}$-limit of $(K_n(t),\psi_n(t))$ given by Proposition \ref{limit-crack-evo}. Then, for all $t \in [0,T]$   any subsequence of $(u_n(t))_n$ is precompact with respect to the weak convergence in \ZZZZ $\SBV^2(\Omega')$ \EEE and any limit point $u(t)$ belongs to $AD(w(t),K(t),\psi(t))$. \EEE
\end{proposition}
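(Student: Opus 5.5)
Fix $t \in [0,T]$. For compactness, Lemma~\ref{prop-control-on-ela} gives $\int_{\Omega'}|\nabla u_n(t)|^2\,{\rm d}x + \|u_n(t)\|_\infty \le C_w$. Choosing $k$ with $t \in [t_n^k,t_n^{k+1})$, we have $J_{u_n(t)} \subset K_n(t)$ by \eqref{Tcrack}. Corollary~\ref{cor: energy bound}, combined with $\psi_n(t) \ge c_g$ on $K_n(t)$ (from \eqref{gproperty} and $g(|[u^j_n]|)\ge c_g$ on $J_{u^j_n}$), yields $\hn(J_{u_n(t)}) \le \hn(K_n(t)) \le C_w/c_g$. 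Ambrosio's compactness theorem in $\SBV^2(\Omega')$ then shows that every subsequence has a further subsequence converging weakly in $\SBV^2(\Omega')$ to some $u(t)$. Since $u_n(t) = w_n(t) = w(t_n^k)$ on $\Omega'\setminus\overline\Omega$, and $w_n(t) \to w(t)$ in $H^1(\Omega')$ because $w \in W^{1,1}(0,T;H^1(\Omega')) \subset C([0,T];H^1(\Omega'))$, passing to the $L^1$-limit gives $u(t) = w(t)$ on $\Omega'\setminus\overline\Omega$. This is \eqref{def:ADgH}.

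For the constraints \eqref{constraints}, I would apply Proposition~\ref{prop:stability} along the chosen subsequence, with $(K_n(t),\psi_n(t)) \cosi (K(t),\psi(t))$ from Proposition~\ref{limit-crack-evo}. I then need to verify \eqref{follow}, i.e.\ $\mathcal{H}_n^-(u_n(t),\Omega') \to 0$ with $\mathcal{H}_n^-$ defined through $\widehat g$ as in \eqref{widehtatg}. Strictly, $\mathcal H_n^-$ in \eqref{1708240952} is defined for piecewise constant functions. I read \eqref{follow} as the surface integral appearing in the proof of Proposition~\ref{prop:stability}, which is what Remark~\ref{rem:tothm31} controls. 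The check splits into two regions. On $\Omega'\setminus\overline\Omega$, $u_n(t) = w_n(t) \in H^1$ has no jump, so the term $(C_g+1)\hn(J_{u_n(t)}\setminus\overline\Omega)$ vanishes. On $\overline\Omega$, $J_{u_n(t)} = J_{u^k_n} \subset K^k_n = K_n(t)$ and $g(|[u^k_n]|) \le \max_{j\le k} g(|[u^j_n]|) = \psi_n^k$, so $(g(|[u_n(t)]|)-\psi_n(t))^+ = 0$ there. Hence \eqref{follow} holds with equality to zero for every $n$.

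Proposition~\ref{prop:stability} then gives $J_{u(t)} \subset K(t)$ and $g(|[u(t)]|) \le \psi(t)$ $\hn$-a.e.\ on $J_{u(t)}$, so $u(t) \in AD(w(t),K(t),\psi(t))$.

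The main obstacle is bookkeeping rather than substance. One point is to confirm that the $\sigma_{\rm cf}$-convergence from Proposition~\ref{limit-crack-evo} holds for the full subsequence at time $t$, so that it persists along any further subsequence; this is true because $\overline\Gamma$-convergence passes to subsequences. The other point is that Proposition~\ref{prop:stability} is applied with $p=2$ and requires the boundary data to converge, as in the setting of Remark~\ref{rem:tothm31}, which the continuity of $w$ in time provides.
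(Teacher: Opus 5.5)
Your proposal is correct and follows the paper's proof. You bound $(u_n(t))_n$ in $\SBV^2(\Omega')$ via Lemma~\ref{prop-control-on-ela}, Corollary~\ref{cor: energy bound} and Ambrosio's theorem, get the boundary condition from the time continuity of $w$, and obtain \eqref{constraints} from Proposition~\ref{prop:stability}, since \eqref{follow} vanishes identically along the sequence by \eqref{Tcrack} and \eqref{definition-crackset}. Your extra remarks, reading \eqref{follow} as the surface integral used in the proof of Proposition~\ref{prop:stability} and noting that $\sigma_{\rm cf}$-limits persist along subsequences, just make explicit points the paper leaves implicit.
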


\begin{proof}
In view of  Lemma \ref{prop-control-on-ela} and Corollary \ref{cor: energy bound},   for any $t \in [0,T]$ the sequence $(u_n(t))_n$ is bounded in $\SBV^2(\Omega')$ and then, \ZZZZ \EEE by Ambrosio's theorem, \EEE precompact (with any subsequence) with respect to the weak convergence in $\SBV^2(\Omega')$. \EEE
 Given a limit point $u(t)$, as \EEE $u_n(t) = \ZZZZ w_n(t)$, \EEE the regularity of $w$ in time implies that $u(t) = w(t)$ on $\Omega' \setminus \overline{\Omega}$. Moreover, by Proposition~\ref{prop:stability}   we get $J_{u(t)} \subset K(t)$ and $g(|[u(t)]|) \le  \psi(t)$ \ZZZZ $\mathcal{H}^{d{-}1}$-a.e.\ \EEE on $J_{u(t)}$. Here, we used that \eqref{follow} holds (and is actually identically zero along the sequence), due to the definition of ($K_n(t),\psi_n(t))$ in \eqref{Tcrack} and \eqref{definition-crackset}. This shows $ u(t) \in AD(w(t),K(t),\psi(t))$ and concludes the proof. 
\end{proof}

%
%

\subsection{Proof of Theorem \ref{main def}}\label{subsec2}

 In this subsection we prove the main result of the paper.

 \begin{proof}[Proof of Theorem \ref{main def}]
 We start with a brief outline of the proof. In Step~1,   we prove the existence of a limiting evolution and validate the irreversibility of the crack sets by using $\sigma_{\rm cf}$-convergence. In Step~2, we use the weak unilateral stability property to show that the limiting strains are given uniquely at all times. As discussed in Section \ref{subsec:sigmaconv}, the weak stability property is not enough to obtain property (c) in the main statement. Our strategy therefore consists in \emph{first} proving the energy balance which will allow us to apply Corollary \ref{cor. unilat}. To this end, we proceed in several steps. In Step 3, we  pass to the limit in the time-discrete energy estimate \eqref{e-balance-lemma}. In Step 4, we introduce a suitable time discretization of the limiting problem which allows us to define an auxiliary energy balance in Step 5. This is not yet the correct balance, yet it provides   a control on certain \emph{defects} related to the crack appearing along the evolution. This control allows us to characterize the local structure of crack sets \BBB along the evolution, \EEE see Step~6 and Subsection \ref{subsec3} below. Based on this, we eventually obtain the correct energy balance in Step~7. Finally, in Step~8, using Corollary \ref{cor. unilat} we can   guarantee the strong unilateral minimality property in the limit.  
 
Along the proof, we will select subsequences multiple times:   firstly,   subsequences are selected \EEE \emph{independently of $t$} labeled with the 
index $n$;  later, \EEE from those ones 
we   extract $t$-dependent subsequences  labeled with the index \EEE  $m$.

    \emph{Step 1: Limiting evolution.} Let  $(K(t),\psi(t))$ be the $\sigma_{\rm cf}$-limit of $(K_n(t),\psi_n(t))$ for $t \in [0,T]$ given by Proposition~\ref{limit-crack-evo}, and recall that $t\mapsto (K(t),\psi(t))$ satisfies the irreversibility properties  stated in \eqref{irr-prop}.   This shows that condition (b) in Theorem \ref{main def} holds.

By   Proposition~\ref{liminf-ineq'},  for any $t \in [0,T]$ \EEE there exists $u(t) \in  AD(w(t),K(t),\psi(t)) \EEE$  which is a \EEE  limit point of $(u_n(t))_n$ with respect to the weak convergence in $\SBV^2(\Omega')$. \EEE

\emph{Step 2: Weak stability and uniqueness of limiting strains.} 
Fix $t \in [0,T]$ and let  $u(t)$ 
\EEE be  the \EEE  limit point of $(u_n(t))_n$   found \EEE above. \EEE
  As \EEE $(u_n(t),K_n(t),\psi_n(t))$ are \BBB strong \EEE unilateral minimizers for each $n \in \N$  (see below \eqref{definition-crackset}), \EEE  by  Proposition~\ref{prop. unilat} we get that  $(u(t),K(t),\psi(t))$ is a weak unilateral minimizer and that   $  \nabla \EEE u_n(t)$ converge   in \EEE $L^2$ to $\nabla u(t)$ along a suitable subsequence \ZZZZ (depending on $t$). \EEE
 In particular, \ZZZZ $u(t) \in  AD(w(t), K(t),\psi(t))$ and,  \EEE   in view of Remark~\ref{weakii},   for  every $v\in AD(w(t),K(t),\psi(t))$  we have
\begin{align}\label{2108241743-new000}
\int_{\Omega'} |\nabla u(t)|^2 \, {\rm d}x \le \int_{\Omega'} |\nabla v|^2 \, {\rm d}x .
    \end{align} \EEE 
Clearly, this property does not yet imply the desired stability property, but it allows to show that the  limiting strain $\nabla u(t)$ obtained in Step 1 is uniquely determined on $\Omega'$. Once this is shown,  Urysohn's lemma \EEE and Lemma~\ref{prop-control-on-ela} imply  that 
\begin{align}\label{independentoft}
\nabla u_n(t) \to \nabla u(t) \text{ in $L^2(\Omega';\R^d)$}  \quad \text{for every } t \in [0,T] \EEE
\end{align}
 for a subsequence $(u_n(t))_n$ which can be chosen \emph{independently of $t$}. This property will allow us to pass to the limit in the time-discrete energy estimate given by Lemma \ref{e-balance-lemma}, see Step 3 below. 
 
 Now, to show the uniqueness of the limiting strain $\nabla u(t)$, let us suppose that $\hat{u}(t) \in AD(w(t),K(t),\psi(t))$ denotes another limit   point \EEE of the sequence $u_n(t)$.   Proceeding as at the beginning of this step, we can show that \eqref{2108241743-new000} holds for $\hat{u}(t)$ in place of $u(t)$. This yields that both $u(t)$ and $\hat{u}(t)$ are minimizers of the strictly convex minimization problem $v \mapsto   \int_{\Omega'}|\nabla v|^2\, {\rm d}x $    for $v \in AD(w(t),K(t),\psi(t))$. \BBB (Note that  $AD(w(t),K(t),\psi(t))$ is convex by the monotonicity of $g$.)
 \EEE Therefore, $\nabla u(t) = \nabla \hat{u}(t)$ on $\Omega'$.

\emph{Step 3: Energy inequality.}  \BBB Next, we will prove the auxiliary energy balance. In this step, we pass  to the limit in the time-discrete energy estimate \ZZZZ of Lemma \ref{e-balance-lemma} \EEE which provides  \EEE   a first energy inequality.  In view of Lemma~\ref{prop-control-on-ela} and $w\in W^{1,1}(0,T; H^1(\Omega'))$, the \BBB functions  $t\mapsto  \langle  \nabla  {u}_n  (t),  \partial_{t} \nabla w(t) \rangle$ are dominated by $C\Vert \partial_t \nabla w(t) \Vert_{L^2(\Omega')} \in L^1(0,T)$.  \EEE Then, by \eqref{independentoft},  we can apply \ZZZZ the \EEE Dominated Convergence Theorem to deduce that  
     \begin{align}\label{limit-testing}
      & \lim_{n\to \infty} \int_{0}^{t}  \langle \EEE  \nabla  {u}_n  (\tau),   \partial_{t} \nabla w(\tau)  \rangle \EEE  \, {\rm d}\tau =  \int_{0}^{t}  \langle \EEE  \nabla u(\tau), \partial_{t} \nabla w(\tau)  \rangle \EEE \,{\rm d}\tau   \quad \text{for all } t \in [0,T] \EEE \,.
     \end{align} 
     By Lemma \ref{e-balance-lemma} we find $(\beta_n)_n$ with $\beta_n \to 0$ such that
  \begin{align*}
\mathcal{E}^k_n(u_n(t))  - \mathcal{E}^{0}(u^0) &\leq 2  \int_{0}^{t_{n}^{k}}  \langle \EEE\nabla {u}_n(\tau) , \partial_{t} \nabla  {w}(\tau)  \rangle \EEE\,{\rm d}\tau  +  \beta_n. 
\end{align*}
Recalling \eqref{eq: lim-en}, \eqref{Tcrack}, \eqref{definition-crackset}, and \eqref{1908241552} we find $\mathcal{E}^{k}_n(u_n(t)) =  \mathcal{E}(u_n(t), K_n^{k{-}1},\psi_n^{k{-}1}) = \mathcal{E}_n(u_n(t), K_n(t),\psi_n(t) )$ \ZZZZ for all   $t\in [t_n^{k}, t_n^{k{+}1})$. \EEE Then, by passing to another sequence $(\beta_n)_n$, still satisfying $\beta_n \to 0 $, \EEE \EEE  for every $k  \in \{ \EEE 0, \ldots, T/\delta_n\}$ and  $t\in [0,T]$ we have   
    \begin{align}\label{energy-diff}   
\ZZZZ \mathcal{E} \EEE (u_n(t), K_n(t),\psi_n(t) )     &\leq 2 \,\int_{0}^{t}  \langle \EEE \nabla   {u}_n  (\tau), \partial_{t} \nabla w(\tau) \rangle \EEE\,{\rm d}\tau  +  \BBB \beta_n  \EEE  +  \mathcal{E}^{0}(u^0)\,,
  \end{align}      
  where we have used  Lemma \ref{prop-control-on-ela} and $w\in W^{1,1}(0,T; H^1(\Omega'))$ to  estimate the integral from $t_n^k$ to $t$ in terms of  (the former) \EEE $\beta_n$. Recalling   \eqref{minimizing-scheme0}, \eqref{1908241552}, and using \eqref{lsc-app} for $t =0$  we have 
 \begin{align}\label{attimezero}
 \mathcal{E}(u(0), K(0), \psi(0) ) \le  \mathcal{E}^0(u^0) =    \min_{\SBV^2_{w(0)}(\Omega')} \mathcal{E}^0 \le   \mathcal{E}(v, J_{v}, \BBB g(|[v]|))   \EEE
  \end{align}
   for all  $v\in \SBV^2_{w(0)}( \Omega') \EEE $. This shows that condition (a) in Theorem \ref{main def} holds. By choosing $v = u(0)$  and using  $J_{u(0)} \subset   K(0)$, $g(|[u(0)]|) \le \psi(0)$ \ZZZZ on $J_{u(0)}$ \EEE (see Proposition \ref{liminf-ineq'}),  we get   $J_{u(0)} =K(0)$, $g(|[u(0)]|) = \psi(0)$ \ZZZZ on $J_{u(0)}$, \EEE  and \EEE $  \mathcal{E}^0 (u^0) = \mathcal{E}(u(0), K(0),\psi(0))$.   \ZZZZ Using \EEE   \eqref{limit-testing}--\eqref{energy-diff} we get
  \begin{align}\label{eq: for en ba}
 \limsup_{n\to \infty} \mathcal{E}(u_n(t),K_n(t),\psi_n(t)) &\leq \mathcal{E}^{0}(u^0)  +  \lim_{n\to \infty} 2\int_{0}^{t}  \langle \EEE \nabla {u}_n \EEE (\tau) , \partial_{t} \nabla w(\tau) \rangle \EEE\,{\rm d}\tau \notag  \\ &=  \mathcal{E}(u(0), K(0),\psi(0)) + 2 \int_0^t   \langle \EEE \nabla u(\tau), \partial_{t} \nabla  w(\tau)  \rangle \EEE \, {\rm d}\tau.
  \end{align}
  
Our next steps consist in finding a lower bound for the expression  $\limsup_{n\to \infty} \mathcal{E}(u_n(t),K_n(t),\psi_n(t))$  which matches the right-hand side of \eqref{eq: for en ba} up to some defects that we will control later.  This will establish an   auxiliary energy balance with defects, see \eqref{eq: auxiliary balance} below.  We start with some preliminaries.

\emph{Step 4: Time discretization of the limit.} As customary in the derivation of energy balances, a time discretization of the limit along with a stability property is used. The challenge in our setting lies in the fact that from Step 2 we only have a (insufficient) weak unilateral stability.   
Let us introduce a suitable discretization. 
 By \cite[Lemma 4.12, Remark 4.13]{dMasoFranToad} (and also inspecting the proof of \cite[Lemma 4.12]{dMasoFranToad}), there exists $\ol s \in \R$ such that for any $\varepsilon>0$ there exists $N_\varepsilon \in \N$ such that, for any $t \in [0,T]$, 
the partition $(s_i)^\kt_{i=0}$ defined by  
\begin{equation}\label{23061152}
s_0:=0,\ \   s_\kt := t,\  \ \ZZZZ  \text{and} \ \ \EEE (s_i)^{\kt-1}_{i=1}= (0,t) \cap \Big(\ol s + \frac{1}{N_\varepsilon} \ZZZZ \mathbb{Z} \EEE \Big), \text{ with }s_i\leq s_{i{+}1} \text{ for }i \in \{ \ZZZZ 0, \EEE \dots, \ZZZZ \kt-1 \EEE \},\, 
\end{equation}
 satisfies \EEE
(denoting   $\|\cdot\|_{\Ldue} \equiv \|\cdot\|_{L^2(\Omega')}$ \ZZZZ for shorthand) \EEE
\begin{align}\label{Riemann}
{\rm (i)} & \ \ \sum_{i=1}^\kt \int_{s_{i{-}1}}^{s_i}  \Vert \nabla u(s_i) \cdot \partial_t \nabla w(s_i)  -  \nabla u(\tau) \cdot \partial_t \nabla w(\tau)\Vert_{2}  \, {\rm d}\tau  \le \eps,\notag \\
{\rm (ii)} & \ \  \sum_{i=1}^\kt \int_{s_{i{-}1}}^{s_i}  \Vert   \partial_t \nabla w(s_i)  -   \partial_t \nabla w(\tau)\Vert_{2}  \, {\rm d}\tau  \le \eps  . 
\end{align}
Recalling the regularity of $w$, by choosing   $N_\varepsilon$ large enough, \EEE
we can additionally assume that 
\begin{align}\label{Riemann2}
\sum_{i=1}^\kt \Vert \nabla w(s_i) - \nabla w(s_{i{-}1}) \Vert_{2}^2 \le \eps. 
\end{align}
By the Fundamental Theorem of Calculus we compute
\begin{align*}
\sum_{i=1}^\kt \langle \nabla u(s_i) , \nabla w(s_i)-\nabla w(s_{i{-}1})  \rangle & =\sum_{i=1}^\kt  \Big\langle \nabla u(s_i) , \int_{s_{i{-}1}}^{s_i}  \partial_t \nabla w(\tau) \, {\rm d}\tau  \Big\rangle  =   \sum_{i=1}^\kt  \int_{s_{i{-}1}}^{s_i} \langle \nabla u(s_i) , \partial_t \nabla w(s_i)\rangle \,  {\rm d}\tau + \Psi,
\end{align*}
where $\Psi := \sum_{i=1}^\kt \langle  \nabla u(s_i) ,    \int_{s_{i{-}1}}^{s_i}(  \partial_t \nabla w(\tau) -\partial_t \nabla w(s_i) )  \,{\rm d}\tau \rangle$ satisfies, by Minkowski's integral inequality, \eqref{Riemann}(ii), and Lemma~\ref{prop-control-on-ela},
\begin{equation*}
|\Psi| \le \sum_{i=1}^\kt  \Vert \nabla u(s_i) \Vert_{2} \int_{s_{i{-}1}}^{s_i}  \Vert\partial_t \nabla w(s_i) - \partial_t \nabla w(\tau)
\Vert_{\Ldue}  \,{\rm d}\tau  \le C\sum_{i=1}^\kt \int_{s_{i{-}1}}^{s_i}  \Vert\partial_t \nabla w(s_i) -  \partial_t \nabla w(\tau)
\Vert_{\Ldue}  \,{\rm d}\tau  \le C\eps.
\end{equation*}
This along with \eqref{Riemann}(i) then yields
\begin{align}\label{Riemann3}
&\Big|\sum_{i=1}^\kt \langle \nabla u(s_i) , \nabla w(s_i)-\nabla w(s_{i{-}1})  \rangle -  \int_0^{t}  \langle \EEE \nabla  u(\tau) , \partial_t  \nabla w(\tau) \rangle \EEE\, \mathrm{d}\tau \Big|  \notag \\ &\le \sum_{i=1}^\kt \int_{s_{i{-}1}}^{s_i}  \Vert \nabla u(s_i) \cdot \partial_t \nabla w(s_i)  -  \nabla u(\tau)\cdot \partial_t \nabla w(\tau)
\Vert_{\Ldue} \, {\rm d}\tau   + |\Psi| \le C\eps.
\end{align}
Given $\bar{s} \in \R$ as above, by  Proposition~\ref{liminf-ineq'}, \EEE potentially redefining the limit $u(t)$ (without changing its  gradient, \EEE see Step~2), and by a diagonal argument,  up to passing to a (not relabeled) subsequence independent of $t$ it holds that
 $u_n(t) \wto u(t)$ in $\SBV^2(\Omega')$ as $n \to \infty$ \EEE and $\lim_{n\to \infty} \mathcal{E}_{n}(u_n(t),K_n(t),\psi_n(t))$ exists for all  $t \in (\bar{s} + \mathbb{Q}) \cap [0,T]$.
  Moreover, for all $[0,T] \setminus (\bar{s} + \mathbb{Q})$ there exists a $t$-dependent subsequence
   of $n$, denoted by $m$ in the sequel, such that $u_m(t) \wto u(t)$ in $\SBV^2(\Omega')$ and
\begin{equation}\label{mmm0} 
 \lim_{m\to \infty} \mathcal{E}_{m}(u_m(t),K_m(t),\psi_m(t))=\liminf_{n\to \infty} \mathcal{E}_{n}(u_n(t),K_n(t),\psi_n(t)). \EEE
 \end{equation}
Let us  fix \EEE
$\varepsilon>0$ and $t \in [0,T]$. In particular, also $\kt$  and the corresponding partition $(s_i)_{i=0}^{\kt}$ are fixed, see \eqref{23061152}. From now on we denote $k\equiv \kt$ for shorthand. \EEE
   \ZZZZ For any $i \in \{0,\dots, k\}$ we have $ u(s_i)  \in AD(w(s_i),K(s_i),\psi(s_i))$ and thus particularly \EEE
\begin{equation}\label{13050959}
 J_{u(s_i)}\tsubset K(s_i), \quad   g(|[u(s_i)]|) \le \psi (s_i) \text{ on }J_{u(s_i)}.
\end{equation}   \EEE
Let $ \overline{\psi}_i:=\max_{0\leq l\leq i} g(|[u(s_l)]|)$  for $i \in \{0,\dots, k\}$. \BBB (We adopt \EEE the notation $ \overline{\psi}_i$ to highlight the fact that this \ZZZZ function \EEE  is defined by considering only the values of $[u]$ on $(s_i)_{i=0}^k$, differently from $\psi(s_i)$ which represents an evaluation of $\psi$ defined for every $s \in [0,T]$.) For $i \in \{0,\dots, k\}$, let us define the measures
\begin{equation}\label{defmu}
\mu_n^i:=\psi_n(s_i) \hn\mres_{K_n(s_i)},\quad  \mu^i:= \BBB \ol \psi_i  \EEE  \hn\mres_{K(s_i)}
\end{equation}
related to the surface energy along the sequence and in the limit, respectively. By \eqref{Tcrack} and property (b) in Theorem \ref{main def} we have,  for every $i \in \{0,\dots, k{-}1\}$ and every $n\in\N$,
\begin{equation}\label{defmu-mono}
\mu_n^i\leq \mu_n^{i{+}1}, \quad \quad \mu^i\leq \mu^{i{+}1}.
\end{equation}
By a covering argument we can choose an increasing sequence $U_i \in \mathcal{A}(\Omega')$, $i\in \{ \EEE 0,\ldots, k\}$, with $U_k = \Omega'$ such that 
\begin{align}\label{uu}
\mu^i(\Omega' \setminus U_i)  < \EEE \frac{\eps}{k}, \quad  \quad \mu^k\big(U_i\sm K(s_i)\big)  < \EEE  \frac{\eps}{k}, \quad \quad  \mu^k(\partial U_i)=0.
\end{align}
Intuitively, $U_i$ contains almost all crack $K(s_i)$ that appeared up to time $s_i$ and there almost does not appear any further crack $K(s_k) \sm K(s_i)$ inside $U_i$ at later times. It is elementary to check that  $\mu_n^l(U_l)=  \sum_{i=1}^l (\mu_n^i-\mu_n^{i{-}1})(U_{i{-}1}) + \sum_{i=0}^l \mu_n^i(U_i \sm U_{i{-}1})$ by an induction over $l \in \lbrace 0,\ldots, k \rbrace$, where for definiteness we have set   
$U_{-1}:=\emptyset$.  In particular, for $l = k$ we obtain 
\begin{equation}\label{1910251906}
\mu_n^k(\Omega')=  \sum_{i=1}^k (\mu_n^i-\mu_n^{i{-}1})(U_{i{-}1}) + \sum_{i=0}^k \mu_n^i(U_i \sm U_{i{-}1}).
\end{equation}
Contributions in the second sum will be labeled with `new' as they correspond to the new crack appearing in the set $U_i \sm U_{i{-}1}$ in the time interval $(s_{i{-}1},s_i]$.  Contributions in the first sum will be labeled with `add' as they correspond to additional jump in $U_{i{-}1}$ which appears  in the time interval $(s_{i{-}1},s_i]$. (Note that in $U_{i{-}1}$ new crack appeared in some earlier time interval, namely $(s_{j-1},s_j]$ for some $j \in \{ \EEE 1,\ldots,i{-}1\}$.) The additional crack can consist in new crack $K_n(s_i) \setminus K_n(s_{i{-}1})$ or in  bigger jump height on $K_n(s_{i{-}1})$.  Note that, due to the second item in  \eqref{uu}, up to small errors,  additional crack in the limit corresponds  only to bigger jump height on $K(s_{i{-}1})$.

We formalize this by introducing additional measures representing these two distinct contributions to the surface energy. For every $i \in \{0,\dots, \BBB k \EEE \}$ and every $n\in\N$ we define
\begin{equation}\label{defmutilde}
{\mu}_{n, {\rm new}}^i:=\mu_n^i\,\mres_{U_i\sm {U_{i{-}1}}},\quad \quad  \BBB {\mu}_{ {\rm new}}^i:= \mu^i\,\mres_{U_i\sm {U_{i{-}1}}} . \EEE
\end{equation}
\BBB Moreover, we let ${\mu}_{n, {\rm add}}^0 = {\mu}_{ {\rm add}}^0 = 0$, and for $i \in \{1,\dots,  k \}$ \EEE 
\begin{equation}\label{defmutilde2}
{\mu}_{n, {\rm add}}^i:=(\mu_n^i-\mu_n^{i{-}1})\,\mres_{U_{i{-}1}} ,\quad \quad  {\mu}_{\rm add}^i:=(\mu^i-\mu^{i{-}1})\mres_{U_{i{-}1}} .  
\end{equation} 
 Up to extracting  from $m$ a subsequence (not relabeled), 
 \EEE we may assume that   
 the limits 
 \begin{equation}\label{1910252147}
{\mu}_{\n}^i  \weaklystar {\mu}_{\rm conv}^i, \quad {\mu}_{\n, {\rm new}}^i  \weaklystar {\mu}_{\rm new, conv}^i, \quad  {\mu}_{\n, {\rm add}}^i \EEE \weaklystar  {\mu}_{\rm add, conv}^i   \quad  \text{in } \ZZZZ \M^+_b \EEE (\Omega') \EEE
 \end{equation}
  exist for $i\in\{0,\dots, k\}$.  (Notice that for $t \in (\ol s +\mathbb Q) \cap [0,T]$ we could  extract also from $n$, see below \eqref{Riemann3}, but we do not need this and denote the subsequence in \eqref{1910252147} always with $m$.) \EEE

%
 Here, the label `conv' indicates that the object is not related to the limiting objects (as \BBB the second items \EEE in \eqref{defmu} or \eqref{defmutilde}--\eqref{defmutilde2}) but obtained from the weak convergence of the sequence $({\mu}_{\n}^i )_\n$. For establishing the energy balance, it will be fundamental to show that ${\mu}_{\rm conv}^k$ and  ${\mu}^k$ coincide    up to   an error of order $\eps$, \BBB see \eqref{asmotivated}  below.

\emph{Step 5: Auxiliary energy balance with defects.} For any  $i \in  \{1,\ldots, k\}$, as  $(u_n(s_{i{-}1}),K_n(s_{i{-}1}),\psi_n(s_{i{-}1}))$ are strong unilateral minimizers for each $n \in \N$, and as $u_\n(s_{i{-}1})
\wto u(s_{i{-}1})$ in $\SBV^2(\Omega')$ and   $w_\n(s_{i{-}1}) \to w(s_{i{-}1})$   in $H^1(\Omega')$, \EEE  we derive that $(u(s_{i{-}1}),K(s_{i{-}1}),\psi(s_{i{-}1}))$ is a weak unilateral minimizer by Proposition~\ref{prop. unilat}, i.e., for all functions $v \in \SBV^2_{w(s_{i{-}1})}(\Omega')$ we have  
$$
\int_{\Omega'} |\nabla u(s_{i{-}1})|^2 \, {\rm d}x \le \int_{\Omega'} |\nabla v|^2 \, {\rm d}x   +  \int_{J_v} h^-_{s_{i{-}1}}(\cdot,[v],\nu_v)   \, {\rm d}\mathcal{H}^{d{-}1}   ,   
$$
where $h^-_{s_{i{-}1}}$ denotes the density  in \eqref{1708240957}, for  the sequence of functionals in \eqref{1708240952} defined  by 
\begin{align}\label{elapart-newdefin}
\mathcal{H}_{n}^-(s_{i{-}1})(u, A)= \int_{  A \cap J_u \EEE}  \big(  \widehat{g}(\cdot,|[u]|) \EEE - \psi_{ n \EEE}(s_{i{-}1})\big)^+   \,  \dhn \quad \text{ \ZZZZ if $u|_A \in \PC(A)$.}
 \end{align}
Noting that  $h^-_{s_{i{-}1}}$  satisfies  $h^-_{s_{i{-}1}}(x, \xi, \nu) \le  g(|\xi|)$ \BBB for $\mathcal{H}^{d{-}1}$-a.e.\ $x \in \Omega \cup \partial_D\Omega$  \ZZZZ and all $\xi \in \R$, $\nu \in \mathbb{S}^{d{-}1}$ \EEE  by Remark \ref{rem: sigma1}(i),   and taking    $ v_i= u(s_i)-w(s_{i})+w(s_{i{-}1})  \in \SBV^2_{w(s_{i{-}1})}(\Omega')$ as an admissible competitor at time $s_{i{-}1}$ we get  
 \begin{align}\label{for diffpart}
 \int_{\Omega'} |\nabla u(s_{i{-}1})|^2 \, {\rm d}x \le  F_{\rm el} + F_{\rm cr},
 \end{align}
where $F_{\rm el} := \int_{\Omega'} |\nabla v_i|^2 \, {\rm d}x$ and  
$$F_{\rm cr} :=    \int_{ J_{v_i} \cap U_{i{-}1}} h^-_{s_{i{-}1}} (\cdot,[v_i],\nu_{v_i})  \,  {\rm d} \mathcal{H}^{d{-}1} + \int_{ J_{v_i} \sm U_{i{-}1}} g(|[v_i]|) \, {\rm d}\mathcal{H}^{d{-}1}.  $$ 
Here, in the spirit of the comment below \eqref{1910251906}, the first term in $F_{\rm cr}$ corresponds to an additional crack, whereas the second one \BBB corresponds \EEE to a new crack. 
 We now estimate the terms on the right-hand side of \eqref{for diffpart} separately. 
 First, for the elastic part  we get 
\begin{equation}\label{elapart}
\begin{split}
F_{\rm el}  &= \int_{\Omega'} |\nabla u(s_i)|^2 \, {\rm d}x - 2 \langle\nabla u(s_i) , \nabla w(s_{i})-\nabla w(s_{i{-}1}) \rangle    + \int_{\Omega'} | \nabla w(s_{i})- \nabla w(s_{i{-}1})|^2    \dx .
\end{split}
\end{equation}
  For the surface part $F_{\rm cr}$, we treat the two contributions separately. For the part inside of $ U_{i{-}1}$, we additionally introduce the  measures
\begin{align}\label{muhi}
  \muim  :=  h^-_{s_{i{-}1}} (\cdot,[u(s_i)],\nu_{u(s_i)}) \hn\mres_{J_{u(s_i)} \cap U_{i{-}1}} \quad \text{for all $i \in  \{1,\ldots, k\}$}.   
\end{align}
Note that $\muim(\Omega')$ corresponds to the additional crack energy in terms of the weak stability notion. In general, it may differ from the additional crack energy $\mu^i_{\rm add}(\Omega')$, see \eqref{defmutilde2}, related to the limiting evolution. However, as a key step of the proof, we will show below that they coincide   up to small errors in  terms of $\eps$. We \EEE recall that 
$u_\n(s_i) \wto u(s_i)$ in $\SBV^2(\Omega')$.   For any $A \in \mathcal{A}(\Omega')$, \EEE choose $V_{i{-}1} \subset \subset  A \cap \EEE U_{i{-}1}$. \EEE 
 By \EEE the  $\Gamma$-convergence of $ \H_n(s_{i{-}1})\EEE(\cdot, \ZZZZ V_{i{-}1} \EEE )$  and since \EEE
 $v_i^{\n} := u_\n(s_i)-w(s_{i})+w(s_{i{-}1})\wto v_i$ in $\SBV^2(\Omega')$ as $\n\to \infty$, 
    we get by   \eqref{defmutilde2},    \eqref{1910252147},   \eqref{elapart-newdefin}, \ZZZZ and $[v_i] = [u(s_i)]$ on  $J_{v_i} = J_{u(s_i)}$ \EEE  
 \begin{align}\label{localozation}
\int_{ J_{\ZZZZ {u}(s_i)} \cap    \ZZZZ V_{i{-}1} \EEE } h^-_{s_{i{-}1}} (\cdot, [u(s_i)]\EEE, \nu_{u(s_i)} )   \, \dhn &\le  \liminf_{\n\to \infty}  \mathcal{H}^-_\n(s_{i{-}1})\EEE(v_i^{\n}, \ZZZZ V_{i{-}1} \EEE )  \leq \lim_{\n\to \infty}(\mu^i_\n-\mu^{i{-}1}_\n)(\ZZZZ V_{i{-}1} \EEE ) \notag \\ & \ZZZZ \le  {\mu}_{\rm add, conv}^i\big(\overline{V_{i{-}1}}\big) \le \EEE {\mu}_{\rm add, conv}^i( A \EEE). 
 \end{align}
Here,  in the \ZZZZ second \EEE inequality we used  the fact that  $g(|[v_i^\n(s_i)]|) = g(|[{u}_\n(s_i)]|) \le \psi_\n(s_i)$ (in particular $J_{v_i^\n}  \subset K_\n(s_i) \ZZZZ \subset \Omega \cup \partial_D\Omega \EEE $) by \eqref{Tcrack}, \ZZZZ as well as \eqref{widehtatg}.  By the arbitrariness of $V_{i{-}1}$  and $A \in \mathcal{A}(\Omega')$ \EEE we get 
 \begin{align}\label{for diffpart1}
  \muim  \le  {\mu}_{\rm add, conv}^i \quad \text{in }\M_b^+(\Omega'). \EEE 
 \end{align}
\EEE We now come to the second term in $F_{\rm cr}$, i.e., the part outside of $U_{i{-}1}$ corresponding to the new crack.  
By  \BBB $[v_i] = [u(s_i)]$ on  $J_{v_i} = J_{u(s_i)}$, \EEE \eqref{13050959},   \eqref{defmu}, \eqref{defmutilde},  
 and  $\mu^i(\Omega' \setminus U_i)  < \EEE \frac{\eps}{k}$  (see \eqref{uu}) we get, \ZZZZ for $i \in \lbrace0,\ldots,k \rbrace$, \EEE
\begin{equation}\label{1910252254}
\begin{split}
\int_{J_{v_i} \sm  U_{i{-}1}}   g(|[{v_i}]|)   \, \dhn& =\int_{J_{v_i} \cap (U_i \sm  U_{i{-}1})}  g(|[{v_i}]|)  \, \dhn + \int_{J_{v_i}\sm U_i} g(|[{v_i}]|) \EEE \, \dhn\EEE \\&  \leq   {\mu}_{\rm new}^i(\Omega') \EEE + \mu^i(\Omega'\sm U_i) \le   {\mu}_{\rm new}^i(\Omega') \EEE + \frac{\varepsilon}{k}.  
\end{split} 
\end{equation}
\ZZZZ By using a similar localization argument as in the proof of \eqref{for diffpart1},  by applying     \eqref{lsc-app} for arbitrary open sets $U  \subset \subset  U_i \setminus  {U_{i{-}1}}$ and using   the third item in \eqref{uu}   and \eqref{13050959},  we get
\begin{align}\label{for diffpart1000}
 {\mu}_{\rm new}^i   \le   {\mu}_{\rm new, conv}^i \quad \text{in }\M_b^+(\Omega') \quad \text{for $i \in \lbrace0,\ldots,k \rbrace$}. 
 \end{align}
\EEE  Collecting \eqref{for diffpart}--\eqref{muhi}, \ZZZZ \eqref{1910252254},   and summing over $i\in\{1, \ZZZZ \ldots, \EEE  k\}$, we get by \eqref{Riemann2} \ZZZZ and  \eqref{Riemann3} \EEE  
\begin{equation}\label{1910252257}
\|\nabla u(0)\|_{\Ldue}^2 \leq \|\nabla u(\Ti)\|_{\Ldue}^2- 2   \int_0^\Ti  \langle \EEE \nabla  u(\tau), \partial_t  \nabla w(\tau) \rangle \EEE\, \mathrm{d}\tau   + \sum_{i=1}^k  \ZZZZ    \muim (\Omega') \EEE 
+ \sum_{i=1}^k  {\mu}_{\rm new}^i(\Omega') \EEE + C\eps.
\end{equation}
On the other hand, by \eqref{eq: lim-en},  \eqref{mmm0},  \eqref{1910251906}\BBB--\EEE\eqref{1910252147}, and \ZZZZ by \EEE 
 $u_\n(t)\wto u(t)$ in $\SBV^2(\Omega')$ as $ \n\to \infty$,
 we get 
\begin{equation}\label{eq: for en ba2}
\liminf_{n\to \infty} \EEE \mathcal{E}_{n}(u_n(t),K_n(t),\psi_n(t))  \ge  \|\nabla u(\Ti)\|_{\Ldue}^2 {+}      {\mu}_{\rm conv}^k(\Omega')   \ge  \|\nabla u(\Ti)\|_{\Ldue}^2 {+}    \sum_{i=1}^k  {\mu}_{\rm add, conv}^i(\Omega')  {+} \sum_{i=0}^k {\mu}_{\rm new, conv}^i(\Omega').  
\end{equation}
\BBB In view of \eqref{attimezero}, \ZZZZ the comment thereafter, \EEE and the fact that $u(0) = u^0$, \EEE we also observe that  
\begin{align}\label{auno}
\mathcal{E}(u(0), K(0),\psi(0)) = \|\nabla u(0)\|_{\Ldue}^2 +  \int_{J_{u(0)}}g(|[u(0)]|)\,\dhn \le  \|\nabla u(0)\|_{\Ldue}^2 +   {\mu}^0_{\rm new}(\Omega') \EEE +  \frac{\eps}{k}
\end{align}
since $\int_{J_{u(0)}}g(|[u(0)]|)\,\dhn =  {\mu}^0(\Omega')=  {\mu}^0_{\rm new}(\Omega') \EEE +  {\mu}^0(\Omega' \setminus U_0) \le  {\mu}^0_{\rm new}(\Omega') \EEE +  \frac{\eps}{k}$ by \eqref{uu}. Now, defining the \ZZZZ \emph{defect measures}  \EEE
\begin{align}\label{newepsbound0}
\delta_k^0 :=    {\mu}_{\rm new, conv}^0   -  {\mu}_{\rm new}^0, \quad \quad   \EEE 
\delta_k^i :=  \big( {\mu}_{\rm new, conv}^i   -  {\mu}_{\rm new}^i  \EEE \big) + \big(  {\mu}_{\rm add, conv}^i  -   \ZZZZ    \muim   \EEE  \big) \quad \text{for $i \in \EEE \{1,\ldots,k\}$}, 
\end{align}
and writing $\mathcal{F} = 2 \int_0^\Ti   \langle \EEE \nabla u(\tau), \partial_{t} \nabla  w(\tau) \rangle \EEE \, {\rm d}\tau$ for shorthand, we combine  \eqref{eq: for en ba}, \eqref{1910252257}--\eqref{auno} to get the auxiliary energy balance 
\begin{align}\label{eq: auxiliary balance}
\mathcal{E}(u(0), K(0),\psi(0)) + \mathcal{F}  & \ge  
   \limsup_{n\to \infty} \mathcal{E}(u_n(\Ti),K_n(\Ti),\psi_n(\Ti)) \ge  \liminf_{n\to \infty} \mathcal{E}(u_n(\Ti),K_n(\Ti),\psi_n(\Ti)) \notag \\
& \ge   \|\nabla u(\Ti)\|_{\Ldue}^2 +      {\mu}_{\rm conv}^k(\Omega')   \ge  \mathcal{E}(u(0), K(0),\psi(0))  + \mathcal{F} + \sum_{i=0}^k \delta_k^i \ZZZZ (\Omega') \EEE - C\eps.
  \end{align}
From \eqref{for diffpart1} and   \eqref{for diffpart1000},   and by comparing the leftmost and rightmost  expression of  \eqref{eq: auxiliary balance}, we obtain \EEE
\begin{align}\label{newepsbound}
 \delta_k^i \in \M_b^+(\Omega') \EEE \ \text{for } i  \in \EEE \{0, \ldots, k\}, \quad \quad \quad   \sum_{i=0}^k \delta_k^i \ZZZZ (\Omega') \EEE  \le C\eps.
\end{align}
Note that \eqref{eq: auxiliary balance} for $\eps \to 0$ does not yet yield the desired energy balance.  This will be achieved by showing that also 
\EEE    \begin{align}\label{asmotivated} 
 \big|{\mu}_{\rm conv}^k(\Omega') - \BBB \int_{K(t)} \psi(t) \, {\rm d}\mathcal{H}^{d{-}1} \EEE \big| \ZZZZ \le  |{\mu}_{\rm conv}^k(\Omega') - {\mu}^k(\Omega') |   \EEE  \le C\eps
\end{align}
 holds. \ZZZZ Our idea will consist in showing that    $ \muim \EEE(\Omega')$ defined in  \eqref{muhi}    can be replaced, up to small errors, by  $\mu^i_{\rm add}(\Omega')$. \EEE  Then, \eqref{asmotivated}  will follow by combining  (the analogue without $n$ of) \EEE \eqref{1910251906} with \eqref{newepsbound}. The fact that the integral can be replaced by    $\mu^i_{\rm add}(\Omega')$ \EEE
  relies on a delicate property related to the local structure of crack sets, \ZZZZ see \eqref{for subsectionlast} \EEE and Subsection \ref{subsec3} below.   \EEE  As a preparation, we need to localize the problem on small cubes around the crack set.

\emph{Step 6:  Covering the crack set with cubes.} 
 For any $p\in \{0,\dots, k\}$, by a Besicovitch covering argument we can choose pairwise disjoint  open cubes $(Q_{p,j})_{j=1}^{N_p} \subset \subset U_p \sm \overline{U_{p-1}}$ with centers  $x_{p,j}\in K(s_{p})$,   sidelengths \EEE $r_{p,j}>0$, and two sides orthogonal to the normal vector $\nu_{p,j}:=\nu_{u(s_p)}(x_{p,j})$, such that  $\mu^k_{\rm conv}(\bigcup_{p,j}\partial Q_{p,j})= \ZZZZ \mu^k (\bigcup_{p,j}\partial Q_{p,j}) = \EEE 0$,    for all $p \le i \le k$ \ZZZZ it holds that (recall notation in Subsection~\ref{sec:slicing})\EEE
  \begin{align}\label{2prop}
 {\rm (i)} & \ \   \mu^i(V_p) +  \muim \EEE(V_p) \EEE  \le \BBB 2 \EEE \frac{\eps}{k} \text{ for } V_p:=(U_p\sm \ZZZZ { U_{p-1}}) \EEE \sm \bigcup\nolimits_{j=1}^{N_p} Q_{p,j},   \notag \\
 {\rm (ii)} & \ \ \text{there are sets } D_{p,j} \subset Q_{p,j} \text{ and }  \Gamma_{p,j}=\partial D_{p,j} \cap Q_{p,j} \text{ such that } r_{p,j}^{d{-}1} \le \mathcal{H}^{d{-}1}( \Gamma_{p,j}) \le  r_{p,j}^{d{-}1}  +  \frac{\eps}{k} r_{p,j}^{d{-}1} \text{ and }   \notag  \\ \notag
 & \quad \quad \quad    \mathcal{H}^{d{-}1}\big( (K(s_p) \cap Q_{p,j}  ) \triangle \Gamma_{p,j}\big)  \le   \frac{\eps}{k}r_{p,j}^{d{-}1}, \quad  \BBB \text{$\# (\Gamma_{p,j})^{\nu}_y = \EEE 1$ for $\hn$-a.e.\ $y\in P^\nu(Q_{p,j})$}, \EEE \\
 {\rm (iii)} & \ \  \BBB \text{if $Q_{p,j} \not\subset \Omega$, then $x_{p,j} \in  \ZZZZ \partial_D \EEE \Omega$, $\partial \Omega$ is differentiable at $x_{p,j}$, and $\Gamma_{p,j} =  \partial_D \Omega \cap Q_{p,j}$,} 
  \end{align}
 and such that, setting 
  $\ZZZZ \etamac{i} \EEE :=  g^{-1}( \ol \psi_i(x_{p,j}))= \max_{0 \leq l\leq i} |[u(s_l, x_{p,j})]|$   (see \BBB definition below \eqref{13050959}),  
we have, again for all  $p \le i \le k$,   
\begin{align}\label{local-h1}  
  \int_{\Gamma_{p,j}} h^-_{s_{i}} (\cdot,\etamac{i},\nu^{\Gamma_{p,j}})   \, \dhn & \le   \frac{\eps}{k}r_{p,j}^{d{-}1}
  \end{align}
  as well as,  for all $p \le i \le k{-}1$, 
\begin{align}\label{local-h2}   
 \mu^{i{+}1}_-(Q_{p,j}) \EEE &\le      \int_{\Gamma_{p,j}} h^-_{s_{i}} (\cdot,\etamac{i{+}1},\nu^{\Gamma_{p,j}})   \, \dhn +  \frac{\eps}{k}r_{p,j}^{d{-}1},
  \end{align}
  and 
\begin{align}\label{local-h3}   
\Bigg| \mathcal{H}^{d{-}1}(Q_{p,j}  \cap  K(s_{i{+}1}) ) g( \etamac{i{+}1} )   &-    \int_{Q_{p,j}  \cap  K(s_{i{+}1}) }  \ol \psi_{i{+}1} \EEE  \, \dhn \Bigg| \EEE \leq   \frac{\eps}{k}r_{p,j}^{d{-}1}.
  \end{align}  \EEE
\BBB Here, \EEE $\nu^{\Gamma_{p,j}}$ denotes a unit normal to ${\Gamma_{p,j}}$. 

Indeed,  property \eqref{2prop}(i) can be achieved by a Besicovitch covering for the measure 
  $\mu^k$    taking also   $\mu^k(U_p\sm K(s_p)) < \frac{\eps}{k}$ into account, see \eqref{uu}, \BBB as well as  $\mu^i \le \mu^k$ and \EEE $ \muim \EEE  \le \mu^i$, by Remark \ref{rem: sigma1}(i), \eqref{13050959},  \eqref{defmu}, and   \eqref{muhi}.   The condition   $\mu^k_{\rm conv}(\bigcup_{p,j}\partial Q_{p,j}) =  \ZZZZ \mu^k (\bigcup_{p,j}\partial Q_{p,j}) = \EEE 0$   can be obtained by outer regularity by choosing $r_{p,j}$ suitably.   Property \eqref{2prop}(ii) follows from the rectifiability of  $K(s_p)$.  \BBB Property (iii) follows from  \ZZZZ  $K(s_p) \subset \Omega \cup \partial_D \Omega$, \EEE the fact that $\partial_D\Omega$ is differentiable $\mathcal{H}^{d{-}1}$-a.e., and $\nu_{K(s_p)} = \nu_\Omega$ $\mathcal{H}^{d{-}1}$-a.e.\ on $K(s_p) \cap \partial_D\Omega$.

The estimates in \eqref{local-h1}--\eqref{local-h3}  can be achieved by choosing the centers  $x_{p,j}$ as Lebesgue points of the mappings $h^-_{s_{i}} (\cdot,[u(s_{i{+}1})], \EEE \ZZZZ \nu_{K(s_{i{+}1})}) \EEE $   \BBB on $K(s_{i{+}1})$ for $ p \le i \le k{-}1$ \EEE (for \eqref{local-h2}) \EEE and 
 $\overline{\psi}_l$ on $K(s_l)$ \BBB for $p \le l \le k$ \EEE (for \eqref{local-h3} and \eqref{local-h1}), along with \BBB some additional properties, \ZZZZ in particular using \eqref{2prop}(ii): \EEE for \eqref{local-h1} we use  \EEE   the fact that $h^-_{s_{i}} (\cdot,g^{-1}(\psi(s_{l})), \ZZZZ \nu_{K(s_l)}) \EEE = 0$ on $K(s_l)$  
for $l \in \{0,\ldots,i\}$.  For \eqref{local-h2}, we further use \EEE  the monotonicity of $h^-_{s_{i}}$ in the second variable \BBB (see Remark \ref{rem: sigma1}(ii)) and \EEE additionally we take into account that $ K(s_{i{+}1}) \setminus  K(s_{p})$ has density $0$ for $\mathcal{H}^{d{-}1}$-a.e.\ $x \in K(s_{p})$, i.e., $\mathcal{H}^{d{-}1}((Q_{p,j}  \cap  K(s_{i{+}1})) \setminus \Gamma_{p,j})$ can be taken of order $\frac{\eps}{k}r_{p,j}^{d{-}1}$.

  Since $(Q_{p,j})_{p,j}$ are pairwise disjoint, \EEE from Corollary \ref{cor: energy bound} with   \eqref{2prop}(ii), and by   \eqref{newepsbound}   it follows that \EEE 
\begin{align}\label{sumr}
 \sum_{p=0}^k  \sum_{j=1}^{N_p} r_{p,j}^{d{-}1} \le C\mathcal{H}^{d{-}1}(K(t)) \le   C, \qquad \sum_{p=0}^k  \sum_{j=1}^{N_p}\sum_{i =p}^k \delta^{i}_k(Q_{p,j}) \le C\eps.  \EEE
\end{align}
Below in Subsection \ref{subsec3} we will show that for all $i \in \{p{+}1, \ldots, k\}$ \BBB and each $Q_{p,j}$, $j\in\{1,\ldots, N_p\}$, 
\begin{align}\label{for subsectionlast}
   \muim \EEE( Q_{p,j})  \EEE   & \le   {\mu}^i(Q_{p,j}) - {\mu}^{i{-}1}(Q_{p,j}) \EEE + \BBB C\frac{\eps}{k}r_{p,j}^{d{-}1} \EEE
     +    C  \Big(  \varepsilon r_{p,j}^{d{-}1} +\EEE \sum_{l=p}^k \delta^{l}_k(Q_{p,j}) \Big)  \, \sigmamac{i} 
    \end{align} 
holds for some universal constant $C>0$, where $\ZZZZ \sigmamac{i}  \EEE := \etamac{i}-\etamac{i{-}1}$ \ZZZZ for all $i \in \{p{+}1, \ldots, k\}$. \EEE  
 
 \emph{Step 7: Energy balance and energy convergence.} We are now in a position to show  \eqref{asmotivated}   which will allow us to deduce the energy balance by using  \eqref{eq: auxiliary balance}--\eqref{newepsbound}. First, by \eqref{1910251906},  \eqref{newepsbound0}, \eqref{newepsbound}, and \eqref{2prop}(i),    we get
\begin{align}\label{plugger}
 {\mu}_{\rm conv}^k(\Omega')  & =  \sum_{i=0}^k {\mu}_{\rm new, conv}^i(\Omega') + \EEE \sum_{i=1}^k  {\mu}_{\rm add, conv}^i(\Omega')   \le  \sum_{p=0}^k {\mu}_{\rm new}^p(\Omega') \EEE + \sum_{i=1}^k
 \ZZZZ \muim  (\Omega') \EEE  + \sum_{i=0}^k \delta_k^i \ZZZZ (\Omega') \EEE \\
 & \ZZZZ  =  \sum_{p=0}^k {\mu}^p(U_p \sm U_{p-1})  + \sum_{i=1}^k
  \muim  (U_{i{-}1})    + \sum_{i=0}^k \delta_k^i  (\Omega') \EEE    \le   \sum_{p=0}^k    \sum_{j=1}^{N_p}  \Big( {\mu}^p(Q_{p,j}) \EEE +   \sum_{i =p{+}1}^k   \muim \EEE( Q_{p,j}) \Big) \EEE \notag 
   +  C\eps,
 \end{align}
where  \ZZZZ in the third step we also employed the definitions in  \eqref{defmutilde} and \eqref{muhi}.  \EEE  We now use the key estimate  \eqref{for subsectionlast}. Note that by the definition of  $\etamac{i}$ and Lemma~\ref{prop-control-on-ela} we get $\ZZZZ \sum_{i =p+1}^k \EEE \sigmamac{i}  \le  \etamac{k} \EEE  \le  C_w$. \EEE Then, using \eqref{for subsectionlast} and taking the sum over $i \in \{p{+}1, \ldots, k\}$ we get 
\begin{align*}
\sum_{i = p{+}1}^{k}   \muim \EEE( Q_{p,j})  \EEE   & \le  {\mu}^k(Q_{p,j}) - {\mu}^{p}(Q_{p,j}) \EEE  +  C\eps r_{p,j}^{d{-}1} +    C \sum_{l=p}^k   \delta^{l}_k (Q_{p,j}). 
    \end{align*} 
Plugging this into \eqref{plugger} \BBB and \EEE   using  
 \eqref{sumr} \EEE     we get 
\begin{align*}
 {\mu}_{\rm conv}^k(\Omega')  \le    \sum_{p=0}^k    \sum_{j=1}^{N_p}  {\mu}^k(Q_{p,j}) \EEE +    C\eps\sum_{p=0}^k    \sum_{j=1}^{N_p}     r_{p,j}^{d{-}1}   +    C  \sum_{p=0}^k    \sum_{j=1}^{N_p}  \sum_{l=p}^k \BBB \delta^{l}_k \EEE (Q_{p,j})    +  C\eps  \le  {\mu}^k(\Omega') \EEE  +  C\eps.
 \end{align*}
 By \eqref{lsc-app} \ZZZZ and a localization argument as in \eqref{localozation}, \EEE  we also have $ {\mu}_{\rm conv}^k(\Omega')  \ge \BBB \int_{K(t)} \psi(t) \, {\rm d}\mathcal{H}^{d{-}1} $. \BBB Using that $ {\mu}^k(\Omega') \le \int_{K(t)} \psi(t) \, {\rm d}\mathcal{H}^{d{-}1} $ by \eqref{13050959} and \EEE \eqref{defmu}, we deduce   \eqref{asmotivated} \ZZZZ  and, as a byproduct,    $|\int_{K(t)} \psi(t) \, {\rm d}\mathcal{H}^{d{-}1} -    {\mu}^k(\Omega')| \le C\eps$. Then, \EEE using that 
 $${\mu}^k(\Omega') = \int_{\bigcup_{0 \le i \le  k \EEE} J_{
 u(s_i)}} \max_{0\leq i\leq  k \EEE} g(|[u(s_i)]|) \, {\rm d}\mathcal{H}^{d{-}1}$$ by \eqref{defmu} and $g(0)=0$, 
 we obtain \eqref{17050743}   for $t \in [0,T]$ \ZZZZ   by the arbitrariness of $\eps$. \EEE Combining \BBB \eqref{asmotivated} \EEE with \eqref{eq: auxiliary balance} and \eqref{newepsbound}, and sending $\eps \to 0$ we get 
\begin{equation*}
 \lim_{n\to \infty} \mathcal{E}(u_n(t),K_n(t),\psi_n(t)) =  \mathcal{E}(u(t), K(t),\psi(t)) =  \mathcal{E}(u(0), K(0),\psi(0))+ 2 \int_0^t   \langle \EEE \nabla u(\tau) , \partial_{t} \nabla  \ZZZZ w \EEE (\tau) \rangle \EEE \, {\rm d}\tau.
 \end{equation*}
\ZZZZ As this property holds \EEE  for every $t\in[0,T]$, we have  shown property (d) in Theorem \ref{main def}. As a byproduct we have also obtained  $\lim_{n\to \infty} \mathcal{E}(u_n(t),K_n(t),\psi_n(t) )  = \mathcal{E}(u(t),K(t),\psi(t) )$  for all $t\in[0,T]$. Thus,  by the separate lower semicontinuity of elastic and surface energy (use \eqref{lsc-app} \ZZZZ and \eqref{independentoft}), \EEE we find  
\begin{equation*}\label{surfconv'}
\lim_{n \to \infty} \int_{K_n(t)} \psi_n(t) \, {\rm d}\mathcal{H}^{d{-}1}  =  \int_{K(t)} \psi(t)  \, {\rm d}\mathcal{H}^{d{-}1} \EEE  \quad \text{for all $t \in [0,T]$.}
\end{equation*}
From  \ZZZZ  \eqref{lsc2-app} \EEE and \eqref{lsc-neu22} (for $\lambda =0$) we then also get 
\begin{align}\label{surfconv}
\lim_{n \to \infty}\Hd(K_n(t))  = \Hd( K\EEE(t))  \quad \text{for all $t \in [0,T]$.}
\end{align}

\emph{Step 8: Stability  \eqref{finalstability}.} 
 Given the   convergence \eqref{surfconv}, we are finally in the position to improve the weak unilateral minimality to the strong version.  For each $x \in K(t)$ and $r >0$, with $\Hd(K(t) \cap \partial B_r(x)) =0$, it holds that  $\Hd(K_n(t) \cap  B_r(x)) \to \Hd(K(t) \cap  B_r(x))$      by \eqref{lsc2-app} and \eqref{surfconv}. Defining  the measures $\lambda_{t,n} = \mathcal{H}^{d{-}1}\mres_{K_n(t)}$, for a suitable subsequence depending on $t$ we can pass to a weak$^*$ limit $\lambda_t \in  \mathcal{M}^+_{\rm b}(\Omega')$. \EEE As $\lambda_t (B_r(x)) =  \Hd(K(t) \cap  B_r(x))$, using the  Radon–Nikod\'ym theorem  and   the rectifiability of $K(t)$, we get 
$$
  \frac{{\rm d}\lambda_t}{{\rm d}\mathcal{H}^{d{-}1}\mres_{K(t)}} = 1
$$
for $\Hd$-a.e.\ $x \in K(t)$. We recall from Step 1  \ZZZZ and the comment below \eqref{Riemann3} that there exists a subsequence $ (u_{ m \EEE}(t))_{ m \EEE} \subset (u_{ n}(t))_n$ such that \EEE   
 $u_{ m \EEE }(t)\wto u(t)$  in $\SBV^2(\Omega')$ 
  as $ m \EEE \to \infty$.  As  $(u_n(t),K_n(t),\psi_n(t))$ are strong unilateral minimizers for each $n \in \N$,  by  Corollary \ref{cor. unilat} we get that  $(u(t),K(t),\psi(t))$   is a strong unilateral minimizer, i.e.,  for all functions $v \in \SBV^p_{w(t)}(\Omega')$ it holds that
$$\int_{\Omega'} |\nabla u(t)|^2 \, {\rm d}x \le \int_{\Omega'} |\nabla v|^2 \, {\rm d}x + \int_{J_v \setminus K(t)} g(|[v]|) \, {\rm d}\mathcal{H}^{d{-}1} +  \int_{J_v \cap K(t)} \big( g(|[v]|) - \psi(t)\big)^+  \, {\rm d}\mathcal{H}^{d{-}1}.    $$
In particular, this implies that for  every  $(H,\zeta)$ with $K(t) \subset H $ and  $\psi(t) \le \zeta$ on $K(t)$,  and for every $v\in AD( \BBB w(t), \EEE H,\zeta)$  it holds that 
\begin{equation*}\label{2108241743-new}
\int_{\Omega'} |\nabla u(t)|^2 \, {\rm d}x \le \int_{\Omega'} |\nabla v|^2 \, {\rm d}x + \int_{H \setminus K(t)} \BBB \zeta \EEE  \, {\rm d}\mathcal{H}^{d{-}1} +  \int_{  K(t)}  (  \BBB \zeta \EEE - \psi(t) )  \, {\rm d}\mathcal{H}^{d{-}1}    .
    \end{equation*}
 Adding $\int_{K(t)} \psi(t)  \, {\rm d}\Hd$ on both sides and recalling \eqref{eq: lim-en-an}, we  \BBB get \EEE property (c) in Theorem~\ref{main def}. This finally concludes the  proof.
 \end{proof}

  \subsection{Local structure of crack sets}\label{subsec3}
 
 This subsection is devoted to the proof of \eqref{for subsectionlast}.
 
 \begin{proposition}\label{lastsecpro}
 In the setting of the proof of Theorem \ref{main def}, it holds that 
  \begin{equation*}
  \begin{split}
 \muim(Q_{p,j}) \EEE   & \le  
     {\mu}^i(Q_{p,j}) - {\mu}^{i{-}1}(Q_{p,j}) \EEE + \BBB C\frac{\eps}{k}r_{p,j}^{d{-}1} \EEE
     +    C  \Big(  \varepsilon r_{p,j}^{d{-}1} +\EEE \sum_{l=p}^k \delta^{l}_k(Q_{p,j}) \Big)  \, \sigmamac{i}  
     \end{split}
    \end{equation*} 
 for all $p\in\{0,\ldots,k\}$, $j\in\{1,\ldots, N_p\}$, and $i \in \lbrace p +1,\ldots,k\rbrace$. 
   \end{proposition}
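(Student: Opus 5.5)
The plan is to fix $p$, $j$ and work only inside $Q:=Q_{p,j}$, with $r:=r_{p,j}$, $\nu:=\nu_{p,j}$, $\Gamma:=\Gamma_{p,j}$. I would argue by induction over $i\in\{p,\dots,k\}$, carrying two statements together. The first is that the approximating surface energy inside $Q$ is close to the limiting one:
\begin{equation*}
\mu^{i}_{\rm conv}(Q)\le \mu^{i}(Q)+ e_i, \qquad e_i:= C\frac{\eps}{k}r^{d{-}1}(i-p+1)+C\sum_{l=p}^{i}\delta^l_k(Q).
\end{equation*}
The second is the claimed bound on $\mu^{i}_-(Q)$.

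\emph{Base case $i=p$.} Since $Q\subset\subset U_p\setminus\overline{U_{p-1}}$, the measures $\mu^p_{\rm conv}$ and $\mu^p$ restricted to $Q$ coincide with $\mu^p_{\rm new,conv}$ and $\mu^p_{\rm new}$. So the definition \eqref{newepsbound0} gives directly $\mu^p_{\rm conv}(Q)=\mu^p(Q)+\delta^p_k(Q)$. This is where the "starting time'' idea enters: at time $s_p$ no precrack constrains the cube.

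\emph{Inductive step.} Assume the first statement at level $i{-}1$. Using $\psi_m(s_{i-1})\ge c_g$ on $K_m(s_{i-1})$ and arguing as in Corollary \ref{cor:lsc-neu22} (localized to $Q$, which is legitimate since $\mu^k_{\rm conv}(\partial Q)=0$), I get a quantitative version of $\kappa=1$:
\begin{equation*}
\limsup_m\Hd(K_m(s_{i-1})\cap Q)\le \Hd(K(s_{i-1})\cap Q)+C e_{i-1}\le r^{d{-}1}+C\tfrac{\eps}{k}r^{d{-}1}+Ce_{i-1}.
\end{equation*}
For the second inequality I use \eqref{2prop}(ii) together with the smallness of $\Hd((K(s_{i-1})\cap Q)\setminus\Gamma)$ noted after \eqref{local-h3}. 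Here the threshold $c_g>0$ is essential.

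Next I rerun the proof of Lemma \ref{lemma: wum} quantitatively on the whole cube $Q$, for the functionals $\H^-_m(s_{i-1})$, with $\zeta=\etamac{i-1}$ and $\xi=\etamac{i}$.
\begin{itemize}
\item Take a recovery sequence $z_m$ of an almost minimizer for $\ol u_{x,\zeta,\nu}$. By \eqref{local-h1} it has energy $\lesssim \frac{\eps}{k}r^{d-1}$.
\item Use the rescaled competitors $w_m=\frac{\xi}{\zeta}z_m$.
\item Apply the slicing argument with Lemma \ref{le:projection}. In place of $\kappa=1$ it now yields $\Hd(J_{z_m}\setminus G_m)\le C(\frac{\eps}{k}r^{d-1}+e_{i-1})$.
\item Every error term carries the factor $g(\xi)-g(\zeta)\le C_g\sigmamac{i}$, by \eqref{gproperty2}.
\end{itemize}
Together with \eqref{local-h2} this should give
\begin{equation*}
\mu^{i}_-(Q)\le \Hd(K(s_{i})\cap Q)\big(g(\etamac{i})-g(\etamac{i-1})\big)+C\tfrac{\eps}{k}r^{d{-}1}+C\big(\eps r^{d{-}1}+\textstyle\sum_{l}\delta^l_k(Q)\big)\sigmamac{i}.
\end{equation*}
Then \eqref{local-h3}, applied at levels $i$ and $i{-}1$, converts the first term into $\mu^i(Q)-\mu^{i-1}(Q)$ up to $C\frac{\eps}{k}r^{d-1}$. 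The difference between $\Hd(K(s_i)\cap Q)$ and $\Hd(K(s_{i-1})\cap Q)$ is of order $\frac{\eps}{k}r^{d-1}$ by the density-zero property. This proves the claimed bound at level $i$.

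Finally I close the induction: $\mu^i_{\rm conv}(Q)=\mu^{i-1}_{\rm conv}(Q)+\mu^i_{\rm add,conv}(Q)$, and by \eqref{newepsbound0} $\mu^i_{\rm add,conv}(Q)=\mu^i_-(Q)+\delta^i_k(Q)$ (no new part, since $Q\subset U_{i-1}$). Combining this with the bound just proved and with $\sigmamac{i}\le C_w$ gives $e_i$ of the stated form.

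The main obstacle is the quantitative Lemma \ref{lemma: wum}. One must check that the error in the length excess of $K_m(s_{i-1})$, the mass of the jumps $J_{z_m}\setminus K_m$, and the set where $[z_m]\neq\zeta$ all enter only linearly and multiplied by $g(\xi)-g(\zeta)$, rather than by $g(\xi)$. Otherwise the accumulated errors would not sum to $O(\eps)$ after summing over $i$. I would also double-check that the recovery sequences satisfy the boundary condition on $\partial Q$, via Lemma \ref{recovboundra}, uniformly enough for the slicing bound $\#(J_{z_m})^\nu_y\ge1$.
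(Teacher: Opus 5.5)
Your route is sound and genuinely different from the paper's. One step, the closing of your induction, needs repair as written; see the second paragraph.

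\emph{Comparison.} The paper never tracks the local energy defect $\mu^i_{\rm conv}(Q)-\mu^i(Q)$ for $i>p$. It inducts instead on a geometric quantity, $\mathcal{H}^{d-1}(\Pi_i)$, the measure of slices that asymptotically meet $K_n(s_i)\cap Q$ at least twice.
\begin{itemize}
\item It uses Corollary~\ref{cor:lsc-neu22} only at the starting time $s_p$, to build surfaces $\Gamma_n$ of almost minimal area.
\item It bounds $\mu^{i+1}_-(Q)$ with the jump-transfer competitor $v_n(s_i)+\sigma_{i+1}\chi_{D_n}$, which places the extra opening on $\Gamma_n$.
\item It controls $\Pi_{i+1}\setminus\Pi_i$ through a slice-wise lower bound on $\mu^{i+1}_{\rm conv}(Q)-\mu^{i}_{\rm conv}(Q)$ that gains $c_g$ from $g(a)+g(b)\ge g(a+b)+c_g$.
\end{itemize}
You instead carry the energy defect itself through the induction. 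At each step you turn it into a length defect of $K_m(s_{i-1})\cap Q$ via Corollary~\ref{cor:lsc-neu22}. This is legitimate with $\lambda=e_{i-1}$, since $\ol\psi_{i-1}\le\psi(s_{i-1})$, and $c_g$ enters through \eqref{lscminus}. You then rerun Lemma~\ref{lemma: wum} quantitatively with the rescaled competitor.

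The key points check out. The identity $\mu^i_{\rm conv}(Q)=\mu^{i-1}_{\rm conv}(Q)+\mu^i_-(Q)+\delta^i_k(Q)$ for $i>p$ is \eqref{countmist}. Every error not of size $\frac{\eps}{k}r^{d-1}$ carries a factor $\sigma_i$, because a jump $a\le\zeta$ costs at most $g(\frac{\xi}{\zeta}a)-g(a)\le C_g\sigma_i$ extra.

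Your route reuses established lemmas. It avoids $\Pi_i$, $\Gamma_n$, the decomposition \eqref{13051859} and the estimates \eqref{0502261734}--\eqref{combon2}, and it yields the localized form of \eqref{asmotivated} directly. The paper's route is more explicitly geometric and isolates the branching mechanism slice by slice.

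\emph{Repair.} Your inductive step actually produces the recursion
\[
e_i\le(1+\widehat C\sigma_i)\,e_{i-1}+\delta^i_k(Q)+\widehat C\tfrac{\eps}{k}r^{d-1},
\]
because the length defect at $s_{i-1}$ is of order $e_{i-1}$ and is multiplied by $g(\eta_i)-g(\eta_{i-1})\le C_g\sigma_i$.
\begin{itemize}
\item The term $\widehat C\sigma_i e_{i-1}$ cannot be absorbed into your linear ansatz $C\frac{\eps}{k}r^{d-1}(i-p+1)+C\sum_{l\le i}\delta^l_k(Q)$ with a fixed $C$.
\item Bounding $\sigma_i\le C_w$ at each step, as you suggest, gives a factor $(1+\widehat CC_w)^{k}$, which is unbounded as $\eps\to0$.
\item Instead unroll the recursion using $\sum_{l=p+1}^{k}\sigma_l\le\eta_k\le C_w$. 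Then $\prod_l(1+\widehat C\sigma_l)\le e^{\widehat CC_w}$ and hence $e_i\le e^{\widehat CC_w}\big(\sum_{l=p}^{i}\delta^l_k(Q)+\widehat C\eps r^{d-1}\big)$.
\end{itemize}
This is exactly the device used in \eqref{projection bound-induction}. Inserting this bound for $e_{i-1}$ into your estimate for $\mu^i_-(Q)$ gives the proposition.

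One small adjustment: let $z_m$ recover $\eta_{i-1}\chi_D$, with that datum near $\partial Q$ via Lemma~\ref{recovboundra}, rather than $\ol u_{x,\zeta,\nu}$. That is what \eqref{local-h1} controls. It also gives $w_m\to\eta_i\chi_D$, whose $\mathcal{H}^-(s_{i-1})$-energy is the integral appearing in \eqref{local-h2}.
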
 
 



 \begin{proof}[Proof of Proposition \ref{lastsecpro}]
We fix a cube $Q_{p,j}$  for some $p \in \lbrace 0,\ldots,k \rbrace$ and $j  \in \lbrace 1 , \ldots, N_p \rbrace$.  For notational convenience, we will drop all indices $(p,j)$ in the sequel. In particular, we write $Q$, $r$, $\nu$, $\eta_i$, $\sigma_i$, $D$,  and  $\Gamma$  in place of $Q_{p,j}$, $r_{p,j}$, $\nu_{p,j}$, $\etamac{i}$, $\sigmamac{i}$,  $D_{p,j}$,  and $\Gamma_{p,j}$.  Moreover, we write $\delta^i$ in place of $\delta_k^i$.  \ZZZZ Recalling \ZZZZ $Q \subset \subset U_p \sm \overline{U_{p-1}}$, \EEE \eqref{defmutilde}--\eqref{defmutilde2}, \eqref{muhi}, and \eqref{newepsbound0}, we first observe 
\begin{align}\label{countmist}
\delta^{p}(Q) =  {\mu}_{\rm  conv}^p(Q)  -  {\mu}^p(Q), \quad \   \  
\delta^{i{+}1}(Q) = \mu^{i{+}1}_{\rm conv}(Q) - \mu^{i}_{\rm conv}(Q) - \mu_-^{i{+}1}(Q) \  \  \text{ for     $p \le i \le k{-}1$}.
\end{align} \EEE 
 Recalling the notation introduced in Subsection \ref{sec:slicing}, we \EEE let   \ZZZZ
\begin{equation}\label{defPi}
 \Pi_{i}  :=  \big\{ y  \in P^{\nu}(Q)  \colon \liminf\nolimits_{n\to +\infty} \#   ( (K_n(s_i) \cap Q)^{\nu}_y   )   \ge 2  \big\}, \quad \EEE  B_{i}   :=   \big( \Pi_{i} +  \R \nu \big) \cap Q, 
\end{equation}
corresponding to the slices which asymptotically hit the crack at least twice. 

Let us formulate the main estimates leading to the proof of the statement.  
For $i \in \lbrace p, \ldots, k{-}1 \rbrace$ 
we will show \ZZZZ that \EEE
\begin{subequations}\label{firtmain}
\begin{align}\label{firtmain1}
  \mu^{i{+}1}_-(Q) \EEE   \le    \liminf_{n \to \infty}  \int_{ \Gamma_n  \cap (Q \setminus B_i) } \big(g( \eta_{i{+}1} ) -   g(\eta_i) \big) \,{\rm d} \mathcal{H}^{d{-}1}  + C\frac{\eps}{k}r^{d{-}1} +    C \big(\delta^{p}(Q)  + \mathcal{H}^{d{-}1}(\Pi_i) \big) \sigma_{i{+}1}, \quad
\end{align}
where $\sigma_{i{+}1} = \eta_{i{+}1} - \eta_i$, and  $\Gamma_n  \subset Q$ is a suitable set such that 
\begin{equation}\label{0305261104}
\limsup_{n \to \infty}\mathcal{H}^{d{-}1}(\Gamma_n ) \le  r^{d{-}1} + C\frac{\eps}{k}r^{d{-}1}  + C\delta^{p}(Q).
\end{equation} 
\end{subequations}
\BBB The underlying idea lies in defining a competitor whose construction is related to the jump-transfer lemma of \cite{Francfort-Larsen:2003} (see Steps 1--2).  In particular, we will exploit that \EEE the slices outside of $B_i$ hit the crack \ZZZZ at most once \EEE and therefore, \ZZZZ if they hit the crack, \EEE the corresponding jump height at time $s_{i{+}1}$ needs to be approximately $\eta_{i{+}1}$. A similar conclusion is not possible on the slices in $B_i$, leading to the error term $\mathcal{H}^{d{-}1}(\Pi_i)$ on the right-hand side of  \eqref{firtmain1}. \EEE Therefore, a second fundamental ingredient lies in the derivation of the bound   
\begin{align}\label{projection bound}
\mathcal{H}^{d{-}1}(\Pi_k) \le  C \eps r^{d{-}1} + C\sum_{i=p}^k \delta^{i}(Q)
\end{align}
  noticing that $\Pi_i \subset \Pi_k$ for $i \in \{p,\dots, k{-}1\}$ as $K_n(s_i) \subset K_n(s_{k})$.  
The idea consists in deriving a suitable lower bound for  $\mu^{i{+}1}_{\rm conv}(Q) - \mu_{\rm conv}^{i}(Q)$ involving  $\mathcal{H}^{d{-}1}(\Pi_{i{+}1} \setminus \Pi_{i})$, and using \eqref{countmist} (see Steps 3--4). \EEE 

Indeed, the statement follows herefrom: combining \eqref{firtmain}, \eqref{projection bound} with  \eqref{2prop}(ii) (with $K(s_p) \BBB \subset K(s_{i})  \EEE \subset K(s_{i{+}1})$), \EEE \eqref{local-h3},  and recalling that $g(\eta_l)=\ol\psi_l(x_{p,j})$ for $p\leq l\leq k$   we get
\begin{align*}
   \mu^{i{+}1}_-(Q) \EEE  &  \le    \int_{Q  \cap  K(s_{i{+}1}) }   \ol\psi_{i{+}1} \EEE  \, \dhn -  \int_{Q   \cap  K(s_{i}) }  \ol \psi_{i} \EEE  \, \dhn + C \frac{\eps}{k}r^{d{-}1}  \notag \\ & \ \ \  \BBB +   C\Big(\frac{\eps}{k}r^{d{-}1}  + \delta^{p}(Q)\Big) \big(g( \eta_{i{+}1} ) -   g(\eta_i)  \big)  \EEE +   C  \Big(  \varepsilon r^{d{-}1} +\EEE \sum_{l=p}^k \delta^{l}(  Q \EEE ) \Big) \sigma_{i{+}1}.
    \end{align*} 
\BBB In view of \ZZZZ \eqref{defmu} and \EEE the fact that $g( \eta_{i{+}1} ) -   g(\eta_i) \le C_g(\eta_{i{+}1}-\eta_i ) = C_g \sigma_{i{+}1}$ by \eqref{gproperty2}, \EEE this implies the statement, where we note that for notational convenience (related to the induction proof employed below) the indices are shifted by $1$. \EEE 
We now come to the proof of \eqref{firtmain} and \eqref{projection bound}. \BBB Clearly, it suffices to establish the estimates on a subsequence of $n$ (not relabeled), which we choose in  such a way that \ZZZZ  \eqref{1910252147} and \EEE  the statement of Lemma \ref{recovboundra} hold  for  $ \ZZZZ \H^-_{n}(s_{i}) \EEE$  and $\ZZZZ \H^-(s_{i}) \EEE$ in place of $\H^-_{n}$ and $\H^-$, for any $p\leq i \leq k$. (We notice that actually we consider a subsequence of $m$, which may depend on $t$, but for notational convenience we still denote it by $n$.)

\BBB 
    \emph{Step 1: Proof of \eqref{0305261104}.} \EEE 
 Let us introduce a first \emph{standard competitor}, namely
\begin{align}\label{vs}
\bar{v}(s_i)  :=\eta_i \chi_{D} \quad \text{ on $Q$  \quad  for $p \le i \le k$},
\end{align}
where $D$ is given in \eqref{2prop}(ii) with $\Gamma=\partial D \cap Q$, i.e.,  $J_{\bar{v}(s_i)} = \Gamma$.   The goal of  this step is to identify a sequence of hypersurfaces $(\Gamma_n)_n$ converging suitably to  $\Gamma$. In later steps, these hypersurfaces will be used to build competitors for the subsequent time step. 
Let $({v}_{n}(s_{p}))_n \subset \PC(Q)$   be a recovery sequence of  $\bar{v}(s_{p})$ for $\ZZZZ \H^-_{n}(s_{p}) \EEE(\cdot, Q)$ (recall \eqref{elapart-newdefin} and \eqref{1708240957}), i.e., ${v}_{n}(s_{p}) \to \bar{v}(s_{p})$   in   $L^1(Q)$ and 
$${\limsup_{n \to \infty}\ZZZZ \H^-_{n}(s_{p}) \EEE({v}_{n}(s_{p}), Q) \le  \int_{\Gamma} h^-_{s_{p}} \big(\cdot,\eta_p,\nu^{\Gamma}\big)   \, \dhn \le \frac{\eps}{k}r^{d{-}1},}$$
where in the last step we used \eqref{local-h1}. By \BBB Lemma~\ref{recovboundra} \EEE we may assume that   \EEE   $v_n(s_p)=\ol v(s_p)$ \ZZZZ  near \EEE $\partial Q$. \BBB In case that $Q \not\subset \Omega$, we have  $x \in \partial_D\Omega$ and  $\Gamma = \partial_D \Omega \ZZZZ \cap Q \EEE $ by \eqref{2prop}(iii), and we can suppose without restriction that $Q \cap \Omega = D \cap \Omega$. Thus, in view of \eqref{widehtatg}, possibly replacing  ${v}_{n}(s_{p})$ by ${v}_{n}(s_{p}) \chi_{D}$ (not relabeled), we may further assume that the recovery sequence $({v}_{n}(s_{p}))_n$ satisfies $J_{{v}_{n}(s_{p})} \subset \Omega \cup \partial_D \Omega$. \EEE      By the fact that  ${v}_{n}(s_{p})$ is piecewise constant and  $g(t)  \ge c_g$ for all $t >0$, see \eqref{gproperty},  we find sets of finite perimeter $D_n \subset Q$ such that $\Gamma_n := Q \cap \partial D_n$ satisfies   $\Gamma_n \subset J_{{v}_{n}(s_{p})}$ \ZZZZ and  \EEE
\begin{align}\label{firstgammaproperty} 
 \limsup_{n \to \infty}\mathcal{H}^{d{-}1}(\Gamma_n \setminus K_n(s_p)) \le  \limsup_{n \to \infty}\mathcal{H}^{d{-}1}(J_{{v}_{n}(s_{p})} \setminus K_n(s_p)) \le C\frac{\eps}{k}r^{d{-}1},
 \end{align}
 where $C>0$ only depends on $ c_g$, \BBB and we have \EEE  $\mathcal{L}^d(D_n \triangle D) \to 0$ as $n \to \infty$, with $D$ as in \eqref{vs}.   \ZZZZ For future reference, \EEE by \EEE   $v_n(s_p)=\ol v(s_p)$ on $\partial Q$, a slicing argument (recalling that $({v}_{n}(s_{p}))_n \subset \PC(Q)$), \ZZZZ and    \eqref{2prop}(ii),  \EEE     we \ZZZZ also \EEE  deduce that 
\begin{align}\label{bgammabound}
 \# (\Gamma_n)^{\nu}_y\geq 1 \ \ \ \text{ for $\hn$-a.e.\ $y\in P^\nu(Q)$ \ZZZZ for all $n \in \N$.}
 \end{align}   
From the first item in   \eqref{countmist}, \BBB $\overline{\psi}_p \le \psi(s_p)$  (see \eqref{13050959}), \EEE  ${\mu}_{\rm  conv}^p(\partial Q) = 0 $, \EEE and  Corollary~\ref{cor:lsc-neu22} (for the cube $Q$ in place of  $A$) \EEE  we get 
  \begin{equation}\label{0102261700}
  \limsup_{n \to \infty}\mathcal{H}^{d{-}1}(K_n(s_p) \cap Q) \le    \mathcal{H}^{d{-}1}(K(s_p) \cap Q)  + C\delta^{p}(Q).
  \end{equation}
  Thus,  combining \eqref{firstgammaproperty}, \eqref{0102261700}, and using also \eqref{2prop}(ii) \ZZZZ we calculate \EEE
\begin{align}\label{basiccracklength}
\limsup_{n \to \infty}\mathcal{H}^{d{-}1}(\Gamma_n ) \le  r^{d{-}1} + C\frac{\eps}{k}r^{d{-}1}  + C\delta^{p}(Q).
\end{align}

\BBB 
    \emph{Step 2: Proof of estimate \eqref{firtmain1}.} \EEE
  Fix $i \in \lbrace p,\ldots,k{-}1\rbrace$ and \EEE let $({v}_{n}(s_{i}))_n \subset \PC(Q)$   be a recovery sequence of  $\bar{v}(s_{i})$ (see \eqref{vs}) for $\ZZZZ \H^-_{n}(s_{i}) \EEE (\cdot, Q)$, i.e., ${v}_{n}(s_{i}) \to \bar{v}(s_{i})$   in   $L^1(Q)$ and 
 \begin{align}\label{nochmali}
 {\limsup_{n \to \infty}\ZZZZ \H^-_{n}(s_{i}) \EEE({v}_{n}(s_{i}), Q) \le  \int_{\Gamma  } h^-_{s_{i}} (\cdot,\eta_i,\nu^\Gamma)   \, \dhn \le \frac{\eps}{k}r^{d{-}1},}
 \end{align}
  where we again used \eqref{local-h1}. In particular, by \eqref{gproperty} this shows 
  \begin{equation}\label{0202262322}
  \limsup_{n \to \infty}\mathcal{H}^{d{-}1}(J_{{v}_{n}(s_{i})} \setminus K_n(s_i)) \le C\frac{\eps}{k}r^{d{-}1},
  \end{equation} see \eqref{firstgammaproperty}  for the same argument.  Similarly to Step~1, by \BBB Lemma~\ref{recovboundra} \EEE  we may assume that $v_n(s_i)=\ol v(s_i)$ on $\partial Q$, \BBB and that $J_{{v}_{n}(s_{i})} \subset \Omega \cup \partial_D \Omega$. \EEE Moreover, by truncation and monotonicity of $g$, we may assume that $v_n(s_i)$ takes values in $[0,\eta_i]$. \EEE  
 Define the sets 
\begin{align*}
  \Pi_i^{n}:=\Pi_i \cup \big\{y \in P^\nu(Q) \colon \#\big((J_{v_n(s_i)} \cup \Gamma_n) \sm K_n(s_i)\big)^\nu_y \ge 1\big\}, \quad B_i^{n}  := \EEE  (\Pi_i^{n} +  \R \nu) \cap Q.
\end{align*}
  Then, from \eqref{firstgammaproperty}, \ZZZZ \eqref{basiccracklength}, \EEE \eqref{0202262322}, the fact that $K_n(s_p)\subset K_n(s_i)$ for $i \ge p$,    the Area Formula, \ZZZZ and Lemma~\ref{le:projection} applied for $\Psi = \Gamma_n$ and $A = B_i^{n} \setminus  B_i$  \EEE  we get   
\begin{align}\label{similarly-neu} 
& \ZZZZ \limsup_{n \to \infty}\mathcal{H}^{d{-}1}\big(  \Pi_i^{n} \setminus  \Pi_i \big) \le      C \frac{\eps}{k}r^{d{-}1}, \EEE \\ 
\label{similarly}  & \limsup_{n \to \infty}\mathcal{H}^{d{-}1}\big(    (B_i^{n} \setminus  B_i) \cap \Gamma_n\big) \le  \ZZZZ   C \frac{\eps}{k}r^{d{-}1}  + C\delta^{p}(Q). \EEE 
\end{align}
\ZZZZ Recalling the definition in \eqref{defPi}  and \eqref{bgammabound}, we observe that   for any $y \in P^\nu(Q) \sm \Pi_i^n$ there is a unique value in $(K_n(s_i) \cap Q)^\nu_y$, denoted by   $\ol t^y_n$. \EEE Note that   this holds for $n$ sufficiently large \BBB depending on $y$, \EEE \ZZZZ but    regardless of $i$ in the sense that $(K_n(s_i) \cap Q)^\nu_y = (K_n(s_l) \cap Q)^\nu_y$ if $y \in P^\nu(Q) \sm (\Pi_i^n \cup \Pi_l^n)$ for $l>i$, due to  the monotonicity of $(K_n(s_i))_i$.    \ZZZZ In particular, we have \EEE 
 \begin{equation}\label{0302260850}
  (J_{{v}_{n}(s_{i})})^\nu_y  =  (\Gamma_n)^\nu_y  =  \lbrace \ol t^y_n  \rbrace, \  \ [({v}_{n}(s_{i}))^\nu_y](\ol t^y_n) = \eta_{i} \ \  \text{for $n$ large enough  for $\mathcal{H}^{d{-}1}$-a.e.\ $y \in P^\nu(Q) \setminus   \Pi_i^{n}$.}
\end{equation}  
\ZZZZ Here, we used \EEE that   $v_n(s_i) \in {\rm PC}(Q)$ with $v_n(s_i)= \BBB \eta_i \chi_D \EEE $ on $\partial Q$ \ZZZZ which \EEE is employed in both conditions in \eqref{0302260850}: for the first since it gives $\#(J_{{v}_{n}(s_{i})})^\nu_y\geq 1$ for any $y \in P^\nu(Q)$ (along with $\# (\Gamma_n)^\nu_y\geq 1$ for any $y \in P^\nu(Q)$, \ZZZZ see \eqref{bgammabound}), \EEE and for the second since it implies that, for  $y \in P^\nu(Q)\sm \Pi^n_i$, the amplitude of the unique jump is $\eta_i$. \EEE 
  By \eqref{local-h2}   we get  
\begin{align}\label{combon0}
  \mu^{i{+}1}_- \ZZZZ (Q) \EEE  \le      \int_{\Gamma } h^-_{s_{i}} (\cdot,\eta_{i{+}1},\nu^\Gamma)   \, \dhn +  \frac{\eps}{k}r^{d{-}1}.
\end{align}
Recall the definition   $\sigma_{i{+}1} = \eta_{i{+}1} - \eta_i$ below \eqref{for subsectionlast}, \eqref{vs}, and the definition of $D_n$ preceding \eqref{firstgammaproperty}. We use  $w_n(s_{i{+}1}) := {v}_{n}(s_{i}) + \sigma_{i{+}1} \chi_{D_n}$   as competitors on the right-hand side of \eqref{combon0}. As $w_n(s_{i{+}1})\to \bar{v}(s_{i{+}1}) = \bar{v}(s_{i}) + \sigma_{i{+}1} \chi_{D} $  in $L^1(\ZZZZ Q \EEE )$ \EEE for $n \to \infty$, the $\Gamma$-liminf inequality for the sequence  $\ZZZZ \H^-_{n}(s_{i}) \EEE(\cdot, Q)$ (recall \eqref{elapart-newdefin} and \eqref{1708240957})  yields 
\begin{align}\label{66} 
\int_{\Gamma } h^-_{s_{i}} (\cdot,\eta_{i{+}1},\nu^\Gamma)   \, \dhn     \le   \liminf_{n \to \infty}  \int_{J_{w_n(s_{i{+}1})}}  \big( g(|[w_n(s_{i{+}1})]|) -   \psi_{n}(s_{i})\big)^+ \,{\rm d} \mathcal{H}^{d{-}1}.
\end{align}
From this, using that (employ \eqref{0302260850} \BBB and $\Gamma_n = Q \cap \partial D_n$) \EEE
\begin{align}\label{13051859}
J_{w_n(s_{i{+}1})}\subset \big(\Gamma_n \sm B_i^{n}\big) &\cup \Big(( \Gamma_n \setminus  K_n(s_i)  ) \cap B^n_i\Big) \cup \Big(\big(  (\Gamma_n \cap K_n(s_i))\setminus J_{{v}_{n}(s_{i})}  \big) \cap B^n_i\Big) \notag \\& \cup \big( \Gamma_n  \cap J_{v_n(s_i)}\cap B^n_i\big) 
\cup \Big((  J_{v_n(s_i)} \setminus  \Gamma_n) \cap B^n_i\Big),
\end{align} \EEE
 we get that 
 \begin{align}\label{combon1.5}
   &  \BBB \int_{\Gamma } h^-_{s_{i}} (\cdot,\eta_{i{+}1},\nu^\Gamma)   \, \dhn   {\le}  \liminf_{n \to \infty}  \int_{\Gamma_n \setminus B^{n}_{i}} \hspace{-1.5em}  \big( g( \eta_{i{+}1}) -  \BBB \psi_n(s_i) \EEE \big)^+ {\rm d} \mathcal{H}^{d{-}1}   { + } \limsup_{n\to \infty}  \mathcal{H}^{d{-}1}\big( (\Gamma_n  \sm K_n(s_i) ) \cap B^n_i  \big) \EEE   g( \eta_{i{+}1} ) \EEE    \notag \\ 
 & \ \ \  + \limsup_{n\to \infty} \Big[  \mathcal{H}^{d{-}1}\big((   (\Gamma_n \cap K_n(s_i))\setminus J_{{v}_{n}(s_{i})}  ) \cap B^n_i\big) C_g\sigma_{i{+}1} +     \mathcal{H}^{d{-}1}( \Gamma_n  \cap J_{{v}_{n}(s_{i})}  \cap B^{n}_{i}) C_g\sigma_{i{+}1} \Big]  \notag  \\
 & \ \ \ + \limsup_{n\to \infty}   \int_{J_{{v}_{n}(s_{i})} \cap B^{n}_{i}}  \big(  g(|[{v}_{n}(s_{i})]|) -   \psi_{n}(s_{i})\big)^+ \,{\rm d} \mathcal{H}^{d{-}1} .
    \end{align}
  \BBB Let us explain the five \EEE addends in \eqref{combon1.5}  which are estimated \EEE from \eqref{66} evaluated on the \BBB five \EEE sets in the decomposition \eqref{13051859} of $J_{w_n(s_{i{+}1})}$.   \BBB For the first addend (coming from the first set in \eqref{13051859}), we use both items  in \eqref{0302260850}. \EEE For the second addend (coming from the second set in \eqref{13051859}), 
   we use  the fact that $\psi_n(s_i)$ is supported on $K_n(s_i)$ and the monotonicity of $g$ with $|[w_n(s_{i{+}1})]|\leq \eta_{i{+}1}$ (as $|[v_n(s_i)]|\leq \eta_i$); for the third addend (coming from the third set in \eqref{13051859}) we use the identity $|[w_n(s_{i{+}1})]|=\sigma_{i{+}1}$ on $\Gamma_n\sm J_{v_n(s_i)}$, $g(t)\leq C_g\,t \BBB +c_g \EEE$ (see \eqref{gproperty2}), \ZZZZ and  $\psi_n \ge c_g$,  \EEE  so that $(g(|[w_n(s_{i{+}1})]|)-\psi_n(s_i))^+\BBB \leq \EEE C_g \sigma_{i{+}1}$ \BBB on $(\Gamma_n \cap K_n(s_i))\setminus J_{{v}_{n}(s_{i})}  $; \EEE 
   \EEE then, we use the estimate $g(a+t)\leq g(a)+C_g\,t$ (see \eqref{gproperty2})  applied \EEE for $a=v_n(s_i)$ and $t=\sigma_{i{+}1}$, \ZZZZ and \EEE  $((\Gamma_n  \cap J_{v_n(s_i)})\cap B^n_i) \cup ((  J_{v_n(s_i)} \setminus  \Gamma_n) \cap B^n_i) \BBB = \EEE J_{v_n(s_i)} \cap B_i^n$  to estimate the last two addends  (evaluated on the last two sets in \eqref{13051859}). \EEE
 
 Eventually,  we combine  \eqref{combon0} \ZZZZ and \EEE \eqref{combon1.5} 
 to \BBB deduce \EEE
        \begin{equation}\label{combon1''}
 \begin{split}
 \mu^{i{+}1}_- \ZZZZ (Q) \EEE  &\le    \liminf_{n \to \infty} \ZZZZ  \int_{\Gamma_n \setminus B^n_{i}} \EEE  \big( g( \eta_{i{+}1} ) - \psi_n(s_i) \big)^+ \EEE \,{\rm d} \mathcal{H}^{d{-}1}   {+} C\frac{\eps}{k}r^{d{-}1} {+}   C_g    \limsup_{n\to \infty} \mathcal{H}^{d{-}1}( \Gamma_n  \cap \ZZZZ B^n_{i}) \EEE   \EEE \sigma_{i{+}1}.
\end{split}
    \end{equation} 
 \BBB Indeed,  \ZZZZ we estimate \EEE the sum of the third and fourth addends in \eqref{combon1.5}  with the last term \ZZZZ on \EEE  the right-hand side of \eqref{combon1''}. \ZZZZ Moreover,  \EEE  we use \EEE  \eqref{firstgammaproperty}   and $K_n(s_p)\subset K_n(s_i)$  (with \eqref{gproperty2}) \EEE to estimate the second addend in \eqref{combon1.5}  with $C\eps r^{d{-}1}/k$, and  \EEE \eqref{nochmali}   to estimate the last addend in \eqref{combon1.5}   with $\eps r^{d{-}1}/k$. \ZZZZ From \EEE  \eqref{combon1''} it \ZZZZ also \EEE follows that \EEE
     \begin{equation}\label{combon1}
 \begin{split}
 \mu^{i{+}1}_- \ZZZZ (Q) \EEE  \leq \liminf_{n \to \infty}  \int_{\Gamma_n \setminus \ZZZZ B^n_{i} \EEE }  \big( g( \eta_{i{+}1} ) -   g( \eta_i ) \big) \EEE \,{\rm d} \mathcal{H}^{d{-}1} 
+  C\frac{\eps}{k}r^{d{-}1} +   C_g  \limsup_{n\to \infty} \mathcal{H}^{d{-}1}( \Gamma_n  \cap  \ZZZZ B^n_{i}) \EEE  \sigma_{i{+}1}.
\end{split}
    \end{equation} 
In fact, \ZZZZ given a rectifiable set $\Phi_n \subset \Gamma_n$, \EEE  by  \eqref{nochmali}    and \EEE the second item in \eqref{0302260850} we get
\begin{equation}\label{25061443}
\BBB \limsup_{n \to \infty} \EEE \int_{\ZZZZ \Phi_n  \sm B_i^n \EEE} (g(\eta_i) -   \psi_{n}(s_{i}))^+ \,\dhn= \BBB \limsup_{n \to \infty} \EEE \int_{\ZZZZ \Phi_n  \sm B_i^n \EEE} (g(|[{v}_{n}(s_{i})]|) -   \psi_{n}(s_{i}))^+\EEE \,\dhn \leq \frac{\varepsilon}{k} r^{d{-}1}, \ \  
\end{equation}
 and then, since $(g(\eta_{i{+}1}) - \psi_{n}(s_{i}))^+ \leq g(\eta_{i{+}1})- g(\eta_i)+ (g(\eta_i) -   \psi_{n}(s_{i}))^+$, we get that for $n$ large enough
\begin{align}\label{isneededlater}
 \int_{\Phi_n \setminus B^{n}_{i}} \big( g(\eta_{i{+}1}) - \psi_{n}(s_{i}) \big)^+ \,{\rm d} \mathcal{H}^{d{-}1}  
\leq   \int_{\Phi_n \setminus B^{n}_{i}}  \big( g( \eta_{i{+}1}) - g( \eta_i) \big) \,{\rm d} \mathcal{H}^{d{-}1} + 2 \frac{\varepsilon}{k} r^{d{-}1}.
\end{align}
 For $\Phi_n = \Gamma_n$ we deduce   \eqref{combon1}  from  \eqref{combon1''}. \EEE (We have chosen a general set $\Phi_n$ for later purposes, see \eqref{newcollect-forlater} below.)  
 
 
  Moreover, by Lemma~\ref{le:projection} (applied to \BBB $\Psi=\Gamma_n$ \EEE and $A=B_i$), \eqref{basiccracklength}, \ZZZZ and \eqref{similarly-neu}, \EEE  we have
 \begin{align}\label{combon4}
 \limsup_{n\to \infty}   \mathcal{H}^{d{-}1}( \Gamma_n  \cap \ZZZZ B^n_{i} \EEE ) \le  C\frac{\eps}{k}r^{d{-}1}  + C\delta^{p}(Q)  + \mathcal{H}^{d{-}1}(\Pi^n_i) \ZZZZ \le  C\frac{\eps}{k}r^{d{-}1}  + C\delta^{p}(Q)  + \mathcal{H}^{d{-}1}(\Pi_i). \EEE  
 \end{align}
\ZZZZ Eventually, \EEE 
by \eqref{combon1}, \eqref{combon4}, \ZZZZ $B_i\subset B_i^n$, \EEE and \ZZZZ as  $\sigma_{i{+}1} \le \eta_{i{+}1}\le C_w$  (see Lemma \ref{prop-control-on-ela}), \EEE  we  obtain  \eqref{firtmain1}   for all $i \in \lbrace p,\ldots,k{-}1\rbrace$.

    \emph{Step 3: Proof of estimate \eqref{projection bound}, first step.} To show \eqref{projection bound}, our idea is to  first prove 
 \begin{align}\label{Pip}
\mathcal{H}^{d{-}1}(\Pi_p) \le C\frac{\eps}{k}r^{d{-}1}  + C\delta^{p}(Q),
\end{align}   
    and then \ZZZZ to \EEE  use an induction argument  to \ZZZZ prove \EEE that 
\begin{align}\label{projection bound-induction}
\mathcal{H}^{d{-}1}(\Pi_i) \le  \mathcal{H}^{d{-}1}(\Pi_p) \prod_{l=p{+}1}^{i}  (1+\widehat{C}\sigma_l) + \prod_{l=p{+}1}^{i}  (1+\widehat{C}\sigma_l) \cdot \widehat{C} \sum_{l=p{+}1}^{i}  \big(\delta^{p}(Q)\sigma_l+ \delta^{l}(Q) + \tfrac{\eps}{k}r^{d{-}1} \big)
\end{align} 
 for $p \le i \le k$,  for a constant $\widehat{C}>0$ large enough specified below in \eqref{tospecify}.  (Here, by convention we set $\prod_{l=p{+}1}^{p} = 1 $ and $\sum_{l=p{+}1}^{p} = 0$.) 
Once this is achieved,  \eqref{projection bound} indeed follows by   $\ZZZZ \sum_{l=p{+}1}^k \EEE  \sigma_l \le \eta_k \le  C_w$   (see Lemma \ref{prop-control-on-ela}) \EEE and \eqref{Pip}, along with the fact that $\log(1+x)\leq x$ for $x\geq 0$ implies $\prod_{l=p{+}1}^{i}  (1+\widehat{C}\sigma_l)\leq e^{\widehat{C} C_w\EEE}$.

In this step, we check that  \eqref{Pip} holds true.  This is achieved by the following projection \ZZZZ argument: \EEE 
\ZZZZ considering the functions \EEE $\ZZZZ {f}_n \EEE  \colon P^\nu(Q) \to \N_0$ defined by ${f}_n(y)= \# (K_n(s_p)\cap Q)^{\nu}_y$, we   compute by Fatou's Lemma, the Area Formula, \BBB   \eqref{firstgammaproperty},  \ZZZZ \eqref{bgammabound}, \EEE \eqref{0102261700},   as well as  \eqref{2prop}(ii)  that
 \[
 \begin{split}
&\hn\big( \{ y \in P^\nu(Q)  \colon \liminf\nolimits_{n\to +\infty} \#\big((K_n(s_p) \cap Q)^\nu_y\big) \ge 2\}\big) \ZZZZ \leq   \liminf_{n\to \infty}\int_{P^\nu(Q)}  \chi_{\lbrace f_n \ge 2 \rbrace } \big( {f}_n  (y) -1\big)  \,\dhn(y) \EEE \\
& \leq   \liminf_{n\to \infty}\int_{P^\nu(Q)}  \big(\ZZZZ {f}_n \EEE (y) -1\big)  \,\dhn(y) + \limsup_{n\to \infty} \mathcal{H}^{d{-}1}( \ZZZZ \lbrace \EEE  y \in P^\nu(Q) \colon \, \ZZZZ {f}_n \EEE (y) = 0 \rbrace) \\ & \le \liminf_{n\to \infty} \mathcal{H}^{d{-}1}(K_n(s_p) \cap Q)- r^{d{-}1} +  C\frac{\eps}{k}r^{d{-}1} \le  C\frac{\eps}{k}r^{d{-}1}+ C\delta^p(Q).
 \end{split}
 \] \EEE
\ZZZZ Recalling the definition in \eqref{defPi}, \EEE  \eqref{Pip} is confirmed.

    \emph{Step 4: Proof of estimate \eqref{projection bound}, induction step.} Clearly, property \eqref{projection bound-induction} holds true for $i = p$.   We now assume that \eqref{projection bound-induction} holds for some  $p \le i \le k{-}1$, and pass to $i{+}1$. The main idea consists in showing that, up to small errors, $\mu^{i{+}1}_{\rm conv}(Q) - \mu_{\rm conv}^{i}(Q)$ can  be \EEE bounded from below by the additional crack energy of the limit   $\mu^{i{+}1}_-(Q)$, 
    plus $\mathcal{H}^{d{-}1}(\Pi_{i{+}1} \setminus \Pi_{i})$. Then,  $\mathcal{H}^{d{-}1}(\Pi_{i{+}1} \setminus \Pi_{i})$ can be controlled by employing \eqref{countmist}.

We start by letting  $({v}_{n}(s_{i{+}1}))_n$   be a recovery sequence  of $\ol v(s_{i{+}1})$ (see \eqref{vs}) for $\ZZZZ \H^-_{n}(s_{i{+}1}) \EEE(\cdot, Q)$, thus \EEE  satisfying \eqref{nochmali} for $i{+}1$ in place of $i$.  Arguing as in Step~2, we may assume $v_n(s_{i{+}1})=\ol v(s_{i{+}1})$ on $\partial Q$ and $J_{{v}_{n}(s_{i{+}1})} \subset \Omega \cup \partial_D \Omega$. \EEE  In \EEE view of \eqref{defmu} \ZZZZ and \eqref{nochmali} for $i{+}1$, \EEE we can estimate, \ZZZZ for $n$ sufficiently large,  
  \begin{align*}
\mu^{i{+}1}_n(Q) - \mu_n^{i}(Q) & \ge \int_{K_n(s_{i{+}1}) \cap Q}  \big( \psi_{n}(s_{i{+}1}) \vee g(|[{v}_{n}(s_{i{+}1})]|) \big) \EEE \,{\rm d} \mathcal{H}^{d{-}1} +  \int_{J_{{v}_{n}(s_{i{+}1})} \setminus K_n(s_{i{+}1})}   g(|[{v}_{n}(s_{i{+}1})]|) \,{\rm d} \mathcal{H}^{d{-}1} \\ 
& \quad  - \int_{K_n(s_{i}) \cap Q}  \psi_{n}(s_{i}) \,{\rm d} \mathcal{H}^{d{-}1} - \ZZZZ 2 \EEE \frac{\eps}{k}r^{d{-}1}.
\end{align*}
Then,    using   $K_n(s_i) \subset K_n(s_{i{+}1})$ and the convention introduced in \eqref{convention}, we shortly  write \EEE
  \begin{equation}\label{0302261109}
\mu^{i{+}1}_n(Q) - \mu_n^{i}(Q)  \geq   \int_{(K_n(s_{i{+}1})  \cup J_{{v}_{n}(s_{i{+}1})}) \cap Q} \Big(   \big( \psi_{n}(s_{i{+}1}) \vee  g(|[{v}_{n}(s_{i{+}1})]|) \big) \EEE - \psi_n(s_i)\Big) \,{\rm d} \mathcal{H}^{d{-}1}  - \ZZZZ 2 \EEE  \frac{\eps}{k}r^{d{-}1}.
\end{equation}
 Recalling \ZZZZ \eqref{0302260850}, \EEE we claim that for $\mathcal{H}^{d{-}1}$-a.e.\ $y \in  P^\nu(Q)  \EEE $  and for $n\geq \ol n$ (depending on $\nu$, $y$) it holds \EEE
\begin{align}\label{0502261734}
 &  a_n^1(y):= \EEE \chi_{\Pi_{i{+}1} \setminus \Pi_i^n}(y) \EEE \sum_{t \in (K_{n}(s_{i{+}1}) \cup J_{{v}_{n}(s_{i{+}1})})^\nu_y} \big( (\psi_n(s_{i{+}1}))^\nu_y \vee g(|[({v}_{n}(s_{i{+}1}))^\nu_y]|)\big) \EEE (t)   - (\psi_n(s_{i}))^\nu_y(t)  \notag \\ &
\ge  a_n^2(y):= \EEE\chi_{\Pi_{i{+}1} \setminus \Pi_i^n}(y) \EEE  \Big(\big(g(\eta_{i{+}1})-\psi_y^{n,i}\big)^+ + c_g \Big),\quad \quad   \text{for }\psi_{n,i}^{y} := (\psi_{n}(s_{i}) \ZZZZ )^\nu_y \EEE (\ol t_n^y). 
\end{align}
In fact, by the  definition in \eqref{defPi} \ZZZZ and \eqref{0302260850}, \EEE given $y \in \Pi_{i{+}1} \setminus \ZZZZ \Pi^n_i \EEE $, 
 for $n\geq \ol n$ (depending on $\nu$, $y$) we have \EEE
that $\#(K_n(s_i) \cap Q)^\nu_y =1$ and $\#(K_n(s_{i{+}1}) \cap Q)^\nu_y \ZZZZ \ge 2$.   In particular, $\sum_{t \in (K_{n}(s_{i{+}1}) \cup J_{{v}_{n}(s_{i{+}1})})^\nu_y} (\psi_n(s_{i}))^\nu_y(t) = \psi_{n,i}^{y}$.     Moreover, $\# (J_{v_n(s_{i{+}1})})^\nu_y \geq 1$ as  $v_n(s_{i{+}1}) \in \PC(Q)$ and  $v_n(s_{i{+}1})=\ol v(s_{i{+}1})$ on $\partial Q$. 
  As \EEE $(\psi_n(s_i))^\nu_y(t)=0$ for $t \neq \ol t^y_n$,  $\psi_n(s_{i{+}1})(\ol t^y_n)\geq \psi_n(s_i)(\ol t^y_n)$, \EEE  and $\psi_n(s_j)\geq c_g$ on $K_n(s_j)$ for all $j$
by \ZZZZ Theorem \ref{sigma-comp}, \EEE 
  we get that the left-hand side in \eqref{0502261734} is at least $c_g$. Thus, \BBB to show \eqref{0502261734}, \EEE it is not restrictive to assume  \EEE
$g(\eta_{i{+}1})> \psi_y^{n,i}$.   
If $\#(J_{v_n(s_{i{+}1})})^\nu_y =1$,  let $(J_{v_n(s_{i{+}1})})^\nu_y = \{\tilde{t}_n^y\}$ and observe that $\#(K_n(s_{i{+}1}) \sm J_{{v}_{n}(s_{i{+}1})})^\nu_y \geq 1$, $[({v}_{n}(s_{i{+}1}))^\nu_y](\tilde{t}^y_n) = \eta_{i{+}1}$,   so that 
\begin{equation*}
\sum_{t \in (K_{n}(s_{i{+}1}) \cup J_{{v}_{n}(s_{i{+}1})})^\nu_y}\hspace{-3em}\big( (\psi_n(s_{i{+}1}))^\nu_y \vee g(|[({v}_{n}(s_{i{+}1}))^\nu_y]|)\big) \EEE(t) \geq g(\eta_{i{+}1}) + c_g \# (K_n(s_{i{+}1}) \sm J_{{v}_{n}(s_{i{+}1})})^\nu_y \geq g(\eta_{i{+}1}) + c_g.
\end{equation*} 
Otherwise, that is if $\#(J_{v_n(s_{i{+}1})})^\nu_y \geq 2$, the   second property  in  \EEE \eqref{gproperty}  applied to the addends of
 $\eta_{i{+}1} = \sum_{t \in (J_{v_n(s_{i{+}1})})^\nu_y}[({v}_{n}(s_{i{+}1}))^\nu_y](t)$ 
gives   
$
\sum_{t \in (J_{v_n(s_{i{+}1})})^\nu_y}g(|[({v}_{n}(s_{i{+}1}))^\nu_y](t) \ZZZZ |) \EEE  \geq g(\eta_{i{+}1}) +c_g$.
Thus, \eqref{0502261734} is confirmed. \EEE

We   now \EEE pass to the limit in \eqref{0302261109},  to get \EEE
\begin{align}\label{combon2.5}
\mu^{i{+}1}_{\rm conv}(Q) - \mu^{i}_{\rm conv}(Q) &  \ge \liminf_{n \to \infty}  \bigg( \EEE \int_{\Gamma_n   \setminus {B}^n_{i{+}1}}   \hspace{-1em} \big(  g(\eta_{i{+}1}) - \psi_{n}(s_{i})\big)^+ \,{\rm d} \mathcal{H}^{d{-}1} +  \int_{\Pi_{i{+}1} \setminus \ZZZZ \Pi^n_{i} \EEE }  \hspace{-1em} \big( g(\eta_{i{+}1}) - \psi_{n,i}^{y} \big)^+  \, {\rm d}\mathcal{H}^{d{-}1}(y) \bigg) \EEE  \notag\\ & \ \ \    + c_g \Hd(\Pi_{i{+}1} \setminus \Pi_{i}) -  C \EEE \frac{\eps}{k}r^{d{-}1}.
\end{align}
 To see this,  we observe that the left-hand \ZZZZ side converges \EEE by \eqref{1910252147} and $\mu^k_{\rm conv}(\partial Q) = 0$, and \EEE
  we split the integral on the right-hand side  of \eqref{0302261109} \EEE into the contributions  in \EEE $K_n(s_{i{+}1}) \sm  {B}_{i{+}1}$,  $K_n(s_{i{+}1}) \cap  ({B}_{i{+}1} \setminus \ZZZZ  {B}^n_{i}) \EEE $,  $K_n(s_{i{+}1}) \cap  \ZZZZ  {B}^n_{i} \EEE $, \ZZZZ and \EEE $J_{{v}_{n}(s_{i{+}1})} \sm K_n(s_{i{+}1})$. \EEE We   note that by \eqref{defmu}--\eqref{defmu-mono} the  integrand is nonnegative, and thus  the \ZZZZ contributions \EEE in $K_n(s_{i{+}1}) \cap \ZZZZ B^n_i \EEE $ \ZZZZ and  $J_{{v}_{n}(s_{i{+}1})} \sm K_n(s_{i{+}1})$ \EEE  can be neglected on the right-hand side of \eqref{combon2.5}. 
 We also notice that the contribution on $K_n(s_{i{+}1}) \cap  ({B}_{i{+}1} \setminus  {B}^n_{i})$ may be controlled by an integral over  $P^\nu(Q)$ \EEE
 in view of \eqref{0502261734}. In order to control in \eqref{combon2.5} the liminf of the sum, we bound also the first contribution in terms of an integral   on  $P^\nu(Q)$: \EEE by using that $B_{i{+}1}\subset  {B}_{i{+}1}^n$,  and by \eqref{0302260850} (for $i{+}1$ in place of $i$, giving in particular $(K_n(s_{i{+}1}) \sm {B}_{i{+}1}^n) \cap Q = J_{{v}_{n}(s_{i{+}1})} \sm {B}_{i{+}1}^n = \Gamma_n \sm {B}_{i{+}1}^n$), using the Area Formula we get  that
the contribution  of the integral in \eqref{0302261109} \EEE on $K_n(s_{i{+}1}) \sm  {B}_{i{+}1}$
is  bounded from below by \EEE
\[
\int_{\Gamma_n   \setminus {B}^n_{i{+}1}} \hspace{-1em} \big( g(\eta_{i{+}1}) - \psi_{n}(s_{i})\big)^+ \,{\rm d} \mathcal{H}^{d{-}1}= \int_{ P^\nu(Q)} \hspace{-1.5em} a_n(y) \, \dhn(y), \ \   a_n(y):= \chi_{ P^\nu(Q) \EEE \sm \Pi_{i{+}1}^n}\big( g(\eta_{i{+}1})- \psi^y_{n,i} \big)^+ |\langle \nu  , \nu(\ol t^y_n) \rangle|^{-1},
\]
 where we \EEE  recall the definition of $\psi^y_{n,i}$ from \eqref{0502261734} and  denote \EEE by $\nu(\ol t^y_n)$ the normal to the set $\Gamma_n$ at $y + \ol t^y_n \nu$. 
By \eqref{0502261734}, $d_n(y):= a_n(y) + a_n^1(y)\geq e_n(y):= a_n(y) + a_n^2(y)$ for $n\geq  \ol n \EEE$, and Fatou's lemma for $d_n-e_n\geq 0$ gives that
$\liminf_n \int_{ P^\nu(Q)} e_n(y) \,\dhn(y)\leq \limsup_n \int_{ P^\nu(Q)} d_n(y) \,\dhn(y)$. \EEE We thus conclude \eqref{combon2.5} using that  $(\Pi_{i{+}1} \setminus \Pi_i^n) \triangle (\Pi_{i{+}1} \setminus \Pi_i) \subset \Pi_{i}^n \setminus \Pi_i$ \EEE along with \eqref{similarly-neu}.
\EEE


Further, we claim that 
\begin{equation}\label{0502261916}
 \Psi{:=} \int_{\ZZZZ \Gamma_n  \EEE  \cap (B_{i{+}1}\sm \ZZZZ B^n_i) \EEE } \hspace{-3em}\big(g(\eta_{i{+}1}) - \psi_n(s_i)\big)^+  \dhn  {-} \int_{\Pi_{i{+}1} \sm \ZZZZ \Pi^n_i \EEE} \hspace{-2em}\big( g(\eta_{i{+}1}) - \psi_{n,i}^{y}\big)^+  \dhn(y) 
\leq C  \frac{\varepsilon}{k}r^{d{-}1} +  \ZZZZ C \EEE \sigma_{i{+}1} \delta^p(Q).
\end{equation}
\ZZZZ Indeed, \EEE for each $y \in \Pi_{i{+}1} \sm \ZZZZ \Pi^n_i \EEE $ we let  $f_{ n \EEE}(y):= \big(g(\eta_{i{+}1}) - \psi_{n,i}^{y}\big)^+$, $\widehat{f}_{ n \EEE}(y):= \big(g(\eta_{i}) - \psi_{n,i}^{y}\big)^+$. 
 For \EEE $\nu(\ol t^y_n)$ defined above,
 $0\leq f_{ n \EEE}(y) \big(|\langle \nu, \nu(\ol t^y_n) \rangle|^{-1}-1\big) \leq (g(\eta_{i{+}1})-g(\eta_i) + \widehat{f}_{n \EEE}(y))  \big(|\langle \nu, \nu(\ol t^y_n) \rangle|^{-1}-1\big)$, so
recalling  the definition $\psi_{n,i}^{y} = (\psi_{n}(s_{i})\ZZZZ )^\nu_y\EEE(\ol t_n^y)$, integrating on $\Pi_{i{+}1}\sm \ZZZZ \Pi^n_i \EEE $, and using the Area Formula we get
 \begin{equation*}\label{0502261916-000}
\begin{split}
 \Psi & =   \int_{\Pi_{i{+}1}\sm \ZZZZ \Pi^n_i \EEE } \hspace{-1em} {f}_{ n \EEE}(y) \big(|\langle \nu, \nu(\ol t^y_n) \rangle|^{-1}{-}1\big)  \, \dhn(y) 
\leq  \int_{\Pi_{i{+}1}\sm \ZZZZ \Pi^n_i \EEE} \hspace{-1em} \big(g(\eta_{i{+}1})-g(\eta_i) + \widehat{f}_{ n \EEE}(y)\big)  \big(|\langle \nu, \nu(\ol t^y_n) \rangle|^{-1}{-}1\big)  \, \dhn(y) \\&
= \int_{´\ZZZZ \Gamma_n \EEE  \cap (B_{i{+}1}\sm \ZZZZ B^n_i \EEE )} \big(g(\eta_{i}) - \psi_n(s_i)\big)^+ \, \dhn - \int_{\Pi_{i{+}1} \sm \ZZZZ \Pi^n_i \EEE } \big(g(\eta_{i}) - \psi_{n,i}^{y}\big)^+  \, \dhn(y) \\& \hspace{1em}+ (g(\eta_{i{+}1})-g(\eta_i)) \Big(\ZZZZ \mathcal{H}^{d{-}1}(\Gamma_n \EEE \cap (B_{i{+}1}\sm \ZZZZ B^n_i)) \EEE - \hn(\Pi_{i{+}1} \sm \ZZZZ \Pi^n_i) \EEE \Big).
\end{split}
\end{equation*} 
 The contribution \EEE in the last line can be controlled by  $C \frac{\varepsilon}{k}r^{d{-}1} + C_g \sigma_{i{+}1} \delta^p(Q)$, where we use \eqref{gproperty2}, \ZZZZ $\sigma_{i{+}1} \le C_w$, \EEE \eqref{basiccracklength}, and  Lemma~\ref{le:projection} applied to \BBB $\Psi=\Gamma_n$ \EEE and $A=B_{i{+}1}\sm  \ZZZZ B^n_i \EEE $.  For \EEE those in the penultimate line we use \eqref{25061443} with $\Phi_n=\Gamma_n$ for the first  term \EEE and \EEE
 we neglect the   second  term, \EEE as the integrand is nonnegative.  This gives \eqref{0502261916}. \EEE
 
Next, by using \eqref{isneededlater} for $\Phi_n = \Gamma_n \cap (B^n_{i{+}1} \setminus B_{i{+}1})$
\begin{align}\label{newcollect-forlater}
\limsup_{n \to \infty}  \int_{\Gamma_n \cap (B^n_{i{+}1}   \setminus (B_i^n \cup {B}_{i{+}1} ))   }  \hspace{-1cm} \big(  g(\eta_{i{+}1}) - \psi_{n}(s_{i})\big)^+ \,{\rm d} \mathcal{H}^{d{-}1} &\le \limsup_{n \to \infty}  \int_{\Gamma_n \cap (B^n_{i{+}1} \setminus B_{i{+}1})}  \hspace{-1cm} \big( g( \eta_{i{+}1}) - g( \eta_i) \big) \,{\rm d} \mathcal{H}^{d{-}1} + C\frac{\varepsilon}{k} r^{d{-}1}
\notag \\& \le  C \frac{\varepsilon}{k} r^{d{-}1}  + C\delta^{p}(Q)\sigma_{i{+}1},
\end{align}
where the latter estimate follows by \eqref{similarly} and since $g( \eta_{i{+}1} ) -   g(\eta_i) \le C_g(\eta_{i{+}1}-\eta_i ) = C_g \sigma_{i{+}1}$ by \eqref{gproperty2}, as well as $\sigma_{i{+}1} \le C_w$.
  Collecting \eqref{combon2.5}, \eqref{0502261916}, and \eqref{newcollect-forlater}, \EEE  and employing  \ZZZZ  \eqref{similarly-neu}, \EEE       we obtain   
\begin{equation}\label{combon2}
\begin{split}
\mu^{i{+}1}_{\rm conv}(Q) {-} \mu^{i}_{\rm conv}(Q) &  \ge \liminf_{n \to \infty}  \int_{\Gamma_n   \setminus \ZZZZ B^n_{i} \EEE }   \hspace{-1em}\big(  g(\eta_{i{+}1}) {-} \psi_{n}(s_{i})\big)^+ {\rm d} \mathcal{H}^{d{-}1} {+}   c_g   \mathcal{H}^{d{-}1}(\Pi_{i{+}1} \setminus \ZZZZ \Pi_{i}) \EEE {-} C\frac{\eps}{k}r^{d{-}1}  {-}  C\sigma_{i{+}1}\delta^{p}(Q).
\end{split} 
\end{equation}
  Then,  combining  \ZZZZ \eqref{combon1''} \EEE  and  \eqref{combon4},   from \eqref{combon2} \ZZZZ and \eqref{countmist} \EEE we derive 
$${  \mathcal{H}^{d{-}1}(\Pi_{i{+}1} \setminus \Pi_{i})  \le C\Big( \delta^{i{+}1}(Q) + \frac{\eps}{k} r^{d{-}1} +  \big(\delta^{p}(Q)  + \mathcal{H}^{d{-}1}(\Pi_i)\big) \sigma_{i{+}1} \Big). } $$
  Eventually, using  the induction assumption \EEE \eqref{projection bound-induction} 
 we get 
\begin{align}\label{tospecify}
\mathcal{H}^{d{-}1}(\Pi_{i{+}1}) & = \mathcal{H}^{d{-}1}(\Pi_i)  +  \mathcal{H}^{d{-}1}(\Pi_{i{+}1} \setminus \Pi_{i})  \le (1+C\sigma_{i{+}1})  \mathcal{H}^{d{-}1}(\Pi_i) +  C  \delta^{i{+}1}(Q) + C\frac{\eps}{k} r^{d{-}1} +  C \delta^{p}(Q)  \sigma_{i{+}1} \notag \\
& \le   \mathcal{H}^{d{-}1}(\Pi_p) \prod_{l=p{+}1}^{i{+}1}  (1+\widehat{C}\sigma_l) + \prod_{l=p{+}1}^{i{+}1} (1+\widehat{C}\sigma_l) \cdot \widehat{C} \sum_{l=p{+}1}^{i}  \big(\delta^{p}(Q)\sigma_l+ \delta^{l}(Q) + \tfrac{\eps}{k}r^{d{-}1} \big) \notag\\ & \ \ \ \  +  C  \delta^{i{+}1}(Q) + C\frac{\eps}{k} r^{d{-}1} +  C \delta^{p}(Q)  \sigma_{i{+}1} \notag\\
& \le   \mathcal{H}^{d{-}1}(\Pi_p) \prod_{l=p{+}1}^{i{+}1}  (1+\widehat{C}\sigma_l) + \prod_{l=p{+}1}^{i{+}1}  (1+\widehat{C}\sigma_l) \cdot \widehat{C} \sum_{l=p{+}1}^{i{+}1}  \big(\delta^{p}(Q)\sigma_l+ \delta^{l}(Q) + \tfrac{\eps}{k}r^{d{-}1}\big),
\end{align}
where we choose $\widehat{C}$ larger than the generic constant $C$. This shows \eqref{projection bound-induction} at time step $i{+}1$ and concludes the proof.  
\end{proof}

 \section*{Acknowledgements} 
 This research was funded by the Deutsche Forschungsgemeinschaft (DFG, German Research Foundation) - 377472739/GRK 2423/2-2023.  
 
  VC is member of the Gruppo Nazionale per l'Analisi Matematica, la Probabilit\`a e le loro Applicazioni (GNAMPA) of the Istituto Nazionale di Alta Matematica (INdAM).

\EEE
\begin{appendices}

\section{On the integral representation for functionals  with  precrack\EEE}\label{sec:App}

In this appendix,  we include a direct proof of the $\Gamma$-convergence and  integral representation results used in the proof of Theorem~\ref{thm:convpreliminaries}. In particular, we will only resort to basic integral representation results for Sobolev \cite{ButDM85}, piecewise constant \cite{BraChP96}, and $\SBV$ functions \cite{BouFonLeoMas02}, avoiding more complicated vectorial and generalized settings treated in \cite{Sto lavoro GSBV, FM}. Although having Theorem~\ref{thm:convpreliminaries} in mind, we   work in a more general setting of varying bulk and surface energy densities as this may be convenient for future work. \EEE This could be further generalized, see Remark \ref{rem:continuityf}. 

\BBB  Given \EEE $0 < \alpha \le \beta <+ \infty$, let   $f_n \colon \Omega' \times \R^{d} \to [0,+\infty)$   be Carath\'eodory  functions such that for \BBB every \EEE  $x\in \Omega'$ and all  $\zeta \in \R^{d}$  we have
\begin{align}\label{eq: general bound}
\alpha | \zeta  |^p \le f_n(x,\zeta) \le  \beta (1+ | \zeta  |^p).
\end{align}
\BBB Let \EEE  $g_n \colon \Omega'  \times \R \times  \mathbb{S}^{d{-}1} \to [0,+\infty)$ be Borel \BBB functions \EEE  satisfying \EEE $g_n(x,0,\nu)= 0$ and
\begin{itemize}
\item[(g1)] 
$\alpha  \le g_n(x,\xi,\nu) \le \beta\, (1+|\xi|)$   for every  \BBB $x \in \Omega'$, \EEE  $\xi \in \R\sm \{0\}$,   $\nu \in \mathbb{S}^{d{-}1}$;
\item[(g2)] $g_n(x,\cdot, \nu)$ is nondecreasing in $[0,+\infty)$ for every $x \in \Omega'$ and $\nu \in \Sn$;
\item[(g3)] $g_n(x, \xi, \nu)=g_n(x,-\xi,-\nu)$ for every $x \in \Omega'$, $\xi \in \R$, and $\nu \in \Sn$;
\item[(g4)]
there exists a modulus of continuity $\omega\colon [0,\infty)\to [0,\infty)$ such that
\begin{equation}\label{gncont}
{ | \EEE g_n(x, \xi_2, \nu) - g_n(x, \xi_1, \nu)  | \EEE \leq \omega(\xi_2-\xi_1) \BBB \big(  g_n(x, \xi_1, \nu) + g_n(x, \xi_2, \nu)  \big) \EEE  \quad \text{ for }0\ZZZZ < \EEE \xi_1 < \xi_2.}
\end{equation}
\end{itemize}
Let  us \EEE  also introduce the   alternative  \BBB conditions \EEE 
\begin{itemize}
\item[(g1')] $\alpha \, (1+|\xi|) \le g_n(x,\xi,\nu) \le \beta\, (1+|\xi|)$ for every \BBB $x \in \Omega'$, \EEE  $\xi \in \R\sm \{0\}$,   $\nu \in \mathbb{S}^{d{-}1}$;
\item[(g1'')]  \BBB  $\alpha   \le g_n(x,\xi,\nu) \le \beta$ for every $x \in \Omega'$,   $\xi \in \R\sm \{0\}$,   $\nu \in \mathbb{S}^{d{-}1}$. \EEE
\end{itemize}
In the following, we  always assume (g3)--(g4), and specify if we  assume either (g1)--(g2), or (g1'), \BBB or  (g1''). \EEE
 We emphasize that we do not assume the conditions \eqref{gproperty}--\eqref{gproperty2} in this section. \EEE

We consider the functionals $\F_n$, $\G_n$, $\E_n \colon L^1(\Omega') \times \A(\Omega') \to [0,+\infty]$ given for any $A \in \A(\Omega')$ by 
\begin{align}\label{fg}
\mathcal{F}_n(u, A) &:=  \int_A f_n(x,\nabla u(x)) \, {\rm d}x \quad  \text{if } \ZZZZ u|_A \in W^{1,p}(A), \EEE \notag \\ \mathcal{G}_n(u, A) & :=  \int_{ A \cap J_u \EEE} g_n(\cdot,[u],\nu_u) \,\dhn  \quad  \text{if } \ZZZZ u|_A \in \PC(A) \EEE \notag\\
 \E_n(u,A) &:= \int_A f_n(x,\nabla u(x)) \, {\rm d}x + \int_{ A \cap J_u \EEE} g_n(\cdot,[u],\nu_u) \,\dhn \quad  \text{if } \ZZZZ u|_A \in \SBV^p(A) \EEE
\end{align}
and $+\infty$ otherwise. Moreover, given   a sequence of rectifiable sets $K_n$ in  $\Omega'$  and  Borel functions   $\varphi_n \colon K_n \to [0,+\infty)$ such that  $\Hd(K_n) \le C_0$, 
we let  
\begin{align}\label{gnm}
\Gnm(u, A) =    \int_{J_u \cap K_n \cap  A} \big(  g_n(\cdot,[u],\nu_u) - g_n(\cdot,\varphi_n,\nu_u) \big)^+ \, {\rm d}\Hd + \int_{(J_u \sm K_n) \cap  A} g_n(\cdot,[u],\nu_u) \, {\rm d}\Hd    
\end{align}
\ZZZZ if $u|_A \in \PC(A)$ \EEE and $\Gnm(u, A) =  +\infty$ otherwise.   Similarly to \eqref{convention}, \EEE for notational convenience, extending $\varphi_n = 0$ outside of $K_n$, and using $g_n(x,0,\nu) =0$, the functional in \eqref{gnm} can be written shortly as   
\begin{align*}
\Gnm(u, A) =    \int_{J_u  \cap  A} \big(  g_n(\cdot,[u],\nu_u) - g_n(\cdot,\varphi_n,\nu_u) \big)^+ \, {\rm d}\Hd .
\end{align*}

 It is known (see \cite{ButDM85, BouFonLeoMas02, GiacPonsi}) that $\mathcal{F}_n$ $\ol \Gamma$-converge  (up to a subsequence) \EEE  to $\mathcal{F} $,  where \EEE
 \begin{equation}\label{flimit}
 \F(u,A)= \int_A f(x, \nabla u(x)) \dx \quad \text{for }u\in W^{1,p}(\Omega'),
\end{equation}
 with $f$ characterized by $f(x, \zeta)= \limsup_{\varrho\to 0^+} \BBB \varrho^{-d} \EEE \mathbf{m}_{\mathcal{F}}^{W^{1,p}}(l_{\zeta,x}, Q^\nu_\varrho(x)) $ for all $x \in \Omega'$ \BBB and \EEE $\zeta \in \R^d$,  where $l_{\zeta,x}(y) \BBB := \EEE  \zeta(y-x)$ for $y \in \R^d$. \EEE
 Our goal is to \EEE prove that an analogous property holds for $\mathcal G_n$ and $\mathcal G$.

\subsection{Fundamental estimate} 
 As a preparation, \EEE we first show that a \emph{fundamental estimate} holds uniformly for the  sequence $(\mathcal{G}_n)_n$.

\begin{lemma}\label{le:fundestGn} 
Assume    {\rm (g3)}--{\rm (g4)} \BBB  and either {\rm (g1')} or {\rm (g1'')}. \EEE 
Let $\eta>0$, let  $A'$, $A$, $B \in \mathcal{A}(\Omega')$ with $A' \subset \subset A$, and let $S:= (A\sm A') \cap B$. Then, there \BBB exists \EEE  $\Lambda>0$ such that for every $n \in \N$,  for every $\sigma \in (0,1)$ \BBB  with $\omega(2\sigma) \le \frac{1}{2}$, \EEE   and $u \in \PC(A)$, $v\in \PC(B)$ there exist $\phi \in C_c^\infty(A;  [0,1] \EEE )$ with $\phi=1$    in a neighborhood of $\ol{A'}$ and $w \in \PC( \BBB A' \EEE \cup B)$ such that $\hn(J_{w} \sm (J_u \cup J_v \cup S))=0$ and   
\begin{itemize}
\item[(i)]  $ \|\phi u+(1-\phi)v-w\|_{L^\infty( \BBB A' \EEE \cup B)}  \le \sigma$, and
\item[(ii)] $\mathcal{G}_n(w, A'\cup B)  \leq (1 + \BBB 2 \EEE  \omega(2\sigma))^2(1+\eta) \big( \mathcal{G}_n(u,A) + \mathcal{G}_n(v,B)\big) + \frac{\Lambda}{\sigma}\|u-v\|_{L^1(S)}$.
\end{itemize}\BBB 
Under the additional assumption that $u$ takes values in $[0,\xi]$ and $v$ takes values in $\lbrace 0, \xi \rbrace$ \ZZZZ for some $\xi >0$, \EEE the function $w$ can be chosen such that $w = v$ on $B \setminus A$.  \EEE

\end{lemma}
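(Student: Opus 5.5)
The plan is to follow the classical fundamental-estimate construction for piecewise constant functions, in the spirit of Braides--Chiadò Piat, combining a layer-averaging cut-off with a discretization of the interpolation so that $w$ stays piecewise constant.

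\emph{Layers.} Fix $N \in \N$ with $N \ge 1/\eta$. Choose nested open sets $A' \subset\subset A_1 \subset\subset \dots \subset\subset A_{N+1} \subset\subset A$ and cut-offs $\phi_j \in C_c^\infty(A_{j+1};[0,1])$ with $\phi_j = 1$ on $A_j$ and $\|\nabla \phi_j\|_\infty \le C N/\mathrm{dist}(A',\partial A)$. Set $S_j := (A_{j+1}\sm \overline{A_j})\cap B$.

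\emph{Rounding the interpolation.} The function $\phi_j u + (1-\phi_j)v$ is not piecewise constant, so we replace it by a rounded version. Choose a shift $\tau \in [0,\sigma)$ and set $w := \sigma \lfloor (\phi_j u + (1-\phi_j)v - \tau)/\sigma \rfloor + \tau$ on the transition layer, $w = u$ on $A_j$, and $w = v$ outside $A_{j+1}$. Then (i) holds, and $J_w \subset J_u \cup J_v \cup S$. On $S_j$ the jump set of $w$ splits into two parts:
\begin{itemize}
\item[(a)] level sets of $\phi_j u+(1-\phi_j)v$ at the discrete levels, arising away from $J_u\cup J_v$;
\item[(b)] points of $J_u\cup J_v$, where the jump becomes $\phi_j[u]+(1-\phi_j)[v]$, rounded by at most $2\sigma$.
\end{itemize}

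\emph{Estimating (a).} On $S_j\sm(J_u\cup J_v)$ we have $\nabla(\phi_j u + (1-\phi_j)v) = \nabla\phi_j (u-v)$. By the coarea formula, averaging over $\tau$ bounds the $\mathcal H^{d-1}$-measure of the level-set jumps by $\sigma^{-1}\int_{S_j}|\nabla\phi_j||u-v|$. Each such jump has height $\sigma<1$, so its density is at most $\beta(1+\sigma)$ under (g1') or $\beta$ under (g1''); the bound $\Lambda$ absorbs this factor.

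\emph{Estimating (b).} This is the main obstacle: a convex combination of the jumps may exceed the maximum of the individual jumps only through the rounding. Monotonicity is available only under (g2), which is not assumed here, so I would bound $g_n$ of the rounded jump by combining (g4), which gives a factor $(1+2\omega(2\sigma))$, with the upper bound in (g1')/(g1'') and the lower bound $\alpha$ on $g_n(\cdot,[u])$ and $g_n(\cdot,[v])$ to absorb constants. At points of $J_u\cap J_v$ with different normals, (g3) is used to align orientations. The resulting bound on $S_j$ is
$$(1+2\omega(2\sigma))^2\, C\,\big(\mathcal G_n(u,S_j)+\mathcal G_n(v,S_j)\big).$$

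\emph{Summing over layers.} Summing over $j$ and choosing the $j$ with minimal cost gives a factor $C/N \le \eta$ in front of $\mathcal G_n(u,A)+\mathcal G_n(v,B)$. Combined with the trivial contributions $\mathcal G_n(u,A_j)$ and $\mathcal G_n(v, B\sm A_{j+1})$, this yields (ii) with $\Lambda \sim N^2\beta/\mathrm{dist}(A',\partial A)$. The rounding constant $C$ in the layer estimate must be absorbed into $\eta$ through the choice of $N$, so the constant in (ii) comes out as written.

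\emph{Final claim.} When $u\in[0,\xi]$ and $v\in\{0,\xi\}$, I would instead round toward the values of $v$. Concretely, choose the discrete levels so that $\xi$ and $0$ are levels, i.e. $\sigma$-grid points containing $0$ and $\xi$ after adjusting $\sigma\le$ the given one. Then $w=v$ wherever $\phi_j=0$, and in particular on $B\sm A$.
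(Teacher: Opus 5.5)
\textbf{Gap: the gluing step.} Your overall architecture matches the paper's: layers with a pigeonhole choice of $S_j$, coarea-based discretization of $\phi_j u+(1-\phi_j)v$, a crude $\beta/\alpha$ bound on the layer, and (g4) to pay for rounding. The problem is that you keep $w=u$ on $A_j$ and $w=v$ outside $A_{j+1}$, and round only in between.

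Just outside $\partial A_j$ the interpolation equals $u$. So on the layer side, $w=\sigma\lfloor (u-\tau)/\sigma\rfloor+\tau$, which differs from $u$ wherever the trace of $u$ is not on the grid $\sigma\mathbb{Z}+\tau$. The same happens with $v$ along $\partial A_{j+1}$. Hence $w$ acquires jumps of height in $(0,\sigma)$ on large parts of $(\partial A_j\cup\partial A_{j+1})\cap B$, at cost at least $\alpha$ per unit area under (g1') or (g1''). These sets lie outside $S_j=(A_{j+1}\setminus\overline{A_j})\cap B$, so neither (a) nor (b) sees them, and nothing in (ii) controls them.

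A concrete failure: take $u=v$ on $S$, with the traces of $u$ on $\partial A_j\cap B$ taking two values, each on a fixed fraction, whose difference is not in $\sigma\mathbb{Z}$. Then $\|u-v\|_{L^1(S)}=0$ and (a) is empty. Yet for every shift $\tau$, a fixed fraction of $\partial A_j\cap B$ enters $J_w$. This error does not shrink with $\sigma$, $\eta$, or the number of layers, which is exactly what a fundamental estimate must exclude.

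\textbf{The fix, and what it changes.} The paper discretizes $\widehat w:=\phi u+(1-\phi)v$ on all of $A'\cup B$. Statement (i) only requires $L^\infty$-closeness, not $w=u$ on $A'$. A monotone discretization preserves the sign of jumps and moves every jump of $u$ in $A_j$ and of $v$ outside $A_{j+1}$ by at most $2\sigma$. Using (g4) and absorbing $\mathcal{G}_n(w)$ on the left via $\omega(2\sigma)\le\frac12$ yields the factor $(1+2\omega(2\sigma))^2$ on the whole of $\mathcal{G}_n(u,A)+\mathcal{G}_n(v,B)$. This is why that factor multiplies the entire bracket in (ii), not only the layer term.

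Global discretization in turn breaks your treatment of the final claim. First, $w=v$ on $B\setminus A$ is no longer automatic. Second, a uniform grid $\sigma'\mathbb{Z}+\tau$ containing both $0$ and $\xi$ pins $\tau$, so the averaging over $\tau$ that (a) relies on is lost. The paper decouples thresholds from assigned values:
\begin{itemize}
\item it picks one threshold $t_l\in(\frac{\sigma}{2}l,\frac{\sigma}{2}(l+1))$ per interval, via the coarea formula and the mean value theorem;
\item it assigns the value $t_l$ on $\{t_{l-1}<\widehat w\le t_l\}$, but $0$ on $\{\widehat w\le t_0\}$ and $\xi$ on $\{\widehat w>t_{C_\sigma-1}\}$.
\end{itemize}
Then $|w-\widehat w|\le\sigma$ and $w=\widehat w$ wherever $\widehat w\in\{0,\xi\}$, in particular $w=v$ on $B\setminus A$.

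\textbf{Two smaller points.} Under (g1') you need the lower bound $\alpha(1+|\xi|)$, not just $\alpha$, to absorb $\beta\big(1+|\phi[u]+(1-\phi)[v]|\big)$ into $\frac{\beta}{\alpha}\big(g_n(\cdot,[u],\cdot)+g_n(\cdot,[v],\cdot)\big)$. Also, the number of layers must be at least $\beta/(\alpha\eta)$ rather than $1/\eta$.
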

 Here, \EEE $u$, $v$ are extended arbitrarily outside $A$ and $B$, respectively. 
\begin{proof}
For $k\in \N$ with $k\geq \frac{\beta}{\alpha \,\eta}$, let $A_1,\dots, A_{k{+}1}\subset \Omega'$ with $A'\subset \subset A_1 \subset \subset \dots \subset \subset A_{k{+}1} \subset \subset A$,  and \EEE $S_i:= B \cap (A_{i{+}1}\sm \ol {A_i})$ \ZZZZ for $i \in \{1,\dots, k\}$. \EEE
Then,  there exists $i_0 \in \{1,\dots, k\}$ such that  
\begin{equation}\label{etaound}
\mathcal{G}_n(u, S_{i_0})+ \mathcal{G}_n(v, S_{i_0})\leq  \frac{1}{k} \EEE \big( \mathcal{G}_n(u, S) + \mathcal{G}_n(v, S)\big) \leq  \frac{\alpha \eta}{\beta}  \EEE \big( \mathcal{G}_n(u, A) + \mathcal{G}_n(v, B)\big).
\end{equation}
For $\phi \in C_c^\infty(A_{i_0+1})$ with $0\leq \phi\leq 1$ and $\phi=1$ on a neighborhood   of $\ol {A_{i_0}}$, the function   
\BBB $\widehat{w}:=\phi u + (1-\phi) v$ \EEE 
 satisfies
 $\widehat{w}\in \SBV( \BBB A' \EEE  \cup B)$ with $J_{\widehat{w}}\subset J_u  \cup \EEE J_v$, 
$\nabla \widehat{w}=0$ a.e.\ in $( \BBB A' \EEE  \cup B) \sm \{0<\phi<1\} \supset ( \BBB A' \EEE  \cup B) \sm S_{i_0}$, 
and
\begin{equation}\label{3103261226'}
\int_{ \BBB A' \EEE  \cup B}|\nabla \widehat{w}|\dx = \int_{S_{i_0}}|\nabla \widehat{w}|\dx \leq \|\nabla \phi\|_{\infty}\, \|u-v\|_{L^1(S_{i_0})}.
\end{equation}
By the Coarea Formula, 
$
\int_E |\nabla \widehat{w}|\dx= |\mathrm{D}\widehat{w}|(E \sm J_{\widehat{w}})= \int_{\R} \hn(\partial^* \{ \widehat{w}>t\} \cap (E \sm J_{\widehat{w}}))\dt
$
for every Borel set $E \subset \BBB A' \EEE  \cup B$.   Thus,  by the Mean Value Theorem, for every $l \in \Z$ 
there exists $t_l \in  (  \frac{\sigma}{2} \EEE  \, l,  \frac{\sigma}{2} \EEE  (l+1))$ such that
\begin{equation}\label{3103261227'}
\ZZZZ \int_{A' \cup B} \EEE |\nabla \widehat{w}|\dx \geq  \frac{\sigma}{2} \EEE  \sum_{l\in \Z} \hn\big(\partial^* \{ \widehat{w}>t_l\}  \ZZZZ \cap ((A' \cup B)\sm J_{\widehat{w}}) \EEE \big).
\end{equation}
Defining 
\begin{equation}\label{3103261103'}
w:= \sum_{l \in \Z} t_l \chi_{\{t_{l-1}< \widehat{w} \leq t_l\}},
\end{equation}
we have that $w \in \PC( \BBB A' \EEE  \cup B)$  with $J_w = J_u$ on $A_{i_0} \BBB \cap (A' \cup B)\EEE $ and $J_w = J_v$ on $B \setminus A_{i_0+1}$.  In particular, this shows $\hn(J_{w} \sm (J_u \cup J_v \cup S))=0$. \EEE  Moreover,  \EEE 
$\|w-\widehat{w}\|_\infty \leq \sigma$,  as well as  $|[w]-[\widehat{w}]|\leq 2 \sigma $ on $J_{\widehat{w}}$,  and \EEE  $|[w]|\leq  \sigma$ on $J_w \sm J_{\widehat{w}}$. Therefore, by the definition of $\widehat{w}$, (i) follows. \BBB By (g4) we get  
$$ \G_n(w, A'\cup B) \le \widehat{\G}_n(\widehat{w}, A'\cup B) + \omega(2\sigma)\big( \G_n(w, A'\cup B) +   \widehat{\G}_n(\widehat{w}, A'\cup B)   \big) + \int_{J_w \sm J_{\widehat{w}}} g_n(\cdot, [w], \nu_w) \,\dhn.  $$ 
\ZZZZ Here, $\widehat{\G}_n$ corresponds to the surface energy in \eqref{fg} which \ZZZZ is \EEE also defined for $\SBV$ functions. \EEE
Since $\omega(2\sigma) \le \frac{1}{2}$, we have $(1-\omega(2\sigma))^{-1} \le 1 + 2\omega(2\sigma) \le 2$, and  rearranging yields
$$ \G_n(w, A'\cup B) \le (1 + 2\omega(2\sigma))^2 \widehat{\G}_n(\widehat{w}, A'\cup B)  +  2\int_{J_w \sm J_{\widehat{w}}} g_n(\cdot, [w], \nu_w) \,\dhn.  $$ \EEE
By \eqref{3103261226'}, \eqref{3103261227'}, and \eqref{3103261103'} we further get 
$\hn(J_w \sm J_{\widehat{w}})\leq \frac{ 2 \EEE \|\nabla \phi\|_{\infty}}{\sigma}\|u-v\|_{L^1(S_{i_0})}$. Then, using the 
growth conditions  from above and below \EEE given by (g1') or (g1''), respectively, as well as  \eqref{etaound} we derive  \EEE
\begin{equation*}
\begin{split}
\G_n(w, A'\cup B) &  
\leq  (1+2\omega(2\sigma))^2 \Big(\G_n(u, A)+ \G_n(v, B) +  \frac{\beta}{\alpha} (\G_n(u, S_{i_0})+  \G_n(v, S_{i_0}))\Big) +   2\beta(1 +\sigma) \EEE \, \hn(J_w \sm J_{\widehat{w}})
\\&
\leq (1+2\omega(2\sigma))^2 (1+\eta) \Big(\G_n(u, A)+ \G_n(v, B) \Big) + \frac{\Lambda}{\sigma}\|u-v\|_{L^1(S_{i_0})}
\end{split}
\end{equation*}
 for $\Lambda:= \BBB 8 \EEE \beta\, \|\nabla \phi\|_{\infty}$.  The energy estimate  (ii) \EEE is concluded by using  $S_{i_0}\subset S$. 

\BBB
We now prove the additional statement under the assumption that   $u \in \PC(A;[0,\xi])$  and  $v \in \PC(B;\lbrace 0, \xi\rbrace)$ for some $\xi >0$. To this end, by the monotonocity of $\omega$, it is not restrictive to assume that $\sigma \le \frac{\xi}{4} \wedge 1$. The idea consists in slightly changing the construction  of $w$ in \eqref{3103261103'}: defining $C_{\sigma}:=\lfloor \frac{ 2 \EEE \xi}{\sigma} \rfloor$, \BBB we let $t_l  \in  (  \frac{\sigma}{2} \EEE  \, l,  \frac{\sigma}{2} \EEE  (l+1))$ for  $0\leq l\leq C_{\sigma}-1$  be the values chosen before \eqref{3103261227'}, and set 
\begin{equation*}
\ZZZZ w:= \EEE \sum_{l=1}^{C_\sigma-1} t_l \chi_{\{t_{l-1}< \widehat{w} \leq t_l\}} +  \xi \chi_{\{\widehat{w}> t_{C_\sigma-1}\}}.
\end{equation*}
\ZZZZ This implies that $w(x)=\widehat{w}(x)$ for all $x \in A' \cup B$ with $\widehat{w}(x) \in \lbrace 0, \xi \rbrace$. Moreover, we recall that $v = \widehat{w}$ on $B \sm A$ and $v \in \PC(B;\lbrace 0, \xi\rbrace)$.    This shows \EEE   $w=\widehat{w}=v$ on $B\sm A$.  
\end{proof}

\begin{proof}[Proof of Lemma~\ref{le:fundestHnm}]
\BBB We first observe that by \eqref{gproperty}--\eqref{gproperty2} and the definition in \eqref{HHNN} the functional $\H_n^{\varepsilon,-}$ satisfies the conditions  {\rm (g1'')} and   {\rm (g3)}--{\rm (g4)}, where  the constants $\alpha$ and $\beta$ depend on $\eps$, and a modulus of continuity $\omega$ can be given explicitly as $\omega(t) = C_g \eps^{-1} t$.  Then,  the statement follows by Lemma \ref{le:fundestGn}, where we set $\omega_g^\eps(t) =  (1+2\omega(2t))^2  -1$. 
   \end{proof}

 \subsection{$\Gamma$-convergence of $\mathcal G_n$}

We now 
 study \EEE the asymptotic behavior of the sequence $(\mathcal G_n)_n$. \EEE

\begin{proposition}\label{gammaconvgn}
Assume  {\rm (g1')}, {\rm (g3)}, {\rm (g4)} \EEE 
and let $\G_n$ be defined as  in \eqref{fg}. Then,  up to a subsequence,   $\G_n$ \BBB $\ol \Gamma$-converge \EEE   to $\G $, where $\G(u,A)=\int_{ A \cap J_u \EEE} g(\cdot,[u], \nu_u) \,\dhn$  for $u \in \PC(\Omega')$ \EEE and $g$  is \EEE  characterized by
\begin{equation}\label{3103262023}
\begin{split}
g(x,\xi,\nu) = \limsup_{\varrho \to 0^+} \frac{\mathbf{m}_{\G}^{\PC}(\ol u_{x,\xi,\nu}, Q^\nu_\varrho(x))}{\varrho^{d{-}1}} \quad \text{for all $x \in \Omega'$, $\xi \in \R$, and $\nu \in \Sn$}.
\end{split}
\end{equation}
\end{proposition}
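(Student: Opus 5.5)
We follow the localization method of $\Gamma$-convergence, i.e.\ the abstract scheme of \cite[Chapters~16--18]{DMLibro}, combined with the integral representation theorem for functionals on piecewise constant functions \cite{BraChP96}.

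\emph{Step 1 (compactness).} By \cite[Theorem~16.9]{DMLibro}, up to a subsequence, $\G_n$ $\ol\Gamma$-converges to some $\G\colon L^1(\Omega')\times\A(\Omega')\to[0,+\infty]$. Then $\G(u,\cdot)$ is increasing and inner regular. Since each $\G_n(u,\cdot)$ is superadditive on disjoint open sets, the same holds for $\G(u,\cdot)$.

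\emph{Step 2 (measure property).} Lemma~\ref{le:fundestGn} holds uniformly in $n$ under (g1'), (g3), (g4). By the standard argument of \cite[Theorem~18.5, Proposition~18.6]{DMLibro}, this yields subadditivity of $\G(u,\cdot)$. Since $\sigma$ is arbitrary, the factor $(1+2\omega(2\sigma))^2(1+\eta)$ can be sent to $1$ and the $L^\infty$-error $\sigma$ to zero. The main point is to keep the competitors $w_n$ in $\PC$ while converging in $L^1$, which is exactly what item (i) guarantees. By the De Giorgi--Letta criterion, $\G(u,\cdot)$ is then the restriction to open sets of a Borel measure.

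\emph{Step 3 (bounds and locality).} The upper bound in (g1') passes to the limit through constant recovery sequences, giving $\G(u,A)\le\beta\int_{A\cap J_u}(1+|[u]|)\,\dhn$. The lower bound follows from lower semicontinuity of $\int_{J_u}\alpha(1+|[u]|)\,\dhn$ on $\PC$ (see \cite[Theorem~4.25]{Ambrosio-Fusco-Pallara:2000}), which gives $\G\ge\alpha\int_{A\cap J_u}(1+|[u]|)\,\dhn$. Locality, $\G(u,A)=\G(v,A)$ when $u=v$ a.e.\ in $A$, and $L^1$-lower semicontinuity are automatic for $\Gamma$-limits.

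\emph{Step 4 (representation and main obstacle).} With these properties, the representation theorem for functionals on piecewise constant (Caccioppoli partition) functions \cite{BraChP96}, in the scalar version with real values in place of finitely many labels, gives $\G(u,A)=\int_{A\cap J_u} g(\cdot,u^+,u^-,\nu_u)\,\dhn$. The density is the blow-up formula given by the $\limsup$ of the scaled cell minimum $\mathbf m^{\PC}_{\G}$ over cubes $Q^\nu_\varrho(x)$ with boundary datum the two-valued jump function.

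The remaining work, and the step I expect to be the main obstacle, is reducing to the single variable $\xi=u^+-u^-$, i.e.\ translation invariance $\G(u+c,A)=\G(u,A)$. This passes from $\G_n$ to the limit, since $\G_n$ depends only on $[u]$ and translations preserve $L^1$-convergence. The density can then be computed with $\ol u_{x,\xi,\nu}$, yielding \eqref{3103262023}.

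Extra care is needed because \cite{BraChP96} handles bounded or finitely-valued partitions. I would first prove the result for $u$ taking finitely many values, using the fundamental estimate to obtain a $\limsup$ with cubes in place of balls. I would then extend it to general $u\in\PC(\Omega')$ by truncation and approximation, using (g4) and the growth $\beta(1+|\xi|)$ to control the error via $|Du|(A)$.
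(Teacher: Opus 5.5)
Your proposal matches the paper's proof: compactness from \cite[Theorem~16.9]{DMLibro}, then subadditivity and hence the measure property from the uniform fundamental estimate of Lemma~\ref{le:fundestGn}, then the two-sided growth bounds, locality and lower semicontinuity, and finally \cite[Theorem~3.2]{BraChP96}; for subadditivity the paper applies the estimate to recovery sequences $u_n,v_n$ with $\sigma_n=\|u_n-v_n\|_{L^1}^{1/2}\to 0$, which is the clean way to make your ``$\sigma$ arbitrary'' remark precise. The finitely-valued approximation in your last paragraph is unnecessary, because that representation theorem already covers all of $\PC(\Omega')$ under exactly the two-sided bound by $\int_{A\cap J_u}(1+|[u]|)\,\dhn$ supplied by (g1')---this is why (g1') is assumed here and (g1)--(g2) are handled only afterwards by truncation in Proposition~\ref{prop:gammaconvgnmod}.
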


\begin{proof}
By \cite[Theorem~16.9]{DMLibro},  we have that $\G_n$ \BBB $\ol \Gamma$-converge \EEE to a functional $\G$, up to a subsequence.  As  $\G_n(u, \cdot)$ \BBB is a  measure \EEE for all $u \in L^1(\Omega')$,  \BBB in view of \EEE  \cite[Theorem~18.5]{DMLibro}, \BBB  it suffices to prove that,  for any $u \in \PC(\Omega')$, $\G(u, \cdot)$ is subadditive  \EEE in order to deduce that  it is a measure. Moreover, by the form of $\G_n$, it can be shown with standard arguments (see e.g.\ \cite[Remark~16.3 and Proposition~16.15]{DMLibro}) that ${\G}$ is also  {local} on $\mathcal{A}(\Omega')$, that $\G(\cdot, A)$ is lower semicontinuous with respect to $L^1(\Omega')$, and that 
 $\alpha \int_{ A \cap J_u \EEE}(1+|[u]|)\,\dhn \leq \G(u,A)\leq \beta \int_{ A \cap J_u \EEE}(1+|[u]|)\,\dhn$ for every $A \in \mathcal{A}(\Omega')$  and $u \in \PC(\Omega')$. \EEE 
Therefore,  once the subadditivity of $\G(u, \cdot)$ for any $u \in  \PC(\Omega')$ \EEE is shown, \BBB the desired integral representation  on $\PC(\Omega'){\times} \mathcal{A}(\Omega')$ \EEE follows from \EEE  \cite[Theorem~3.2]{BraChP96}.  

We are now left to show that $\G(u, \cdot)$ is subadditive for any fixed $u \in  \PC(\Omega')$. \EEE
 In particular, by  following the proof of \EEE \cite[Proposition~18.4]{DMLibro}, it is enough to show that   for any $u \in \PC(\Omega')$ \EEE
\begin{equation}\label{3103261311}
\G''(u, A'\cup B)\leq \G''(u, A)+ \G''(u,B)
\end{equation}
for every $A'$, $A$, $B\in \mathcal{A}(\Omega')$ with $A'\subset \subset A$,  where \EEE $\G''(\cdot, \widetilde{A}):=\Gamma$-$\limsup_{n\to \infty} \G_n(\cdot, \widetilde{A})$ for every $\widetilde{A}\in \mathcal{A}(\Omega')$. To prove \eqref{3103261311}, we follow the lines of \cite[Proposition~18.3]{DMLibro}:
let $u_n$, $v_n \in \PC(\Omega')$ converge in $L^1(\Omega')$ to $u  \in \PC(\Omega')$ \EEE such that
\begin{equation*}
\G''(u, A)=\limsup_{n\to \infty}\G_n(u_n, A), \quad \G''(u, B)=\limsup_{n\to \infty}\G_n(v_n, B). 
\end{equation*} 
Let  $\eta>0$. By Lemma~\ref{le:fundestGn}, taking  $\sigma_n= \|u_n-v_n\|_{L^1(\Omega')}^{1/2}$ for every $n\in \N$, there exist $\Lambda>0$, $\phi_n \in C_c^\infty(A;  [0,1]) \EEE $ with $\phi_n=1$ in a neighborhood of $\ol{A'}$, and $w_n \in \PC(\BBB A' \EEE \cup B)$ such that 
\begin{equation*}
\mathcal{G}_n(w_n, A'\cup B) \leq \BBB (1 + 2\omega(2\sigma_n))^2 \EEE (1+\eta) \big( \mathcal{G}_n(u_n,A) + \mathcal{G}_n(v_n,B)\big) + \frac{\Lambda}{\sigma_n}\|u_n-v_n\|_{L^1(S)}
\end{equation*}
and
\[
 {\|\phi_n u_n+(1-\phi_n)v_n-w_n\|_{L^\infty(\BBB A' \EEE \cup B)} \le \sigma_n,}
\]
for $S:= (A\sm A') \cap B$.
As \BBB $ u_n$ and $v_n$ converge to $u$ in $L^1(\Omega')$, we get   $w_n \to u$ in $L^1(\Omega')$.  Moreover, \EEE since  $\|u_n-v_n\|_{L^1( \Omega' \EEE )}\to 0$ as $n\to \infty$, it follows that $\sigma_n\to 0$, and therefore
\begin{equation*}
\G''(u, A'\cup B) \leq \limsup_{n\to \infty} \mathcal{G}_n(w_n, A'\cup B) \leq (1+\eta) (\G''(u, A)+\G''(u, B)).
\end{equation*}
By the arbitrariness of $\eta>0$, we confirm \eqref{3103261311}. 
\end{proof}

By a truncation argument we  can show that the proposition above also \BBB holds \EEE  for   (g1)--(g2) \EEE in place of (g1').  \EEE 
\begin{proposition}\label{prop:gammaconvgnmod}
Suppose that {\rm (g1)}--{\rm (g4)} hold and that $\G_n$  are defined as  in \eqref{fg}. Then,  up to a subsequence,   $\G_n$ $\ol \Gamma$-converge   to $\G$, where $\G(u,A)=\int_{ A \cap J_u \EEE} g(\cdot,[u], \nu_u) \,\dhn$  for $u \in \PC(\Omega')$ \EEE and $g$  is \EEE characterized by
 \eqref{3103262023}.
\end{proposition}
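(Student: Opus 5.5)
The plan is to reduce Proposition~\ref{prop:gammaconvgnmod} to Proposition~\ref{gammaconvgn} by a perturbation/truncation in the jump amplitude. For $M>0$ and $\tau>0$, define
\[
g_n^{M,\tau}(x,\xi,\nu):=g_n(x,\xi\wedge M,\nu)+\tau|\xi| ,
\]
with the obvious extension to negative $\xi$ through (g3). This family satisfies (g1') with constants depending on $\alpha,\beta,M,\tau$: the lower bound $\alpha\wedge\tau$ times $(1+|\xi|)$ and the upper bound $\beta(1+M)+\tau$. Property (g3) is immediate. Property (g4) holds with a modified modulus, because truncation is $1$-Lipschitz and the added term $\tau|\xi|$ is controlled relative to $g_n\ge\alpha$. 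Proposition~\ref{gammaconvgn} then gives, via a diagonal argument over countably many $M\in\N$ and $\tau\in\mathbb{Q}\cap(0,1)$, a common subsequence along which $\G_n^{M,\tau}$ $\ol\Gamma$-converge to $\G^{M,\tau}$. Each limit has density $g^{M,\tau}$ characterized by the cell formula \eqref{3103262023}. Independently, \cite[Theorem~16.9]{DMLibro} yields a further subsequence with $\G_n\ \ol\Gamma$-converging to some $\G$.

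Next I relate $\G$ to the approximations on functions with bounded values. Take $u\in\PC(\Omega')$ with $\|u\|_\infty\le M/2$. Both $\G_n$ and the approximating functionals decrease under truncation at levels $\pm M/2$, by (g2) and the monotonicity of the perturbation. Hence recovery sequences can be assumed to take values in $[-M/2,M/2]$, and then all jumps satisfy $|\xi|\le M$. On such functions $\G_n\le\G_n^{M,\tau}\le\G_n+\tau\,\hn(J\cdot)$. Using $g_n\ge\alpha$ to bound $\hn(J_{v_n})\le\alpha^{-1}\G_n(v_n,A)$, I get
\[
\G(u,A)\le\G^{M,\tau}(u,A)\le(1+\tau\alpha^{-1})\G(u,A).
\]
Letting $\tau\to0$ shows that $\G(u,\cdot)$ is the increasing limit of the measures $\G^{M,\tau}(u,\cdot)$, so it is itself a measure with density $g^M:=\lim_{\tau\to0}g^{M,\tau}$ on jumps with $|\xi|\le M/2$. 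Consistency in $M$ comes from the same truncation argument. A general $u\in\PC(\Omega')$ is unbounded only in the sense that $\hn(J_u)<\infty$ allows arbitrarily large values. I handle it by truncating $u$ at level $M$, using lower semicontinuity of $\G$ and the bound $\G\le\beta\int(1+|[u]|)$, and letting $M\to\infty$ by monotone convergence.

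Finally, I identify $g=\lim_\tau g^{M,\tau}$ with the cell formula for $\G$. For fixed $(x,\xi,\nu)$, the competitors in $\mathbf m^{\PC}_{\G}(\ol u_{x,\xi,\nu},Q^\nu_\varrho(x))$ may be truncated to $[0,\xi]$. There $\mathbf m_\G\le\mathbf m_{\G^{M,\tau}}\le(1+\tau\alpha^{-1})\mathbf m_\G$, so the limsup in \eqref{3103262023} passes through the limit $\tau\to0$.

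The main obstacle is verifying (g4) for the perturbed densities and keeping all constants uniform in $n$, in particular near $\xi=M$, where truncation creates a kink. If the relative modulus fails there, I would instead use a smooth monotone cutoff $\xi\mapsto\theta_M(\xi)$ in place of $\xi\wedge M$.
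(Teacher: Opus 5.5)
Your argument is correct, but it takes a different route from the paper.

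\textbf{The paper's route.} The paper truncates the \emph{function}. It sets $\G_n^L(u,A):=\G_n(u^L,A)$ with $u^L=(u\wedge L)\vee(-L)$ and applies Proposition~\ref{gammaconvgn} for each $L\in\N$. It then defines $g:=\sup_L g^L$ by monotonicity in $L$, and obtains the $\Gamma$-liminf and the recovery sequences for $\G_n$ directly by monotone convergence and a diagonal argument. This handles bounded and unbounded $u$ at once.

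\textbf{Your route.} You truncate and perturb the \emph{density}: $g_n^{M,\tau}(x,\xi,\nu)=g_n(x,(\xi\wedge M)\vee(-M),\nu)+\tau|\xi|$. This keeps the approximating functionals translation invariant, with densities depending only on $[u]$ and satisfying (g1') on all of $\PC$. That is exactly the class covered by Proposition~\ref{gammaconvgn}. By contrast, $\G_n(u^L,\cdot)$ depends on the traces $u^\pm$ and vanishes on jumps whose traces both lie above $L$ (or both below $-L$). So the lower bound in (g1') holds for it only on functions with values in $[-L,L]$.

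\textbf{Trade-off.} Your route costs a second parameter and the two-sided comparison $\G\le\G^{M,\tau}\le(1+\tau M\alpha^{-1})\G$ on functions bounded by $M/2$. It also needs the cell-problem identification of $\lim_{\tau\to0}g^{M,\tau}$ with \eqref{3103262023} and a separate truncation step for unbounded $u$. In return, the identification of the limit density with \eqref{3103262023} is explicit rather than implicit.

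\textbf{Details to fix.}
\begin{itemize}
\item Jumps of functions bounded by $M/2$ can reach size $M$. So the error term is $\tau M\,\hn(J_{v_n}\cap A)$ and the factor is $1+\tau M\alpha^{-1}$, not $1+\tau\alpha^{-1}$.
\item As $\tau\downarrow0$, the measures $\G^{M,\tau}(u,\cdot)$ \emph{decrease} to $\G(u,\cdot)$; they do not increase.
\item The kink at $\xi=M$ is harmless. For $\xi_1<M<\xi_2$ and nondecreasing $\omega$,
\[
|g_n(x,M,\nu)-g_n(x,\xi_1,\nu)|\le\omega(M-\xi_1)\big(g_n(x,\xi_1,\nu)+g_n(x,M,\nu)\big)\le\omega(\xi_2-\xi_1)\big(g_n^{M,\tau}(x,\xi_1,\nu)+g_n^{M,\tau}(x,\xi_2,\nu)\big).
\]
Hence (g4) holds with modulus $\omega(t)+\frac{\tau}{2\alpha}t$, and no smoothing of the cutoff is needed.
\item For unbounded $u$, lower semicontinuity gives only $\G(u,A)\le\liminf_M\G(u^M,A)$. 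You also need $\G(u^M,A)\le\G(u,A)$. This follows by truncating a recovery sequence for $u$: truncation is $1$-Lipschitz and does not increase $\G_n$. Then dominated convergence finishes the step, using $g\le\beta(1+|\xi|)$ and the fact that $[u^M]=[u]$ eventually at $\hn$-a.e.\ point of $J_u$.
\end{itemize}
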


\begin{proof}
For every $L \ZZZZ \in  \N \EEE $ and $n\in \N$ \ZZZZ we define \EEE the functionals  $\mathcal{G}_n^L(u, A):= \mathcal{G}_n(u^L, A)$ if $u|_A \in \PC(A)$ and $ \ZZZZ \mathcal{G}^L_n(u, A)= \EEE +\infty$ otherwise, \EEE where $u^{L} := (u \wedge L) \vee (-L)$.  By Proposition~\ref{gammaconvgn} \ZZZZ and a diagonal argument, we find a subsequence (not relabeled) such that for all $L   \in \N   $ it holds that  $\G^L_n $ $\ol \Gamma$-converge  to a limit $\G^L$ which on $\PC(\Omega')$ is given by a surface energy with  density $g^L$ characterized by \eqref{3103262023}. In fact, we may ensure (g1'), where  the constant in the lower bound depends also on $L$,  additionally to  the constant in the lower bound in (g1). Due to monotonicity of $\mathcal{G}_n^L$ in $L$, we observe that the densities $g^L$ are increasing in $L$, i.e., $g := \sup_{L \in \N} g^L$ is well defined. We now deduce that  $\G_n $ $\ol \Gamma$-converge to a functional $\G$ with  $\G(u,A)=\int_{ A \cap J_u \EEE} g(\cdot,[u], \nu_u) \,\dhn$ for all $u \in \PC(\Omega')$. \EEE

In fact, for fixed $A \in \mathcal{A}(\Omega')$, \ZZZZ we first observe that by the  Monotone Convergence Theorem it holds that
\begin{equation}\label{3103261722}
\mathcal{G}(u,A)=\sup_{L >1} \mathcal{G}^L(u,A)= \lim_{L\to \infty}\mathcal{G}^L(u,A) \quad \text{for any $u \in \PC(\Omega')$}.
\end{equation}
Now,   if $\ZZZZ (u_n)_n \subset \EEE \PC(\Omega')$ converge in $L^1(\Omega')$ to $u \in \PC(\Omega')$, for any $\varepsilon>0$ there exists $\ZZZZ L \in \N \EEE $ such that $\G(u,A)-\varepsilon \ZZZZ \le \EEE   \G^L(u,A) \leq \liminf_n \mathcal{G}_n^L(u_n,A) \leq \liminf_n \mathcal{G}_n(u_n,A)$. \ZZZZ By arbitrariness of $\eps$ this shows the $\Gamma$-liminf inequality. Moreover, for fixed $u \in \PC(\Omega')$ and \EEE   for any \ZZZZ $L \in \N$ \EEE there exists $\ZZZZ (u_{n,L})_n \subset \EEE  \PC(\Omega')$ with $u_{n,L}\to u$ \ZZZZ (and thus  $u_{n,L}^L\to u^L$) \EEE such that $\lim_n  \mathcal{G}_n   (u_{n,L}^L,A) \ZZZZ  = \lim_n  \mathcal{G}^L_n   (u_{n,L},A)  \EEE =  \G^L(\ZZZZ u, \EEE A)$. \ZZZZ Therefore, by \eqref{3103261722} and a diagonal argument we conclude the existence of a recovery sequence for $u$. \EEE  
\end{proof}

\subsection{Integral representation for functionals   with  precrack}\label{sec: ppi3}
 In this subsection we  formulate and prove the $\Gamma$-convergence and integral representation results used in the proof of Theorem \ref{thm:convpreliminaries}. \EEE   As a preparation, we give the proof of Lemma~\ref{extra-lemma}. \EEE

 \begin{proof}[Proof of Lemma \ref{extra-lemma}]
Let us set $\mu_n:= \hn\mres_{K_n}$. Since $\hn(K_n)\leq C_0$, there exists $ \mu \in \M_b^+(\Omega')$ such that, up to a subsequence, $\mu_n \weaklystar \mu$ in $\ZZZZ \M^+_b \EEE (\Omega')$. By \ZZZZ \eqref{HHNN} \EEE and general properties of $\Gamma$-convergence we deduce that,  \ZZZZ for all $A \in \mathcal{A}(\Omega')$, \EEE
\begin{equation}\label{1004262009-neu}
\H^-(u,A) \leq \H^{\varepsilon,-}(u,A) \leq \H^-(u,A) + \varepsilon \,\mu(\ol A).
\end{equation}
Since $\mu(\Omega')\leq C_0$ and $h_\varepsilon^-$ are monotone increasing in $\varepsilon$ (thus converging as $\varepsilon\to 0$),
it holds that $\H^{\varepsilon,-}$   converge  pointwise  to $\H^-$ which \ZZZZ for  $ u \in \EEE \PC(\Omega')$ \EEE  is  represented by 
\begin{equation*}
\H^{-}(u,A)= \int_{A\cap J_u} h_\infty^{-}(\cdot, [u],\nu_u) \,\dhn,  
\end{equation*}
where  $h_\infty^-(x, \xi,\nu):= \lim_{\varepsilon\to 0} h^{\varepsilon,-}(x,\xi,\nu)$ \BBB for every  $x \in \Omega'$, $\xi \in \R$, and $\nu \in \mathbb{S}^{d{-}1}$. \EEE  Recalling the definition of  $h^-$  in \eqref{1004261901}, it now \EEE  holds that $h_\infty^-(x, \xi,\nu)= h^-(x,\xi,\nu)$
for every $\xi \in \ZZZZ \R \EEE$, $\nu \in \Sn$,  and for every $x \in \Omega'$ such that $H(x):= \limsup_{\varrho\to 0^+}  \varrho^{-(d{-}1)} \mu(\ol{B_\varrho(x)})<+\infty$.
In fact, from \eqref{1004262009-neu} it follows that $\mathbf{m}_{\H^-}^{\PC}(\ol u_{x,\xi,\nu}, Q^\nu_\varrho(x)) \leq \mathbf{m}_{\H^{\varepsilon,-}}^{\PC}(\ol u_{x,\xi,\nu}, Q^\nu_\varrho(x)) \leq \mathbf{m}_{\H^-}^{\PC}(\ol u_{x,\xi,\nu}, Q^\nu_\varrho(x)) + \varepsilon \mu(\ol Q^\nu_\varrho(x))$. Since $\mu(\ol Q^\nu_\varrho(x)) \leq \mu(\ol {B_{\sqrt{d}\varrho}(x)})$,    we conclude by dividing by $\varrho^{d{-}1}$ and taking the $\limsup$ on $\varrho\to 0^+$, \ZZZZ where we also use $h^{\varepsilon,-}(x, \xi,\nu)=\limsup_{\varrho \to 0^+}  \varrho^{-(d{-}1)} \mathbf{m}_{\H^{\varepsilon,-}}^{\PC}(\ol u_{x,\xi,\nu}, Q^\nu_\varrho(x))$. \EEE
 Since $H(x)<+\infty$ for $\hn$-a.e.\  $x \in \Omega'$ (use $\mu(\Omega')\leq C_0$ and  \cite[Theorem~2.56]{Ambrosio-Fusco-Pallara:2000}), the claim is concluded. 
 \end{proof}

  We now first come to  the  pure surface energy. \EEE 
\begin{proposition}\label{prop:Gammaconvgnminus}
\BBB Assume that {\rm (g1)}--{\rm (g4)} hold. \EEE 
There exists $\G^-\colon L^1(\Omega') \times \mathcal{A}(\Omega') \to [0,\infty]$ such that, up to a subsequence \ZZZZ (not relabeled), \EEE $\G_n^-$ defined in \eqref{gnm} $\ol \Gamma$-converge  to $\G^- $, with
\begin{equation*}
\G^-(u,A)= \int_{A \cap J_u} g^-(\cdot, [u],\nu_u)\,\dhn  \quad  \text{for } u \in \PC(\Omega'), \EEE
\end{equation*}
where
\begin{equation}\label{3103262002-neu}
{g^-(x, \xi, \nu)=\limsup_{\varrho \to 0^+} \frac{\mathbf{m}_{\mathcal G^-}^{\PC}(\ol u_{x,\xi,\nu}, Q^\nu_\varrho(x))}{\varrho^{d{-}1}} \quad  \text{ for all $x \in \Omega'$, $\xi \in \R$, and $\nu \in \mathbb{S}^{d{-}1}$.} \EEE  }
\end{equation}
\end{proposition}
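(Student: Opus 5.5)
The plan is to mimic the proof of Theorem~\ref{thm:convpreliminaries}, using a perturbation in $\varepsilon$ to recover a uniform lower bound on the surface densities, and to reduce to Proposition~\ref{prop:gammaconvgnmod} plus Lemma~\ref{extra-lemma}. The key steps, in order, are as follows.

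First, by \cite[Theorem~16.9]{DMLibro} we extract a subsequence along which $\G_n^-$ $\ol\Gamma$-converges to some $\G^-$. For a countable set $I\subset(0,1)$ with $0\in\ol I$ we introduce
\[
\G_n^{\varepsilon,-}(u,A):=\G_n^-(u,A)+\varepsilon\,\hn(A\cap J_u\cap K_n)\quad\text{if } u|_A\in\PC(A),
\]
and $+\infty$ otherwise. Its surface density
\[
g_n^{\varepsilon}(x,\xi,\nu):=\big(g_n(x,\xi,\nu)-g_n(x,\varphi_n,\nu)\big)^++\varepsilon
\]
on $K_n$, and $g_n$ off $K_n$, satisfies $g_n^\varepsilon\ge\min\{\alpha,\varepsilon\}$ for $\xi\neq0$ by (g1). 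It is bounded above by $\beta(1+|\xi|)+1$, nondecreasing in $|\xi|$ by (g2), and symmetric by (g3).

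Second, for (g4) we use $|(a_2-c)^+-(a_1-c)^+|\le|a_2-a_1|\le\omega(\xi_2-\xi_1)(g_n(\xi_1)+g_n(\xi_2))$. The right-hand side is not controlled by the truncated density itself. However, it is bounded by $\omega(\xi_2-\xi_1)\,2\beta(1+|\xi_2|)$, which is at most $C_{\varepsilon,L}\,\omega(\cdot)$ times the lower bound $\varepsilon$ as long as $|\xi|\le L$. So I would work with the truncated functionals $\G_n^{\varepsilon,-}(u^L,\cdot)$, exactly as in the proof of Proposition~\ref{prop:gammaconvgnmod}. On values in $[-L,L]$ condition (g1'') holds, and (g4) holds with the modulus $\omega_{\varepsilon,L}=C\varepsilon^{-1}\beta(1+L)\omega$.

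The fundamental estimate Lemma~\ref{le:fundestGn} then applies, since $g_n^\varepsilon$ may depend on $n$ and $x$ through $K_n$ and $\varphi_n$. The proof of Proposition~\ref{gammaconvgn} thus yields subadditivity, hence the measure property, and locality. Together with the bounds $c\,\hn(J_u)\le\G^{\varepsilon,-,L}\le C\int(1+|[u]|)$, this lets us apply \cite[Theorem~3.2]{BraChP96}. The upper bound is fine for $\PC$, and the lower bound is only of order $\hn$, which is the situation of (g1''). This gives integral representations with density
\[
g^{\varepsilon,-,L}=\limsup_{\varrho\to0^+}\varrho^{1-d}\,\mathbf m^{\PC}_{\G^{\varepsilon,-,L}}\big(\ol u_{x,\xi,\nu},Q^\nu_\varrho(x)\big).
\]
Letting $L\to\infty$ by monotone convergence as in Proposition~\ref{prop:gammaconvgnmod}, and using a diagonal subsequence over $I$ and $L\in\N$, we obtain $\G_n^{\varepsilon,-}\to\G^{\varepsilon,-}$ with a density $g^{\varepsilon,-}$ given by the cell formula.

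Third, we apply Lemma~\ref{extra-lemma} verbatim (its proof uses only $\hn(K_n)\le C_0$ and the structure $\G^-\le\G^{\varepsilon,-}\le\G^-+\varepsilon\mu(\ol A)$). This yields the representation of $\G^-$ with density $\lim_{\varepsilon\to0}g^{\varepsilon,-}$, and this limit equals \eqref{3103262002-neu}.

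The main obstacle I expect is the second step: checking that the cell-formula characterization survives the truncation limit $L\to\infty$. That is, one must verify that $\sup_L g^{\varepsilon,-,L}$ coincides with $\limsup_\varrho\varrho^{1-d}\mathbf m^{\PC}_{\G^{\varepsilon,-}}(\ol u_{x,\xi,\nu},Q^\nu_\varrho(x))$. For this I would note that for $L\ge|\xi|$, truncation at level $L$ does not increase energy among competitors with boundary datum $\ol u_{x,\xi,\nu}$, by the monotonicity (g2). Hence the minimum problems for $\G^{\varepsilon,-,L}$ and $\G^{\varepsilon,-}$ coincide once $L\ge|\xi|$, which gives the identification.
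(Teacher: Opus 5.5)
Your proposal is correct and follows the same route as the paper. The paper's proof is one line: compactness via \cite[Theorem~16.9]{DMLibro}, then the $\varepsilon$-perturbed functionals $\G_n^{\varepsilon,-}$ handled by Proposition~\ref{prop:gammaconvgnmod} (including its truncation at level $L$), then Lemma~\ref{extra-lemma} to let $\varepsilon\to0$. You add two details that one line leaves implicit. First, the perturbed densities satisfy (g4) only with an $L$-dependent modulus, so truncating the values to $[-L,L]$ is genuinely needed. Second, by (g2) the cell problems for $\G^{\varepsilon,-,L}$ and $\G^{\varepsilon,-}$ coincide once $L\ge|\xi|$, which identifies the density of $\G^{\varepsilon,-}$ with its cell formula.
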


\begin{proof}
 The statement follows by combining \BBB an abstract compactness result for $\Gamma$-convergence (see  \EEE \cite[Theorem 16.9]{DMLibro}) with  Proposition \ref{prop:gammaconvgnmod} and Lemma \ref{extra-lemma}   (with $\G_n^-$, $\G^-$ in place of $\H_n^-$, $\H^-$, the proof being analogous).  
\end{proof}

\begin{remark}\label{rem:propglimit}
By the asymptotic representation \ZZZZ formula \EEE  for $g$ in \eqref{3103262023}, arguing on the approximating cell problems as in \cite[Lemma~A.7]{Sto lavoro GSBV}, it can be proven that also $g$ satisfies (g1)--(g4) with the same $\alpha$, $\beta$,  and $\omega$. By the monotone approximation in the proof of  Lemma \ref{extra-lemma}, \EEE $g^-$ satisfies (g2), (g3), and (g4) with the same $\omega$.   Moreover, arguing as in the proof of Theorem~\ref{thm:convpreliminaries} (proof of second inequality in \eqref{12040901-eps} \ZZZZ and of \eqref{12040854}), \EEE it holds that 
\begin{equation}\label{15060711}
g^-(x, \xi, \nu)\leq \limsup_{\varrho\to 0^+}\liminf_{n\to \infty} \frac{\mathbf{m}_{\mathcal G_n^-}^{\PC}(\ol u_{x,\xi,\nu}, Q^\nu_\varrho(x))}{\varrho^{d{-}1}} \quad \text{ for all $x \in \Omega'$, $\xi \in \R$, and $\nu \in \mathbb{S}^{d{-}1}$.}
\end{equation} \EEE
\end{remark}

 We now come to the representation  of the bulk-surface energy. \EEE The proof follows the lines of \cite[Theorem~5.1]{GiacPonsi},  additionally \EEE taking into account the dependence on the jump amplitude of the surface energy. 
 \begin{theorem}\label{generalgammacon}
\BBB Assume that {\rm (g1)}--{\rm (g4)} hold. \EEE 
 Assume that \EEE the functionals $\mathcal{F}_n$ and $\mathcal{G}^-_n$ defined in   \eqref{fg}--\eqref{gnm} \EEE  $\ol \Gamma$-converge   to the functionals  
$\mathcal{F}$ and $\mathcal{G}^-$ given in \eqref{flimit} and  Proposition~\ref{prop:Gammaconvgnminus}, respectively. \EEE  Then,
 the functionals $\mathcal{E}^-_n $ defined by 
$$\mathcal{E}^-_n(u, A)  =   \int_A f_n(x,\nabla u(x)) \, {\rm d}x  + \int_{A \cap J_u} \big( g_n(\cdot,[u],\nu_u) - g_n(\cdot,\varphi_n,\nu_u) \big)^+ \, {\rm d}\Hd   $$
\BBB if  $u|_A \EEE \in \SBV^p(A)$ and   $+\infty$ otherwise in $L^1(\Omega')$, $\ol \Gamma$-converge  to $\mathcal{E}^- $ which satisfies 
\begin{equation}\label{14042304}\mathcal{E}^-(u, A)  =    \int_A f(x,\nabla u(x)) \, {\rm d}x  +  \int_{ A \cap J_u \EEE} g^-(\cdot,[u],\nu_u) \, {\rm d}\Hd  \quad  \text{for } \EEE u \in \SBV^p(\Omega').
\end{equation}

 \end{theorem}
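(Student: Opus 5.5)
The plan is to follow the scheme of \cite[Theorem~5.1]{GiacPonsi}. First, by \cite[Theorem~16.9]{DMLibro} we may assume, up to a subsequence, that $\E_n^-$ $\ol\Gamma$-converge to some $\E^-$. It then suffices to prove the two inequalities $\E^-(u,A)\ge \F(u,A)+\G^-(u,A)$ and $\E^-(u,A)\le \F(u,A)+\G^-(u,A)$ for $u\in\SBV^p(\Omega')$, where we write $\G^-(u,A)$ for the surface integral with density $g^-$ also for $\SBV$ functions.

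\emph{Lower bound.} Take $u_n\to u$ with $\sup_n\E_n^-(u_n,A)<\infty$. A uniform $L^\infty$ bound can be assumed by truncation, which is monotone for both the bulk and surface parts by (g2). The key difficulty is that $\hn(J_{u_n})$ is not bounded, because the surface density may vanish on $K_n$. We add $\hn(K_n)\le C_0$ to the bound off $K_n$ coming from (g1), which gives $\hn(J_{u_n})\le C$. Ambrosio's theorem then yields $u_n\wto u$ in $\SBV^p(A)$. The measures $\lambda_n:=f_n(\cdot,\nabla u_n)\mathcal L^d+(g_n(\cdot,[u_n],\nu_{u_n})-g_n(\cdot,\varphi_n,\nu_{u_n}))^+\hn\mres J_{u_n}$ have a weak$^*$ limit $\lambda$. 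We then compute Radon--Nikod\'ym densities by a blow-up argument, in two cases.
\begin{itemize}
\item At $\mathcal L^d$-a.e.\ $x$, we blow up around $x$. We then compare with the Sobolev cell problem for $\mathbf{m}_{\F}$, where the small jumps are removed via a Sobolev replacement/coarea truncation as in \cite{GiacPonsi}. This gives $\frac{{\rm d}\lambda}{{\rm d}\mathcal L^d}\ge f(x,\nabla u(x))$.
\item At $\hn$-a.e.\ $x\in J_u$, the blow-ups of $u_n$ on $Q^{\nu_u}_\varrho(x)$ converge to $\ol u_{x,[u](x),\nu_u}$. The gradient part is negligible at scale $\varrho^{d-1}$, since $\|\nabla u_n\|_{L^p}$ is bounded. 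We discard it by the piecewise-constant approximation of \cite{GiacPonsi}, namely by the coarea formula replacing $u_n$ by a $\PC$ function with additional jump set of small measure and small amplitude. The boundary values are then adjusted with the fundamental estimate Lemma~\ref{le:fundestGn}, applied to the perturbed functionals with $\eps\,\hn\mres K_n$ added. Combined with \eqref{15060711}, this yields $\frac{{\rm d}\lambda}{{\rm d}\hn\mres J_u}\ge g^-(x,[u],\nu_u)$, after letting $\eps\to0$ as in Lemma~\ref{extra-lemma}.
\end{itemize}

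\emph{Upper bound.} We first treat $u$ piecewise smooth, or more generally $u$ whose jump set is contained in finitely many $C^1$ pieces with $u$ being $W^{1,p}$ away from them. The two recovery sequences are glued by a partition into a neighborhood of $J_u$ and its complement. On the neighborhood of $J_u$ we use $\G^-$ recovery sequences for the piecewise-constant part; on the complement we use $\F$ recovery sequences. The interfaces are joined with the fundamental estimate: Lemma~\ref{le:fundestGn} for the surface part and the classical Sobolev one for the bulk part. The bulk part is local and does not see $K_n$; here the sum $v+w$ of a Sobolev recovery sequence $v$ and a $\PC$ recovery sequence $w$ is used. The energy of this sum splits because the $\PC$ part has zero gradient, and $(g_n([v+w])-g_n(\varphi_n))^+$ is controlled by (g4), since $v$ contributes no jump. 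We then extend to general $u\in\SBV^p$ by density in energy (Cortesani--Toader type). The continuity of the right-hand side of \eqref{14042304} along this approximation uses (g4) for $g^-$ (Remark~\ref{rem:propglimit}) and the bound $g^-\le\beta(1+|\xi|)$. Lower semicontinuity of the $\Gamma$-limsup completes the step.

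\emph{Main obstacle.} I expect the hardest part to be the surface lower bound at points of $J_u$. There the precrack $K_n$ destroys the uniform lower bound on the surface density, so the fundamental estimate is only available for the perturbed functionals with $\eps>0$. The plan is to perform the whole blow-up for $\G_n^-+\eps\hn\mres K_n$, obtain the density $g^{\eps,-}$, and then pass $\eps\to0$ using $\hn(K_n)\le C_0$, exactly as in the proof of Lemma~\ref{extra-lemma}.
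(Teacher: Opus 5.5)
Your lower bound is sound. On $J_u$ it is the paper's proof of $g_\infty\ge g^-$: blow-up, coarea discretization to a $\PC$ function, Lemma~\ref{le:fundestGn} to restore the boundary datum, then \eqref{15060711}. Running it for the $\eps\,\hn\mres K_n$-perturbed functionals is a legitimate substitute for the paper's use of the unperturbed $\G_{n_i}$ together with $\sup_i\hn(J_{v_i})<\infty$.

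The gap is the upper bound. Already for piecewise smooth $u$, the splitting $u=v+w$ with $v\in W^{1,p}$ and $w\in\PC$ is unavailable. Since $J_{v+w}=J_w$ and $[v+w]=[w]$, the whole jump of $u$ would have to be carried by a $\PC$ function. This is impossible if $[u]$ varies along $J_u$, or if $J_u$ is a segment that does not disconnect the domain (a crack tip): then any $\PC$ function with jump set in $J_u$ is constant. Off the precrack, the extra jumps you would need cost at least $\alpha$ per unit area. Localizing to cubes where $J_u$ is flat and separating would require knowing that $\E^-(u,\cdot)$ is a measure, i.e.\ a fundamental estimate for $\E^-_n$, whose surface density degenerates on $K_n$. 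You never establish this.

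More seriously, the extension to general $u\in\SBV^p$ by Cortesani--Toader density cannot work. That result controls surface energies only for upper semicontinuous densities. Here $g^-(\cdot,\xi,\nu)$ is merely Borel in $x$: it may vanish on the limit precrack, an arbitrary rectifiable set, and be $\ge\alpha$ off it, while (g4) gives continuity only in $\xi$. For a concrete failure, let $g_n=g$ be independent of $n$, $x$, $\nu$, let $K_n=K\subset\R^2$ be a fixed strictly convex $C^1$ arc, and let $\varphi_n\equiv1$.
\begin{itemize}
\item Using $\G^-(\cdot,B)\le\G^-_n(\cdot,B)$ on small balls $B$ split by $K$, one gets $g^-(\cdot,\xi,\nu_K)=0$ on $K$ for $|\xi|\le 1$. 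So for $u$ with $J_u=K$ and $0<|[u]|\le1$, the surface term in \eqref{14042304} vanishes.
\item Every approximant with polyhedral jump set meets $K$ in an $\hn$-null set. There $g^-\ge\alpha$, by \eqref{0104261904} and a blow-up at points where $K$ has zero density.
\item Hence the $\liminf$ of the approximants' surface energies is at least $\alpha\hn(K)>0$, and the right-hand side of \eqref{14042304} is not continuous along the approximation.
\end{itemize}

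This is why the paper uses no density argument at all:
\begin{itemize}
\item It adds $\eps\int_{A\cap J_u}(1+|[u]|)\,\dhn$ to restore coercivity.
\item It applies the global method of \cite{BouFonLeoMas02}, which represents $\E^{\eps,-}$ on all of $\SBV^p$ with densities given at every point by the cell formulas \eqref{0104261748}, and then lets $\eps\to0$.
\item It proves $g_\infty\le g^-$ pointwise by inserting almost minimizers of the $\G^-$ cell problem into the cell formula. The $\eps$-term is absorbed using \eqref{1304260714} and \eqref{0104261904} together with $H(x)<\infty$.
\item The identity $f_\infty=f$ is taken from \cite{GiacPonsi}.
\end{itemize}
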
  
   We emphasize that we use the same notation $\mathcal{E}^-_n$ as in  \eqref{samenotation}, although the functionals are in general different. In fact, here we consider general sequences of densities $(g_n)_n$. \BBB Note that, in this formulation, we can also  encompass the penalization term on $\Omega' \setminus \overline{\Omega}$. \EEE

 \begin{proof}
By abstract theory of $\Gamma$-convergence (see \cite[Theorem~16.9]{DMLibro}), up to a subsequence \ZZZZ (not relabeled) \EEE the functionals $\mathcal{E}_n^- $ $\ol \Gamma$-converge to a functional $\mathcal{E}^- $. Moreover, it holds that $\mathcal{E}^-(\cdot, A)$ is finite on $\SBV^p$  for $A \in \A(\Omega')$  by the growth assumptions on $f_n$ and $g_n$  (see also Remark \ref{rem:propglimit}), \EEE with
\begin{equation}\label{1304260714}
\E^-(u,A) \leq \F(u,A)\quad \text{for }u \in W^{1,p}(A), \qquad \E^-(u,A) \leq \beta |A| + \G^-(u,A) \quad \text{for }u \in \PC(A).
\end{equation}
We notice that, even if we  work on a suitable subsequence, as we are going to show that the  $\ol\Gamma$-limit \EEE has the form \eqref{14042304}, \ZZZZ by \EEE  Urysohn's lemma \EEE the whole sequence  \ZZZZ  $( \E^-_n)_n$ \EEE $\ol\Gamma$-converges. \EEE
 
We proceed as in \cite[(4.23)]{GiacPonsi}: for   $\varepsilon>0$ we introduce the functionals $\mathcal{E}^{\varepsilon,-} \colon L^1(\Omega') \times \mathcal{A}(\Omega') \to [0,+\infty]$    by
\begin{equation*}
\mathcal{E}^{\varepsilon,-}(u,A):=\mathcal{E}^-(u,A) + \varepsilon \int_{A \cap J_u} (1+|[u]|) \,\dhn \quad \text{if } \BBB u|_A \EEE  \in \SBV^p(A)
\end{equation*}
and $+\infty$ otherwise.
Then
$\mathcal{E}^{\varepsilon,-}$ converge to $\mathcal{E}^-$ pointwise as $\varepsilon\to 0$ \ZZZZ for all $u \in \SBV^p(\Omega')$. \EEE
%
 By the integral representation result 
\cite[Theorem~1]{BouFonLeoMas02},
 for $u \in \SBV^p(\Omega')$ 
 it holds that
 \begin{equation*}
 \E^{\varepsilon,-}(u,A)= \int_A f^\varepsilon_\infty(x, \nabla u(x)) \dx+ \int_{A \cap J_u} g^\varepsilon_\infty(\cdot, u^-, u^+, \nu_u) \,\dhn
\end{equation*}  
 with, \ZZZZ  for all  $x \in \Omega'$, $\zeta \in \R^d$, $\BBB \xi^-, \EEE \xi^+ \in \R$, and $\nu \in \mathbb{S}^{d{-}1}$, \EEE
 \begin{equation}\label{0104261748}
 f^\varepsilon_\infty(x,\zeta)=\limsup_{\varrho\to 0^+} \frac{\mathbf{m}_{\E^{\varepsilon,-}}(l_{\zeta,x}, Q^\nu_\varrho(x))}{\varrho^{d{-}1}}, \qquad g^\varepsilon_\infty(x,\xi^-, \xi^+,\nu)=\limsup_{\varrho\to 0^+} \frac{\mathbf{m}_{\E^{\varepsilon,-}}(\ol u_{x,\xi^-, \xi^+,\nu}, Q^\nu_\varrho(x))}{\varrho^{d{-}1}},
 \end{equation}
where $\mathbf{m}_{\E^{\varepsilon,-}}\equiv \mathbf{m}_{\E^{\varepsilon,-}}^{L^1}$ and  $\ol u_{x,\xi^-, \xi^+,\nu}:=  \xi^-    +   (\xi^+ - \xi^-) \chi_{Q^{\nu,+}_\varrho(x)}$.  
Passing to the limit in $\varepsilon$, by the pointwise convergence of $\mathcal{E}^{\varepsilon,-}(u,A)$ to $\mathcal{E}^-(u,A)$ and since the formulas in \eqref{0104261748} are increasing in $\varepsilon$ (thus converging as $\varepsilon\to 0$),  we then get \EEE
\begin{equation*}
 \E^-(u,A)= \int_A f_\infty(x, \nabla u(x)) \dx+ \int_{A \cap J_u} g_\infty( \ZZZZ\cdot, \EEE u^-, u^+, \nu_u) \,\dhn,
\end{equation*}
with
\begin{equation}\label{ginftyeps}
f_\infty(x, \zeta)=\lim_{\varepsilon\to 0} f^\varepsilon_\infty(x,\zeta), \quad \quad  g_\infty(x,\xi^-,\xi^+, \nu)=\lim_{\varepsilon\to 0} g^\varepsilon_\infty(x,\xi^-, \xi^+, \nu)
\end{equation}
 for $x \in \Omega'$, $\zeta \in \R^d$, $\BBB \xi^-, \EEE \xi^+ \in \R$, and $\nu \in \mathbb{S}^{d{-}1}$. \EEE Therefore, as in \cite[Theorem 5.1]{GiacPonsi}, the goal is to prove that 
\begin{equation}\label{0104261834}
f_\infty(x, \nabla u(x))=f(x, \nabla u(x)) \text{ for a.e.\ }x\in \Omega', 
\end{equation}
and
\begin{equation}\label{0104261835}
g_\infty(x,u^-(x),u^+(x),\nu_u(x))= g^-(x,[u](x),\nu_u(x)) \text{ for }\hn\text{-a.e.\ }x \in J_u
\end{equation}
for every $u \in \SBV^p(\Omega')$.
The identity \eqref{0104261834} follows  exactly as in \EEE  \cite[Steps 1,2 in Theorem 5.1]{GiacPonsi}: indeed, in this part of the proof, the only relevant properties on the surface energy are  $\hn(K_n)\leq C_0$
and the nonnegativity of the surface density of $\mathcal{E}_n^-$. 
To prove \eqref{0104261835}, we show both inequalities separately.

\emph{Inequality $g_\infty(x,u^-(x),u^+(x),\nu_u(x))\leq g^-(x,[u](x),\nu_u(x))$ for $\mathcal{H}^{d{-}1}$-a.e.\ $x \in J_u$}: This can be shown as in \cite[Step 3 in Theorem 5.1]{GiacPonsi}. We report here the argument for convenience \BBB of the reader. \EEE  Denoting $\mu_n:= \hn\mres_{K_n}$, by $\hn(K_n)\leq C_0$, there exists
$\mu\in \M_b^+(\Omega')$ such that, up to a subsequence, $\mu_n \weaklystar \mu$ in $\ZZZZ \M^+_b \EEE (\Omega')$.
For every $u \in \SBV^p(\Omega')$ and $A \in \A(\Omega')$ we have, using (g1),
\begin{equation*}
\alpha \,  \hn(A \cap (J_u\sm K_n))\leq \G_n^-(u,A), 
\end{equation*}
so that $\alpha \,  \hn(A \cap J_u)\leq \G_n^-(u,A) + \alpha \, \mu_n(A)$. Passing to the $\Gamma$-limit of $\G_n^-(\cdot,A)$ and using the weak$^*$-convergence of $\mu_n$ to $\mu$, we get
\begin{equation}\label{0104261904}
\alpha \, \hn(A \cap J_u)\leq \G^-(u,A) + \alpha \,\mu(\ol A).
\end{equation}
For brevity, we write $\nu$, $u^\pm$ in place of $\nu_u(x)$, $u^\pm(x)$ in the sequel, as well as $[u] = u^+ - u^-$. 
Given $\varepsilon>0$, for every $\varrho>0$ let $u_{\varepsilon,\varrho} \in \PC(Q_\varrho^\nu(x))$ be  such that  $u_{\varepsilon,\varrho} =  \ol u_{x,u^-, u^+,\nu} \EEE $ near  $\partial Q_\varrho^\nu(x)$ and  \EEE
\begin{equation}\label{1304260723}
\G^-(u_{\varepsilon,\varrho}, Q_\varrho^\nu(x)) \leq \mathbf{m}_{\G^-}^{\PC}(\ol u_{x, [u], \nu}, Q_\varrho^\nu(x)) + \varepsilon \varrho^{d{-}1}.
\end{equation}
 Here, we  used \EEE that $\G^-$ is translation invariant. We observe that, by truncation and (g2),   we may assume that $u^-\wedge u^+\leq u_{\varepsilon,\varrho} \leq u^-\vee u^+$. Thus, 
$\E^{\varepsilon,-}(u_{\varepsilon,\varrho},Q_\varrho^\nu(x))\leq  \E^{-}(u_{\varepsilon,\varrho},Q_\varrho^\nu(x))+ \varepsilon(1+|[u]|) \hn(J_{u_{\varepsilon,\varrho}} \cap Q_\varrho^\nu(x))$.
By \eqref{1304260714}, \eqref{0104261904}, and \eqref{1304260723}  we get 
\begin{equation*}
\begin{split}
\E^{\varepsilon,-}(u_{\varepsilon,\varrho},Q_\varrho^\nu(x))&\leq \beta \varrho^d +  \G^-(u_{\varepsilon,\varrho}, Q_\varrho^\nu(x)) + \varepsilon (1+|[u]|)\hn(J_{u_{\varepsilon,\varrho}} \cap Q_\varrho^\nu(x))
\\&
\leq \beta \varrho^d + \Big(1+\frac{\varepsilon}{\alpha}(1+|[u]|)\Big) \big(\mathbf{m}_{\G^-}(\ol u_{x,[u],\nu}, Q_\varrho^\nu(x)) + \varepsilon \varrho^{d{-}1}\big) + \varepsilon (1+|[u]|) \mu(\ol{Q_\varrho^\nu(x)}).
\end{split}
\end{equation*}
Thus, for $x$ such that 
$H(x):=\limsup_{\varrho\to 0^+} \varrho^{-(d{-}1)} \mu(\ol{B_\varrho(x)})<+\infty$   (which holds for $\hn$-a.e.\ $x\in \Omega'$, see e.g.\ \cite[Theorem~2.56]{Ambrosio-Fusco-Pallara:2000}), \BBB dividing by \ZZZZ $\varrho^{d{-}1}$ \EEE and passing to the limit $\ZZZZ \varrho \EEE \to 0$,  by \eqref{3103262002-neu} \EEE and \eqref{0104261748} we deduce
\begin{equation*}
g^\varepsilon_\infty(x,u^-, u^+,\nu) \leq \Big(1+\frac{\varepsilon}{\alpha}(1+|[u]|) \Big) \big(g^-(x, [u], \nu) + \varepsilon \big) + \varepsilon (1+|[u]|)  \sqrt{d}^{d{-}1} \EEE  H(x).
\end{equation*}
We  then conclude by \eqref{ginftyeps} and the arbitrariness of $\varepsilon>0$ and since $|[u]|(x)<+\infty$ for $\hn$-a.e.\ $x \in J_u$.

\emph{Inequality $g_\infty(x,u^-(x),u^+(x),\nu_u(x))\geq g^-(x,[u](x),\nu_u(x))$ for $\mathcal{H}^{d{-}1}$-a.e.\ $x \in J_u$}:
Since $u \in \SBV^p(\Omega')$ and thus $\E^-(u,\Omega')<+\infty$, for $\hn$-a.e.\ $x \in J_u$ it holds that
\begin{equation}\label{01041949}
g_\infty(x,u^-(x),u^+(x),\nu_u(x))=\lim_{\varrho\to 0^+} \frac{\E^-(u, Q_\varrho^{\nu_u(x)}(x))}{\varrho^{d{-}1}} < + \infty.
\end{equation}
Moreover, for \BBB $(u_n)_n \subset \SBV^p(\Omega')$ \EEE such that $\E^-(u,\Omega')=\lim_{n \to \infty} \E_n^-(u_n,\Omega')$, we define $\mu_n:=\hn\mres_{J_{u_n}}$ and we notice that, by  (g1) \EEE
along with $\hn(K_n)\leq C_0$, $\mu_n(\Omega')$ are \BBB uniformly \EEE bounded. Thus,  up to a subsequence, $\mu_n \weaklystar \mu$ in $\ZZZZ \M^+_b \EEE (\Omega')$ for $\mu \in \M_b^+(\Omega')$.
 Then $H(x):=\limsup_{\varrho\to 0^+} \varrho^{-(d{-}1)}\mu(\ol{{Q}_\varrho (x)}) <+\infty$ for $\hn$-a.e.\ $x \in \Omega'$.

Fix $x$ satisfying \eqref{01041949}, such that $u^\varrho(y):=u(x+\varrho y)  - u^-  \to \ol u_{x,[u],\nu} \EEE $ in $L^1(Q^\nu_1)$ (for $Q^\nu_1=Q^\nu_1(0)$) and $H(x)<+\infty$ \ZZZZ holds, \EEE again using $u^\pm$, $[u]$, and $\nu$ as shorthand notation. (Notice that these hold for $\hn$-a.e.\ $x \in J_u$.)  By \eqref{15060711}, for every $i \in \N$ there exist $\varrho_i\to 0^+$ and $n_i \in \N$ such that 
\begin{equation}\label{0104262330}
g^-(x,[u],\nu)\leq\liminf_{i\to \infty} \frac{\mathbf{m}^{\PC}_{\G^-_{n_i}}(\ol u_{x,[u],\nu}, Q_{\varrho_i}^\nu(x))}{\varrho_i^{d{-}1}}.
\end{equation} 
 Moreover,  we may also assume that
$\E^-(u, \partial Q_{\varrho_i}^\nu(x)))=0$. Then,
 \EEE in view of \cite[Remark 4.6]{GiacPonsi}, by the choice of $x$, \ZZZZ and by a change of variables, we get 
\begin{align}\label{0104262326}
\frac{\E^-(u, Q_{\varrho_i}^\nu(x))}{\varrho_i^{d{-}1}}&\geq\frac{\E_{n_i}^-(u_{n_i}, Q_{\varrho_i}^\nu(x))}{\varrho_i^{d{-}1}} -\frac{1}{i} \geq  \frac{\G_{n_i}^-(u_{n_i}, Q_{\varrho_i}^\nu(x))}{\varrho_i^{d{-}1}} -\frac{1}{i}
\\&
= \int_{Q_1^\nu \cap J_{v_i} }  \big(g_{n_i}(x + \varrho_i y, [v_i](y), \nu_{v_i}(y)) - g_{n_i}(x+\varrho_i y, \widetilde{\varphi}_i \BBB (y), \EEE \nu_{v_i}(y)) \big)^+ \,\dhn(y) -\frac{1}{i},\notag
\end{align}
and 
\begin{equation}\label{12041849}
v_i \to  \ZZZZ \ol u_{0,[u],\nu} \EEE \quad\text{in }L^1(Q^\nu_1),
\end{equation} where 
$v_i(y):=u_{n_i}(x+\varrho_i y)  - u^- \EEE $, $\widetilde{\varphi}_i(y):=\varphi_{n_i}(x+\varrho_i y)$  for  $y \in Q_1^\nu$  and \EEE $\widetilde{K}_i:= \frac{(K_{n_i} \cap Q_{\varrho_i}^\nu(x))-x}{\varrho_i}$. \ZZZZ Without restriction we assume that $  [u] \EEE >0$ and, by a truncation argument, that $v_i$ maps into $[0,[u]]$. \EEE By \eqref{eq: general bound} and a change of variables we find
\begin{equation*}
\int_{Q_1^\nu} |\nabla v_i|^p \,\mathrm{d}y= \varrho_i^{p-d} \int_{Q_{\varrho_i}^\nu(x)} |\nabla u_{n_i}|^p \dx\leq \frac{\varrho_i^{p-1}}{\alpha} \frac{\E^-_{n_i}(u_{n_i}, Q_{\varrho_i}^\nu(x))}{\varrho_i^{d{-}1}} \leq C \varrho_i^{p-1},
\end{equation*}
where we used \eqref{01041949}  and \eqref{0104262326} \EEE in the last estimate. Consequently, by H\"older's inequality  
\begin{equation}\label{01042233}
\int_{Q_1^\nu} |\nabla v_i| \,\mathrm{d}y\leq C \varrho_i^{1-1/p}=  C \EEE \sigma_i^2, \quad  \text{where } \EEE  \sigma_i:= \varrho_i^{\frac{1}{2}(1-1/p)}.
\end{equation}
We now 
argue as in the proof of Lemma~\ref{le:fundestGn}   
to deduce  
\begin{equation}
\int_{Q_1^\nu} |\nabla v_i| \,\mathrm{d}y  \geq \EEE   \ZZZZ \int_{0}^{[u]} \EEE \hn\big(\partial^*\{v_i>t\} \ZZZZ \cap (Q_1^\nu \sm J_{v_i}) \EEE  \big)\,\mathrm{d}t\geq \sigma_i  \sum_{l=0}^{C_{\sigma_i}-1} \EEE \hn \big(\partial^*\{v_i>t_l\}  \ZZZZ \cap (Q_1^\nu \sm J_{v_i}) \EEE  \big)
\end{equation}
for $C_{\sigma_i}= \lfloor\frac{ [u] \EEE }{\sigma_i} \rfloor$ and suitable $\BBB t_l \in \EEE     (\sigma_i\, l, \sigma_i (l+1))$ for $l \in \{0,\dots, C_{\sigma_i}-1\}$.
Then, we  set  
\BBB 
\begin{equation}\label{0104262246}
w_i:=  \sum_{l=1}^{C_\sigma-1} t_l \chi_{\{t_{l-1}< v_i \leq t_l\}} +  [u] \chi_{\{v_i> t_{C_\sigma-1}\}}.
\end{equation}
\EEE Therefore, $w_i \in \PC(Q_1^\nu; \BBB [0,[u]])\EEE$,  
$\|w_i-v_i\|_\infty\leq 2 \sigma_i$, 
$\hn(J_{w_i}\sm J_{v_i})\leq {C}\sigma_i$ (by \eqref{01042233}--\eqref{0104262246}), as well as $|[w_i]-[v_i]|\leq 4 \sigma_i$ on $J_{v_i} \cap J_{w_i} $ and $|[w_i]|\leq 2\sigma_i$ on $J_{w_i}\sm J_{v_i}$. By (g1) \ZZZZ and \EEE  (g4)  we get 
\begin{equation}\label{0104262328}
\begin{split}
\int_{Q_1^\nu \cap J_{w_i}} g_{n_i}(x + \varrho_i y, [w_i](y), \nu_{w_i}(y)) \,\dhn(y) &\leq \int_{Q_1^\nu \cap J_{v_i}} g_{n_i}(x + \varrho_i y, [v_i](y), \nu_{v_i}(y)) \,\dhn(y) \\&
\hspace{1em}+ \BBB \gamma_i \EEE \hn(J_{v_i}) + \beta (1+   2 \sigma_i   ) {C}\sigma_i,
\end{split}
\end{equation}
\BBB where for shorthand we set $\gamma_i := \omega( 4 \sigma_i)2\beta(1 + |[u]|  )$.  Eventually, we apply Lemma~\ref{le:fundestGn}   for $u$, $v$, \ZZZZ $A'$ $A$, $B$, \EEE and $\sigma$ therein equal to $w_i$, $\ZZZZ \ol u_{0,[u],\nu} \EEE$, \ZZZZ $(1-2\varrho_i) Q^\nu_1$, $(1-\varrho_i) Q^\nu_1$, $Q^\nu_1\sm (1-2\varrho_i) Q^\nu_1$,  \EEE  and $\sqrt{\tau_i}$, where \EEE $\tau_i:=\|w_i - \ZZZZ \ol u_{0,[u],\nu} \EEE \ZZZZ \|_{L^1(Q_1^\nu)} \EEE$.  Accordingly, \EEE  $\eta=\eta_i$ \BBB is   chosen such that $\eta_i \to 0$ slowly enough to guarantee \EEE  $\Lambda_i \tau_i^{1/2}\to 0$,  where  $\Lambda_i$ is the constant associated to $\eta_i$: \EEE  this allows us to find a modification $\widetilde{w}_i$ of $w_i$ such that $\widetilde{w}_i= \ZZZZ \ol u_{0,[u],\nu} \EEE$ \BBB in \EEE a neighborhood of $\partial Q^\nu_1$  and \EEE $\widetilde{w}_i \to \ZZZZ \ol u_{0,[u],\nu} \EEE$ in $L^1(Q^\nu_1)$,   where we use \eqref{12041849} as well as $\|w_i-v_i\|_\infty\leq 2 \sigma_i$. Moreover,  \EEE  recalling  (g1) we get 
\begin{equation*}\label{12042357}
\begin{split}
\int_{Q_1^\nu \cap J_{\widetilde{w}_i}}& \hspace{-0.15cm} g_{n_i}(x + \varrho_i y, [\widetilde{w}_i](y), \nu_{\widetilde{w}_i}(y)) \,\dhn(y)\leq \BBB (1+2\omega(2\sqrt{\tau_i}))^2 \EEE (1 + \eta_i)\int_{Q_1^\nu \cap J_{w_i}}  \hspace{-0.15cm} g_{n_i}(x + \varrho_i  \cdot, \EEE [w_i], \nu_{w_i}) \,\dhn
\\&
+  \BBB (1+2\omega(2\sqrt{\tau_i}))^2 \EEE(1 + \eta_i) \beta (1+ |[u]|) \EEE  \, (1-(1-\BBB 2 \EEE \varrho_i)^{d{-}1})  + \Lambda_i \tau_i^{1/2}.
\end{split}
\end{equation*}
By rescaling, the functions $z_i(\zeta):= \widetilde{w}_i((\zeta-x)/{\varrho_i})$ are in $\BBB \PC \EEE (Q_{\varrho_i}^\nu(x))$, $z_i= \ol u_{x,[u],\nu_u}$ on $\partial Q_{\varrho_i}^\nu(x)$,   and   
\begin{equation*}\label{0104262321}
\begin{split}
\int_{Q_1^\nu \cap J_{ \widetilde{w}_i } }&  \Big(g_{n_i}(x + \varrho_i y, [\widetilde{w}_i](y), \nu_{\widetilde{w}_i}(y)) - g_{n_i}(x+\varrho_i y, \widetilde{\varphi}_i\BBB (y), \EEE \nu_{\widetilde{w}_i}(y))\Big)^+  \,\dhn(y)= \frac{\G_{n_i}^-(z_{i}, Q_{\varrho_i}^\nu(x))}{\varrho_i^{d{-}1}} 
\\&\geq \frac{\mathbf{m}^{\PC}_{\G^-_{n_i}}(\ol u_{x,[u],\nu_u}, Q_{\varrho_i}^\nu(x))}{\varrho_i^{d{-}1}}.
\end{split}
\end{equation*}
We conclude $g_\infty(x,u^-(x),u^+(x),\nu_u(x))\geq g^-(x,[u](x),\nu_u(x))$ by collecting \eqref{01041949}--\eqref{0104262326}, and the estimates from  \eqref{0104262328} on,  \EEE
passing to the limit in $i$, \ZZZZ and \EEE  using that $\sigma_i,\tau_i,  \Lambda_i \tau_i^{1/2} \to 0$. \EEE Here, we also used $H(x) <  \infty$ to ensure that $\sup_{i \in \N} \hn(J_{v_i})  <   \infty$ by a change of variables. 
\end{proof}  
\begin{remark}\label{rem:continuityf} The previous analysis could be generalized to a vector valued setting, i.e., for functionals defined on $u\colon \Omega'\to \R^m$, $m>1$, by following the strategy in \cite{Sto lavoro GSBV}.  This is \EEE based on suitable truncations and an application of the Coarea Formula on components $u^i$  (see also \cite{BraDeFVit97}). Further, everything could be accomplished in the generalized setting of $\GSBV(\Omega'; \R^m)$ functions. The assumptions on $f_n$ and $g_n$ are   analogous, \EEE  but generalized to the vector-valued case ($\xi \in \R^m$);  the only condition that is not readily generalized is the monotonicity (g2), which is replaced either by
\begin{itemize}
\item[(g2)$^m$] $g_n(x, \xi_1,\nu)\leq g_n(x, \xi_2, \nu)$ for every $x \in \Omega'$, $\nu \in \Sn$,  $\xi_1, \, \xi_2 \in \R^m \sm \{0\}$ with $|\xi_1|\leq |\xi_2|$
\end{itemize}
or, more generally, by the combination of the following, for some $c>0$:
\begin{itemize}
\item[(g2a)$^m$] $g_n(x, \xi_1,\nu)\leq c\, g_n(x, \xi_2, \nu)$ for every $x \in \Omega'$, $\nu \in \Sn$,  $\xi_1, \, \xi_2 \in \R^m \sm \{0\}$ with $|\xi_1|\leq |\xi_2|$;
\item[(g2b)$^m$] $g_n(x, \xi_1,\nu)\leq g_n(x, \xi_2, \nu)$ for every $x \in \Omega'$, $\nu \in \Sn$,  $\xi_1, \, \xi_2 \in \R^m \sm \{0\}$ with $c\, |\xi_1|\leq |\xi_2|$.
\end{itemize}
\end{remark}
\end{appendices}

\end{document}